\newcommand{\Rm}{\mathrm{Rm}}
\newcommand{\id}{\mathrm{id}}
\newcommand{\Ric}{\mathrm{Ric}}
\newcommand{\scal}{\mathrm{scal}}
\newcommand{\e}{\epsilon}
\newcommand{\n}{\nabla}
\newcommand{\im}{\mathrm{im}}
\renewcommand{\L}{\mathcal{L}}
\renewcommand{\o}{\omega}
\newcommand{\tr}{\mathrm{tr}}
\renewcommand{\a}{\alpha}
\newcommand{\de}{\delta}
\renewcommand{\b}{\beta}
\renewcommand{\d}{\partial}
\newcommand{\abs}[1]{\left\lvert#1\right\rvert}
\newcommand{\norm}[1]{\left\lVert#1\right\rVert}
\renewcommand{\div}{\mathrm{div}}
\renewcommand{\O}{\mathcal O}
\newcommand{\pr}{\mathrm{pr}}
\theoremstyle{plain}
\newtheorem{thm}{Theorem}[section]
\newtheorem{prop}[thm]{Proposition}
\newtheorem{lemma}[thm]{Lemma}
\newtheorem{cor}[thm]{Corollary}
\theoremstyle{definition}
\newtheorem{definition}[thm]{Definition}
\newtheorem{remark}[thm]{Remark}
\newtheorem{example}[thm]{Example}
\newtheorem{prob}[thm]{Problem}
\newcommand{\R}[0]{\mathbb{R}}							
\newcommand{\N}[0]{\mathbb{N}}							
\newcommand{\Z}[0]{\mathbb{Z}}							
\newcommand{\M}[0]{\mathcal M}
\newcommand{\U}[0]{\mathcal U}
\newcommand{\V}[0]{\mathcal V}
\newcommand{\F}[0]{\mathcal F}
\newcommand{\dv}{\text{ }dV}
\newtheorem{lem}[thm]{Lemma}
\newtheorem{rem}[thm]{Remark}
\title[Convergence of the Ricci flow to Ricci-flat ALE manifolds]{Convergence of the Ricci flow to Ricci-flat ALE manifolds and positive scalar curvature rigidity}
\author{Klaus Kröncke}
\address{Department of Mathematics, KTH Royal Institute of Technology,
Lindstedtsvägen 25,
10044 Stockholm, Sweden}
\email{kroncke@kth.se}
\author{Oliver Petersen}
\address{Department of Mathematics, Stockholm University,
Albanovägen 28,
10691 Stockholm, Sweden}
\email{oliver.petersen@math.su.se}
\begin{document}
\hbadness=100000
\vbadness=100000

\begin{abstract}
We prove stability of integrable ALE manifolds with a parallel spinor under Ricci flow, with respect to perturbations in $L^p\cap L^{\infty}$ for any $p \in (1, n)$, improving a result by Deruelle and the first author \cite{DK17}. Our result applies to all ALE gravitational instantons.

The theorem is proved by a fixed point argument, based on novel estimates for the heat kernel of the Lichnerowicz Laplacian. It allows us to give a precise description of the convergence behaviour of the Ricci flow. Our decay rates are strong enough to prove positive scalar curvature rigidity in $L^p$, for each $p\in [1,\frac{n}{n-2})$, generalizing a result by Appleton. 
\end{abstract}

\maketitle

\tableofcontents
\begin{sloppypar}

\section{Introduction}
A one-parameter family $\left\{g_t\right\}_{t\in I}$ of Riemannian metrics on a manifold $M^n$, $n\geq2$, is called a Ricci flow if
\begin{align*}
	\partial_tg_t=-2\Ric_{g_t}.
\end{align*}
The Ricci flow was introduced in the eighties by Hamilton \cite{Ham82}
and it has become an important tool in Riemannian geometry ever since. Its success culminated in Perelman's proof
of the Poincar\'{e} and Geometrization Conjectures about the classification of closed three-dimensional
manifolds \cite{Per02}.
A natural question in geometric analysis is the stability of stationary points of the Ricci flow on the space of metrics (modulo homotheties), which we call Ricci solitons. This problem is relevant for the formation of singularities under the Ricci flow. 
Any type I singularity admits a blowup limit which is a Ricci soliton, see \cite{EMT2011}, and its instability would exclude it as a possible singularity model for generic initial data \cite{IKS19}.

On compact manifolds, the stability problem is by now well understood in terms of Perelman's entropies due to work by Haslhofer-M\"{u}ller and the first author \cites{HM14,Kro15,Kro20}. 
In this paper, we are interested in the stability problem on \emph{non-compact manifolds}.
As singularity models for the $4$-dimensional Ricci flow on compact manifolds, only non-compact Ricci-flat \emph{ALE spaces} can appear if the scalar curvature is bounded along the flow, see \cite{Bam18}.
In fact, such singularities have recently been shown to exist by Appleton \cite{App19} (however, on non-compact manifolds with unbounded scalar curvature). 
We expect that stability questions of ALE spaces are deeply connected to the formation of singularities under 4-dimensional Ricci flow.

The main result of this paper states that integrable ALE spaces with a parallel spinor (hence Ricci-flat) are dynamically stable to perturbations in a small $L^p \cap L^\infty$-neighbourhood, for any $p < n$.
In terms of fall-off conditions on the perturbations, our result requires a fall-off of order $\O \left(r^{-1-\e}\right)$.
See Theorem \ref{thm : mainthm 1 introduction} for the precise statement.
Our main result applies in particular to all known $4$-dimensional Ricci-flat ALE spaces (which all belong to the Kronheimer classification of gravitational instantons).
An interesting application of our result is the scalar curvature rigidity result of integrable ALE spaces with a parallel spinor with respect to perturbations in $L^p \cap L^\infty$ for $p < \frac n{n-2}$, see Theorem \ref{thm : psc rigidity introduction}.
We construct counterexamples to scalar curvature rigidity for $p > \frac n{n-2}$, showing that our scalar curvature rigidity result is at least almost sharp, see Theorem \ref{thm: counterexample scalar curvature rigidity intro}.

Our result is a significant improvement over the $L^2$-stability result of Ricci-flat ALE spaces, by Deruelle and the first author \cite{DK17}, where the initial data was assumed to be $L^2 \cap L^\infty$-close and no convergence rate was established. 
As a consequence, only convergence of the Ricci-de Turck flow was shown in \cite{DK17}, not convergence of the Ricci flow.
In this paper, we establish sharp convergence rates for the Ricci-de Turck flow, allowing us to conclude convergence of the actual Ricci flow.
Analogous to these results, the stability of $\R^n$ was proven by Schulze, Schn\"{u}rer and Simon \cite{SSS08}.
Their proof relies heavily on the explicit geometry of $\R^n$ and cannot be generalized to the ALE setting.

There are several further results in the literature on stability of Ricci flow on certain non-compact manifolds, including the stability of hyperbolic space \cite{SSS11}, hyperbolic spaces with cusps \cite{Bam14}, symmetric spaces of non-compact type \cite{Bam15}, complex hyperbolic space \cite{Wu13}, $\cosh$-cylinders by the first author \cite{Kro18} and further non-trivial non-compact expanding Ricci solitons \cites{Der15,DL17,WW16}.
None of these appear as blowup limits of Ricci flows, hence these results are not relevant for Ricci flow singularities.
{Moreover, the present geometric situation has the following additional complication: The continuous spectrum of the linearized operator in the ALE case is the entire \emph{non-negative real axis}, whereas in the above mentioned results (apart from $\R^n$), the operator is  \emph{strictly positive}.}

\subsection{Geometric setup}
Before we explain the main results of this paper, let us introduce the geometric setting we are working in.
\begin{definition}[ALE manifold] \label{def: ALE}
	A complete Riemannian manifold $(M^n, g)$ is called \emph{asymptotically locally Euclidean} with one end of order $\tau > 0$ if there is a compact set $K \subset M$ and a diffeomorphism $\phi: M_\infty := M \backslash K \to (\R^n \backslash \overline{B_1})/\Gamma$, where $\Gamma$ is a finite subgroup of $SO(n)$ acting freely on $\R^n \backslash \{0\}$, such that
	\[
	\abs{(\n^{eucl})^k(\phi_* g - g_{eucl})}_{eucl} = O(r^{-\tau-k})
	\]
	holds on $(\R^n \backslash B_1)/\Gamma$ for all $k \in \N$.
	The diffeomorphism $\phi$ will also be called \enquote{coordinate system at infinity}.
\end{definition}

We are particularly interested in \emph{Ricci-flat} ALE manifolds, of which many examples do exist:

\begin{example}[Ricci-flat ALE manifolds]
	The simplest example of a Ricci-flat ALE manifold (different from $\R^n$) is the Eguchi-Hanson manifold.
	Let $\a_1, \a_2, \a_3$ be the standard left-invariant one-forms on $S^3$.
	For each $\e > 0$, define the Eguchi-Hanson metric
	\[
	g_{eh, \e} := \frac{r^2}{\left(r^4 + \e^4\right)^{\frac12}}\left(dr \otimes dr + r^2 \a_1 \otimes \a_1\right) + \left(r^4 + \e^4\right)^{\frac12}\left(\a_2 \otimes \a_2 + \a_3 \otimes \a_3\right),
	\]
	for $r >0$.
	After we quotient by $\Z_2$, we can smoothly glue in an $S^2$ at $r = 0$ to get the (complete) Eguchi-Hanson manifold $(T^*S^2,g_{eh,\e})$, which is ALE with $\Gamma=\Z_2$ and hyperk\"{a}hler, hence Ricci-flat. 
	This is an example in Kronheimer's classification of hyperk\"{a}hler ALE manifolds \cite{Kro89}: 
	Each $4$-dimensional hyperk\"{a}hler ALE manifold is diffeomorphic to a minimal resolution of $(\R^4\setminus\left\{0\right\})/\Gamma$, where $\Gamma\subset \mathrm{SU}(2)$ be a discrete subgroup acting freely on $S^3$.
\end{example}
These examples satisfy an important assumption, which can be defined under the following condition: 
\begin{definition}[Spin ALE manifold]
	We say that an ALE manifold is \emph{spin} if it carries a spin structure which is compatible with the Euclidean spin structure on $M_\infty$, for which the flat metric admits parallel spinors.
\end{definition}
The main assumption is now that $(M,h)$ is an ALE spin manifold with a \emph{parallel spinor}.
This assumption has various consequences, c.f. also the discussion in \cite{KP2020}*{Section 6}:
\begin{itemize}
	\item $(M,h)$ is Ricci-flat.
	\item $(M,h)$ has irreducible holonomy, unless it is flat. Consequently,
	\begin{align}\label{holonomy}
		\mathrm{Hol}(M,h)\in\left\{\mathrm{SU}(n/2),\mathrm{Sp}(n/4),\mathrm{Spin}(7)\right\}.
	\end{align}
	\item $M$ is even-dimensional (we therefore excluded the case of holonomy $G_2$).
	\item $(M,h)$ has at most finite fundamental group.
\end{itemize}
\begin{remark}
	All known Ricci-flat ALE manifolds satisfy \eqref{holonomy} and thus carry a parallel spinor. Moreover, all these groups actually appear as holonomy groups of Ricci-flat ALE manifolds, see \cites{Kro89,Joy99,Joy00,Joy01}. 
	It is an open question whether there are other examples, cf. \cite{BKN89}*{p.\ 315}.
\end{remark}
Up to a gauge term, the linearization of the Ricci curvature is given by an elliptic operator, called the Lichnerowicz Laplacian:
\begin{definition}
	Let $S^2M$ denote the vector bundle of symmetric $(0,2)$-tensors on $M$.
	The \emph{Lichnerowicz Laplacian} on a Ricci-flat manifold $(M,h)$ is defined as
	\[
	\Delta_L := \n^*\n - 2 \Rm: C^\infty(M, S^2M) \to C^\infty(M, S^2M),
	\]
	where
	\[
	\Rm (k)_{ij} = R_{imnj}k^{mn},
	\]
	for any $k \in C^\infty(M, S^2M)$. The manifold $(M,h)$ is called \emph{linearly stable} if $\Delta_L\geq 0$.
\end{definition}
It is well known that a Ricci-flat manifold with a parallel spinor is \emph{linearly stable} \cites{DWW05, Wang91}. 
The reason is that there is an isometric parallel bundle endomorphism
\begin{align*}
	\Phi:C^{\infty}(S^2M)\to C^{\infty}(S\otimes T^*M),
\end{align*}
such that 
\begin{align}\label{eq : Dirac}
	\Phi\circ \Delta_L
		= \left( D_{T^*M} \right)^2\circ \Phi,
\end{align}
where $D_{T^*M}$ is the twisted Dirac operator on vector spinors, i.e.\ the natural Dirac operator on
\[
	S \otimes T^*M,
\]
with respect to the Clifford multiplication ${X \cdot (s \otimes \o) := (X \cdot s) \otimes \o}$, where ${X \in TM}$ and ${s \otimes \o \in S \otimes T^*M}$, and $\cdot$ denotes the Clifford multiplication.
Given \eqref{eq : Dirac}, one can compute
\begin{align*}
	(\Delta_L h, h)_{L^2}
		&= (\Phi \circ \Delta_L h, \Phi(h))_{L^2}
		= (\left( D_{T^*M} \right)^2\circ \Phi(h), \Phi(h))_{L^2}
		= \norm{D_{T^*M} \Phi(h)}_{L^2}^2,
\end{align*}
showing the desired positivity.

Another necessary notion we need is \emph{integrability} which we define in the following.
For $1\leq p< q\leq\infty$, we use the notation $L^{[p,q]}:=L^{p}\cap L^{q}=\cap_{r\in [p,q]}L^r$.
Furthermore, for a fixed metric $\hat{h}$, we define $\mathcal{M}^{[p,q]}$ as the set of metrics $g$ such that $g-\hat{h}\in L^{[p,q]}(S^2M)$.
\begin{definition}
	A spin ALE manifold $(M,\hat{h})$ with a parallel spinor is called \emph{$L^{[p,\infty]}$-integrable} if there exists an $L^{[p,\infty]}$-neighbourhood $\U\subset \mathcal{M}^{[p,\infty]}$ such that the set
	\[
	\F_{\U}:=\left\{h\in \U\mid \Ric_h=0,\quad 2\div_h \hat{h} -  d (\tr_h \hat{h}) =0 \right\}
	\]
	is a finite-dimensional submanifold of $\mathcal{M}^{[p,\infty]}$ only containing metrics with a parallel spinor and satisfying 
	\begin{align*}
		T_{\hat{h}}\mathcal{F}_{\U}=\ker_{L^{[p,\infty]}}(\Delta_{L,\hat{h}}):=
		\left\{k\in \ker(\Delta_{L,\hat h})\mid k\in L^{[p,\infty]}(S^2M)\right\}.
	\end{align*}
	We call it \emph{integrable}, if it is $L^{[p,\infty]}$-integrable for all $p\in (1,\infty)$.
\end{definition}
\begin{rem}
	The additional condition $2\div_h \hat{h} -  d (\tr_h \hat{h}) =0$ serves as a gauge condition. In suitable weighted Sobolev spaces, it defines a slice of the action of the diffeomorphism group on the space of metrics, see \cite{DK17}*{Proposition 2.11}.
\end{rem}
The integrability condition has been shown to hold for K\"{a}hler and hence also for hyperk\"{a}hler manifolds, see \cite{DK17}. 
Therefore, the integrability is in fact known to be \emph{automatic}, given a parallel spinor, unless the holonomy is $\mathrm{Spin}(7)$. 
However, also in the $\mathrm{Spin}(7)$ case integrability is widely expected to be true.
\subsection{Main results}
We formulate the main theorem of this paper. 
The appearing norms and covariant derivatives are taken with respect to $\hat{h}$.
Here and throughout the paper, whenever we write ``ALE manifold, which carries a parallel spinor'', we assume that the manifold is spin.
\begin{thm}\label{thm : mainthm 1 introduction}
	Let $(M^n,\hat{h})$ be an ALE manifold, which carries a parallel spinor and is integrable. 
	Then for each $q\in (1,n)$ and each $L^{[q,\infty]}$-neighbourhood $\mathcal{U}\subset\mathcal{M}$ of $\hat{h}$ in the space of metrics, there exists another $L^{[q,\infty]}$-neighbourhood $\mathcal{V}\subset\mathcal{U}$ of $\hat{h}$  with the following property:\\ For each metric $g_0\in \mathcal{V}$ on $M$,
	 the Ricci flow $\left\{g_t\right\}_{t\geq0}$ starting at $g_0$ exists for all time and there is a family of diffeomorphisms $\left\{\phi_t\right\}_{t\geq0}$ such that $\phi_t^*g_t\in\mathcal{U}$
	for all $t\geq0$ and $\phi_t^*g_t$ converges to a Ricci-flat metric $h_{\infty}$ as $t\to\infty$.\\
	 Moreover, if $g_0-\hat{h}\in L^p$ for some $p\in (1,q]$, there exists a smooth family of Ricci-flat metrics $h_t$, such that following convergence rates hold: 
\begin{itemize}
	\item[(i)] For each $k\in\N_0$ and $\tau>0$, there exists a constant $C=C(\tau, k, \hat h)$ such that for all $t\geq 1$, we have
	\begin{align}\label{eq: decay rate h}
		\left\|h_t-h_{\infty}\right\|_{C^k}\leq C\cdot t^{1-\frac{n}{p}+\tau}.
	\end{align}
	\item[(ii)] For $r\in [p,\infty]$ and $k \in\N_0$ such that $\frac{n}{2}\left(\frac{1}{p}-\frac{1}{r}\right)+\frac{k}{2}<\frac{n}{2p}$, there exists a constant $C = C(p, q, k, \hat h)$ such that for all $t\geq 1$, we have
	\begin{align}\label{eq : decay rate k 1}
	\norm{\nabla^k (\phi_t^*g_t-h_t)}_{L^{r}} \leq C\cdot t^{-\frac n2 \left(\frac1p - \frac{1}{r}\right)-\frac{k}{2}}.
	\end{align}
	\item[(iii)] For $r\in [p,\infty]$ and $k\in\N_0$ such that $\frac{n}{2}\left(\frac{1}{p}-\frac{1}{r}\right)+\frac{k}{2}\geq\frac{n}{2p}$ and for each $\tau>0$ there exists a constant $C = C(p, q,k,\tau, \hat h)$ such that for all $t\geq 1$, we have
	\begin{align}\label{eq : decay rate k 2}
	\norm{\nabla^k (\phi_t^*g_t-h_t)}_{L^{r}} \leq C\cdot t^{-\frac{n}{2p}+\tau}.
	\end{align}
\end{itemize}
\end{thm}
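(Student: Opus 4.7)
The plan is to work with the Ricci-DeTurck flow $\partial_t g_t = -2\,\Ric_{g_t} - \L_{W(g_t,\hat h)} g_t$, whose linearization at $\hat h$ is (a multiple of) $-\Delta_L$; by the intertwining \eqref{eq : Dirac} and $D^2_{T^*M}\geq 0$, the Lichnerowicz Laplacian is non-negative, so the semigroup $e^{-t\Delta_L}$ is bounded on every $L^r$. The first ingredient I would need is a family of sharp $L^p$--$L^r$ heat-kernel estimates of the form
\[
\|\n^k e^{-t\Delta_L} k_0\|_{L^r}\leq C\,t^{-\frac n2\left(\frac1p-\frac1r\right)-\frac k2}\|k_0\|_{L^p}
\]
on the $L^2$-orthogonal complement of $\ker \Delta_L$, together with a Euclidean-type saturation bound $\|e^{-t\Delta_L} k_0\|_{L^r}\leq C\,t^{-n/(2p)}\|k_0\|_{L^p}$ once the natural exponent exceeds $n/(2p)$. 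The latter reflects that at large times the ALE heat flow behaves like its Euclidean counterpart and cannot beat the rate $t^{-n/(2p)}$ for generic $L^p$ data, which is precisely what triggers the cutoff between regimes (ii) and (iii) and produces the $\tau$-loss.

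The second ingredient is integrability. Near $\hat h$, the set $\F_\U$ is a finite-dimensional submanifold of $\M^{[p,\infty]}$ tangent to $\ker_{L^{[p,\infty]}}\Delta_L$. I would split $g_0 = h_0 + v_0$ with $h_0\in\F_\U$ and $v_0$ $L^2$-orthogonal to $T_{h_0}\F_\U$, and then evolve a curve $h_t\in\F_\U$ so that the difference $v_t:=\phi_t^* g_t - h_t$ remains orthogonal to the kernel at each time. Because each $h_t$ is Ricci-flat, $v_t$ satisfies a nonlinear equation
\[
\partial_t v_t = -\Delta_L v_t + \mathcal N(v_t) - \dot h_t,
\]
with $\mathcal N(v) = O(|v||\n^2 v| + |\n v|^2)$ and $\dot h_t$ a kernel-valued correction arising from the motion along $\F_\U$.

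The core of the proof is a fixed-point/continuity argument based on Duhamel's formula
\[
v_t = e^{-t\Delta_L}v_0 + \int_0^t e^{-(t-s)\Delta_L}\bigl(\mathcal N(v_s)-\dot h_s\bigr)\,ds,
\]
run in a Banach space whose norm is exactly the list of decay rates \eqref{eq : decay rate k 1}--\eqref{eq : decay rate k 2}. The heat-kernel estimates reduce the nonlinear bound to time convolutions of the form $\int_0^t(t-s)^{-a}s^{-b}\,ds$ whose convergence near $s=t$ is ensured by the hypothesis $q<n$, which guarantees $n/(2p)>1$. The curve $h_t$ itself is determined by $L^2$-projecting $\mathcal N(v_s)$ onto $\ker\Delta_L$ and integrating; a further integration over $[t,\infty)$ then produces $h_\infty$ and the rate \eqref{eq: decay rate h}. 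Long-time existence follows by a standard continuity argument once the a priori estimates close, and the diffeomorphisms $\phi_t$ in the statement are recovered as the flow of the DeTurck vector field, whose decay is inherited from that of $v_t$.

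The main obstacle I anticipate is the derivation of the sharp $L^p$--$L^r$ heat kernel and gradient estimates on the orthogonal complement of $\ker \Delta_L$ on a non-compact ALE manifold, and checking that they are strong enough to close the nonlinearity at the threshold rate with only a logarithmic $\tau$-loss. A secondary difficulty is reconciling the $L^{[p,\infty]}$-based integrability hypothesis with the flow, which naturally lives in a wider scale of $L^r$ spaces: one must verify that the curve $h_t$ and its limit $h_\infty$ can consistently be chosen inside $\F_\U$ throughout the evolution, and that the projection onto $\ker_{L^2}\Delta_L$ used to define $\dot h_t$ is compatible with the $L^{[p,\infty]}$ kernel appearing in the integrability assumption.
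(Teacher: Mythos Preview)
Your high-level strategy matches the paper's: Ricci--DeTurck flow, a moving curve $h_t$ of gauged Ricci-flat metrics chosen so that $v_t=\phi_t^*g_t-h_t$ stays orthogonal to the kernel, Duhamel's formula, and a fixed-point argument in a Banach space tailored to the linear decay rates. The heat-kernel estimates you identify as the first obstacle are indeed the ones the authors established in their companion paper and quote here as Theorem~\ref{thm : linear estimates} and Corollary~\ref{cor : linear estimates}.

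However, there is a genuine gap in your proposal at the point where you say the nonlinear bound ``reduces to time convolutions $\int_0^t(t-s)^{-a}s^{-b}\,ds$''. The nonlinearity contains a term of the schematic form $\nabla\bigl((g^{-1}-h^{-1})*\nabla v\bigr)$, i.e.\ $|v|\,|\nabla^2 v|$. To close the iteration you need to control $\nabla^2 v_t$; but for the contribution from $s\in[t-1,t]$ you only have $\|\nabla^2 e^{-(t-s)\Delta_L}\Theta\|_{L^p}\le C(t-s)^{-1}\|\Theta\|_{L^p}$, which is not integrable, or $\le C(t-s)^{-1/2}\|\Theta\|_{W^{1,p}}$, which forces $\|\nabla((g^{-1}-h^{-1})*\nabla v)\|_{W^{1,p}}$ and hence \emph{third} derivatives of $v$. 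So the iteration does not close with second derivatives alone. This is exactly Problem~1.10 in the paper. The authors' fix is not to refine the convolution bound but to absorb this critical piece into the linear operator: for each target time $t$ they use a \emph{mixed} evolution operator $P(g,h,h_\infty)_{s\to t}$ that solves $\partial_s+\Delta_{L,h_\infty}$ on $[1,t-1]$ and then a modified Lichnerowicz operator $\partial_s+\Delta_{L,g,h}$ (which contains the $g^{ab}\nabla^2_{ab}$ principal part) on $[t-1,t]$. Short-time $W^{2,p}$ estimates for the latter (their Lemma~\ref{lem : short-time estimates}) then close the loop without touching $\nabla^3 v$.

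Two smaller points. First, your claim that $q<n$ gives $n/(2p)>1$ is false for $p\in(n/2,n)$; what one actually gets is $n/(2q)>1/2$, and the paper's choice of the auxiliary exponent $r$ with $\frac{n}{2}(\frac1q-\frac1r)>\frac12$ is what makes the interior convolutions integrable via Lemma~\ref{important_technical_lemma}. Second, since $h_t$ moves, $\Delta_{L,h_t}$ is time-dependent; the paper handles this by projecting onto $\ker_{L^2}(\Delta_{L,h_\infty})^{\perp}$ and treating $(\Delta_{L,h_\infty}-\Delta_{L,h_t})k$ as an additional inhomogeneity, rather than working with a single fixed $\Delta_L$ as your Duhamel formula suggests. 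The paper also starts the iteration at $t=1$ (after one unit of ordinary Ricci--DeTurck flow to smooth the data), which removes the $s\to 0$ endpoint of the convolution entirely.
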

The technical difficulty is already apparent from spectral properties of the Lichnerowicz Laplacian. 
All the other results on non-compact manifolds (except $\R^n$) mentioned in the final paragraph of the introduction use the property \emph{strict linear stability}, which means
\begin{align}\label{eq : strict positivity}
	P\geq c>0
\end{align}
for some constant $c>0$ and the associated linear operator $P$. This property gives nice decay estimates for the heat kernel and causes exponential convergence of the flow. In contrast, the continuous spectrum of $\Delta_L$ on Ricci-flat ALE manifolds is always $[0,\infty)$. Thus \eqref{eq : strict positivity} can never hold in this setting, not even on $\ker_{L^2}(\Delta_L)^{\perp}$. Instead, one can prove the weaker inequality
\begin{align*}
	\Delta_L|_{\ker_{L^2}(\Delta_L)^{\perp}}\geq c\nabla^*\nabla>0,
\end{align*}
for some constant $c>0$,
 which was the central ingredient for proving the aforementioned $L^2$-stability result in \cite{DK17}. 
{Here,} we use novel estimates for the heat kernel of $\Delta_L$ and its derivatives, which the authors developed in a recent paper \cite{KP2020}. 
Inspired by an approach 
by Koch and Lamm \cite{KL12}, we establish the Ricci-de Turck flow as the fixed point of a contraction map. 
In the present geometric situation, we had to overcome some technical obstacles, which we explain in Subsection \ref{subsection : technical obstacles} below.
\begin{rem}
	The diffeomorphisms $\phi_t$ are coming from the de Turck vector field: The family $\phi_t^*g_t$ is a Ricci-de Turck flow. For $t\in [0,1]$, we take $\hat{h}$ as the reference metric. 
	For $t\geq1$, $\phi_t^*g_t$ is a Ricci-de Turck flow with \emph{moving} reference metric $h_t$. 
	This choice of gauge turned out to be more convenient in our setting.
\end{rem}
The assumption of having a parallel spinor is pivotal for the following two reasons:
\begin{itemize}
	\item We showed in \cite{KP2020} that under this assumption, $\ker_{L^2}(\Delta_{L,\hat{h}})\subset\O_{\infty}(r^{-n})$, which improves the result in \cite{DK17}, where we only showed $\ker_{L^2}(\Delta_{L,\hat{h}})\subset\O_{\infty}(r^{1-n})$. 
	This allows us to have a better control on the reference metrics $h_t$, as we will then have $h_t-\hat{h}\in \O_{\infty}(r^{-n})$ as well.
	\item In \cite{KP2020}, we computed (with the help of \eqref{eq : Dirac}) optimal estimates on the heat kernel of the Lichnerowicz Laplacian and its derivatives. These estimates are strong enough to establish the Ricci flow as the fixed point of an iteration map.
\end{itemize}
\begin{rem}
	The decay rates for $\phi_t^*g_t-h_t$ in \eqref{eq : decay rate k 1} and \eqref{eq : decay rate k 2} coincide with the decay rates of the norm of the map
	\begin{align*}
		e^{-t \Delta_{L,\hat{h}}} 
			: L^p(S^2M)\cap \ker_{L^2}(\Delta_{L,\hat{h}})^{\perp}
			\to L^r(S^2M).
	\end{align*}
The convergence rate of $h_t$ in \eqref{eq: decay rate h} comes from integrating the inequality
\begin{align}\label{eq : explaining decay rate h}
	\left\|\partial_t h\right\|_{C^k}\leq C \left\|\phi_t^*g_t-h_t\right\|_{W^{2,r}}^2\leq Ct^{-n\left(\frac{1}{p}-\frac{1}{r}\right)},
\end{align}
where we can pick $r \in (p, \infty]$ as large as we want. 
Here, the first estimate follows from the construction of $h_t$ and the second one follows from \eqref{eq : decay rate k 2}. We could also replace the $C^k$-norm by any $W^{k,s}$-norm with $s \in (1,\infty]$.
Note that the right hand side of \eqref{eq : explaining decay rate h} is not integrable, for any $r \in (p,\infty]$, if $p \in [n, \infty]$.
This rate therefore explains why we cannot take $p\in [n,\infty]$.
This is in sharp contrast to the Euclidean case, where one can take $h_t \equiv h_{\R^n}$. 
There, one also expects the rate in \eqref{eq : decay rate k 1} to hold for all $p\in [1,\infty]$, see \cite{App18} for partial results.
\end{rem}
If we restrict to $p<\frac{3n}{4}$, we can get rid of the diffeomorphisms: 
\begin{thm}\label{thm : mainthm 2 introduction}
Let $(M^n,\hat{h})$ be an ALE manifold, which carries a parallel spinor and is integrable. 
Then for each $q\in (1,n)$, there exists an $L^{[q,\infty]}$-neighbourhood $\V$	with the following property: For each metric $g_0\in\V$ satisfying $g_0-\hat{h}\in L^p$ for some $p\in (1,\frac{3n}{4})$, the Ricci flow $g_t$ starting at $g_0$ exists for all time and converges to a Ricci-flat limit metric $h_{\infty}$ as $t\to\infty$. Moreover,
 there exists a smooth family of Ricci-flat metrics $h_t$ such that for each $\tau>0$, we have a constant $C=C(\tau)$ such that for all $t\geq 1$,
	\begin{align*}
		\left\|h_t-h_{\infty}\right\|_{C^k}\leq C\cdot \begin{cases} t^{\frac{1}{2}-\frac{n}{2p}+\tau},& \text{ if } p\in \left(1,\frac{2n}{3}\right),\\
			t^{2-\frac{3n}{2p}+\tau},& \text{ if } p\in \left[\frac{2n}{3},\frac{3n}{4}\right),
		\end{cases}
	\qquad \left\|g_t-h_t\right\|_{C^k}\leq C\cdot   t^{-\frac{n}{2p}+\tau},
	\end{align*}
with the norms taken with respect to the limit metric $h_{\infty}$.
\end{thm}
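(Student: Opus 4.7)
The plan is to deduce the theorem from Theorem~\ref{thm : mainthm 1 introduction} by absorbing the gauge diffeomorphisms $\phi_t$ into a moving Ricci-flat reference; the restriction $p<\tfrac{3n}{4}$ is the price for doing so. Set $\tilde g_t := \phi_t^* g_t$, let $h_t^{(1)}$ and $h_\infty^{(1)}$ be the reference family and limit from Theorem~\ref{thm : mainthm 1 introduction}, and write $k_t := \tilde g_t - h_t^{(1)}$. The family $\phi_t$ is the flow of the de Turck vector field
\[
W_t^k = \tilde g_t^{ij}\bigl(\Gamma(\tilde g_t)^k_{ij} - \Gamma(h_t^{(1)})^k_{ij}\bigr),
\]
which satisfies $|W_t|\leq C|\n^{h_t^{(1)}} k_t|$ up to nonlinear terms of the same or faster decay. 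The first step is to show $\phi_t\to\phi_\infty$ to a limit diffeomorphism in $C^k_{\mathrm{loc}}$. For $p<\tfrac{n}{2}$, Theorem~\ref{thm : mainthm 1 introduction}(iii) gives $\|W_t\|_{C^k}\leq C t^{-n/(2p)+\tau}$, which is integrable in time. For $p\in[\tfrac{n}{2},\tfrac{3n}{4})$ the naive $L^\infty$-in-time argument diverges, and one must exploit the ALE spatial decay $k_t=\O_{\infty}(r^{-n})$ established in \cite{KP2020} together with the $L^r$-in-space rates of Theorem~\ref{thm : mainthm 1 introduction}(ii) for $r<n$ to still extract convergence along orbits.

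With $\phi_\infty$ in hand, set $h_t := (\phi_t^{-1})^* h_t^{(1)}$ and $h_\infty := (\phi_\infty^{-1})^* h_\infty^{(1)}$, both of which are Ricci-flat as pullbacks of Ricci-flat metrics. Since $g_t = (\phi_t^{-1})^* \tilde g_t$, we have $g_t - h_t = (\phi_t^{-1})^* k_t$, and the $C^\infty_{\mathrm{loc}}$-boundedness of $\phi_t^{-1}$ (uniform in $t$) turns Theorem~\ref{thm : mainthm 1 introduction}(iii) directly into
\[
\|g_t-h_t\|_{C^k,\,h_\infty}\leq C\,t^{-n/(2p)+\tau},
\]
which is the second estimate claimed in the theorem.

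For the rate on $h_t-h_\infty$, decompose
\[
h_t-h_\infty = (\phi_t^{-1})^*\bigl(h_t^{(1)}-h_\infty^{(1)}\bigr) + \bigl[(\phi_t^{-1})^* - (\phi_\infty^{-1})^*\bigr] h_\infty^{(1)}.
\]
The first summand inherits the rate $t^{1-n/p+\tau}$ of Theorem~\ref{thm : mainthm 1 introduction}(i). The second summand can be rewritten as $\int_t^\infty (\phi_s^{-1})^*\L_{W_s} h_\infty^{(1)}\,ds$, and the two regimes of the theorem correspond to two ways of balancing this integral. In the range $p\in(1,\tfrac{2n}{3})$, an energy/Cauchy--Schwarz-type argument combining the Theorem~\ref{thm : mainthm 1 introduction}(i) rate $t^{1-n/p}$ with the $L^\infty$ rate of $k_t$ gives $\|h_t-h_\infty\|_{C^k}\lesssim(t^{1-n/p})^{1/2}\sim t^{1/2 - n/(2p)+\tau}$, matching the exponent in the first case. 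In the range $p\in[\tfrac{2n}{3},\tfrac{3n}{4})$, the sharper $L^r$-rates with $r<n$ on $W_t$ allow one to integrate the de Turck vector field in time directly, yielding the improved rate $t^{2-3n/(2p)+\tau}$. The two exponents coincide at the threshold $p=\tfrac{2n}{3}$, which is precisely the point where the interpolation balance changes.

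The main technical obstacle is the convergence $\phi_t\to\phi_\infty$ in the range $p\in[\tfrac{n}{2},\tfrac{3n}{4})$: the pointwise $L^\infty$-in-time integrability of $W_t$ fails there, and one must use the fine ALE spatial decay of $k_t$ in combination with the finer $L^r$-rates from Theorem~\ref{thm : mainthm 1 introduction}(ii) to close the argument. The upper bound $p<\tfrac{3n}{4}$ is sharp for this strategy, since beyond it even the refined spatial--temporal balance is not sufficient to make $\phi_t$ converge or to keep the second summand in the decomposition above going to zero.
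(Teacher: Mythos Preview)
Your overall architecture---pull back the Ricci--de~Turck flow by the de~Turck diffeomorphisms and show these converge to a limit $\phi_\infty$---matches the paper. The genuine gap is in how you control the de~Turck vector field $W_t$ for $p\geq n/2$. Your naive bound $|W_t|\leq C|\nabla k_t|$ only gives $\|W_t\|_{L^\infty}\leq Ct^{-n/(2p)+\tau}$, which is integrable in $t$ precisely when $p<n/2$, and your proposed fixes for larger $p$ do not work. In particular, the claim that $k_t\in\O_\infty(r^{-n})$ is false: that spatial decay is established in \cite{KP2020} only for elements of $\ker_{L^2}(\Delta_L)$, whereas $k_t$ lives in the \emph{orthogonal complement} of that kernel and has no such a~priori pointwise decay. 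Likewise, the ``Cauchy--Schwarz interpolation'' producing $(t^{1-n/p})^{1/2}$ and the ``$L^r$-rates with $r<n$'' are heuristics that do not correspond to an actual estimate on $W_t$.

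What the paper does instead is Taylor-expand $V(g_t,h_t)=DV_{h_t}(k_t)+O(k_t^2)$ and exploit the special operator identity $\Delta_{VF}\circ DV=DV\circ\Delta_L$ together with the estimate $\|DV\circ e^{-t\Delta_L}\|_{L^p,L^p}\leq Ct^{-1/2}$ (Theorem~\ref{thm : special derivative estimates}), valid for \emph{all} $p\in(1,\infty)$. Feeding the integral representation \eqref{eq : modified expression k_t} of $k_t$ through $DV_{h_t}$ then yields the refined rates of Proposition~\ref{prop : de Turck vector field}: $\|V\|_{C^i}\leq Ct^{-n/(2p)-1/2+\tau}$ for $p<2n/3$ and $\|V\|_{C^i}\leq Ct^{1-3n/(2p)+\tau}$ for $p\in[2n/3,n)$. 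Both are integrable in $t$ exactly when $p<3n/4$, which is where that threshold actually comes from. The claimed rates for $h_t-h_\infty$ then follow by integrating $\|\partial_t h_t\|_{C^i}\leq C(\|\partial_t\overline h_t\|_{C^i}+\|V\|_{C^{i+1}})$ from $t$ to $\infty$; the dominant contribution is the $V$-term, and integrating the two displayed rates gives $t^{1/2-n/(2p)+\tau}$ and $t^{2-3n/(2p)+\tau}$ respectively. Your decomposition of $h_t-h_\infty$ is fine, but the mechanism producing the exponents is this special $DV$-estimate, not interpolation or spatial decay.
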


Our third main result is an application of the previous ones and reads as follows:
\begin{thm}\label{thm : psc rigidity introduction}
	Let $(M^n,\hat{h})$ be an ALE Ricci-flat spin manifold which is integrable and carries a parallel spinor. Then for each $q\in (1,n)$, there exists a $L^{[q,\infty]}$-neighbourhood $\U$ of $\hat{h}$ in the space of metrics such that each smooth metric $g\in\U$  on $M$ satisfying
	\begin{align*}
	 \scal_g\geq0,
	 	 \qquad\text{ and }\qquad\left\|g-\hat{h}\right\|_{L^p}<\infty
	\end{align*}
	for some $p\in \left[1,\frac{n}{n-2}\right)$ is Ricci-flat.
\end{thm}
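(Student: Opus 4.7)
The plan is to combine the dynamic stability from Theorem \ref{thm : mainthm 1 introduction} with a Witten-type spinor rigidity argument. The heuristic is to run the Ricci flow starting at $g$, which will converge to a Ricci-flat limit $h_{\infty}$ carrying a parallel spinor, and then to use the sharp decay at infinity (afforded by the $L^p$ hypothesis with $p<\frac{n}{n-2}$) to force the parallel spinor to pull back to a parallel spinor on $(M,g)$, so that $g$ itself is Ricci-flat.

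Shrinking $\U$ if necessary, we first apply Theorem \ref{thm : mainthm 1 introduction} with some fixed $q\in (1,n)$: the Ricci flow $\{g_t\}_{t\geq 0}$ exists for all time, and diffeomorphisms $\phi_t$ produce $\phi_t^* g_t \to h_{\infty}$ with explicit polynomial decay rates in every $L^r$ and $C^k$ norm. Since $\scal_g \geq 0$ and the scalar curvature evolves by $\partial_t \scal = \Delta \scal + 2|\Ric|^2 \geq \Delta \scal$ along the Ricci flow, the weak maximum principle---valid on manifolds of bounded geometry, which $(M,g_t)$ enjoys uniformly in $t$ by Theorem \ref{thm : mainthm 1 introduction}---yields $\scal_{g_t} \geq 0$ for all $t \geq 0$.

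At a sufficiently large time $T$, the convergence rates of Theorem \ref{thm : mainthm 1 introduction} combined with Sobolev embedding and the $L^p$-decay of $g-\hat h$ guarantee that $\phi_T^* g_T - h_{\infty}$ and its first derivatives decay at infinity faster than $r^{-(n-2)}$. We then apply the Witten spinor argument on $(M, \phi_T^* g_T)$: using Fredholm theory for the twisted Dirac operator on suitable weighted Sobolev spaces, we construct a harmonic spinor $\psi$ asymptotic to the parallel spinor $\hat\psi$ of $(M, h_{\infty})$, and invoke the Schr\"odinger--Lichnerowicz--Weitzenb\"ock identity
\[
\int_M \left( |\nabla \psi|^2 + \tfrac{1}{4}\, \scal_{\phi_T^* g_T}\, |\psi|^2 \right) dV = \lim_{R \to \infty} \int_{\partial B_R} (\text{boundary term}),
\]
whose right hand side vanishes by the sharp decay. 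Combined with $\scal \geq 0$, this forces $\nabla \psi \equiv 0$ and $\scal_{\phi_T^* g_T} \equiv 0$; thus $\phi_T^* g_T$ admits a parallel spinor and is in particular Ricci-flat.

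Consequently $g_T$ is Ricci-flat, and backwards uniqueness of the Ricci flow---applicable via the bounded curvature along $\{g_t\}$---yields $g_t = g_T$ for all $0 \leq t \leq T$, so $g=g_0$ is Ricci-flat. The main technical obstacle is the Witten step: converting the $L^{p}\cap L^{\infty}$ hypothesis together with the $t$-decay rates of Theorem \ref{thm : mainthm 1 introduction} into genuine pointwise spatial decay of $\phi_T^* g_T - h_{\infty}$ sharp enough to kill the boundary integral, and implementing the Fredholm construction of a harmonic spinor asymptotic to $\hat\psi$ on the resulting ALE geometry.
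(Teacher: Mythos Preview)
Your approach via Witten spinor rigidity is genuinely different from the paper's, but the step you yourself flag as the ``main technical obstacle'' is a real gap, not merely a technicality. The paper never establishes pointwise spatial decay of $\phi_T^*g_T - h_\infty$ at any fixed time; all the rates in Theorem \ref{thm : mainthm 1 introduction} are decay rates in $t$ of global $L^r$ or $C^k$ norms. Membership in $L^p$ for $p<\frac{n}{n-2}$ is only heuristically equivalent to decay faster than $r^{-(n-2)}$, and converting this into the rigorous weighted estimates needed to kill the boundary integral in the Weitzenb\"ock identity would require propagating weighted Sobolev bounds under the Ricci--de Turck flow with moving gauge---something the paper does not do and which is not obviously easier than the direct argument. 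Your invocation of the weak maximum principle for $\scal_{g_t}\geq 0$ is also too quick: the curvature bounds available from the short-time theory degenerate like $t^{-1}$ as $t\to 0^+$ (the initial data is only $L^\infty$-close, not $C^2$-close), so $(M,g_t)$ does \emph{not} have uniformly bounded geometry down to $t=0$. The paper devotes a separate lemma, based on Gaussian heat kernel bounds and a dyadic-in-time iteration, to establishing $\scal_{g_1}\geq 0$.

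The paper's actual proof avoids spatial decay entirely by a contradiction purely in the time variable. If $\Ric_{g_0}\not\equiv 0$, then Duhamel's formula for the super-heat equation $\partial_t\scal_{g_t} + \Delta_{g_t}\scal_{g_t} = 2|\Ric_{g_t}|^2$, together with a lower Gaussian bound on the scalar heat kernel (obtained from Perelman's reduced length), yields $\scal_{g_t}(x)\geq C\, t^{-n/2}$ for large $t$. On the other hand, the sharp Ricci decay of Proposition \ref{prop : de Turck vector field} gives $\|\Ric_{g_t}\|_{C^0}\leq C\, t^{-\frac{n}{2p}-1+\tau}$. The condition $p<\frac{n}{n-2}$ is precisely $\frac{n}{2p}+1>\frac{n}{2}$, which makes these two bounds incompatible. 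Thus the threshold emerges from comparing two time-decay exponents rather than from any mass or boundary computation.
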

This theorem generalizes a corresponding result for Euclidean space \cite{App18} which also holds for $q=\infty$.
It is related to the rigidity part of the positive mass theorem, for which there exists also a version on ALE spin manifolds \cite{Dahl97}. The ADM mass of an ALE manifold $(M^n,g)$ is
\begin{align*}
	m(g)
		=\lim_{r\to\infty}\int_{S^{n-1}(r)/\Gamma}(\partial_jg_{ij}-\partial_ig_{jj})\dv_{S^{n-1}(r)},
\end{align*}
where the components of $g$ are taken with respect to an asymptotic coordinate system. Now if $g-\hat{h}\in L^p$, we heuristically expect
\begin{align*}
	g-\hat{h}
		\in o\left(r^{-\frac{n}{p}}\right),
		\qquad \partial(g-\hat{h})\in o\left(r^{-\frac{n}{p}-1}\right).
	\end{align*}
Due to \cite{BKN89}, we know that $\hat{h}-h_{\R^n}\in \O_{\infty}\left(r^{-n+1}\right)$ in suitable coordinates. We get
\begin{align*}
	\partial g
		\in o\left(r^{-\frac{n}{p}-1}\right),
			\text{ if }p>\frac{n}{n-1},
	\qquad \partial g
		\in O\left(r^{-n}\right),
			\text{ if }p\in \left(1,\frac{n}{n-1}\right).
\end{align*}
Thus for $p<\frac{n}{n-2}$ we expect that $m(g)=0$ and $g$ has to be Ricci-flat by the rigidity statement of the positive mass theorem.

For $p=\frac{n}{n-2}$, it is unclear what happens. In \cite{App18}, a partial result was shown for Euclidean space, which we are not able to reproduce here. 
For the remaining cases for $p$, the converse holds under even milder assumptions on the background metric.

\begin{thm} \label{thm: counterexample scalar curvature rigidity intro}
Let $(M,\hat{h})$ be a Ricci-flat manifold. Then for every $p>\frac{n}{n-2}$ there exists a sequence $(g_i)_{i\in\N}$ with $\scal_{g_i}>0$ such that $g_i\to\hat{h}$ in $L^{[p,\infty]}$ as $i\to\infty$. 
Moreover, the $g_i$ can be chosen to be conformal to $\hat{h}$.
\end{thm}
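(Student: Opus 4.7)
The strategy is to realize $g_i$ as conformal perturbations $g_i = u_i^{\frac{4}{n-2}}\hat{h}$ with $u_i$ a smooth positive function close to $1$. Since $\scal_{\hat h}=0$, the standard conformal transformation law reduces to
\[
\scal_{g_i} = -\frac{4(n-1)}{n-2}\,u_i^{-\frac{n+2}{n-2}}\,\Delta_{\hat h}u_i,
\]
where $\Delta_{\hat h}$ denotes the analyst's Laplace operator. Thus $\scal_{g_i}>0$ pointwise is equivalent to $u_i$ being strictly superharmonic, and the problem reduces to constructing a smooth strictly positive bounded function $v$ on $M$ with $-\Delta_{\hat h}v>0$ pointwise and $v \in L^{[p,\infty]}$; the desired sequence will then be obtained by setting $u_i := 1 + \e_i v$ with $\e_i \searrow 0$.

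To construct $v$ I would use the positive minimal Green's function $G$ of the Laplacian, which exists whenever $(M,\hat{h})$ is non-parabolic --- in particular on every ALE manifold of dimension $n\geq 3$ --- and on the ALE end admits the asymptotics $G(x,y) \sim c_n\,d(x,y)^{-(n-2)}$ uniformly for $y$ in compact sets. Choose any strictly positive smooth function $f\colon M \to (0,\infty)$ with $f\in L^1\cap L^\infty$; for instance, take $f = (1+r^2)^{-N}$ for $N$ large in the ALE chart at infinity, smoothly extended to be strictly positive inside. Set
\[
v(x) := \int_M G(x,y)\,f(y)\,dV_{\hat h}(y).
\]
Elliptic regularity gives $v\in C^\infty(M)$; positivity of $G$ and $f$ gives $v>0$; and $-\Delta_{\hat h}v = f > 0$ pointwise. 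Local integrability of the singularity of $G$ together with $f\in L^1\cap L^\infty$ shows $v$ is uniformly bounded, while the Green's function asymptotics yield
\[
v(x) = c_n\!\left(\int_M f\,dV_{\hat h}\right) r^{-(n-2)} + o(r^{-(n-2)})
\]
as $r\to\infty$ on the ALE end, so $v\in L^{p}$ precisely when $(n-2)p > n$, i.e.\ when $p > \tfrac{n}{n-2}$. Hence $v\in L^{[p,\infty]}$ for every $p$ in the advertised range.

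Finally, fix such a $p$, let $\e_i \searrow 0$, and set $u_i := 1 + \e_i v$. For $i$ large enough that $\e_i\|v\|_{L^\infty}<\tfrac12$ we have $u_i>0$ smoothly on $M$, so $g_i := u_i^{\frac{4}{n-2}}\hat h$ is a smooth Riemannian metric conformal to $\hat h$. The conformal formula gives
\[
\scal_{g_i} = \frac{4(n-1)}{n-2}\,\e_i\,u_i^{-\frac{n+2}{n-2}}\,f > 0
\]
everywhere on $M$. Since $u_i^{\frac{4}{n-2}} - 1 = O(\e_i v)$ pointwise, one has $\|g_i - \hat h\|_{L^{[p,\infty]}} \leq C\e_i\|v\|_{L^{[p,\infty]}} \to 0$, completing the construction.

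The only non-routine ingredient is the Green's function asymptotic $v(x)=O(r^{-(n-2)})$ on the ALE end, which is exactly what pins down the critical exponent. It also clarifies sharpness: for any conformal perturbation with $-\Delta_{\hat h}u \geq 0$ not identically zero, the positive ``total mass'' $\int_M(-\Delta_{\hat h}u)\,dV_{\hat h}>0$ forces the fall-off of $u-1$ to match the Green's function rate $r^{-(n-2)}$, which sits exactly at the $L^p$ threshold $p>\tfrac{n}{n-2}$.
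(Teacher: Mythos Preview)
Your proof is correct and takes a genuinely different, more constructive route than the paper. The paper argues abstractly via the inverse function theorem: it considers the scalar curvature map $\scal:\mathrm{Conf}^{k,p}_{\delta}(\hat h)\to W^{k-2,p}_{\delta-2}(M)$ with $\delta=-n/p$, observes that its linearization at $\hat h$ is $(n-1)\Delta$, and invokes the fact (from Bartnik) that this is an isomorphism between the weighted spaces precisely when $\delta>2-n$, i.e.\ when $p>\tfrac{n}{n-2}$. The inverse function theorem then produces, for any small positive $f_i\to 0$, conformal metrics $g_i$ with $\scal_{g_i}=f_i$, and Sobolev embedding gives the $L^{[p,\infty]}$ convergence. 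Your Green's function construction is the explicit hand-built inverse of that same linear map: you write down $v=G*f$ directly, verify $-\Delta v=f>0$, and read off $v\sim r^{-(n-2)}$ from the Green's function asymptotics. The upshot is the same---the critical exponent $\tfrac{n}{n-2}$ appears in both cases as the $L^p$ borderline for the fundamental solution decay $r^{-(n-2)}$---but your argument is more elementary (no Banach manifold machinery) and makes the sharpness mechanism transparent, as your final paragraph nicely explains. The paper's approach, in turn, avoids having to justify the Green's function asymptotics on a general ALE background and packages the regularity neatly.
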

This assertion is a simple consequence of the implicit function theorem. Nevertheless, we could not find it in this form in the literature. Therefore we state it here to complement Theorem \ref{thm : psc rigidity introduction}.

\subsection{Outline of the proof of stability}\label{subsection : technical obstacles}
For proving stability of a given Ricci-flat metric $h$, it is more convenient to use the Ricci-de Turck flow
\begin{align}\label{eq : RiccideTurck-1}
	\partial_tg_t=-2\Ric_{g_t}+\mathcal{L}_{V(g_t,h)}g_t,\qquad V(g,h)^k=g^{ij}(\Gamma(g)_{ij}^k-\Gamma(h)_{ij}^k).
\end{align}
 instead of the Ricci flow as it has the advantage of being strictly parabolic. More precisely, it can be written in terms of the difference $k=g-h$ as
\begin{align}\label{eq : RiccideTurck0}
	\partial_tk+\Delta_{L,h}k=g^{-1}*R_h*k*k+g^{-1}*g^{-1}*\nabla k*\nabla k+\nabla ((g^{-1}-h^{-1})*\nabla k).
\end{align}
In integral form, the latter equation reads
\begin{equation}\begin{split}\label{eq : RiccideTurck0_integral}
k_t&=e^{-t\Delta_{L,h}}k_0+\int_0^t e^{-(t-s)\Delta_{L,h}}[g_s^{-1}*R_h*k_s*k_s+g_s^{-1}*g_s^{-1}*\nabla k_s*\nabla k_s]ds\\
	&\qquad+\int_0^t e^{-(t-s)\Delta_{L,h}}[\nabla ((g_s^{-1}-h_s^{-1})*\nabla k_s)]ds.	
\end{split}
\end{equation}
Given an initial $k_0$, the linear part of the equation is expected to dominate the behavior of the solution $k_t$. 
It is therefore natural to set
\begin{align*}
	k_t^{(1)}=e^{-t\Delta_{L,h}}k_0
\end{align*}
and inductively define
\begin{align*}
	k_t^{(i+1)}&=e^{-t\Delta_{L,h}}k_0+\int_0^t e^{-(t-s)\Delta_{L,h}}[(g_s^{(i)})^{-1}*R_h*k^{(i)}_s*k^{(i)}_s+g_s^{-1}*(g_s^{(i)})^{-1}*\nabla k^{(i)}_s*\nabla k^{(i)}_s]ds\\
	&\qquad+\int_0^t e^{-(t-s)\Delta_{L,h}}[\nabla (((g_s^{(i)})^{-1}-h^{-1})*\nabla k^{(i)}_s)]ds	
\end{align*}
for $i\in\N$. One would now hope that as $i\to\infty$, $k_t^{(i)}$ converges in a suitable Banach space to a solution of \eqref{eq : RiccideTurck0_integral} and hence of \eqref{eq : RiccideTurck0}. In fact, this strategy was successfully carried out in \cite{KL12}, to prove $L^{\infty}$-stability of $\R^n$ under Ricci flow.

Carrying out these steps in the general ALE case is far more complicated. Here, we will explain the main technical issues and outline the ideas how to overcome these problems.
\subsubsection{Controlling the linear part}
\begin{prob}
The operator $\Delta_{L,h}$ will in general have a nontrivial $(L^2)$-kernel and hence, $e^{-t\Delta_{L,h}}$ admits stationary points. However, we need some decay for $k$ in order to bound the convolution integral.	
\end{prob}
In \cite{KP2020}, we were able to derive optimal polynomial decay rates of the heat kernel on the orthogonal complement of the kernel, which means we have to assume $k\perp \ker_{L^2}(\Delta_{L,h})$. These estimates can be used if we find a projection map $\Phi$ which maps from a neighbourhood $\mathcal{U}$ of a given Ricci-flat metric $\hat{h}$ onto a set of Ricci-flat metrics $\mathcal{F}$, and comes with the property that 
$g-\Phi(g)\perp_{L^2}\ker_{L^2}(\Delta_{L,\Phi(g)})$ for $g\in \mathcal{U}$. 
We therefore gauge the Ricci flow with respect to the Ricci-flat ``shadow metric'' $\Phi(g_t)$ instead of a fixed metric.
The Ricci-de Turck flow is then
\begin{align}\label{eq : movingRiccideTurck-1}
	\partial_tg_t=-2\Ric_{g_t}+\mathcal{L}_{V(g_t,\Phi(g_t))}g_t	.
\end{align}
The evolution equation on $k_t=g_t-\Phi(g_t)$ now looks slightly different than \eqref{eq : RiccideTurck0}:
\begin{align}\label{eq : movingRiccideTurck0}
	\partial_tk+\Delta_{L,h}k=(1-D_g\Phi)(g^{-1}*R_h*k*k+g^{-1}*g^{-1}*\nabla k*\nabla k+\nabla ((g^{-1}-h^{-1})*\nabla k)),
\end{align}
where $h=h_t=\Phi(g_t)$ and the additional $D_g\Phi$-term describes the evolution of $h$. In view of \eqref{eq : RiccideTurck0_integral}, the next problem arises:\\
\begin{prob} Compute the heat kernel of $\Delta_{L,h}$ for a time-dependent family $h_t$ of Ricci-flat metrics.
	\end{prob}
We will solve this problem by assuming that $h_t$ converges to a limit $h_{\infty}$. We are then rewriting \eqref{eq : movingRiccideTurck0} as an equation on $\overline{k}_t$ (the part of $k_t$ orthogonal to $\ker_{L^2}(\Delta_{L,h_{\infty}})$), with left hand side given by $\partial_t\overline{k}+\Delta_{L,h_{\infty}}\overline{k}$ and an appropriate modified right hand side containing  $(\Delta_{L,h_{t}}-\Delta_{L,h_{\infty}})(k_t)$. Controlling $\overline{k}_t$ is already good enough to control $k_t$: The orthogonal projection $\Pi^{\perp}_t:\ker_{L^2}(\Delta_{L,h_t})^{\perp}\to \ker_{L^2}(\Delta_{L,h_{\infty}})^{\perp}$ is an isomorphism for $h_t$ close enough to $h_{\infty}$.

\subsubsection{Finding the right Banach space}
The heat kernel of $\Delta_{L,h}$ admits mapping properties of the form
\begin{align*}
	\left\|\nabla^i\circ  e^{-t\Delta_{L,h}}|_{\ker_{L^2}^{\perp}}\right\|_{L^p,L^q}\leq C t^{-\alpha(p,q,i)},
\end{align*}
for some $\alpha(p,q,i)\geq0$.
This suggests  a suitable linear combination of terms
\begin{align*}
	\sup_{t\geq0}\left( t^{\alpha(p,q,i)}\cdot \left\|\nabla^i k_t\right\|_{L^q}\right),
\end{align*}
to define a norm controlling $k_t$.
Since at most second derivatives appear in the evolution equation, it is not necessary to use terms with $i\geq3$.
We also need a norm controlling $h_t$. The evolution equation
\begin{align}\label{eq : shadow metric evolution}
	\partial_th=D_g\Phi(g^{-1}*R_h*k*k+g^{-1}*g^{-1}*\nabla k*\nabla k+\nabla ((g^{-1}-h^{-1})*\nabla k))
\end{align}
suggests a combination of
\begin{align*}
		\sup_{t\geq0}\left\|h_t-\hat{h}\right\|_{L^p},\qquad 	\sup_{t\geq0}\left( t^{\beta(p,q)}\cdot \left\|\partial_t h_t\right\|_{L^q}\right).
\end{align*}
The first norm controls the distance to a Ricci-flat reference metric $\hat{h}$. The second part determines a possible convergence to a limit metric $h_{\infty}$. Here, $\beta(p,q)\geq0$ is suggested from the expected polynomial decay rate of right hand side of \eqref{eq : shadow metric evolution}, coming from the decay of $k_t$ and its derivatives.

\subsubsection{Controlling the inhomogeneous part in the iteration process}
In the iteration process, we will have triples $(h_t^{(i)},h_{\infty}^{(i)},k_t^{(i)})$ consisting of an evolving family of Ricci-flat shadow metrics $h_t^{(i)}$ with a limit $h_{\infty}^{(i)}$ and an evolving family of symmetric $2$-tensors $k_t^{(i)}$ orthogonal to the respective kernels of $h_t^{(i)}$, which form a family of evolving metrics $g_t^{(i)}:=k_t^{(i)}+h_t^{(i)}$ that should eventually converge to a Ricci-de Turck flow.
 In the iteration process, we will have to control terms of the form
\begin{align*}
	\int_0^t e^{-(t-s)\Delta_{L,h^{(i)}_{\infty}}}H(k^{(i)}_s,h^{(i)}_s) ds,
\end{align*}
where $H(k,h)$ here denotes some non-linear expressions in $h$ and $k$.
The polynomial decay rates appearing so far suggest to control integrals of the form
\begin{align}\label{eq : convolution of rational functions}
		\int_0^{t}s^{-\alpha}(t-s)^{-\beta}ds.
\end{align}
If $\min\left\{\alpha,\beta\right\}\geq1$, this integral is not finite. However, we can at least control the interior part of the integral by an elementary lemma. 
 \begin{lem}\label{important_technical_lemma}
 	Let $\alpha,\beta>0$, and define $\gamma=\min\left\{\alpha,\beta\right\}$, $\delta:=\max\left\{\alpha,\beta\right\}$. Then there exists a constant $C=C(\alpha,\beta)$ such that for all $t\geq 2$, we have
 	\begin{align}\label{eq : central technical lemma}
 		\int_1^{t-1}s^{-\alpha}(t-s)^{-\beta}ds\leq C\cdot
 		\begin{cases}	t^{-\gamma}&  \text{ if }\delta>1,\\
 			t^{-\gamma}\log(t)& \text{ if }\delta=1,\\
 			t^{1-\alpha-\beta}& \text{ if }\delta<1,
 		\end{cases}
 	\end{align}
 and the rates on the right hand side are optimal.
 	In particular, we have
 	\begin{align*}
 		\int_1^{t-1}s^{-\alpha}(t-s)^{-\beta}ds\leq C\cdot t^{-\theta}
 	\end{align*}
 	for every $\theta< \min\left\{\alpha,\beta,\alpha+\beta-1\right\}$ and also for $\theta= \min\left\{\alpha,\beta,\alpha+\beta-1\right\}$, if $\max\left\{\alpha,\beta\right\}\neq1$.
 \end{lem}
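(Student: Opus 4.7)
The plan is to rescale the integration variable. The substitution $s = tu$ turns the integral into
\[
\int_1^{t-1} s^{-\alpha}(t-s)^{-\beta}\, ds = t^{1-\alpha-\beta} \int_{1/t}^{1-1/t} u^{-\alpha}(1-u)^{-\beta}\, du,
\]
so the problem reduces to the asymptotic behavior, as $t\to\infty$, of a truncated Beta-type integral. I would split this Beta integral at $u = 1/2$. On $[1/t,\, 1/2]$ the factor $(1-u)^{-\beta}$ lies in $[1, 2^{\beta}]$, so the integral there is comparable to $\int_{1/t}^{1/2} u^{-\alpha}\,du$; and on $[1/2,\,1-1/t]$ the factor $u^{-\alpha}$ is bounded, so after the symmetry $u\mapsto 1-u$ that piece is comparable to $\int_{1/t}^{1/2} v^{-\beta}\,dv$.

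The elementary antiderivative of $u^{-\alpha}$ yields, as $t\to\infty$, a bounded quantity if $\alpha < 1$, a $\log t$ if $\alpha = 1$, and a term comparable to $t^{\alpha-1}$ if $\alpha > 1$; the same trichotomy applies on the $\beta$-side. Multiplying by the prefactor $t^{1-\alpha-\beta}$ and summing the two halves produces, after a short case distinction on $\delta = \max\{\alpha,\beta\}$, exactly the three regimes in \eqref{eq : central technical lemma}: if $\delta > 1$ the \enquote{large exponent} half contributes $t^{-\gamma}$ while the other half is of lower order; if $\delta = 1$ both halves contribute the critical $t^{-\gamma}\log t$; and if $\delta < 1$ both halves contribute $t^{1-\alpha-\beta}$. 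Since the comparisons on each half are two-sided, the same argument gives matching lower bounds, yielding the claimed optimality of the rates.

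For the final \enquote{in particular} assertion I use the elementary observation that $\alpha + \beta - 1 \geq \gamma$ precisely when $\delta \geq 1$, so that $\min\{\alpha,\beta,\alpha+\beta-1\}$ coincides with $\gamma$ whenever $\delta \geq 1$ and with $\alpha + \beta - 1$ when $\delta < 1$. The bound $C t^{-\theta}$ then follows with equality whenever $\delta \neq 1$, while in the borderline case $\delta = 1$ the logarithmic factor forces the strict inequality $\theta < \gamma$ --- this is exactly the case excluded by the hypothesis $\max\{\alpha,\beta\} \neq 1$ in the sharp version. The argument is entirely elementary; the only subtle point is this borderline bookkeeping, and no substantive analytic obstacle arises.
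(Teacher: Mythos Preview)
Your approach is correct and essentially the same as the paper's: both split the integral at the midpoint $s=t/2$ and bound the non-singular factor on each half before integrating the remaining power; you rescale via $s=tu$ to a truncated Beta integral while the paper shifts via $s=r+t/2$, but the analysis is identical. One minor imprecision in your write-up: when $\delta=1$ but $\gamma<1$ only the half with exponent $\delta$ carries the $\log t$ factor, not both, though this does not affect the final bound.
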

 \begin{proof}
 	If we substitute $s=r+t/2$, the left hand side of the inequality can be written and estimated from above and below as
 	\begin{align*}
 		&(t-1)^{-\beta}\int_{1-t/2}^{0}(r+t/2)^{-\alpha}dr+  (t-1)^{-\alpha}\int_{0}^{t/2-1}(t/2-r)^{-\beta}dr\\
 		&\qquad\leq \int_{1-t/2}^{0}(r+t/2)^{-\alpha}(t/2-r)^{-\beta}dr	+\int_{0}^{t/2-1}(r+t/2)^{-\alpha}(t/2-r)^{-\beta}dr\\
 		&\qquad\leq (t/2)^{-\beta}\int_{1-t/2}^{0}(r+t/2)^{-\alpha}dr+  (t/2)^{-\alpha}\int_{0}^{t/2-1}(t/2-r)^{-\beta}dr.
 	\end{align*}
 	The rest of the proof follows from elementary calculus and a case by case analysis.
 \end{proof}
\subsubsection{Treating boundary terms of the time integral}\label{subsubseq : boundary terms}
In our proof, we will let an initial metric $g_0$ (which is $L^{p}\cap L^{\infty}$-close to $\hat{h}$) evolve under the Ricci-de Turck flow (with gauge metric $\hat{h}$) up to time $t=1$. The metric $g_1$ and the tensors $h_1=\Phi(g_1)$, $k_1:=g_1-h_1$ are smooth and we can bound all derivatives in terms of the initial data. 
 By starting the iteration argument from time $1$ instead of $0$, we get sequences of metrics and tensors $k^{(i)}_t,h^{(i)}_t$, whose norms do not blow up as $t\searrow 1$. In this way, we can just get rid of the integral in \eqref{eq : convolution of rational functions} from $0$ to $1$.

For the integral from $t-1$ to $t$, there is one term that causes troubles:
\begin{prob}
	We need to control the term
	\begin{align}\label{eq : critical term}
\int_{t-1}^t e^{-(t-s)\Delta_{L,h}}[\nabla ((g_s^{-1}-h_s^{-1})*\nabla k_s)]ds.	
\end{align}	
	\end{prob}
In the iteration process, we need to control up to second derivatives of $k_s^{(i+1)}$ by using only up to second derivatives of $k_s^{(i)}$.
Short-time estimates for parabolic equations show that
\begin{align*}
	\left\| \nabla^2 e^{-(t-s)\Delta_{L,h}}\Theta\right\|_{L^p}&\leq C (t-s)^{-\frac{1}{2}}\left\|\Theta\right\|_{W^{1,p}} \in L^1([t-1,t])
	\\
		  	\left\| \nabla^2 e^{-(t-s)\Delta_{L,h}}\Theta\right\|_{L^p}&\leq C (t-s)^{-1}\left\|\Theta\right\|_{L^p}
		  	\notin L^1([t-1,t]).
\end{align*}
Therefore, we can only estimate
\begin{align*}
\left\|\nabla^2 \int_{t-1}^t e^{-(t-s)\Delta_{L,h}}[\nabla ((g_s^{-1}-h_s^{-1})*\nabla k_s)]ds\right\|_{L^p}
\leq C\sup_{s\in[t-1,t]}\left\| \nabla ((g_s^{-1}-h_s^{-1})*\nabla k_s)\right\|_{W^{1,p}},
\end{align*}
but the right hand side contains third derivatives of $k$ and thus, the iteration argument can not be closed. Instead, we put this part of the integral to the left hand side of the equation as follows: For an initial tensor $k_1$, and a fixed time $t>1$, we solve the equation
\begin{align*}
	\partial_s k+\Delta_{L,h^{(i)}_{\infty}}k=0,\qquad \text{ for }s\in [1,\max\left\{t-1,1\right\}]
\end{align*}
and afterwards the equation
\begin{align*}
\partial_t k+\Delta_{L,g^{(i)},h^{(i)}}k=0	,\qquad \text{ for }s\in [\max\left\{t-1,1\right\},t].
\end{align*}
Here, $\Delta_{L,g^{(i)},h^{(i)}}$ is a slightly modified Lichnerowicz Laplacian which captures exactly the critical terms just discussed.
It turns out that the associated evolution operator admits the same mapping properties as the Lichnerwicz Laplacian. 
For large $t$, the short-time estimates for $s\in [t-1,t]$ do not destroy the decay rates generated by the Lichnerowicz Laplacian for $s\in [1,t-1]$.
 \subsection{Structure of the paper}
 In Section \ref{sec : Ricci-flat metrics}, we describe the space of gauged Ricci-flat metrics $\F$ in detail, study the asymptotics of its elements and derive sharp estimates for projection maps defined by elements in $\F$. In Section \ref{sec : short-time estimates}, we derive novel Shi-type estimates for $L^p$-norms under parabolic equations. In Section \ref{sec : RdT mixed evolution}, a suitable integral expression for solutions of the Ricci-de Turck flow is derived. 
Section \ref{sec : iteration map} is the technical core of the paper, in which we study the precise mapping properties of the iteration map which comes from the aforementioned integral expression. 
These estimates are used to establish the Ricci-de Turck flow as a fixed point of this map in Section \ref{sec : long-time existence convergence} and to conclude the main Theorem \ref{thm : mainthm 1 introduction}. 
We conclude by proving the remaining theorems of the introduction at the end of Section \ref{sec : long-time existence convergence}.
 
\vspace{3mm}
\noindent\textbf{Acknowledgments.}
Part of this work was carried out while the authors were visiting the Institut Mittag-Leffler during the program \emph{General Relativity, Geometry and Analysis} in Fall 2019, supported by the Swedish Research Council under grant no.\ 2016-06596.
We wish to thank the institute for their hospitality and for the excellent working conditions provided. We furthermore want to thank the referees for careful reading and many helpful comments which helped to improve the paper.
The work of the authors is supported by the German Research Foundation through the grant KR 4978/1-1 in the framework of the priority program 2026: \textit{Geometry at infinity}.
\section{The space of gauged Ricci-flat metrics}\label{sec : Ricci-flat metrics}

In order to set up the stability problem, we introduce the space of metrics we are considering and the space of gauged Ricci-flat metrics, which we show convergence to.

\subsection{Asymptotic structure}
Let from now on $\M$ denote the space of smooth Riemannian metrics on $M$.
As explained in the introduction, we will prove \emph{dynamical stability}, i.e.\ that any Ricci flow \emph{starting close to} $\hat{h}$ converges to a Ricci-flat metric \emph{near} $\hat{h}$. 
Due to the diffeomorphism invariance, the space of Ricci-flat metrics near $\hat{h}$ is infinite dimensional within $\M$.
In order to get a finite dimensional space of Ricci-flat metrics near $\hat{h}$, we therefore need to impose the de Turck gauge. 
This corresponds to considering the following set
\[
\F := \{h \in \M \mid 2 \Ric_h = \L_{V(h, \hat{h})}h \},
\]
where $V(h, \hat{h})$ is the de Turck vector field, defined by
\[
\hat{h}(V(h, \hat{h}), \cdot) := - \div_h \hat{h} + \frac12 d (\tr_h \hat{h})
\]
or locally by
\[
V(h,\hat{h})^l=h^{ij}(\Gamma(h)_{ij}^l-\Gamma (\hat{h})_{ij}^l).
\]
In \cite{DK17}*{Section 2}, it was shown that for any Ricci-flat ALE manifold $(M,\hat{h})$ and any $p\in (1,\infty)$, there exists a $L^{[p,\infty]}$-neighbourhood $\U$ such that 
\[
\F\cap\U= \{h \in \U \mid  \Ric_h =0, V(h, \hat{h})=0 \}=\F_{\U}.
\]
In particular, if $\hat{h}$ is integrable, then $\F\cap\U$ is a smooth manifold.

For the analysis performed in this subsection we need weighted Sobolev spaces which are defined as follows: Fix a point $x \in M$ and pick a smooth function such that 
\[
\rho(y) = \sqrt{1 + d(x, y)^2}
\]
for all $y\in M$ outside a compact set, where $d$ is the Riemannian distance.
\begin{definition} \label{def: Weighted ALE}
	Let $V$ be a Riemannian vector bundle over $M$.
	For any ${k \in \N_0}$, ${p \in [1, \infty]}$ and any ${\delta \in \R}$, the weighted Sobolev space $W_\delta^{k,p}(V)$ is the space of $V$-valued sections $u \in W^{k,p}_{loc}(V)$ such that
	\[
	\norm{u}_{W^{k, p}_\delta} := \begin{cases}\sum_{j = 0}^k \left(\int_{M}\abs{(\rho\n)^ju}^p\rho^{-\delta p - n}\dv\right)^{1/p}, & \text{ if }p<\infty,\\
	\sum_{j = 0}^k\mathrm{ess}\sup_M |\rho^{-\delta}(\rho\n)^ju| & \text{ if }p=\infty
	\end{cases}
	\]
	is finite.
	We also use the notation $L_\delta^p := W_\delta^{0,p}$ and $H_\delta^k := W_\delta^{k,2}$. 
\end{definition}
\begin{prop} \label{prop: near metrics}
	Let $(M,\hat{h})$ be an {integrable Ricci-flat ALE manifold with a parallel spinor} and $p\in (1,\infty)$.
	Then, there is an open $L^{[p,\infty]}$-neighbourhood $\U$ of $\hat{h}$, such that if $h \in \U { \cap \F}$, then
	\[
	h - \hat{h} \in \O_{\infty}\left(\rho^{-n}\right).
	\]
\end{prop}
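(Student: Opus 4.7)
The plan is to combine quasilinear elliptic regularity for the gauged Einstein equation with the weighted Fredholm theory of $\Delta_{L,\hat{h}}$ on the ALE end, iteratively improving the decay rate of $k := h-\hat h$ up to the threshold dictated by the indicial roots, and then to use both the de Turck gauge and the sharp kernel decay from \cite{KP2020} to push past the exceptional weight $r^{2-n}$.

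The identity that the linearization of $h\mapsto -2\Ric_h+\L_{V(h,\hat h)}h$ at $\hat h$ equals $\Delta_{L,\hat h}$ (see \cite{DK17}*{Section 2}) turns the condition $h\in\F$ into the quasilinear elliptic equation
\[
\Delta_{L,\hat h}\, k = Q(k,\n k,\n^2 k),
\]
with $Q$ smooth, vanishing to second order at $k=0$, and schematically of the form $\hat h^{-1}*\Rm_{\hat h}*k*k + h^{-1}*h^{-1}*\n k*\n k + \n((h^{-1}-\hat h^{-1})*\n k)$. On a sufficiently small $L^{[p,\infty]}$-neighbourhood $\U$, the coefficients remain uniformly close to those of $\hat h$, and a standard quasilinear bootstrap upgrades $k\in L^{[p,\infty]}$ to $k\in C^\infty$ with uniform bounds on $\n^j k$ for every $j\in\N_0$.

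Next I move to the weighted Sobolev scale $W^{k,p}_\delta$ of Definition \ref{def: Weighted ALE}. Combined with smoothness, $k\in L^p\cap L^\infty$ yields a mild initial decay $k\in\O_\infty(r^{-\sigma_0})$ for some $\sigma_0>0$, and the Bartnik-type Fredholm theory of $\Delta_{L,\hat h}$, as reviewed in \cite{DK17}, gives isomorphism properties of
\[
\Delta_{L,\hat h}\colon W^{k+2,p}_{\delta}\longrightarrow W^{k,p}_{\delta-2}
\]
off the discrete set of exceptional weights (the indicial roots of $\Delta_{L,\hat h}$ on the end, which coincide with those of the Euclidean Lichnerowicz Laplacian on $(\R^n\setminus\{0\})/\Gamma$). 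If $k\in\O_\infty(r^{-\sigma})$ together with all its covariant derivatives, the quadratic structure of $Q$ forces $Q\in\O_\infty(r^{-2\sigma-2})$, and weighted elliptic regularity improves $k$ to $\O_\infty(r^{-\sigma'})$ for any $\sigma'$ less than both $2\sigma+2$ and the next indicial root above $\sigma$. Iterating pushes $\sigma$ monotonically upward through all non-exceptional weights.

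The principal obstacle is the indicial root at $2-n$: by itself, the iteration above would only yield $k\in\O_\infty(r^{2-n})$. To cross it I exploit two extra inputs. By integrability and the implicit function theorem, every $h\in\F\cap\U$ (for $\U$ small) decomposes as $h = \hat h + k_0 + r(k_0)$ with $k_0\in\ker_{L^{[p,\infty]}}(\Delta_{L,\hat h})$ and $r(k_0)$ quadratically small in $k_0$ and orthogonal (in a suitable sense) to $\ker_{L^2}(\Delta_{L,\hat h})$. The sharp kernel bound $\ker_{L^2}(\Delta_{L,\hat h})\subset\O_\infty(r^{-n})$ from \cite{KP2020}, which is precisely where the parallel spinor assumption enters, immediately gives $k_0\in\O_\infty(r^{-n})$. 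For the remainder $r(k_0)$, the combination of the de Turck gauge $V(h,\hat h)=0$ with the orthogonality to $\ker_{L^2}(\Delta_{L,\hat h})$ rules out the $r^{2-n}$-asymptotic modes, which morally correspond to mass and to infinitesimal diffeomorphisms; the weighted bootstrap for $r(k_0)$ then crosses the $2-n$ threshold and terminates at the next indicial root $-n$, giving $h-\hat h\in\O_\infty(r^{-n})$. Verifying that the gauge condition genuinely eliminates the obstructing modes is the delicate analytic content of the proposition and is where I expect the most work.
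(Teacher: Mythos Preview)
Your approach and the paper's diverge at the key step. The paper does \emph{not} bootstrap the nonlinear equation for $k=h-\hat h$ directly. Instead, it reduces to a \emph{linear} problem by working at the tangent space level: for each $h\in\U\cap\F$ one shows $T_h\F\subset\O_\infty(r^{-n})$, and since $\F$ is a smooth finite-dimensional manifold by integrability, integrating a path in $\F$ from $\hat h$ to $h$ then gives $h-\hat h\in\O_\infty(r^{-n})$ immediately. The tangent space claim (Proposition~\ref{prop : kernel properties}~(ii)) is proved by a short gauge argument: any $k\in T_h\F$ satisfies $\Delta_{L,h}k=-\L_{\langle k,\Gamma(h)-\Gamma(\hat h)\rangle}h$, so solving $\Delta X=\langle k,\Gamma(h)-\Gamma(\hat h)\rangle$ in weighted spaces makes $\bar k:=k+\L_Xh\in\ker_{L^2}(\Delta_{L,h})$. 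Since $\Gamma(h)-\Gamma(\hat h)\in\O_\infty(r^{-n})$ and $k\in\O_\infty(r^{1-n})$ (already known from \cite{DK17}), the right-hand side is $\O_\infty(r^{1-2n})$, giving $X\in\O_\infty(r^{1-n})$ and hence $\L_Xh\in\O_\infty(r^{-n})$; together with $\bar k\in\O_\infty(r^{-n})$ from \cite{KP2020} this yields $k\in\O_\infty(r^{-n})$. No indicial-root crossing for a nonlinear remainder is ever needed.

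Your route may be salvageable, but two issues stand out. First, the step you yourself flag as the hard one---showing that the de Turck gauge plus orthogonality to $\ker_{L^2}(\Delta_{L,\hat h})$ kill the obstructing asymptotic modes for the remainder $r(k_0)$---is essentially the entire content of the proposition, and you have not carried it out. Second, your identification of the obstruction at $r^{2-n}$ is inconsistent with what is already known: \cite{DK17} proves $h-\hat h\in\O_\infty(r^{1-n})$, so any bootstrap reaches at least $r^{1-n}$, and you would then have to explain how to cross from $r^{1-n}$ to $r^{-n}$ (your sentence ``terminates at the next indicial root $-n$'' silently skips $1-n$). The paper's linearized approach sidesteps this indicial analysis entirely by pushing the parallel-spinor input from \cite{KP2020} directly into $\ker_{L^2}(\Delta_{L,h})$ rather than into a weighted cokernel argument.
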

Here and henceforth the notation $a \in \O_{\infty}\left(r^\a\right)$ means that for each $k \in \N_0$, there are constants $C_k > 0$ such that $\abs{\n^k a} \leq C_k \rho^{\a - k}$ for large enough $\rho$.
Proposition \ref{prop: near metrics} is an improvement of Theorem 2.7 and Remark 2.8 in \cite{DK17}, where it was proven that
\[
h - \hat{h} \in \O_\infty \left(\rho^{1-n}\right).
\]
The tool to improve this result is a result of our companion paper \cite{KP2020}, which says that {under the assumption of a parallel spinor}, elements in the $L^2$-kernel of $\Delta_L$ decay as $\rho^{-n}$, i.e.\
\begin{align}\label{eq : decay kernel}
	\ker_{L^2}\left(\Delta_L\right) 
		\subset \O_\infty\left(\rho^{-n}\right).
\end{align}
By linearizing the defining equation in $\F$, we note that any $k\in T_h\mathcal{F}$ satisfies
\begin{align*}
0
	&= \Delta_{L,h}k+\mathcal{L}_{\langle k,\Gamma(h)-\Gamma(\hat{h})\rangle}h, \text{ where}\\ 
	\langle k,\Gamma(h)-\Gamma(\hat{h})\rangle^m
	&:=h^{iq}h^{jl}k_{ij}(\Gamma(h)_{ql}^m-\Gamma(\hat{h})_{ql}^m)
\end{align*}
and from the proof of  \cite{DK17}*{Thm.\ 2.7}, it follows that
\begin{equation} \label{eq: tangent space decay}
	T_h\mathcal{F} \subset \O_\infty\left(\rho^{1-n}\right).
\end{equation}
The key to prove Proposition \ref{prop: near metrics} is to first improve \eqref{eq: tangent space decay} as follows:
\begin{prop}\label{prop : kernel properties}
	Let $(M,\hat{h})$ be an {integrable Ricci-flat ALE manifold with a parallel spinor}. 
	Then there is a small $L^{[p,\infty]}$-neighbourhood $\mathcal{U}$ with the following properties:
	\begin{itemize}
		\item[(i)] $\dim\ker_{L^2}(\Delta_L)$ is constant for all $h\in \mathcal{U}\cap\mathcal{F}$. In particular, we can choose for each $h\in \mathcal{U}\cap\mathcal{F}$ a set of tensors $\left\{e_1(h),\ldots e_m(h)\right\}$ smoothly depending on $h$ that forms an $L^2(h)$-orthonormal basis of $\ker_{L^2}(\Delta_L)$.
		\item[(ii)] For all $h\in \mathcal{U}\cap\mathcal{F}$, we have
		\[
		T_h\mathcal{F} \subset \O_\infty\left(\rho^{-n}\right).
		\]
	\end{itemize}
\end{prop}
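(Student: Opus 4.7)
My plan is to establish (i) via a semi-continuity-plus-gauge-change argument exploiting the integrability hypothesis, and then to deduce (ii) by bootstrapping the existing decay $T_h\mathcal{F}\subset\O_\infty(r^{1-n})$ from \cite{DK17} through the defining equation of $T_h\mathcal{F}$, using \eqref{eq : decay kernel} as the decisive input at the end.

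For (i), upper semi-continuity of $\dim\ker_{L^2}(\Delta_{L,h})$ at $\hat h$ follows from standard Fredholm perturbation theory on suitable weighted Sobolev spaces, on which $\Delta_{L,h}$ is Fredholm for $h$ close to $\hat h$ and where the $L^2$-kernel coincides with the weighted one thanks to \eqref{eq : decay kernel}. For the matching lower bound I would combine two facts: (a) by integrability of $\hat h$, the space $\mathcal{F}$ is a smooth manifold of dimension $\dim T_{\hat h}\mathcal{F}=\dim\ker_{L^{[p,\infty]}}(\Delta_{L,\hat h})=\dim\ker_{L^2}(\Delta_{L,\hat h})$ (the last equality via \eqref{eq : decay kernel}); (b) every $h\in\mathcal{F}\cap\mathcal{U}$ is itself a Ricci-flat ALE manifold with parallel spinor, so the same analysis applied at $h$---passing through the Dirac identification \eqref{eq : Dirac} and an implicit-function-theorem change of gauge identifying $\mathcal{F}$ locally near $h$ with the $h$-gauged space $\mathcal{F}_h = \{g\in\M\mid \Ric_g=0,\, V(g,h)=0\}$---yields $\dim T_h\mathcal{F}\le \dim\ker_{L^2}(\Delta_{L,h})$. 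Since $\dim T_h\mathcal{F}$ is constant on the smooth manifold $\mathcal{F}$, this produces the matching lower bound. A smoothly $h$-varying $L^2(h)$-orthonormal basis is then obtained by applying the weighted spectral projector onto the kernel---available because the weight pushes the essential spectrum of $\Delta_{L,h}$ away from $0$---to a fixed basis at $\hat h$, followed by $L^2(h)$-Gram--Schmidt.

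For (ii), the linearized gauged Ricci-flat equation at $h$ reads, as computed in the excerpt,
\[
0 \;=\; \Delta_{L,h}k + \mathcal{L}_{\langle k,\,\Gamma(h)-\Gamma(\hat h)\rangle}\,h.
\]
Inserting $k\in\O_\infty(r^{1-n})$ and $h-\hat h\in\O_\infty(r^{1-n})$ from \cite{DK17} gives $\Gamma(h)-\Gamma(\hat h)\in\O_\infty(r^{-n})$, hence $\langle k,\Gamma(h)-\Gamma(\hat h)\rangle\in\O_\infty(r^{1-2n})$, and the inhomogeneity lies in $\O_\infty(r^{-2n})$---strictly faster than the slowest $L^2$-kernel rate $r^{-n}$ guaranteed by \eqref{eq : decay kernel}. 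Weighted Fredholm theory for $\Delta_{L,h}$ on the ALE end now produces a decomposition $k=k_{\ker}+k_{\mathrm{rem}}$ with $k_{\ker}\in\ker_{L^2}(\Delta_{L,h})\subset\O_\infty(r^{-n})$ (via \eqref{eq : decay kernel} applied to $h$, which is permissible since $h$ is a Ricci-flat ALE manifold with parallel spinor) and $k_{\mathrm{rem}}$ inheriting strictly better decay from the right-hand side. Both summands lie in $\O_\infty(r^{-n})$, which is the desired improvement.

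The main obstacle is the indicial-root analysis for $\Delta_{L,h}$ at infinity. The rate $r^{1-n}$ is an \emph{admissible} indicial root (homogeneous harmonic symmetric 2-tensors on $\R^n/\Gamma$ with that decay do exist), so a priori one cannot exclude a non-trivial $r^{1-n}$ leading term in $k$. The decisive observation is that after subtracting the fast-decaying particular solution, any residual $r^{1-n}$ tail would represent an $L^2$-harmonic symmetric 2-tensor for $\Delta_{L,h}$ with $r^{1-n}$ leading behavior, contradicting \eqref{eq : decay kernel} applied to $h$. This is precisely where the parallel spinor hypothesis is indispensable: it permits \eqref{eq : decay kernel} to be transferred from $\hat h$ to every $h\in\mathcal{F}\cap\mathcal{U}$, ruling out the borderline mode.
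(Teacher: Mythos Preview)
Your strategy for both parts is close to the paper's in spirit, but the execution diverges in a way that leaves a genuine gap in (ii).

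For (i), the paper also combines upper semicontinuity with a lower bound coming from integrability, but instead of a gauge-change argument it constructs an \emph{explicit} injection $T_h\F\to\ker_{L^2}(\Delta_{L,h})$: given $k\in T_h\F$, one solves the vector-field equation $\Delta X=\langle k,\Gamma(h)-\Gamma(\hat h)\rangle$ in $H^i_\delta$ for $\delta\in(1-n,1)$ and sets $\bar k=k+\L_Xh$. Since $\Delta_{L,h}(\L_Xh)=\L_{\Delta X}h$ on a Ricci-flat background, this $\bar k$ is $\Delta_{L,h}$-harmonic and lies in $L^2$. Your gauge-change route is plausible but rests on a slice-theorem identification of $\F$ with the $h$-gauged moduli space that you only sketch.

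For (ii), your plan---find a fast-decaying particular solution $k_p$ with $\Delta_{L,h}k_p=-\L_{\langle k,\Gamma(h)-\Gamma(\hat h)\rangle}h$, subtract, and apply \eqref{eq : decay kernel} to the harmonic $L^2$ remainder---is exactly what the paper carries out, but you do not actually produce $k_p$. This is a real gap: $\Delta_{L,h}$ on symmetric $2$-tensors has nontrivial cokernel on $H^i_\delta\to H^{i-2}_{\delta-2}$ for $\delta$ near $-n$ (for instance the metric $h$ itself is a bounded $\Delta_{L,h}$-harmonic $2$-tensor, hence obstructs surjectivity), so existence of $k_p$ with decay $\O_{\infty}(r^{-n})$ is not automatic from weighted Fredholm theory alone. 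The paper's key observation is that the inhomogeneity is itself a Lie derivative, so one may take $k_p=-\L_Xh$ with the \emph{same} $X$ constructed in (i). This shifts the problem to the Poisson equation $\Delta X=\langle k,\Gamma(h)-\Gamma(\hat h)\rangle$ on vector fields, where the connection Laplacian \emph{is} an isomorphism $H^i_\delta\to H^{i-2}_{\delta-2}$ for nonexceptional $\delta\in(1-n,1)$: bounded harmonic vector fields are parallel by the maximum principle, and parallel vector fields are excluded on Ricci-flat ALE manifolds by Cheeger--Gromoll. One then shows $X\in\O_{\infty}(r^{1-n})$, hence $\L_Xh\in\O_{\infty}(r^{-n})$, and $k=\bar k-\L_Xh\in\O_{\infty}(r^{-n})$. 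Your final paragraph correctly flags $r^{1-n}$ as the critical indicial rate, but the resolution is not a contradiction argument on asymptotics---it is to bypass the $2$-tensor Fredholm problem entirely by passing to vector fields.
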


The proof relies on the following standard fact, which we prove for convenience of the reader:

\begin{lemma} \label{le: Fredholm theory}
For given Banach spaces $X, Y$, assume that $A: X \to Y$ is a Fredholm operator.
Then there is an $\e > 0$, such that if $B: X \to Y$ is a bounded operator with $\norm{B}_{\mathrm{op}} < \e$, then
\[
	A + B : X \to Y
\]
is a Fredholm operator and $\dim\left(\ker\left(A + B\right)\right) \leq \dim(\ker(A))$.
\end{lemma}
\begin{proof}[Proof of Lemma \ref{le: Fredholm theory}]
Let us first assume that $A$ is invertible, i.e.~has a bounded inverse $A^{-1}: Y \to X$.
In that case, formally,
\begin{align*}
	\left(A + B\right)^{-1}
		&= \left((1 - (-BA^{-1})A )\right)^{-1} \\
		&= A^{-1} \left(1 - (-BA^{-1})\right)^{-1} \\
		&= A^{-1} \sum_{j = 0}^\infty (-BA^{-1})^j,
\end{align*}
which converges if $\norm{B}_{\mathrm{op}} < \norm{A^{-1}}_{\mathrm{op}}^{-1}$.

Assume now that $A$ is a Fredholm operator and write $X = X^c \oplus \ker(A)$ and $Y = \im(A) \oplus Y^c$, where $X^c$ and $Y^c$ denote closed complements.
We get induced continuous maps
\begin{align*}
	\pr : X 
		&\to \ker(A), \\
	\iota : Y^c
		&\to Y.
\end{align*}
We may thus define the invertible operator
\[
	\hat A: X \oplus Y^c \to Y \oplus \ker(A)
\]
by $\hat A(u, v) = (A u + \iota(v), \pr(u))$.
Given $B: X \to Y$, define now 
\[
	\hat B: X \oplus Y^c \to Y \oplus \ker(A),
\]
by $\hat B(u, v) := (Bu, 0)$.
By what we showed above, $\hat A + \hat B$ is invertible if $\norm{B}_{\mathrm{op}}$ is small enough.
Moreover, note that
\[
	\left( \hat A + \hat B \right) (u, v)
		= \left( (A + B)(u) + \iota(v), \pr(u) \right).
\]
Because $\hat A + \hat B $ is surjective and $Y^c$ is finite-dimensional, it follows that $\im(A+B)\subset Y$ has finite codimension.
It also follows that the map
\begin{align*}
	X
		&\to Y \oplus \ker(A) \\
	u
		&\mapsto \left( (A + B)(u), \pr(u) \right)
\end{align*}
is injective.
In particular, the restriction of this map to the kernel of $A + B$ maps injectively to $\ker(A) = \im(\pr)$.
We therefore conclude that
\[
	\dim \left( \ker\left(A + B \right) \right)
		\leq \dim\left( \im(\pr) \right) 
		= \dim \left( \ker\left(A \right) \right).
\]
This completes the proof of the lemma.
\end{proof}

\begin{proof}[Proof of Proposition \ref{prop: near metrics}]
Let us start with the proof of (i):
	Since $\hat{h}$ is integrable, $\mathcal{U}\cap\mathcal{F}$ is a smooth manifold and all tangent spaces $T_{h}\mathcal{F}$ have the same dimension for $h\in \mathcal{U}\cap\mathcal{F}$. 
	We first construct an injection $\iota:T_{h}\mathcal{F}\to \ker_{L^2}(\Delta_{L,h})$. 
	Let $k\in T_h\mathcal{F}$. 
	Then $k$ satisfies
	\begin{align*}
		0
			=\Delta_{L,h}k+\mathcal{L}_{\langle k,\Gamma(h)-\Gamma(\hat{h})\rangle}h.
	\end{align*}
	We now add a gauge term to $k$ to get an element in $\ker_{L^2}(\Delta_{L,h})$. 
	More precisely, let $X$ be a vector field and consider the tensor $\overline{k}=k+\mathcal{L}_Xh$. 
	Then we have, due to standard commutation formulas for operators on Ricci-flat metrics that
	\begin{align*}
		\Delta_{L,h}\overline{k}
			=\Delta_{L,h}(\mathcal{L}_Xh)-\mathcal{L}_{\langle k,\Gamma(h)-\Gamma(\hat{h})\rangle}h=\mathcal{L}_{\Delta X}h-\mathcal{L}_{\langle k,\Gamma(h)-\Gamma(\hat{h})\rangle}h.
	\end{align*}
	Thus, it suffices to solve the equation 
	\begin{align*}
		\Delta X
			=\langle k,\Gamma(h)-\Gamma(\hat{h})\rangle
	\end{align*}
	in a suitable function space.
	Observe that $\Gamma(h)-\Gamma(\hat{h})\in H^i_{\delta_1}$ for $\delta_1>-n$ and $k\in H^i_{\delta_2}$ for $\delta_2>1-n$ (for any $i\in\N_0$ in both cases). 
	Therefore, $\langle k,\Gamma(h)-\Gamma(\hat{h})\rangle\in H^i_{\delta}$ for $\delta>1-2n$ and $i>\frac{n}{2}+1$ due to the estimate
	\begin{align*}
		\left\|\langle k,\Gamma(h)-\Gamma(\hat{h})\rangle\right\|_{H^i_{\delta_1+\delta_2}}
			\leq C\left\|\Gamma(h)-\Gamma(\hat{h})\right\|_{H^i_{\delta_1}}\left\|k\right\|_{H^i_{\delta_2}}.
	\end{align*}
	Now the connection Laplacian $\Delta :H^i_{\delta}(TM)\to H^{i-2}_{\delta-2}(TM)$ is a Fredholm operator for the nonexceptional values $\delta\in \R\setminus \left( \left\{0,1,2,\ldots\right\}\cup\left\{2-n,1-n,-n\ldots\right\} \right)$ (see e.g.\ \cite{pacini}*{Corollary 9.4}). 
	Due to the identity $\Delta |X|^2=-2|\nabla X|^2$ for harmonic vector fields, the maximum principle implies that every bounded harmonic vector field is parallel. 
	On the other hand, since $(M,g)$ is ALE and Ricci-flat, it cannot contain parallel vector fields, due to the Cheeger-Gromoll splitting theorem.
	Therefore, we even have  $\ker(\Delta)\cap H^i_{\delta}(TM)=\left\{0\right\}$ for $\delta<1$.
	As a consequence, duality arguments (see e.g.\ \cite{pacini}*{Section 10}) imply that $\Delta:H^i_{\delta}(TM)\to H^{i-2}_{\delta-2}(TM)$ is an isomorphism for all $\delta\in(1-n,1)\setminus \left\{2-n,0\right\}$. 
	Therefore, we find for each $\delta\in (1-n,1)$ and $i> \frac{n}{2}+1$ a unique solution $X\in H^{i+2}_{\delta}(TM)$ of the equation  
	\begin{align*}
		\Delta X=\langle k,\Gamma(h)-\Gamma(\hat{h})\rangle.
	\end{align*}
	Moreover, the vector field $X$ is the same for all possible choices for $i$ and $\delta$.
	
	Now, $\Delta_L\overline{k}=0$ and $\overline{k}\in \ker_{L^2}(\Delta_L)$ because $\mathcal{L}_Xh\in H^{i-1}_{\delta-1}(S^2M)\subset H^{i-1}_{-\frac{n}{2}}(S^2M) \subset L^2(S^2M)$. 
	Therefore we can define the desired map by $\iota: k\mapsto k+\mathcal{L}_Xh$ where $X\in  H^i_{1-\frac{n}{2}}(TM)$ is defined as the unique solution of the above equation. 
	This map is injective because
	\begin{align*}
		\left\|\mathcal{L}_Xh\right\|_{H^{i-1}_{-\frac{n}{2}}}
			\leq C\left\|X\right\|_{H^{i}_{1-\frac{n}{2}}}
			\leq C\left\|\Delta X\right\|_{H^{i-2}_{-1-\frac{n}{2}}}
			\leq C\left\|\Gamma(h)-\Gamma(\hat{h})\right\|_{H^{i-2}_{-1}}\left\|k\right\|_{H^{i-1}_{-\frac{n}{2}}}
	\end{align*}
	and therefore,
	\begin{align*}
		\left\| i(k)\right\|_{H^{i-1}_{-\frac{n}{2}}}\geq \left\|k\right\|_{H^{i-1}_{-\frac{n}{2}}}-\left\|\mathcal{L}_Xh\right\|_{H^{i-1}_{-\frac{n}{2}}}\geq \frac{1}{2}\left\|k\right\|_{H^{i-1}_{-\frac{n}{2}}},
	\end{align*} 
	provided that the neighbourhood $\mathcal{U}$ is chosen small enough.
	We get
	\begin{align*}
		\dim\ker_{L^2}(\Delta_{L,h})\geq \dim  T_h\mathcal{F}=\dim T_{\hat{h}}\mathcal{F}=\dim\ker_{L^2}(\Delta_{L,\hat{h}}).
	\end{align*}
	Because of \eqref{eq : decay kernel},
	$\dim\ker_{L^2}(\Delta_{L,h}) = \dim \left( \ker(\Delta_{L,h})\cap H^i_{\delta}(S^2M) \right)$ for all $i\in\N_{0}$ and $\delta\in (-n,0)$. Choose $\delta$ nonexceptional.
	By Lemma \ref{le: Fredholm theory},
	\begin{align*}
		\dim\ker_{L^2}(\Delta_{L,h})=\dim\ker_{H^i_{\delta}}(\Delta_{L,h})\leq \dim\ker_{H^i_{\delta}}(\Delta_{L, {\hat{h}}})=\dim\ker_{L^2}(\Delta_{L,{\hat{h}}})
	\end{align*}
	as well. We conclude (i).
	
	For the proof of (ii), let $k,X$ and $\overline{k}$ as above. 
	By \eqref{eq : decay kernel} and \eqref{eq: tangent space decay}, we know that
	\begin{align*}
		k\in T_h\mathcal{F} \subset\O_\infty\left(\rho^{1-n}\right), k+\mathcal{L}_Xh=\overline{k} \in \ker_{L^2}(\Delta_{L})\subset \O_\infty\left(\rho^{-n}\right).
	\end{align*}
	However, we also have seen that $X\in H^{i}_{\delta}(TM)$ for all $\delta>1-n$ and $i>\frac{n}{2}+3$ so that Sobolev embedding implies 
	\begin{align*}
		X\in \O_\infty\left(\rho^{\delta}\right)
	\end{align*}
	for all $\delta>1-n$. Standard arguments (cf. \cite{KP2020}*{Proposition 4.3}), using the equation
	\begin{align*}
		\Delta X=\langle k,\Gamma(h)-\Gamma(\hat{h})\rangle\in \O_\infty\left(\rho^{1-2n}\right)
	\end{align*}
	show that we actually have
	\begin{align*}
		X\in \O_\infty\left(\rho^{1-n}\right),
	\end{align*}
	hence
	\begin{align*}
		\mathcal{L}_Xh\in \O_\infty\left(\rho^{-n}\right).
	\end{align*}
	Therefore, $k\in T_h\mathcal{F}$ satisfies
	\begin{align*}
		k=\overline{k}-\mathcal{L}_Xh\in \O_\infty\left(\rho^{-n}\right),
	\end{align*}
	which finishes the proof.
\end{proof}

\begin{proof}[Proof of Proposition \ref{prop: near metrics}]
	This is now an immediate consequence of Proposition \ref{prop : kernel properties} (ii).
\end{proof}

\begin{remark}
Due to \eqref{eq : decay kernel}, $\ker_{L^2}(\Delta_{L, \hat{h}})\subset L^{[p,\infty]}$ for every $p\in (1,\infty)$. 
Therefore
\begin{equation} \label{eq: Lp infinity decomposition}
	\ker_{L^2}(\Delta_{L, \hat{h}}) 
		\oplus \left(\ker_{L^2}(\Delta_{L, \hat{h}})^{\perp_{\hat h}} \cap L^{[p,\infty]}\right)
		\cong L^{[p,\infty]}(S^2M).
\end{equation}
By the identification $T_h \F \cong \ker_{L^2}(\Delta_{L, h})$, we also have
\begin{equation} \label{eq: Lp infinity decomposition 2}
	T_h \F 
		\oplus \left(\ker_{L^2}(\Delta_{L, \hat{h}})^{\perp_{\hat h}} \cap L^{[p,\infty]}\right)
		\cong L^{[p,\infty]}(S^2M).
\end{equation}
\end{remark}

\subsection{A projection map onto \texorpdfstring{$\F$}{F}}\label{subsec: proj map}
Throughout the rest of this section, let $(M,\hat{h})$ be an integrable Ricci-flat ALE manifold with a parallel spinor. 
In the previous subsection, we developed some understanding of the space $\F$ of gauged Ricci-flat metrics.
We would like to construct a smooth map
\begin{align*}
	\Phi: \U \subset \M \to \F
\end{align*}
on some open neighbourhood $\U$ of $\hat{h}$, provided that $\F$ is a smooth manifold. The projection should have the property that
\begin{align*}
	g-\Phi(g)\in \ker_{L^2}(\Delta_{L, \Phi(g)})^{\perp_{\Phi(g)}},
\end{align*}
where $\perp_{\Phi(g)}$ denotes the $L^2$-orthogonal complement with respect to $\Phi(g)$.
The map we want to define is illustrated schematically in Figure \ref{fig: tangent spaces in F}.
\begin{figure*}
  \begin{center}
    \includegraphics[scale = 2]{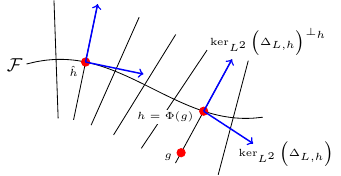}
  \end{center}
  \caption{
	Since $\F$ is defined with respect to $\hat h$, the tangent space of $\F$ is the $L^2$-kernel of the linearized operator precisely at $\hat h$.
  	}
  	\label{fig: tangent spaces in F}
\end{figure*}
The construction goes as follows.
First define the map 
\begin{align*}
	\Psi^{\hat{h}}: \F \times \ker_{L^2}(\Delta_{L, \hat{h}})^{\perp_{\hat{h}}}
	&\to \M, \\
	(h, k)
	&\mapsto h + k - \sum_{j = 1}^m (k, e_j(h))_{L^2(h)} e_j(h),
\end{align*}
where $\left(e_j(h)\right)_{j = 1}^m$ is an $L^2(h)$-orthonormal basis of $\ker_{L^2}(\Delta_{L, h})$ (depending smoothly on $h$).
\begin{remark} \label{rmk: Psi hat h computations}
	Note that $\Psi^{\hat{h}}(h, 0) = h$, for all $h \in \F$, and that
	\[
	\Psi^{\hat{h}}(h, k) \in h + \ker_{L^2}\left(\Delta_{L, h}\right)^{\perp_h}.
	\]
\end{remark}

In fact, we have the following lemma:
\begin{lemma} \label{le: Banach space isomor}
For every $p\in (1,\infty)$, there is an open $L^{[p,\infty]}$-neighbourhood $\V$ of $(\hat{h}, 0)$ such that
	\[
	\Psi^{\hat{h}}|_{\V}: \V \to \M
	\]
	is a diffeomorphism of Banach manifolds onto its image.
\end{lemma}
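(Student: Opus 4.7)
The plan is a direct application of the inverse function theorem for smooth maps between Banach manifolds. The domain $\ker_{L^2}(\Delta_{L,\hat{h}})^{\perp_{\hat{h}}}\times\F$ is, locally near $(0,\hat{h})$, the product of the Banach space $\ker_{L^2}(\Delta_{L,\hat{h}})^{\perp_{\hat{h}}}\subset L^{[p,\infty]}(S^2M)$ with the finite-dimensional manifold $\F$, and the target $\M$ is modeled on the Banach space $L^{[p,\infty]}(S^2M)$.

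First I would verify that $\Psi$ is smooth. Since the map $h\mapsto e_j(h)$ is smooth by Proposition \ref{prop : kernel properties}(i), and since each $e_j(h)\in\O_\infty(r^{-n})$ by \eqref{eq : decay kernel}, the coefficient $\langle k,e_j(h)\rangle_{L^2(h)}$ is a smooth bounded bilinear pairing on $L^{[p,\infty]}(S^2M)\times \U$: the rapid decay of $e_j(h)$ makes the pairing well-defined even though $k$ is only $L^{[p,\infty]}$, and $h$-dependence enters smoothly via the volume form and the inner product.

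Second I would compute $D\Psi(0,\hat{h})$. For a tangent vector $(k,\ell)\in \ker_{L^2}(\Delta_{L,\hat{h}})^{\perp_{\hat{h}}}\times T_{\hat{h}}\F$, differentiating the two slots separately at $(0,\hat{h})$ gives
\begin{align*}
D\Psi(0,\hat{h})(k,\ell)
= \ell + k - \sum_{j=1}^m \langle k,e_j(\hat{h})\rangle_{L^2(\hat{h})}\,e_j(\hat{h})
= k+\ell,
\end{align*}
where the sum vanishes because $k\perp_{L^2(\hat{h})}\ker_{L^2}(\Delta_{L,\hat{h}})$ and the derivative of the coefficient in the second slot is multiplied by $k=0$.

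Third I would argue that $(k,\ell)\mapsto k+\ell$ is a bounded linear isomorphism
\begin{align*}
\ker_{L^2}(\Delta_{L,\hat{h}})^{\perp_{\hat{h}}}\oplus T_{\hat{h}}\F\longrightarrow L^{[p,\infty]}(S^2M).
\end{align*}
By integrability, $T_{\hat{h}}\F = \ker_{L^{[p,\infty]}}(\Delta_{L,\hat{h}})$, which by \eqref{eq : decay kernel} coincides with $\ker_{L^2}(\Delta_{L,\hat{h}})$. The $L^2(\hat{h})$-projection $P\eta := \sum_j \langle \eta,e_j(\hat{h})\rangle_{L^2(\hat{h})}\,e_j(\hat{h})$ is well-defined on $L^{[p,\infty]}$ and bounded, again because each $e_j(\hat{h})$ decays like $r^{-n}$ so that pairing against it is a bounded linear functional on $L^{[p,\infty]}$; hence $\eta = (\eta - P\eta) + P\eta$ exhibits the inverse. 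Applying the inverse function theorem for Banach manifolds yields a neighbourhood $\V$ of $(0,\hat{h})$ on which $\Psi$ is a diffeomorphism onto its image. The only subtlety is the boundedness of $P$ on $L^{[p,\infty]}$ and the smoothness of the $h$-dependent coefficients, both of which hinge on the improved decay of kernel elements established in Proposition \ref{prop : kernel properties}; no further obstacle arises.
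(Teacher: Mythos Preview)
Your proposal is correct and follows essentially the same approach as the paper: compute $D\Psi(0,\hat{h})(k,\ell)=k+\ell$, identify $T_{\hat{h}}\F=\ker_{L^2}(\Delta_{L,\hat{h}})$ via integrability and the decay \eqref{eq : decay kernel}, observe that the resulting direct sum decomposition of $L^{[p,\infty]}(S^2M)$ makes the differential the identity, and apply the inverse function theorem. Your added remarks on smoothness and the explicit inverse via the $L^2$-projection are helpful elaborations but not different in substance.
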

\begin{proof}
	Given an $\tilde h \in \ker_{L^2}(\Delta_{L, \hat{h}})$,
the first assertion in Remark \ref{rmk: Psi hat h computations} implies that 
	\[
		D \Psi^{\hat h}_{(\hat h, 0)}(\tilde h, 0)
			= \tilde h,
	\]
	where we have identified
	\[
		T_{\hat{h}}\F = \ker(\Delta_{L, \hat{h}}).
	\]
	Moreover, given a $\tilde k \in (\ker_{L^2}(\Delta_{L, \hat{h}})^{\perp_{\hat h}} \cap L^{[p,\infty]})$, note that
	\begin{align*}
		D \Psi^{\hat h}_{(\hat h, 0)}(0, \tilde k)
			= \tilde k - \sum_{j = 1}^m (\tilde k, e_j(\hat h))_{L^2(\hat h)} e_j(\hat h)
			= \tilde k.
	\end{align*}
	Hence $D\Psi^{\hat{h}}_{(\hat{h}, 0)}=\id$ with respect to the decomposition \eqref{eq: Lp infinity decomposition}.
	The inverse function theorem now proves the claim.
\end{proof}
\noindent Using the (smooth) projection map
\[
	\pi: 
		\F \times \ker(\Delta_{L, \hat{h}})^{\perp_{\hat h}} 
			\to \F,
\]
and the neighbourhood
\[
\U:=\Psi^{\hat{h}}(\V),
\]
we may use the previous lemma to define the smooth map
\begin{align*}
	\Phi: \U 
	&\to \F \\
	g
	&\mapsto \pi \circ (\Psi^{\hat{h}})^{-1}(g).
\end{align*}
\begin{rem}\label{rem : proj indep ref metric}
Note that we used $\hat{h}$ in the definition of $\Psi^{\hat{h}}$ in order to fix the reference domain $\F \times \ker_{L^2}(\Delta_{L, \hat{h}})^{\perp_{\hat{h}}}$. We could also choose a different reference metric $\tilde{h}\in \mathcal{F}$ and define $\Psi^{\tilde{h}}$ by exactly the same formula as $\Psi^{\hat{h}}$, but now on $\F \times \ker_{L^2}(\Delta_{L, \tilde{h}})^{\perp_{\tilde{h}}}$, and analogously define $\Phi^{\tilde{h}}(g)=\pi \circ (\Psi^{\tilde{h}})^{-1}(g)$.
We claim that $\Phi = \Phi^{\tilde h}$ in their common domain of definition $\U \cap \tilde \U$.
The reason is that there is a geometrically invariant interpretation of $\Phi$.
Lemma \ref{le: Banach space isomor} implies that for each $g \in \U$, there is a unique $h \in \F \cap \U$ and a unique $k$, such that 
\[
	g = h + k,
\]
with $k \in T_h\F^\perp$. 
Indeed, here $h = \Phi(g)$ and $k = g - \Phi(g)$.
The same geometric interpretation is true for $\Phi^{\tilde h}$, so by uniqueness it follows that $\Phi(g) = \Phi^{\tilde h}(g)$ for all $g \in \U \cap \tilde \U$, as claimed.
\end{rem}

We will use $\Phi$ to construct a curve of Ricci-flat metrics $h_t:=\Phi(g_t)$ that `shadow' $g_t$ and satisfy
\[
	k_t
		:=g_t-h_t\perp_{h_t}\ker_{L^2}(\Delta_{L,h_t}).
\]
The goal will then be to show that $k_t$ converges (fast enough) to $0$ and that $h_t$ converges to a limit metric, i.e.\ that the Ricci flow converges to a metric $g_\infty \in \F$.

\subsection{Properties of projection maps}
Let further $p\in (1,\infty)$ and $\U$ be an $L^{[p,\infty]}$-neighbourhood which is so small that the projection map $\Phi:\U\to \U\cap\F$ from the previous subsection is defined. 
Since $h-\hat{h}\in \O_{\infty}(r^{-n})$, the appearing norms and covariant derivatives can be taken with respect to any $h\in\U\cap\F$.

Here, we collect a few properties of projection maps.
For $h \in \U\cap\F$, let $\left\{e_i(h)\right\}_{1\leq i\leq m}$ be an orthonormal basis of $\ker_{L^2}(\Delta_{L,h})$ which depends smoothly on $h$. For $p\in (1,\infty)$, we define the natural projection maps
\begin{align*}
	\Pi^{\parallel}_{h}:L^{[p,\infty]}(S^2M)&\to \ker_{L^2}(\Delta_{L,h}),\qquad
	&k\mapsto \sum_{i=1}^m(k,e_i(h))_{L^2(h)}e_i(h),\\
	\Pi^{\perp}_{h}:L^{[p,\infty]}({S^2M})&\to \ker_{L^2}(\Delta_{L,h})^{\perp}\cap L^{[p,\infty]}(S^2M),\qquad
	&k\mapsto k-\sum_{i=1}^m(k,e_i(h))_{L^2(h)}e_i(h).
\end{align*}
If we choose $\U$ small enough, the matrix
\[
A_{ij}=(e_i(h),e_j(\bar{h}))_{L^2(\bar{h})}
\]
is invertible for every pair $h,\bar{h}\in\U\cap\F$. In other words, the map
\[
\Pi^{\parallel}_{h,\bar{h}}:=\Pi^{\parallel}_{\bar{h}}|_{\ker_{L^2}(\Delta_{L,h})}:
\ker_{L^2}(\Delta_{L,h})\to \ker_{L^2}(\Delta_{L,\bar{h}})
\]
is invertible.
\begin{lem}\label{lem : invert projection}
	If $\Pi^{\parallel}_{\bar{h}, h}$ is invertible, the map
	\[
	\Pi^{\perp}_{h,\bar{h}}:=\Pi^{\perp}_{\bar{h}}|_{\ker_{L^2}(\Delta_{L,h})^{\perp}}:
	\ker_{L^2}(\Delta_{L,h})^{\perp}\cap L^{[p,\infty]}(S^2M)\to \ker_{L^2}(\Delta_{L,\bar{h}})^{\perp}\cap L^{[p,\infty]}(S^2M)
	\]
	is also invertible for every $p\in (1,\infty)$ and its inverse is given by
	\begin{align}\label{eq : invert projection}
		(\Pi^{\perp}_{h,\bar{h}})^{-1}=\id-(\Pi^{\parallel}_{\bar{h},h})^{-1}\circ\Pi^{\parallel}_h.
	\end{align}
\end{lem}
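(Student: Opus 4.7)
The plan is a direct algebraic verification: I will show that the explicit formula on the right-hand side of \eqref{eq : invert projection} defines a two-sided inverse of $\Pi^{\perp}_{h,\bar{h}}$. The underlying reason is simple: by \eqref{eq : decay kernel} together with part (i) of Proposition \ref{prop : kernel properties}, the kernels $\ker_{L^2}(\Delta_{L,h})$ and $\ker_{L^2}(\Delta_{L,\bar{h}})$ are finite-dimensional subspaces of $L^{[p,\infty]}(S^2M)$, so we have the direct sum decomposition
\[
L^{[p,\infty]}(S^2M)=\ker_{L^2}(\Delta_{L,\bar{h}})\oplus\bigl(\ker_{L^2}(\Delta_{L,\bar{h}})^{\perp}\cap L^{[p,\infty]}(S^2M)\bigr),
\]
and analogously for $h$. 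Once this is noted, everything reduces to linear algebra involving the four projections $\Pi^{\parallel}_h,\Pi^{\perp}_h,\Pi^{\parallel}_{\bar{h}},\Pi^{\perp}_{\bar{h}}$ and the isomorphism $\Pi^{\parallel}_{h,\bar{h}}\colon\ker_{L^2}(\Delta_{L,h})\to\ker_{L^2}(\Delta_{L,\bar{h}})$.

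Concretely, define the candidate inverse $S\colon\ker_{L^2}(\Delta_{L,\bar{h}})^{\perp}\cap L^{[p,\infty]}\to L^{[p,\infty]}$ by
\[
S(\bar{k}):=\bar{k}-(\Pi^{\parallel}_{\bar{h},h})^{-1}\bigl(\Pi^{\parallel}_h(\bar{k})\bigr).
\]
I first check that $S(\bar{k})\in\ker_{L^2}(\Delta_{L,h})^{\perp}$, i.e.\ that $\Pi^{\parallel}_h\circ S=0$. Using that $\Pi^{\parallel}_h$ restricted to $\ker_{L^2}(\Delta_{L,\bar{h}})$ is precisely $\Pi^{\parallel}_{\bar{h},h}$, I obtain
\[
\Pi^{\parallel}_h(S(\bar{k}))=\Pi^{\parallel}_h(\bar{k})-\Pi^{\parallel}_{\bar{h},h}\bigl((\Pi^{\parallel}_{\bar{h},h})^{-1}(\Pi^{\parallel}_h(\bar{k}))\bigr)=0,
\]
so $S$ indeed lands in the target space of $(\Pi^{\perp}_{h,\bar{h}})^{-1}$.

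Next I verify that $\Pi^{\perp}_{h,\bar{h}}\circ S=\id$ on $\ker_{L^2}(\Delta_{L,\bar{h}})^{\perp}\cap L^{[p,\infty]}$: applying $\Pi^{\perp}_{\bar{h}}$ to $S(\bar{k})$, the correction term lies in $\ker_{L^2}(\Delta_{L,\bar{h}})$ and is killed, while $\Pi^{\perp}_{\bar{h}}(\bar{k})=\bar{k}$. For the other composition $S\circ\Pi^{\perp}_{h,\bar{h}}=\id$ on $\ker_{L^2}(\Delta_{L,h})^{\perp}\cap L^{[p,\infty]}$, I take $k\in\ker_{L^2}(\Delta_{L,h})^{\perp}$, write $\bar{k}:=\Pi^{\perp}_{\bar{h}}(k)=k-\Pi^{\parallel}_{\bar{h}}(k)$, and compute $\Pi^{\parallel}_h(\bar{k})=-\Pi^{\parallel}_{\bar{h},h}(\Pi^{\parallel}_{\bar{h}}(k))$ using $\Pi^{\parallel}_h(k)=0$; substituting this back into $S(\bar{k})$ yields $S(\bar{k})=\bar{k}+\Pi^{\parallel}_{\bar{h}}(k)=k$.

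There is no serious obstacle here since the argument is a pure linear-algebraic identity; the only thing that must be checked before the calculation is that all maps involved are well-defined on $L^{[p,\infty]}$, which is ensured by the decay estimate $\ker_{L^2}(\Delta_{L,h})\subset\O_\infty(r^{-n})$ from \eqref{eq : decay kernel} (and continuous dependence of $e_i(h)$ in $h$), so that $\Pi^{\parallel}_h$ makes sense on $L^{[p,\infty]}$ for every $p\in(1,\infty)$.
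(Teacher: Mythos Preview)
Your proof is correct and follows essentially the same approach as the paper: define the candidate inverse $S$ and verify directly that it lands in the correct space and inverts $\Pi^{\perp}_{h,\bar{h}}$. In fact your argument is slightly more complete, since you check both compositions $\Pi^{\perp}_{h,\bar{h}}\circ S=\id$ and $S\circ\Pi^{\perp}_{h,\bar{h}}=\id$, whereas the paper only verifies the first one explicitly before concluding.
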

\begin{proof}
	For $\bar{k}\in \ker_{L^2}(\Delta_{L,\bar{h}})^{\perp}\cap L^{[p,\infty]}(S^2M)$, let
	\[
	k:=\bar{k}-(\Pi^{\parallel}_{\bar{h},h})^{-1}\circ(\Pi^{\parallel}_h(\bar{k})).
	\]
	Then, $k\in \ker_{L^2}(\Delta_{L,h})^{\perp}\cap L^{[p,\infty]}(S^2M)$, because
	\[
	\Pi^{\parallel}_{h}(k)=\Pi^{\parallel}_{h}(\bar{k})-\Pi^{\parallel}_{h}\circ(\Pi^{\parallel}_{\bar{h},h})^{-1}\circ(\Pi^{\parallel}_h(\bar{k}))=\Pi^{\parallel}_{h}(\bar{k})-\Pi^{\parallel}_{h}(\bar{k})=0.
	\]
	Moreover,
	\begin{align*}
	\Pi^\perp_{\bar{h}}(k)
		&=k-\Pi^{\parallel}_{\bar{h}}(k)\\
		&=\bar{k}-(\Pi^{\parallel}_{\bar{h},h})^{-1}\circ(\Pi^{\parallel}_h(\bar{k}))-\Pi^{\parallel}_{\bar{h}}[\bar{k}-(\Pi^{\parallel}_{\bar{h},h})^{-1}\circ(\Pi^{\parallel}_h(\bar{k})]\\
		&=\bar{k}-\Pi^{\parallel}_{\bar{h}}(\bar{k})-(\Pi^{\parallel}_{\bar{h},h})^{-1}\circ(\Pi^{\parallel}_h(\bar{k}))+\Pi^{\parallel}_{\bar{h}}\circ(\Pi^{\parallel}_{\bar{h},h})^{-1}\circ\Pi^{\parallel}_h(\bar{k})=\bar{k}.
	\end{align*}
	In the last equation, we used that $\Pi^{\parallel}_{\bar{h}}(\bar{k})=0$ and that $(\Pi^{\parallel}_{\bar{h},h})^{-1}\circ\Pi^{\parallel}_h(\bar{k})\in \ker_{L^2}(\Delta_{L,\bar{h}})$. Hence, $\Pi^{\perp}_{h,\bar{h}}$ is invertible because we constructed its inverse explicitly.
\end{proof}
\begin{rem}\label{rem : invert projection}
	Later, we need to obtain estimates on the difference 	$(\Pi^{\perp}_{h_1,\bar{h}_1})^{-1}-(\Pi^{\perp}_{h_2,\bar{h}_2})^{-1}$, which a priori doesn't make sense as the operators are defined on different spaces. 
	We can however make sense of this difference on all of $L^q(S^2M)$ and $q\in (1,\infty)$ by using the right hand side of \eqref{eq : invert projection} as a definition. 
	For convenience, we don't change the notation for this extension.
\end{rem}
\begin{lem} \label{lem: DPhi vanishes}
	For $g\in\U$, $D_g\Phi$ vanishes on $\ker_{L^2}(\Delta_{L,h})^{\perp}\cap L^{[p,\infty]}(S^2M)$, where $h = \Phi(g)$.
\end{lem}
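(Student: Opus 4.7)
The plan is to exploit the explicit form of $\Psi$ on its natural domain $\V\subset \ker_{L^2}(\Delta_{L,\hat{h}})^{\perp_{\hat{h}}}\times\F$ (extended using the $h$-orthogonal decomposition), together with the fact that near $(0,\hat{h})$ the map $\Psi$ is a local diffeomorphism. The key observation is that for any $h\in\F\cap\U$ and any $k\in\ker_{L^2}(\Delta_{L,h})^\perp\cap L^{[p,\infty]}(S^2M)$, one has $\Pi^\perp_h(k)=k$, hence
\[
\Psi(k,h)=h+\Pi^\perp_h(k)=h+k.
\]
This trivialization along fibers is what allows us to read off $D_g\Phi$ directly, without differentiating the orthonormal basis $\{e_j(h)\}$.

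The proof then proceeds as follows. Fix $g\in\U$ and set $h:=\Phi(g)\in\F\cap\U$ and $k:=g-h$. By construction of $\Phi$, we have $(k,h)=\Psi^{-1}(g)$, so in particular $k\in\ker_{L^2}(\Delta_{L,h})^\perp\cap L^{[p,\infty]}(S^2M)$. Now pick any $v\in\ker_{L^2}(\Delta_{L,h})^\perp\cap L^{[p,\infty]}(S^2M)$ and consider the variation $g_s:=g+sv$ for $|s|$ small. The candidate preimage under $\Psi$ is $(k+sv,h)$: this is admissible because $k+sv\in\ker_{L^2}(\Delta_{L,h})^\perp$ (as a sum of two elements of that subspace), and for $|s|$ small it still lies in the neighbourhood $\V$ where $\Psi$ is a diffeomorphism. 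By the observation above,
\[
\Psi(k+sv,h)=h+\Pi^\perp_h(k+sv)=h+(k+sv)=g_s.
\]
Hence $\Psi^{-1}(g_s)=(k+sv,h)$ and therefore $\Phi(g_s)=\pi(k+sv,h)=h$, which is independent of $s$. Differentiating at $s=0$ gives $D_g\Phi(v)=0$.

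The main (mild) obstacle is a bookkeeping issue: one has to make sure that when $g\neq \hat{h}$, the preimage $(k+sv,h)$ indeed lies in the domain $\V$ on which $\Psi$ is invertible. This is guaranteed by choosing $\U$ (hence $\V$) small enough in $L^{[p,\infty]}$ and restricting to small $s$, using that the $L^{[p,\infty]}$-norm of $k+sv$ depends continuously on $s$ and equals $\|k\|_{L^{[p,\infty]}}$ at $s=0$. No further analytic input is required; all the work is absorbed into the inverse function theorem argument already used to construct $\Phi$.
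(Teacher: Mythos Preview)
Your argument is essentially correct and more direct than the paper's, but there is a genuine domain issue you have glossed over. As defined in Subsection~\ref{subsec: proj map}, $\Psi$ has domain $\ker_{L^2}(\Delta_{L,\hat{h}})^{\perp_{\hat{h}}}\times\F$, and it is on a neighbourhood $\V$ of $(0,\hat{h})$ \emph{in that product} that the inverse function theorem was applied. The tensor $k:=g-h$ lies in $\ker_{L^2}(\Delta_{L,h})^\perp$, which for $h\neq\hat h$ is not $\ker_{L^2}(\Delta_{L,\hat{h}})^\perp$, so your assertion $(k,h)=\Psi^{-1}(g)$ is not literally correct, and $(k+sv,h)$ need not lie in $\V$. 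Your parenthetical ``extended using the $h$-orthogonal decomposition'' does not resolve this: on the larger domain $L^{[p,\infty]}\times\F$ the map $\Psi$ is no longer injective (adding any element of $\ker_{L^2}(\Delta_{L,h})$ to the first slot leaves the image unchanged), so one cannot speak of $\Psi^{-1}$ there. The fix is short: write $\Psi^{-1}(g)=(\tilde k,h)$ with $\tilde k\in\ker_{L^2}(\Delta_{L,\hat{h}})^\perp$, and for $v\in\ker_{L^2}(\Delta_{L,h})^\perp$ set $\tilde v:=(\Pi^\perp_{\hat{h},h})^{-1}(v)\in\ker_{L^2}(\Delta_{L,\hat{h}})^\perp$ via Lemma~\ref{lem : invert projection}. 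Then $(\tilde k+s\tilde v,h)\in\V$ for small $|s|$ and $\Psi(\tilde k+s\tilde v,h)=h+\Pi^\perp_h(\tilde k)+s\,\Pi^\perp_h(\tilde v)=g+sv$, hence $\Phi(g+sv)=h$ and $D_g\Phi(v)=0$ as desired.

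The paper takes a different route: it differentiates $\Psi$ and writes $D_{(h,k)}\Psi$ as a $2\times2$ block matrix relative to the splitting $T_h\F\oplus\bigl(\ker_{L^2}(\Delta_{L,h})^\perp\cap L^{[p,\infty]}\bigr)$; the $(1,2)$-block vanishes (this is precisely the infinitesimal version of your observation that $\Psi(\cdot,h)$ maps into the fibre $h+\ker_{L^2}(\Delta_{L,h})^\perp$), so the matrix is block lower triangular and one reads off the explicit formula~\eqref{eq : differential Phi} for $D_g\Phi$. Your route reaches the vanishing statement faster, but the paper's computation has the side benefit of producing~\eqref{eq : differential Phi}, which is reused in Lemma~\ref{lem : projection lemma} to obtain the $L^q\to L^r$ mapping bounds on $D_g\Phi$.
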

\begin{proof}
	Let $g\in\U$ be fixed and consider a curve $g_t$ in $\U$ with $g_0=g$. 
	We may split $g_t=h_t+k_t$ where $h_t$ is a curve in $\F$ and $k_t\in \ker_{L^2}(\Delta_{L,h_t})^{\perp}$. 
	Let $h=h_0$ and $k=k_0$. 
	By the previous lemma, we can write $k_t=\Pi^{\perp}_{h_t}(\tilde{k}_t)$ with $\tilde{k}_t\in \ker_{L^2}(\Delta_{L,h})^{\perp}$. 
	Differentiating at $t=0$ yields
	\[
	g_0'=h_0'+\Pi^{\perp}_h(\tilde{k}'_t)|_{t=0}+D\Pi_{(h,\tilde{k}_t)}^{\perp}(h_t')|_{t=0}=h_0'+\tilde{k}_0'+D\Pi^\perp_{(h,\tilde{k}_0)}(h_0'),
	\]
	where $D\Pi^{\perp}_{(h,k)}$ is the Fr\'echet derivative of $\Pi^\perp_{(\cdot)}(\cdot):\U\cap\F\times L^{[p,\infty]}(S^2M)\to L^{[p,\infty]}(S^2M)$ in the first component at $(h,k)$. It is a map
	\begin{align*}
		D\Pi_{(h,k)}:T_h\F\to 	L^{[p,\infty]}(S^2M).
	\end{align*}
	With respect to the decomposition \eqref{eq: Lp infinity decomposition 2} and corresponding projection maps
	\begin{align*}
		\mathrm{proj}_{T_h\F}&:L^{[p,\infty]}(S^2M)\to T_{h}\F,\\ \mathrm{proj}_{\ker_{L^2}(\Delta_{L,h})^{\perp}}&:L^{[p,\infty]}(S^2M)\to \ker_{L^2}(\Delta_{L,h})^{\perp}\cap L^{[p,\infty]}(S^2M),
	\end{align*}
	the differential $D_{h,k}\Psi^h$ (where $\Psi^h$ is as in Remark \ref{rem : proj indep ref metric}) reads
	\[
	D_{h,k}\Psi^h
		=\begin{pmatrix}
		\mathrm{id}_{T_h\F}+\mathrm{proj}_{T_h\F}\circ D\Pi^\perp_{(h,k)}(\cdot) & 0\\
		\mathrm{proj}_{\ker_{L^2}(\Delta_{L,h})^{\perp}}\circ D\Pi^\perp_{(h,k)}(\cdot) & \mathrm{id}_{\ker_{L^2}(\Delta_{L,h})^{\perp}}
	\end{pmatrix},
	\]
	where $h=\Phi(g)$ and $k=g-h$. 
	The differential of $\Phi$ is therefore given by
	\begin{align}\label{eq : differential Phi}
		D_g\Phi
			=\mathrm{proj}_{T_h\F}\circ (D_{h,k}\Psi^h)^{-1}=(\mathrm{id}_{T_h\F}+\mathrm{proj}_{T_h\F}\circ D\Pi^\perp_{(h,k)}(.))^{-1}\circ \mathrm{proj}_{T_h\F},
	\end{align}
	where we have used that $(D_{h,k} \Psi^h)^{-1}$ is on the form
	\[
		\left( D_{h,k}\Psi^h\right)^{-1}
		=\begin{pmatrix}
			\left( \mathrm{id}_{T_h\F}+\mathrm{proj}_{T_h\F}\circ D\Pi^\perp_{(h,k)}(\cdot) \right)^{-1} & 0\\
			* & \mathrm{id}_{\ker_{L^2}(\Delta_{L,h})^{\perp}}
	\end{pmatrix}.
	\]
	The assertion is now immediate.
\end{proof}

\begin{lem}\label{lem : projection F}
	If $\U$ was chosen small enough, then for every $h\in\U\cap\F$, the map
	\begin{equation} \label{eq: F h map}
	\Pi_{\F,h}^{\parallel}=\Pi_h^{\parallel}|_{T_h\F}:T_h\F\to \ker_{L^2}(\Delta_{L,h})
	\end{equation}
	is invertible. In this case the projection maps $\mathrm{proj}_{T_h\F}$ and $\mathrm{proj}_{\ker_{L^2}(\Delta_{L,h})^{\perp}}$ are given by
	\begin{align*}
		\mathrm{proj}_{T_h\F}(k)
			&=(\Pi_{\F,h}^{\parallel})^{-1}(\Pi^{\parallel}_{h}(k)),\\
		\mathrm{proj}_{\ker_{L^2}(\Delta_{L,h})^{\perp}}(k)
			&=\Pi^{\perp}_h \left( k-(\Pi^{\parallel}_{\F,h})^{-1}(\Pi^{\parallel}_{h}(k)) \right).
	\end{align*}
\end{lem}
\begin{proof}
	Since the map
	\[
		\Pi_{\F,\hat h}^{\parallel}:T_{\hat h}\F\to \ker_{L^2}(\Delta_{L,\hat h})
	\]
	is the identity and since \eqref{eq: F h map}, by Proposition \ref{prop : kernel properties} (i), is a smooth family of linear maps between finite dimensional vector spaces, it is invertible for $h$ near $\hat h$.
	At first, we clearly have $(\Pi^{\parallel}_{\F,h})^{-1}(\Pi^{\parallel}_{h}(k))\in T_h\F$ and 
	\[
		\Pi^{\perp}_h \left( k-(\Pi^{\parallel}_{\F,h})^{-1}(\Pi^{\parallel}_{h}(k) ) \right)
			\perp \ker_{L^2}(\Delta_{L,h})
	\]
	by construction. It thus remains that they add up to $k$. 
	We have
\begin{align*}
	(\Pi^{\parallel}_{\F,h})^{-1} \left( \Pi^{\parallel}_{h}(k)\right)
		& +\Pi^{\perp}_h \left( k-(\Pi^{\parallel}_{\F,h})^{-1}(\Pi^{\parallel}_{h}(k)) \right) \\
		&= \Pi^{\perp}_h(k) + (\Pi_{\F, h}^\parallel)^{-1} \left( \Pi^{\parallel}_{h}(k)\right) - \Pi^{\perp}_h \left( (\Pi^{\parallel}_{\F,h})^{-1}\left(\Pi^{\parallel}_{h}(k)\right) \right) \\
		&= \Pi^{\perp}_h(k) + \left( \id - \Pi^{\perp}_h \right) (\Pi_{\F, h}^\parallel)^{-1} \left( \Pi^{\parallel}_{h}(k)\right) \\
		&=\Pi^{\perp}_h(k)+\Pi^{\parallel}_h(\Pi^{\parallel}_{\F,h})^{-1}(\Pi^{\parallel}_{h}(k))\\
		&=\Pi^{\perp}_h(k)+\Pi^{\parallel}_h(k)
		=k,
	\end{align*}
	which finishes the proof.
\end{proof}
For the next lemma, we introduce the norm
\begin{align*}
	\left\|k\right\|_{L^{[p,\infty]}}=\left\|k\right\|_{L^p}+\left\|k\right\|_{L^{\infty}}
	\end{align*}
which is the natural norm on $L^{[p,\infty]}=L^p\cap L^{\infty}$.
\begin{lem}\label{lem : projection lemma}
	Let $p \in (1, \infty)$ be fixed as in the beginning of this section and let $g,\bar{g}\in\U$ and $h,\bar{h},h_1,\bar{h}_1,h_2,\bar{h}_2\in \U\cap\F$.
	\begin{itemize}
		\item[(i)] For all $q\in (1,\infty),r\in (1,\infty]$ and $l\in \N_0$, there exists a constant $C=C(q,r,l,\U \cap \F)$ such that
		\[
		\left\|\nabla^l\circ\Pi^{\parallel}_{h}(k)\right\|_{L^r}\leq C\left\|k\right\|_{L^q}.
		\]
		\item[(ii)]	For all $q,r\in (1,\infty)$ and $l\in \N_0$, there exists a constant $C=C(q,r,l,\U \cap \F)$ such that
		\[
		\left\|\nabla^l\circ\Pi^{\perp}_h(k)\right\|_{L^r}
			\leq C(\left\|\nabla^lk\right\|_{L^r}+\left\|k\right\|_{L^q}).
		\]
		\item[(iii)] For all $q\in (1,\infty),r\in (1,\infty]$ and $l\in \N_0$, there exists a constant $C=C(q,r,l,\U)$ such that
		\[
		\left\|\nabla^l\circ D\Phi_g(k)\right\|_{L^r}\leq C\left\|k\right\|_{L^q}.
		\]
		\item[(iv)]	For all $q,r\in (1,\infty)$ and $l\in \N_0$, there exists a constant $C=C(q,r,l,\U \cap \F)$ such that
		\[
		\left\|\nabla^l\circ (\Pi^{\perp}_{h,\bar{h}})^{-1}(k)\right\|_{L^r}
			\leq C(\left\|\nabla^lk\right\|_{L^r}+\left\|k\right\|_{L^q}).
		\]
		\item[(v)] For all $q\in (1,\infty),r\in (1,\infty]$ and $l\in \N_0$, there exists a constant $C=C(q,r,l,\U \cap \F)$ such that
		\[
		\left\|\nabla^l\circ(\Pi^{\perp}_h-\Pi^{\perp}_{\bar{h}})(k)\right\|_{L^r}\leq C\left\|h-\bar{h}\right\|_{L^{p}}\left\|k\right\|_{L^q}.
		\]
		\item[(vi)] For all $q\in (1,\infty),r\in (1,\infty]$ and $l\in \N_0$, there exists a constant $C=C(q,r,l,\U)$ such that
		\[
		\left\|(D\Phi_g-D\Phi_{\bar{g}})(k)\right\|_{L^r}\leq C\left\|g-\bar{g}\right\|_{L^{[p,\infty]}}\left\|k\right\|_{L^q}.
		\]
		\item[(vii)] For all $q\in (1,\infty),r\in (1,\infty]$ and $l\in \N_0$, there exists a constant $C=C(q,r,l,\U \cap \F)$ such that
		\[ 
		\left\|\nabla^l\circ[(\Pi^{\perp}_{h_1,\bar{h}_1})^{-1}-	(\Pi^{\perp}_{h_2,\bar{h}_2})^{-1}](k) \right\|_{L^r}\leq C(\left\|h_1-h_2 \right\|_{L^p}+\left\|\bar{h}_1-\bar{h}_2\right\|_{L^p})\left\|k\right\|_{L^q}
		\]
	\end{itemize}
\end{lem}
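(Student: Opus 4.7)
The seven estimates all reduce, via the explicit formulae derived in the preceding lemmas, to two underlying inputs. First, by \eqref{eq : decay kernel} and Proposition \ref{prop : kernel properties}(ii), every element of $\ker_{L^2}(\Delta_{L,h})$ and of $T_h\F$ lies in $\O_\infty(r^{-n})$ with uniform bounds on all covariant derivatives as $h$ ranges over $\U\cap\F$; in particular, every such element and its derivatives lie in $L^s$ for every $s\in(1,\infty]$. Second, by the construction of the basis $(e_i(h))_{i=1}^m$ via the implicit function theorem together with the smoothness of the finite-dimensional manifold $\F$ in the $L^{[p,\infty]}$-topology, the map $h\mapsto e_i(h)$ depends smoothly on $h$ as a map into the space of smooth sections with $\O_\infty(r^{-n})$ decay. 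After shrinking $\U$ if necessary, this yields Lipschitz bounds
\begin{align*}
\|\nabla^\ell(e_i(h)-e_i(\bar h))\|_{L^s}\leq C_{s,\ell}\|h-\bar h\|_{L^p}
\end{align*}
for every $s\in(1,\infty]$ and $\ell\in\N_0$, and analogous bounds for the differentials $h\mapsto D e_i(h)$ needed for (vi).

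\textbf{The bounded estimates (i)--(iv).} Part (i) is a direct application of H\"older to $\Pi^{\parallel}_h(k)=\sum_i(k,e_i(h))_{L^2(h)}e_i(h)$: the coefficient is bounded by $\|k\|_{L^q}\|e_i(h)\|_{L^{q'}}$ where $q'$ is the conjugate exponent of $q$, and $\|\nabla^\ell e_i(h)\|_{L^r}$ is uniformly finite for every $r\in(1,\infty]$. Part (ii) is immediate from (i) and $\Pi^{\perp}_h=\mathrm{id}-\Pi^{\parallel}_h$. Part (iv) is obtained analogously: by the explicit inverse formula \eqref{eq : invert projection}, the error term $(\Pi^{\parallel}_{\bar h,h})^{-1}\circ\Pi^{\parallel}_h(k)$ lies in the finite-dimensional space $\ker_{L^2}(\Delta_{L,h})$, on which all norms are equivalent, and the inverse $(\Pi^{\parallel}_{\bar h,h})^{-1}$ is uniformly bounded after shrinking $\U$. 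For (iii), combine \eqref{eq : differential Phi} with Lemma \ref{lem : projection F}: the image $D_g\Phi(k)\in T_h\F$ is a finite-dimensional subspace of $\O_\infty(r^{-n})$-sections, the map $(\Pi^{\parallel}_{\F,h})^{-1}$ is uniformly bounded on $\U$, and $(\mathrm{id}_{T_h\F}+\mathrm{proj}_{T_h\F}\circ D\Pi_{(h,k)})^{-1}$ is a small perturbation of the identity on a finite-dimensional space, hence uniformly bounded as well; the estimate on the finite-dimensional output then follows from (i).

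\textbf{The Lipschitz estimates (v)--(vii).} These are derived by inserting the $L^p$-Lipschitz bounds for $e_i(h)$ into the same explicit formulae. For (v), one writes
\begin{align*}
(\Pi^{\parallel}_h-\Pi^{\parallel}_{\bar h})(k)=\sum_i\bigl((k,e_i(h))_{L^2(h)}-(k,e_i(\bar h))_{L^2(\bar h)}\bigr)\,e_i(h)+\sum_i(k,e_i(\bar h))_{L^2(\bar h)}\bigl(e_i(h)-e_i(\bar h)\bigr),
\end{align*}
and bounds each factor containing $e_i(h)-e_i(\bar h)$ or $\,dV_h-dV_{\bar h}$ using the Lipschitz estimate together with H\"older, exploiting that $\|k\|_{L^2(h)}\leq C\|k\|_{L^q}$ when tested against $\O_\infty(r^{-n})$ elements. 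Part (vii) is then the combination of \eqref{eq : invert projection}, (v), and the uniform invertibility of $\Pi^{\parallel}_{h,\bar h}$, using Remark \ref{rem : invert projection} to make sense of the difference on a common domain. The main obstacle is (vi): the formula \eqref{eq : differential Phi} for $D_g\Phi$ involves the implicitly defined inverse $(\mathrm{id}_{T_h\F}+\mathrm{proj}_{T_h\F}\circ D\Pi_{(h,k)})^{-1}$, and quantitative dependence on $\|g-\bar g\|_{L^{[p,\infty]}}$ requires tracking the Lipschitz dependence of each building block $h=\Phi(g)$, $k=g-h$, $\mathrm{proj}_{T_h\F}$, $D\Pi_{(h,k)}$ through a Neumann series expansion of the inverse. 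Smoothness of $\Psi^{-1}$ on the neighbourhood $\U$ of $\hat h$, together with the Lipschitz bounds on $(e_i)_{i=1}^m$ and the formulae of Lemma \ref{lem : projection F}, furnishes exactly the information needed to close this estimate.
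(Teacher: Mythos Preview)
Your proposal is correct and follows essentially the same approach as the paper: both arguments rest on the $\O_\infty(r^{-n})$ decay of kernel elements (making all $W^{l,s}$-norms equivalent on the relevant finite-dimensional spaces), the explicit formulae \eqref{eq : invert projection} and \eqref{eq : differential Phi}, and the smooth---hence Lipschitz---dependence of the basis $e_i(h)$ on $h$. The paper is terser, invoking ``smooth implies Lipschitz'' directly for (v)--(vii) rather than expanding the differences and tracking the Neumann series as you do, but the content is the same.
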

\begin{proof}
	Recall that $\ker_{L^2}(\Delta_{L,h})\subset \O_{\infty}(r^{-n})$, so that $\ker_{L^2}(\Delta_{L,h})\subset W^{l,q}$
	for all $l\in\N_0$ and $q\in (1,\infty]$. 
	Thus, (i) and (ii) follow immediately from the definition of the projection maps $\Pi^{\parallel}_{h}$ and $\Pi^{\perp}_h$. 	
	From Lemma \ref{lem : projection F} and \eqref{eq : differential Phi}, we see that
	\[
	D_g\Phi=A\circ \Pi^{\parallel}_{\Phi(g)},
	\]	
	where $A:\ker_{L^2}(\Delta_{L,\Phi(g)})\to T_{\Phi(g)}\F$ is a linear map between finite dimensional spaces. On both spaces, all elements are in $\O_{\infty}(r^{-n})$ and all $W^{l,q}$-norms are equivalent for $l\in \N_0$ and $q\in (1,\infty]$. Therefore by using (i), we get
	\[
	\left\|\nabla^lD\Phi_g(k)\right\|_{L^r}
		\leq \left\|A\circ \Pi^{\parallel}_{\Phi(g)}(k)\right\|_{W^{l,r}}
		\leq C\left\|\Pi^{\parallel}_{\Phi(g)}(k)\right\|_{L^r}
		\leq C\left\|k\right\|_{L^q},
	\]
	which proves (iii). For (iv), recall from the proof of Lemma \ref{lem : invert projection} that
	\begin{align*}
		(\Pi^{\perp}_{h,\bar{h}})^{-1}=\id-(\Pi^{\parallel}_{\bar{h},h})^{-1}\circ \Pi^{\parallel}_h.	
	\end{align*}
	The map $A=(\Pi^{\parallel}_{\bar{h},h})^{-1}$ is a linear map between finite-dimensional spaces on which all $W^{l,r}$-norms are equivalent for $l\in \N_0$ and $r \in (1,\infty]$. 
	Therefore, again by using (i), we get
	\[
		\left\|\nabla^l(\Pi^{\parallel}_{\bar h,h})^{-1}(\Pi^{\parallel}_h(k))\right\|_{L^r}
			\leq \left\|A\circ \Pi^{\parallel}_{h}(k)\right\|_{W^{l,r}}
			\leq C\left\|\Pi^{\parallel}_{h}(k)\right\|_{L^r}
			\leq C\left\|k\right\|_{L^q},
	\]
	which implies (iv). For (v), observe first that
	\[
	(\Pi^{\perp}_h-\Pi^{\perp}_{\bar{h}})(k)
		=- (\Pi^{\parallel}_h-\Pi^{\parallel}_{\bar{h}})(k).
	\]
	By (i), we have a family of linear bounded maps $\nabla^l\circ\Pi^{\parallel}_h:L^q\to L^r$ which depends smoothly on $h$, and in particular, the dependence is Lipschitz. 
	This implies (v).	
	From the construction of $D\Phi_g$ and (iii), we also have a family of linear bounded maps $D\Phi_g:L^q\to L^r$ which depends smoothly on $g$, and in particular, the dependence is Lipschitz. 
	The estimate in (vi) is immediate. For the final point, we remark that using Remark \ref{rem : invert projection}, we may write 
	\[
	\nabla^l\circ[(\Pi^{\perp}_{h_1,\bar{h}_1})^{-1}-	(\Pi^{\perp}_{h_2,\bar{h}_2})^{-1}]=
	\nabla^l\circ(\Pi^{\parallel}_{\bar{h}_2,h_2})^{-1}\circ \Pi^{\parallel}_{h_2}-\nabla^l\circ(\Pi^{\parallel}_{\bar{h}_1,h_1})^{-1}\circ \Pi^{\parallel}_{h_1}.
	\]
	By construction and the proof of part (iv), we have a family of bounded maps
	\[
	\nabla^l\circ(\Pi^{\parallel}_{\bar h, h})^{-1}\circ\Pi^{\parallel}_h:L^q\to L^r
	\]
	which is smooth in $h$ and $\bar{h}$, in particular Lipschitz in both entries with respect to the $L^p$ norm. 
	This proves part (vii).
\end{proof}
\begin{lem}\label{lem : elliptic regularity}
	Let $(M,\hat{h})$ be an {integrable Ricci-flat ALE manifold with a parallel spinor}. 
	Then there is a small $L^{[p,\infty]}$-neighbourhood $\mathcal{U}$ such that the following holds: For each $k\in\N_0$, $q\in (1,\infty]$, $r\in (1,\infty]$, there exists a constant $C=C(\mathcal{U},k,q,r)$ such that
	\begin{align*}
\left\|h-\tilde{h}\right\|_{W^{k,r}}\leq C 	\left\|h-\tilde{h}\right\|_{L^q}
	\end{align*}
	for all $h,\tilde{h}\in \mathcal{U}\cap\mathcal{F}$.
\end{lem}
\begin{proof}Fix $h\in \mathcal{U}\cap\mathcal{F}$. 
By Proposition \ref{prop : kernel properties}, we have that $T_h\mathcal{F} \subset\O_\infty\left(\rho^{-n}\right)$, and thus $\left\|h-\tilde{h}\right\|_{W^{k,r}}<\infty$ for all $k\in\N_0$ and $r\in (1,\infty]$. Because $T_h\mathcal{F}$ is finite-dimensional, all these norms are pairwise equivalent.
Thus, we have a constant $C=C(\mathcal{U},k,q,r)$ such that
	\begin{align*}
\left\|k\right\|_{W^{k,r}}\leq C 	\left\|k\right\|_{L^q}
	\end{align*}
	for all $k\in T_h\mathcal{F}$. Because $T_h\mathcal{F}$ depends smoothly on the point, we can choose $C$ so that the above estimate holds for all $h\in \mathcal{U}\cap\mathcal{F}$ simultaneously. Now if $h,\tilde{h}$ are given,  we have for every smooth curve $\left\{h_{t}\right\}_{t\in [0,1]}$ in $\mathcal{F}$ joining $h=h_0$ and $\tilde{h}=h_1$ that
	\begin{align*}
	\left\|h-\tilde{h}\right\|_{W^{k,r}}\leq C\int_0^1\left\|\partial_th\right\|_{W^{k,r}}dt\leq C\int_0^1\left\|\partial_th\right\|_{L^q}dt.
	\end{align*}
	Now set $g_t=t\tilde{h}+(1-t)h$ and $h_t:=\Phi(g_t)$.
Then $h_t$ is a smooth curve in $\mathcal{F}$ joining $h=h_0$ and $\tilde{h}=h_1$	
	 By Lemma \ref{lem : projection lemma} (iii), we obtain
	\begin{align*}
	\int_0^1\left\|\partial_th\right\|_{L^q}dt
	\leq \int_0^1\left\|D_{g}\Phi(\partial_tg)\right\|_{L^q}dt
	\leq C\int_0^1\left\|\partial_tg\right\|_{L^q}dt
		\leq C\left\|\tilde{h}-h\right\|_{L^q}
	\end{align*}
	and the desired estimate follows.
\end{proof}
\section{Short-time estimates for parabolic equations}\label{sec : short-time estimates}
\subsection{Various expansions for the Ricci-de Turck flow}\label{subsec : RdTf expansions}
Let $h$ be a fixed Ricci-flat metric and consider $h$-gauged Ricci-de Turck flow, i.e. the evolution equation
\[
\partial_tg=-2\mathrm{Ric}_g+\mathcal{L}_{V(g,h)}g,\qquad V(g,h)^k=g^{ij}(\Gamma(g)_{ij}^k-\Gamma(h)_{ij}^k).
\]
Let $\n$ denote the Levi-Civita connection and $\abs{\cdot}$ the norm with respect to $h$.
\begin{lemma}\label{lem : expression RdT1}
	The Ricci-de Turck flow can be written with respect to the difference $k=g-h$ as
	\begin{align}	\label{eq : expression RdT1}
		\partial_tk+\Delta_{L,g,h}k&=F_1(g^{-1},g^{-1},\nabla k,\nabla k),\\
		\label{eq : expression RdT2}	
		\partial_tk+\Delta_{L,h}k&=F_1(g^{-1},g^{-1},\nabla k,\nabla k)+F_2(g^{-1},R,k,k)+F_3(g^{-1},k,\nabla^2 k),\\
		\label{eq : expression RdT3}	\partial_tk+\Delta_{h}k&=F_4(g^{-1},g^{-1},\nabla k,\nabla k)+F_5(g^{-1},g,R,k)+\nabla_a((g^{ab}-h^{ab})\nabla_bk_{ij}),
	\end{align}
	where
	\begin{align}
		\Delta_{L,g,h}k_{ij}
			&=-g^{ab}\nabla^{2}_{ab}k_{ij}-k_{ab}g^{ka}h^{lb}g_{ip}h^{pq}R_{jklq}-k_{ab}g^{ka}h^{lb}g_{jp}h^{pq}R_{iklq}, \label{eq: Lgh}\\
		\Delta_{L,h}k_{ij}
			&=-h^{ab}\nabla^2_{ab}k_{ij}-2k_{ab}h^{pa}h^{lb}R_{iplj}, \nonumber
	\end{align}
	and the $F_i$ are $h$-parallel maps which are $C^{\infty}(M)$-linear in all entries.
\end{lemma}
\begin{proof}
	According to \cite{Shi-Def}*{Lemma 2.1}, this evolution equation can be rewritten as
	\begin{align*}
		\partial_t g_{ij}&=g^{ab}\nabla^{2}_{ab}g_{ij}-g^{kl}g_{ip}h^{pq}R_{jklq}-g^{kl}g_{jp}h^{pq}R_{iklq}\\
		&\quad+g^{ab}g^{pq}\left(\frac{1}{2}\nabla_ig_{pa}\nabla_jg_{qb}+\nabla_ag_{jp}\nabla_qg_{ib}\right)\\
		&\quad-g^{ab}g^{pq}\left(\nabla_ag_{jp}\nabla_bg_{iq}+\nabla_jg_{pa}\nabla_bg_{iq}+\nabla_ig_{pa}\nabla_bg_{jq}\right),
	\end{align*}
	where the curvature and the covariant derivatives are taken with respect to $h$. If $h$ is Ricci-flat, this equation can be rewritten in terms of the difference $k=g-h$ as
	\begin{align*}
		\partial_t k_{ij}
			&=g^{ab}\nabla^{2}_{ab}k_{ij}+k_{ab}g^{ra}h^{lb}g_{ip}h^{pq}R_{jrlq}+k_{ab}g^{ra}h^{lb}
		g_{jp}h^{pq}R_{irlq}\\
			&\quad+g^{ab}g^{pq}\left(\frac{1}{2}\nabla_ik_{pa}\nabla_jk_{qb}+\nabla_ak_{jp}\nabla_qk_{ib}\right) \\
			&\quad -g^{ab}g^{pq}\left(\nabla_ak_{jp}\nabla_bk_{iq}+\nabla_jk_{pa}\nabla_bk_{iq}+\nabla_ik_{pa}\nabla_bk_{jq}\right).
	\end{align*}
	Then \eqref{eq : expression RdT1} follows from setting
	\begin{align*}
		F_1(g^{-1},g^{-1},\nabla k,\nabla k)&:=g^{ab}g^{pq}\left(\frac{1}{2}\nabla_ik_{pa}\nabla_jk_{qb}+\nabla_ak_{jp}\nabla_qk_{ib}\right)\\&\quad
		-g^{ab}g^{pq}\left(\nabla_ak_{jp}\nabla_bk_{iq}+\nabla_jk_{pa}\nabla_bk_{iq}+\nabla_ik_{pa}\nabla_bk_{jq}\right).
	\end{align*}
	For \eqref{eq : expression RdT2}, we first write the Lichnerowicz Laplacian as
	\begin{align*}
		\Delta_{L,h}k_{ij}
			=-h^{ab}\nabla^2_{ab}k_{ij}-k_{ab}h^{pa}h^{lb}R_{jpli}-k_{ab}h^{pa}h^{lb}R_{iplj}.
	\end{align*}
	Note that the last two terms are equal but their separate treatment allows a better comparison with $\Delta_{L,g,h}$ from the previous lemma. We compute
	\begin{align*}
		g^{ra}g_{ip}h^{pq}R_{jrlq}-h^{ra}R_{jrli}
			&= g^{ra}g_{ip}h^{pq}R_{jrlq}-h^{ra}h_{ip}h^{pq}R_{jrlq}\\
			&=g^{ra}(g_{ip}-h_{ip})h^{pq}R_{jrlp}+(g^{ra}-h^{ra})h_{ip}h^{pq}R_{jrlq}\\
			&=g^{ra}k_{ip}h^{pq}R_{jrlp}-k_{mn}g^{rm}h^{an}h_{ip}h^{pq}R_{jrlq}\\
			&=g^{ra}k_{ip}h^{pq}R_{jrlp}-k_{mn}g^{rm}h^{an}R_{jrli}
	\end{align*}
	and by exchanging $i$ and $j$,
	\begin{align*}
		g^{ra}g_{jp}h^{pq}R_{irlq}-h^{ra}R_{irlj}
			= g^{ra}k_{jp}h^{pq}R_{irlp}-k_{mn}g^{rm}h^{an}R_{irlj}.
	\end{align*}
	By summing up, we obtain
	\begin{align*}
		\Delta_{L,h} k-\Delta_{L,g,h}k
			&= (g^{ab}-h^{ab})\nabla^2_{ab}k_{ij} +k_{ab}h^{lb}(g^{ra}k_{ip}h^{pq}R_{jrlq}-k_{mn}g^{rm}h^{an}R_{jrli})\\
			&\qquad + k_{ab}h^{lb}(g^{ra}k_{jp}h^{pq}R_{irlq}-k_{mn}g^{rm}h^{an}R_{irlj})\\
			&=:(g^{ab}-h^{ab})\nabla^2_{ab}k_{ij}+F_2(g^{-1},R, k,k)
	\end{align*}
	and
	\begin{align*}
		(g^{ab}-h^{ab})\nabla^2_{ab}k_{ij}
			= -k_{pq}g^{ap}h^{bq}\nabla^2_{ab}k_{ij}
			=:F_3(g^{-1},k,\nabla^2k).
	\end{align*}
	Then, \eqref{eq : expression RdT2} follows from \eqref{eq : expression RdT1}. Finally, \eqref{eq : expression RdT3} follows from computing
	\begin{align*}
		(g^{ab}-h^{ab})\nabla^2_{ab}k_{ij}&=
		\nabla_a((g^{ab}-h^{ab})\nabla_bk_{ij})-\nabla_a(g^{ab}-h^{ab})\nabla_bk_{ij}\\&=\nabla_a((g^{ab}-h^{ab})\nabla_bk_{ij})
		+g^{ap}g^{bq}\nabla_a k_{pq}\nabla_bk_{ij},
	\end{align*}
	setting 
	\begin{align*}
		F_4(g^{-1},g^{-1},\nabla k,\nabla k)
			&:=F_1(g^{-1},g^{-1},\nabla k,\nabla k)+g^{ap}g^{bq}\nabla_a k_{pq}\nabla_b k_{ij},\\
		F_5(g^{-1},g,R,k)
			&:=k_{ab}g^{ra}h^{lb}g_{ip}h^{pq}R_{jrlq}-k_{ab}g^{ra}h^{lb}g_{jp}h^{pq}R_{irlq}
	\end{align*}
	and using \eqref{eq : expression RdT1} again.
\end{proof}

\subsection{An \texorpdfstring{$L^p$}{Lp}-maximum principle}

A standard tool for parabolic equations is short-time derivative estimates of the form $\left\|\nabla^ku_t\right\|_{L^{\infty}}\leq  C\cdot t^{-\frac{k}{2}} \left\|u_0\right\|_{L^{\infty}}$. 
The main purpose of this chapter is to develop analogous estimates for the $L^p$ norm. 
The main tool for doing this is the following theorem:
\begin{thm}[$L^p$-maximum principle]\label{Lpmaxprinc}
	Let $(M,h_t)$, $t\geq 0$ be a smooth $1$-parameter family of ALE manifolds such that 
	\begin{align*}
		\frac1\Lambda h_t 
			\leq \hat h 
			\leq \Lambda h_t, \qquad |\partial_th|_{h_t}\leq \Lambda
	\end{align*}
	for all $t\geq0$, a fixed ALE metric $\hat{h}$ and a time-independent constant $\Lambda>0$.\\	
Let $g_t$, $t\geq0$ be another smooth 1-parameter family of complete Riemannian metrics on $M$ such that for all $t\geq0$, we have
\begin{align*}
		&\frac{1}{\Lambda}h_t
			\leq g_t\leq \Lambda\cdot h_t,
			\qquad \left\|\nabla^{h_t} g_t\right\|_{L^{\infty}(h_t)}<\infty.
\end{align*}	
	Let $E,F,G$ be tensor bundles over $M$ equipped with the natural family of Riemannian metrics and connections induced by $h_t$.
	Let $ u(t)\in C^{\infty}(E)$ and
	\begin{align*}
		&H_1(t)\in L^{\infty}(\mathrm{End}(E)), \quad H_2(t)\in L^{\infty}(\mathrm{Hom}(T^*M\otimes E,E)), \\
		&H_3(t)\in L^{\infty}(\mathrm{Hom}(T^*M\otimes E,TM\otimes E)), \quad H_4(t) \in L^{p}(E), \\
		&H_5(t)\in L^{\infty}(\mathrm{Hom}(E,F)), \quad H_6(t)\in L^{\infty}(\mathrm{Hom}(E,G))
	\end{align*}
	be time-dependent sections.	
	\begin{itemize}
		\item[(i)]
		Suppose that $u$ satisfies the evolution inequality
		\begin{align*}
			\partial_t |u|^2
				&\leq g^{ab}\nabla^2_{ab}|u|^2+ 2 \langle H_1(u)+ H_2(\nabla u)+\nabla_a((H_3)^{ab}\nabla_bu) + H_4,u\rangle \\
				&\qquad -2(1-\delta)g^{ab}\langle\nabla_au,\nabla_bu\rangle
		\end{align*}
		for some $\delta\in [0,1)$.
		Then for every $p_0\in (1+\delta,\infty)$, there exists an $\epsilon>0$ such that the following holds:
		If $\left\|H_3\right\|_{L^{\infty}}<\epsilon$ and $u(0)\in L^p$ for some $p\in [p_0,\infty)$, we have $u(t)\in L^p$ for all $t\geq0$ and the estimate
		\begin{align*}
			\left\| u(t)\right\|_{L^p}\leq e^{\int_0^t\psi(s) ds}	\left\| u(0)\right\|_{L^p}
			+\left\| e^{\int_s^t\psi(r) dr}\cdot H_4(s)\right\|_{L^p([0,t]\times M)},
		\end{align*}
		where
		\begin{align*}
			\psi(t)
				=C\left(\left\|H_1\right\|_{L^\infty} + \left\|\nabla g\right\|_{L^{\infty}}^2+\left\|H_2\right\|_{L^{\infty}}^2+1\right)
		\end{align*}
		and $C=C(p_0,\epsilon,\Lambda,\hat{h},n)$ but independent of $p$.
		\item[(ii)]
		Suppose that $u$ satisfies the evolution inequality
		\begin{align*}
			\partial_t |u|^2
				&\leq g^{ab}\nabla^2_{ab}|u|^2 + 2 \langle H_1(u)+ H_2(\nabla u) + \nabla_a((H_3)^{ab}\nabla_bu) + H_4, u\rangle \\
				& \qquad -2(1-\delta)g^{ab}\langle\nabla_au,\nabla_bu\rangle + 2 \langle \nabla*(H_5(u))+\nabla*(H_6(\nabla u)), u \rangle
		\end{align*}
		for some $\delta\in [0,1)$.
		Then for $1+\delta<p_0<p_1<\infty$, there exists an $\epsilon>0$ such that the following holds:
		If $\left\|H_3\right\|_{L^{\infty}}+\left\|H_6\right\|_{L^{\infty}}<\epsilon$ and $u(0)\in L^p$ for some $p\in [p_0,p_1]$, we have $u(t)\in L^p$ for all $t\geq0$ and the estimate
		\begin{align*}
			\left\| u(t)\right\|_{L^p}&\leq e^{\int_0^t\psi(s) ds}	\left\| u(0)\right\|_{L^p}
			+\left\| e^{\int_s^t\psi(r) dr}\cdot H_4(s)\right\|_{L^p([0,t]\times M)},
		\end{align*}
		where
		\begin{align*}
			\psi(t)=
				C\left(\left\|H_1\right\|_{L^\infty} + \left\|\nabla g\right\|_{L^{\infty}}^2+\left\|H_2\right\|_{L^{\infty}}^2+\left\|H_5\right\|_{L^{\infty}}^2+1\right)
		\end{align*}
		and $C=C(p_0,p_1,\epsilon,\Lambda,\hat{h},n)$ but independent of $p$.
	\end{itemize}
\end{thm}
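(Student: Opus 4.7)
I would prove both parts by a Moser-style $L^p$ energy estimate. The plan is to multiply the hypothesized inequality for $\partial_t|u|^2$ by the test weight $\tfrac{p}{2}(|u|^2+\eta)^{(p-2)/2}\chi_R$, with $\eta>0$ regularizing at $\{u=0\}$ and $\chi_R$ a smooth spatial cutoff supported in $B_R(x_0)$, integrate against $dV_{h_t}$, and send $\eta\downarrow 0$ and $R\uparrow\infty$ at the end. The ALE background bounds on $h_t$ together with an $L^p$-persistence bootstrap for $u$ are used to kill the $\chi_R$-boundary contribution.

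\textbf{Integration by parts.} The two second-order terms on the right are then integrated by parts. The Laplacian piece $g^{ab}\nabla^2_{ab}|u|^2$ produces the \emph{Laplacian gain}
\[
-\tfrac{p(p-2)}{4}\int|u|^{p-4}|\nabla|u|^2|_g^2\,dV
\]
plus a $\nabla g$-error controlled via Young by $\|\nabla g\|_{L^\infty}^2\|u\|_{L^p}^p$ and a small fraction of the gradient integral $\int|u|^{p-2}|\nabla u|^2\,dV$. The divergence term $\nabla_a((H_3)^{ab}\nabla_b u)$ integrated by parts similarly yields two contributions whose absolute values are bounded by multiples of $\|H_3\|_{L^\infty}$ times $\int|u|^{p-2}|\nabla u|^2\,dV$ and $\int|u|^{p-4}|\nabla|u|^2|^2\,dV$. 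The explicit hypothesis $-2(1-\delta)g^{ab}\langle\nabla_a u,\nabla_b u\rangle$ contributes the \emph{dissipation} $-p(1-\delta)\int|u|^{p-2}|\nabla u|_g^2\,dV$. Finally, $H_1$ and $H_2$ are estimated by Cauchy--Schwarz/Young into $\psi(t)\|u\|_{L^p}^p$ plus small gradient fractions, and $H_4$ contributes $p\|H_4\|_{L^p}\|u\|_{L^p}^{p-1}$ by H\"older.

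\textbf{Absorption and Gronwall.} Now I would choose $\epsilon=\epsilon(p_0,\delta)$ small enough that the bad gradient integrals from $H_2$ and $H_3$ are strictly dominated by the Laplacian gain and the dissipation combined; the hypothesis $p_0>1+\delta$ is precisely what ensures that the net coefficient of the dissipation remains positive after the residual Young terms are subtracted. Dropping the remaining non-positive gradient terms yields
\[
\frac{d}{dt}\|u(t)\|_{L^p}^p\le p\psi(t)\|u(t)\|_{L^p}^p+p\|H_4(t)\|_{L^p}\|u(t)\|_{L^p}^{p-1}.
\]
A further Young in the form $p\|H_4\|_{L^p}\|u\|_{L^p}^{p-1}\le \|H_4\|_{L^p}^p+(p-1)\|u\|_{L^p}^p$ (the $(p-1)$-term absorbed into the $+1$ contribution to $\psi$) converts this into a linear Gronwall inequality for $\|u\|_{L^p}^p$, whose conclusion after taking $p$-th roots is precisely the stated estimate with the $L^p([0,t]\times M)$-norm of $e^{\int_s^t\psi}H_4$. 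Case (ii) follows the same scheme: the additional terms $\nabla*(H_5(u)),\,\nabla*(H_6(\nabla u))$ paired with $\nabla u$ are integrated by parts and Young-estimated, and the \emph{stronger} dissipation $-2(1+\delta)|\nabla u|_g^2$ together with $\|H_6\|_{L^\infty}<\epsilon$ provides the absorption; the restriction to bounded $p\in[p_0,p_1]$ is needed because the Young constants from the $H_5$-to-gradient transfer degenerate as $p\to\infty$.

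\textbf{Main obstacle.} The delicate step is the $p$-uniformity of the absorption. The crudest estimate on the $H_3$ integration by parts places $p(p-1)\|H_3\|_{L^\infty}\int|u|^{p-2}|\nabla u|^2\,dV$ against only the $p$-linear dissipation $p(1-\delta)\int|u|^{p-2}|\nabla u|^2\,dV$, which on its own would force $\epsilon$ to shrink with $p$. The resolution is to exploit the essentially scalar action of $(H_3)^{ab}$ on the $E$-fibres in the geometric application (cf.\ \eqref{eq : expression RdT3}) and to perform the $H_3$-IBP so as to redistribute the dangerous contribution into the $|\nabla|u|^2|^2$-channel via an appropriately chosen Young parameter, where it is then absorbed by the Laplacian gain $-\tfrac{p(p-2)}{4}\int|u|^{p-4}|\nabla|u|^2|^2\,dV$ with a $p$-independent threshold. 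Rigorously justifying the IBP and the passage $R\uparrow\infty$ on the non-compact $M$ is a secondary but non-trivial technical issue, handled by the cutoff $\chi_R$ together with the $L^p$-persistence bootstrap mentioned above.
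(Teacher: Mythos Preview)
Your proposal is essentially the paper's proof: set $q=p/2$, work with $F_\rho=|u|^2+\rho$, multiply by $qF_\rho^{q-1}\phi_{R,x}^2$, integrate by parts, and absorb the dangerous $H_3$-contribution into the $|\nabla F|^2$-channel where it meets the Laplacian gain with a $p$-independent threshold. Your observation about the scalar fibre action of $(H_3)^{ab}$ is exactly what the paper uses (it writes $2\langle (H_3)^{ab}\nabla_b u,u\rangle=(H_3)^{ab}\nabla_b F$, an algebraic identity rather than a Young split), and this is indeed the case in every later application.

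Two places where the paper is more explicit than your sketch. First, for $p\in(1+\delta,2)$ the ``Laplacian gain'' $-\tfrac{p(p-2)}{4}\int|u|^{p-4}|\nabla|u|^2|^2$ has the wrong sign; the paper converts it via the Kato inequality $g^{ab}\nabla_aF\nabla_bF\le 4F_\rho\, g^{ab}\langle\nabla_au,\nabla_bu\rangle$ into the dissipation channel, and it is precisely here that $p_0>1+\delta$ is used. Second, the passage $R\to\infty$ is not done by an $L^p$-persistence bootstrap (which risks circularity on a noncompact manifold) but by introducing $A(R,\rho,t)=\sup_{x\in M}\int F_\rho^q\phi_{R,x}^2$, using volume doubling to bound the cutoff-error $\int_{B_{2R}}F_\rho^q$ by $C_{\mathrm{doubl}}A$, closing a Gronwall-type inequality for $A$ with an extra $O(R^{-2})$ term, and only then letting $R\to\infty$ and $\rho\to 0$.
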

\begin{proof}
	We start with the proof of (i). 
	Let $q=\frac{p}{2}$ and $\rho>0$ a small parameter. 
	Define $F=|u|^2$ and $F_{\rho}=|u|^2+\rho$. Then we get
	\begin{equation}\begin{split}\label{eq : evo inequ times p}
		\partial_tF_{\rho}^q
			&\leq g^{ab}\nabla^2_{ab}F_{\rho}^q-q(q-1)g^{ab}\nabla_aF \nabla_bF F_{\rho}^{q-2}-2(1-\delta)q\cdot g^{ab}\langle\nabla_au,\nabla_bu\rangle F_{\rho}^{q-1}\\
			&\qquad+2q\cdot \langle H_1(u)+ H_2(\nabla u)+\nabla_a((H_3)^{ab}\nabla_bu)
			+H_4,u\rangle F_{\rho}^{q-1}.
		\end{split}
	\end{equation}
	Choose for each $x\in M$ and large $R>0$ a cutoff function $\phi_{R,x}$ such that 
	\begin{align*}
		\phi_{R,x}\equiv1\text{ on }B_R(x),\qquad \phi_{R,x}\equiv0\text{ on }M\setminus B_{2R}(x),\qquad |\nabla \phi_{R,x}|\leq 2\sqrt{\Lambda}/R,
	\end{align*}
	where $B_R(x)$ is defined with respect to $\hat h$, making $\phi_{R,x}$ time independent. Let us define the quantity
	\begin{align*}
		A(R,\rho,t)=\sup_{x\in M}\int_M F_{\rho}^q(t)\cdot \phi_{R,x}^2\dv.
	\end{align*}
	We multiply \eqref{eq : evo inequ times p} by $\phi^2:=\phi_{R,x}^2$ and integrate over $M$. 
	Then we get
	\begin{align*}
		\partial_t\int_M F_{\rho}^q\phi^2\dv
			&\leq \int_M g^{ab}\nabla^2_{ab}F_{\rho}^q\phi^2\dv - q(q-1)\int_Mg^{ab}\nabla_aF \nabla_bF F_{\rho}^{q-2}\phi^2\dv\\
			&\qquad - 2(1-\delta)q\int_Mg^{ab}\langle\nabla_au,\nabla_bu\rangle F_{\rho}^{q-1}\phi^2\dv\\
			&\qquad + 2q\int_M  \langle H_1(u)+ H_2(\nabla u), u \rangle F_{\rho}^{q-1}\phi^2\dv\\
			&\qquad + 2q\int_M  \langle\nabla_a((H_3)^{ab}\nabla_bu) + H_4,u\rangle F_{\rho}^{q-1}\phi^2\dv \\
			&\qquad + \frac12 \int_M F_\rho^q \phi^2 \tr_h(\d_t h) \dv.
	\end{align*}
	Performing integration by parts with the first term yields
	\begin{align*}
		\int_M g^{ab}\nabla^2_{ab}F_{\rho}^q\phi^2\dv
			=-q\int_M \nabla_bF\cdot F_{\rho}^{q-1}\nabla_ag^{ab}\cdot\phi^2\dv-2q\int_M\nabla_bF\cdot F_{\rho}^{q-1}g^{ab}\nabla_a\phi\cdot\phi\dv.
	\end{align*}
	We get, using the Peter-Paul inequality
	\begin{align*}
		-q\int_M \nabla_bF\cdot F_{\rho}^{q-1}\nabla_ag^{ab}\cdot\phi^2\dv
			&\leq C\cdot q\int_M|\nabla u||u||\nabla g|F_{\rho}^{q-1}\cdot\phi^2\dv\\
			&\leq \epsilon_1\cdot q\int_M g^{ab}\langle\nabla_au,\nabla_bu\rangle F_{\rho}^{q-1}\phi^2\dv\\
			&\qquad+C(\epsilon_1)\cdot q \int_M F_{\rho}^q\cdot|\nabla g|^2\cdot\phi^2\dv.
	\end{align*}
	Because $g^{ab}\nabla_aF\nabla_b\phi\leq C|\nabla u||u||\nabla\phi|$, another application of the Peter-Paul inequality yields
	\begin{align*}
		-2q\int_M\nabla_bF\cdot F_{\rho}^{q-1}g^{ab}\nabla_a\phi\cdot\phi\dv&\leq
		\epsilon_1\cdot q\int_M g^{ab}\langle\nabla_au,\nabla_bu\rangle F_{\rho}^{q-1}\phi^2\dv\\
		&\qquad+C(\epsilon_1)\cdot q \int_M F_{\rho}^q\cdot|\nabla\phi|^2\dv.
	\end{align*}
	Similarly
	\begin{align*}
		2q\int_M  \langle H_2(\nabla u),u\rangle F_{\rho}^{q-1}\phi^2\dv&\leq
		\epsilon_1\cdot q\int_M g^{ab}\langle\nabla_au,\nabla_bu\rangle F_{\rho}^{q-1}\phi^2\dv\\
		&\qquad +C(\epsilon_1)\left\|H_2\right\|_{L^{\infty}}^2\cdot q\int_M  F_{\rho}^{q}\phi^2\dv.
	\end{align*}
	We easily get
	\begin{align*}
		2q\int_M  \langle H_1(u),u\rangle F_{\rho}^{q-1}\phi^2\dv&\leq 2q \left\|H_1\right\|_{L^{\infty}}\int_MF_{\rho}^{q}\cdot\phi^2\dv.
	\end{align*}
	Using Young's inequality $ab\leq \frac{1}{p'}a^{p'}+\frac{1}{q'}b^{q'}$ for $a=|H_4|$, $b=F_{\rho}^{q-\frac{1}{2}}$, $p'=2q$ and $q'=\frac{2q}{2q-1}$ yields
	\begin{align*}
		2q
			&\int_M  \langle H_4,u\rangle F_{\rho}^{q-1}\phi^2\dv
			\leq \int_M |H_4|^{2q}\phi^2\dv+(2q-1)\int_M  F_{\rho}^{q}\phi^2\dv.
	\end{align*}
	Let us now look at the remaining term. Integration by parts yields
	\begin{align*}
		2q\int_M\langle  \nabla_a((H_3)^{ab}\nabla_bu),u\rangle  F_{\rho}^{q-1}\phi^2\dv
			&= - 2q\int_M \langle (H_3)^{ab}\nabla_bu,\nabla_a u\rangle F_{\rho}^{q-1}\phi^2 \dv \\
			&\qquad - 2q(q-1)\int_M\langle (H_3)^{ab}\nabla_bu,u\rangle\nabla_a F\cdot F^{q-2}_{\rho}\phi^2\dv\\
			&\qquad - 4q\int_M \langle (H_3)^{ab}\nabla_bu,u\rangle F_{\rho}^{q-1}\nabla_a \phi\cdot \phi\dv.
	\end{align*}
	We have
	\begin{align*}
		- 2q\int_M \langle (H_3)^{ab}\nabla_bu,\nabla_a u\rangle F_{\rho}^{q-1}\phi^2 \dv 
			&\leq 2C q \int_M |H_3| g^{ab} \langle \nabla_bu , \nabla_a u \rangle F_{\rho}^{q-1}\phi^2 \dv 
	\end{align*}
	and
	\begin{align*}
		-2q(q-1)\int_M\langle (H_3)^{ab}\nabla_bu,u\rangle\nabla_a F\cdot F^{q-2}_{\rho}\phi^2\dv
			&=- q(q-1)\int_M (H_3)^{ab}\nabla_bF\nabla_a F\cdot F^{q-2}_{\rho}\phi^2\dv\\
			&\leq q|q-1| \int_M |H_3| |\nabla F|^2 F^{q-2}_{\rho}\phi^2\dv.
	\end{align*}
	Using the Peter Paul inequality again, we get
	\begin{align*}
		-4q\int_M \langle (H_3)^{ab}\nabla_bu,u\rangle F_{\rho}^{q-1}\nabla_a \phi\cdot \phi\dv
			&=- 2q\int_M  (H_3)^{ab}\nabla_bF\cdot F_{\rho}^{q-1}\nabla_a \phi\cdot \phi\dv\\ 
			&\leq q\int_M |H_3||\nabla F|^2 F_{\rho}^{q-2}\phi^2\dv+q\int_M |H_3||\nabla\phi|^2\cdot F_{\rho}^{q}\dv.
	\end{align*}
	Summing up and using 
	\begin{align*}
	|\nabla u|^2\leq Cg^{ab}\langle\nabla_au,\nabla_bu\rangle,\qquad
		|\nabla F|^2\leq Cg^{ab}\langle\nabla_aF,\nabla_bF\rangle,
	\end{align*}
	we get
	\begin{align*}
		2q\int_M\langle  \nabla_a((H_3)^{ab}\nabla_bu),u\rangle  F_{\rho}^{q-1}\phi^2\dv
			&\leq 2C q \int_M |H_3| g^{ab} \langle \nabla_bu, \nabla_a u \rangle F_{\rho}^{q-1}\phi^2 \dv \\
			&\qquad + Cq^2\int_M |H_3|g^{ab}\nabla_aF \nabla_bF F_{\rho}^{q-2}\phi^2\dv\\
			&\qquad + q\int_M |H_3||\nabla\phi|^2\cdot F_{\rho}^{q}\dv.
	\end{align*}
	Finally, we also have
	\[
		\int_M F_\rho^q \phi^2 \tr_h(\d_t h) \dv
			\leq Cq \int_M F_\rho^q \phi^2 \dv,
	\]
	since $q \geq \frac12$.
	In summary, we obtain
	\begin{equation} \label{eq: part (i)}
	\begin{split}
		\partial_t\int_M F_{\rho}^q\phi^2\dv
			&\leq -[q(q-1)-Cq^2\left\|H_3\right\|_{L^\infty}]\int_Mg^{ab}\nabla_aF \nabla_bF F_{\rho}^{q-2}\phi^2\dv\\	
			&\qquad -2\left(1-\delta-\frac32 \epsilon_1 - C\norm{H_3}_{L^\infty}\right)q\int_Mg^{ab}\langle\nabla_au,\nabla_bu\rangle F_{\rho}^{q-1}\phi^2\dv\\
			&\qquad + C(\left\|H_1\right\|_{L^\infty} + \left\|\nabla g\right\|_{L^{\infty}}^2+\left\|H_2\right\|_{L^{\infty}}^2+1)\cdot q\int_MF_{\rho}^q\phi^2\dv\\
			&\qquad+C(1+\left\|H_3\right\|_{L^{\infty}})\cdot q\int_M |\nabla \phi|^2F_{\rho}^q\dv+\int_M |H_4|^{2q}\phi^2\dv.
	\end{split}
	\end{equation}
Note that the constants which appear in this estimate only depend on $\Lambda,n$ and $\epsilon_1$.	
	
	We now claim that the sum of the first two terms is nonpositive, provided that $\epsilon_1$ and $\left\|H_3\right\|_{L^{\infty}}$ are chosen small enough in comparison to the constants $C$, but independent of an upper bound on $q$.
	If $q\geq 2$, it is immediate.
	Before proceeding with the other cases, note first that by the Cauchy-Schwarz inequality, we get
	\begin{align*}
		g^{ab}\nabla_aF \nabla_bF
			= 4 g^{ab}\langle\nabla_au,u\rangle\langle\nabla_bu,u\rangle\leq4 g^{ab}\langle\nabla_au,\nabla_bu\rangle F_\rho,
	\end{align*}
	so that
	\begin{align*}
		-[q(q-1)
			&-Cq^2\left\|H_3\right\|_{L^{\infty}}]\int_Mg^{ab}\nabla_aF \nabla_bF F_{\rho}^{q-2}\phi^2\dv \\
			&\qquad -2\left( 1-\delta-\frac32 \epsilon_1 - C\norm{H_3}_{L^\infty} \right) q\int_Mg^{ab}\langle\nabla_au,\nabla_bu\rangle F_{\rho}^{q-1}\phi^2\dv\\
			&\leq -q(q-1)\int_Mg^{ab}\nabla_aF \nabla_bF F_{\rho}^{q-2}\phi^2\dv\\
			&\qquad -2\left(1-\delta-\frac32 \epsilon_1 - C\norm{H_3}_{L^\infty} -2Cq\left\|H_3\right\|_{L^{\infty}}\right)\cdot q \int_Mg^{ab}\langle\nabla_au,\nabla_bu\rangle F_{\rho}^{q-1}\phi^2\dv.
	\end{align*}
	Thus if $q\in [1,2]$, the right hand side is nonpositive, provided that $\epsilon_1$ and $\left\|H_3\right\|_{L^{\infty}}$ are small enough in comparison to the constants $C$, but independent of an upper bound on $q$. If $p_0\in (1,2)$ and $q\in [\frac{p_0}{2},1]$, we use the above inequality again to obtain
	\begin{align*}
		-q&(q-1)\int_Mg^{ab}\nabla_aF \nabla_bF F_{\rho}^{q-2}\phi^2\dv\\
		&\qquad -2\left(1-\delta-\frac32 \epsilon_1 - C \norm{H_3}_{L^\infty} -2Cq\left\|H_3\right\|_{L^{\infty}} \right)\cdot q \int_Mg^{ab}\langle\nabla_au,\nabla_bu\rangle F_{\rho}^{q-1}\phi^2\dv\\
		&\leq -2\left(2q-1-\delta-\frac32 \epsilon_1 - C \norm{H_3}_{L^\infty} - 2C q\left\|H_3\right\|_{L^{\infty}} \right)\cdot q \int_Mg^{ab}\langle\nabla_au,\nabla_bu\rangle F_{\rho}^{q-1}\phi^2\dv.
	\end{align*}
	The right hand side is nonpositive, provided that $\epsilon_1+\left\|H_3\right\|_{L^{\infty}}$ is smaller than a constant which depends on $\Lambda,n$ and $p_0$ but is independent of $q$.
	We arrive at the estimate
	\begin{align*}
		\partial_t\int_M F_{\rho}^q\phi^2\dv
		&\leq C(\left\|H_1\right\|_{L^\infty}+
		\left\|\nabla g\right\|_{L^{\infty}}^2+\left\|H_2\right\|_{L^{\infty}}^2+1)\cdot q\int_MF_{\rho}^q\phi^2\dv\\
		&\qquad+C(1+\left\|H_3\right\|_{L^{\infty}})R^{-2}\cdot q\int_{B_{2R}} F_{\rho}^q\dv+\left\|H_4\right\|_{L^{2q}}^{2q}.
	\end{align*}
	Abbreviate
	\begin{align*}
		\psi(t)=\frac{C}{2}(\left\|H_1\right\|_{L^\infty}+
		\left\|\nabla g\right\|_{L^{\infty}}^2+\left\|H_2\right\|_{L^{\infty}}^2+1).
	\end{align*}
	Integrating this differential inequality in time, we obtain
	\begin{align*}
	&\int_M F_{\rho}^q(t)\phi^2\dv \\*
		&\quad \leq \exp\left(2q\cdot \int_0^t\psi(s)ds\right)\int_M F_{\rho}^q(0)\phi^2\dv\\*
		&\quad \qquad +\int_0^t\exp\left(2q\cdot \int_s^t\psi(r)dr\right)\left(C(1+\left\|H_3\right\|_{L^{\infty}})R^{-2}\cdot q\int_{B_{2R}} F_{\rho}^q\dv+\left\|H_4\right\|_{L^{2q}}^{2q} \right)ds\\
		&\quad \leq \exp\left(2q\cdot \int_0^t\psi(s)ds\right)A(R,\rho,0)+C_{double}\cdot C R^{-2}\cdot q\int_0^t\exp\left(2q\cdot \int_s^t\psi(r)dr\right)A(R,\rho,s)ds\\
		&\quad \qquad+ \int_0^t\exp\left(2q\cdot \int_s^t\psi(r)dr\right)\left\|H_4\right\|_{L^{2q}}^{2q} ds,
	\end{align*}
	where we used the definition of $A$ and the fact that we can cover $B_{2R}(x)$ by $C_{double}$ balls of radius $R$.
	By taking the supremum over all $x\in M$ on the left hand side, we conclude
	\begin{align*}
		A(R,\rho,t)
		&\leq \exp\left(2q\cdot \int_0^t\psi(r)dr\right)A(R,\rho,0)\\&\qquad+C_{double}\cdot C R^{-2}\cdot q\int_0^t\exp\left(2q\cdot \int_s^t\psi(r)dr\right)A(R,\rho,s)ds\\
		&\qquad+ \int_0^t\exp\left(2q\cdot \int_s^t\psi(r)dr\right)\left\|H_4\right\|_{L^{2q}}^{2q} ds.
	\end{align*}
	By a variant of Gronwall's lemma (cf. \cite{Mit91}*{p.~356}), we get
	\begin{align*}
		A(R,\rho,t)
		&\leq \exp\left(2q\cdot \int_0^t\psi(r)dr\right)A(R,\rho,0)+
		\int_0^t\exp\left(2q\cdot \int_s^t\psi(r)dr\right)\left\|H_4\right\|_{L^{2q}}^{2q} ds\\
		&\qquad +\gamma(t)\int_0^t \alpha(t)\beta(s)\exp\left(\int_0^t\beta(r)\gamma(r)dr\right)ds,
	\end{align*}
	where
	\begin{align*}
		\alpha(t)&=\exp\left(2q\cdot \int_0^t\psi(r)dr\right)A(R,\rho,0)+
		\int_0^t\exp\left(2q\cdot \int_s^t\psi(r)dr\right)\left\|H_4\right\|_{L^{2q}}^{2q} ds,\\
		\beta(t)&=C_{double}\cdot C R^{-2}\cdot q\cdot\exp\left(-2q\cdot \int_0^s\psi(r)dr\right),\\
		\gamma(t)&=\exp\left(2q\cdot \int_0^t\psi(r)dr\right).
	\end{align*}
	Letting $\rho\to0$ and $R\to\infty$ and using $p=2q$, we get
	\begin{align*}
		\left\|u(t)\right\|_{L^{p}}^{p}
		&=\left\|F(t)\right\|_{L^{q}}^{q} \\*
		&\leq \exp\left(2q\cdot \int_0^t\psi(r)dr\right)\left\|F(0)\right\|_{L^{q}}^q+
		\int_0^t\exp\left(2q\cdot \int_s^t\psi(r)dr\right)\left\|H_4\right\|_{L^{2q}}^{2q} ds\\
		&\leq \exp\left(p\cdot \int_0^t\psi(r)dr\right)\left\|u(0)\right\|_{L^{p}}^{p}+
		\int_0^t\exp\left(p\cdot \int_s^t\psi(r)dr\right)\left\|H_4\right\|_{L^{p}}^{p} ds.
	\end{align*}
	With
	\begin{align*}
		x(t)=\exp\left(\int_0^t\psi(r)dr\right)\left\|u(0)\right\|_{L^{p}},\qquad
		y(t)=\left(\int_0^t \left[(\exp\left(\int_s^t\psi(r)dr\right)\left\|H_4\right\|_{L^p}\right]^pds\right)^{\frac{1}{p}}
	\end{align*}
	and the elementary inequality
	\begin{align*}
		(x(t)^p+y(t)^p)^{\frac{1}{p}}\leq |x(t)|+|y(t)|,
	\end{align*}
	we finally get
	\begin{align*}
		\left\|u(t)\right\|_{L^{p}}\leq \exp\left(\int_0^t\psi(r)dr\right)\left\|u(0)\right\|_{L^{p}}+\left(\int_0^t\exp\left(p\cdot\int_s^t\psi(r)dr\right) \left\|H_4\right\|_{L^{p}}^pds\right)^{\frac{1}{p}}
	\end{align*}
	for all $p<\infty$ with the function $\psi$ chosen independently of $p$. 
	This finishes the proof of (i).\\
	For the proof of (ii), we proceed as in the first part and we also use the notation from the beginning of the proof. We have to deal with the additional terms
	\begin{align*}
		2q\int_M \langle \nabla*(H_5(u))+\nabla*(H_6(\nabla u)),u\rangle F_{\rho}^{q-1}\phi^2\dv.
	\end{align*}
	For the first term, we proceed as follows:
	\begin{align*}
		2q\int_M 
			&\langle \nabla*(H_5(u)),u\rangle F_{\rho}^{q-1}\phi^2\dv \\
			&\leq C q \abs{\int_M\langle H_5(u),\nabla u\rangle F_{\rho}^{q-1}\phi^2\dv} + C q\abs{q-1}\abs{\int_M \langle H_5(u),u\rangle \nabla F\cdot F_{\rho}^{q-2}\phi^2\dv} \\
			&\qquad +Cq\abs{\int_M \langle H_5(u),u\rangle F_{\rho}^{q-1}\nabla \phi\cdot \phi\dv}\\
			&\leq Cq\epsilon_1\int_M g^{ab}\langle\nabla_au,\nabla_bu\rangle F_{\rho}^{q-1}\phi^2\dv + C(\epsilon_1)q\int_M |H_5|^2F_{\rho}^{q}\phi^2\dv\\
			&\qquad +\epsilon_1\cdot C q\abs{q-1}\int_Mg^{ab}\nabla_aF \nabla_bF F_{\rho}^{q-2}\phi^2\dv+C(\epsilon_1)q\abs{q-1}\int_M|H_5|^2F_{\rho}^{q}\phi^2dv\\
			&\qquad +Cq\int_M |H_5|^2F_{\rho}^q\phi^2\dv+Cq\int_M F_{\rho}^q|\nabla\phi|^2\dv\\
			&=Cq\epsilon_1\int_M g^{ab}\langle\nabla_au,\nabla_bu\rangle F_{\rho}^{q-1}\phi^2\dv+\epsilon_1\cdot Cq\abs{q-1}\int_Mg^{ab}\nabla_aF \nabla_bF F_{\rho}^{q-2}\phi^2\dv\\
			&\qquad+ C(\epsilon_1)q^2\int_M |H_5|^2F_{\rho}^{q}\phi^2\dv +Cq\int_M F_{\rho}^q|\nabla\phi|^2\dv.
	\end{align*}
	The second term is treated as
	\begin{align*}
		2q
			&\int_M\langle\nabla*(H_6(\nabla u)),u\rangle F_{\rho}^{q-1}\phi^2\dv \\
			&= 2q \int_M H_6(\nabla u) * (\nabla u) F_{\rho}^{q-1}\phi^2\dv + 2q(q-1) \int_M H_6 (\nabla u) * u (\nabla F)\cdot F_{\rho}^{q-2}\phi^2\dv \\
			&\qquad +4q\int_M H_6 (\nabla u) * u F_{\rho}^{q-1}\nabla \phi\cdot\phi \dv\\
			&\leq Cq^2 \left\|H_6\right\|_{L^{\infty}}\int_M g^{ab}\langle\nabla_au,\nabla_bu\rangle F^{q-1}_{\rho}\phi^2\dv + Cq \left\|H_6\right\|_{L^{\infty}}\int_M F_{\rho}^{q}|\nabla\phi|^2\dv.
	\end{align*}
	Summing with the terms from part (i), the right hand side of the estimate \eqref{eq: part (i)}, we obtain
	\begin{align*}
	\partial_t
		&\int_M F_{\rho}^q\phi^2\dv \\
		&\leq -[q(q-1) - \epsilon_1 C q \abs{q - 1} -Cq^2\left\|H_3\right\|_{L^\infty}]\int_Mg^{ab}\nabla_aF \nabla_bF F_{\rho}^{q-2}\phi^2\dv
		\\&\qquad
		-2 q \left(1-\delta-\frac{3}{2} \epsilon_1 - C \norm{H_3}_{L^\infty}-Cq\left\|H_6\right\|_{L^{\infty}}\right)
			\int_Mg^{ab}\langle\nabla_au,\nabla_bu\rangle F_{\rho}^{q-1}\phi^2\dv\\
		&\qquad + C(\left\|H_1\right\|_{L^\infty}+
		\left\|\nabla g\right\|_{L^{\infty}}^2+\left\|H_2\right\|_{L^{\infty}}^2+q\cdot\left\|H_5\right\|_{L^{\infty}}^2+1)\cdot q\int_MF_{\rho}^q\phi^2\dv\\
		&\qquad+C(1+\left\|H_3\right\|_{L^{\infty}}+\left\|H_6\right\|_{L^{\infty}})\cdot q\int_M |\nabla \phi|^2F_{\rho}^q\dv+\int_M |H_4|^{2q}\phi^2\dv.
	\end{align*}
Again, the constants appearing in this estimate only depend on $\Lambda,n$ and $\epsilon_1$.		
		Because there are terms containing $H_5$ and $H_6$ which are quadratic in $q$, we are not able to prove an estimate uniform in $q$ for all large $q$. However, assuming additionally a bound $2q=p\leq p_1<\infty$, we may proceed as in part (i) to finish the proof of part (ii). 
\end{proof}

\subsection{Short-time estimates for the heat flow of the modified Lichnerowicz Laplacian}
In this section, we establish short-time estimates for solutions of the linear heat equation
\begin{align}\label{eq : a linear heat equation}
	\partial_tk+\Delta_{L,g,h}k
		= 0.
\end{align}

The involved scalar products, covariant derivatives and curvatures appearing here are induced by $h$ and hence also depend on time.
\begin{lem}\label{lem : short-time estimates}
	Let  $\hat{h}$, $\U$ and $\F$ be as in Proposition \ref{prop: near metrics}.
	Let $g_t \in \U$ and $h_t \in \U \cap \F$ be 1-parameter families of metrics on $M$, such that
	\begin{align*}
		|\nabla^{h_t} g_t|_{h_t} 		\leq \Lambda, 
		\qquad \left| \d_t h_t \right|_{h_t} 
		  \leq \Lambda,
	\end{align*}
	for all $t\geq0$ and a time-independent constant $\Lambda>0$. \\
	Let $k_t$, $t\in [0,T]$ be a solution of equation \eqref{eq : a linear heat equation} with initial data $k_0$ and $1<p_0<p_1<\infty$.
	\begin{itemize}
		\item[(i)] If $p\in [p_0,\infty)$, $l\in\left\{0,1\right\}$ and $k_0\in W^{l,p}$, then $k(t)\in W^{l,p}$ for all $t\geq0$ and we have
		\begin{align*}
		\left\|k_t\right\|_{W^{l,p}}\leq e^{C\cdot t}\left\|k_0\right\|_{W^{l,p}}\end{align*}
		for some constant $C = C(\Lambda, n,\U,p_0,T,l)$.
		\item[(ii)] If $p\in [p_0,p_1]$ and $k_0\in W^{2,p}$, then $k_t\in W^{2,p}$ for all $t\geq 0$ and we have
		\begin{align*}
		\left\|k_t\right\|_{W^{2,p}}\leq e^{C\cdot t}\left\|k_0\right\|_{W^{2,p}}\end{align*}
		for some constant $C=C(\Lambda, n,\U,p_0,p_1,T,l)$.		
		\item[(iii)] If $p\in [p_0,p_1]$ and $k_0\in W^{1,p}$, then $k_t\in W^{2,p}$ for all $t>0$ and we have
		\begin{align*}
		\left\|k_t\right\|_{W^{1,p}}+C_1\cdot t^{1/2}\cdot \left\|\nabla^2k_t\right\|_{L^p}\leq e^{C_2\cdot t}\left\|k_0\right\|_{W^{1,p}}\end{align*}
		for some constants $C_i=C_i(\Lambda, n,\U,p_0, p_1,T)$, $i=1,2$.
		\item[(iv)] If $p\in [p_0,p_1]$ and $k_0\in L^{p}$, then $k_t\in W^{2,p}$ for all $t>0$ and we have
		\begin{align*}
		\left\|k_t\right\|_{L^{p}}+C_1\cdot t^{1/2}\cdot \left\|\nabla k_t\right\|_{L^p}+C_2\cdot t\cdot \left\|\nabla^2k_t\right\|_{L^p} 
		\leq e^{C_3\cdot t}\left\|k_0\right\|_{L^p}
		\end{align*}
		for some constants $C_i=C_i(\Lambda, n,\U,T, p_0,p_1)$, $i=1,2,3$.
		\item[(v)] If $p\in [p_0,\infty)$, $q\in [p,\infty)$, $k_0\in L^{q}$ and $\nabla k_0\in L^{p}$, then $\nabla k_t\in L^{p}$ for all $t\in [0,T]$ and we have
		\begin{align*}
			\left\|\nabla k_t\right\|_{L^{p}}
				\leq e^{C_1\cdot t}(\left\|\nabla k_0\right\|_{L^{p}}+ C_2t^{1/p}\left\| k_0\right\|_{L^{q}})
		\end{align*}
		for some constants $C_i=C_i(\Lambda, n,\U,T,p_0)$, $i=1,2$.
		\item[(vi)] If $p\in [p_0,p_1]$, $q\in [p,\infty)$, $k_0\in W^{1,q}$ and $\nabla^2 k_0\in L^{p}$, then $\nabla^2 k_t\in L^{p}$ for all $t\in [0,T]$ and we have
		\begin{align*}
			\left\|\nabla^2 k_t\right\|_{L^{p}}
				\leq e^{C_1\cdot t}(\left\|\nabla^2 k_0\right\|_{L^{p}}+C_2(t^{1/p}+t^{1/2})\left\|k_0\right\|_{W^{1,q}})
		\end{align*}
		for some constants $C_i=C_i(\Lambda, n,\U,T,p_0,p_1)$, $i=1,2$.
	\end{itemize}
\end{lem}
\begin{proof}
Let us first remark that because $g_t,h_t\in\mathcal{U}$, we have
\begin{align*}
&\frac{1}{\Lambda}h_t
			\leq g_t\leq \Lambda\cdot h_t,
			\qquad 
		\frac1{\Lambda} h_t 
			\leq \hat h 
			\leq \Lambda h_t,
\end{align*}
provided that $\Lambda>0$ is chosen sufficiently large. Thus, the assumptions of Theorem \ref{Lpmaxprinc} are satisfied. We can write equation \eqref{eq : a linear heat equation} as
\begin{align*}
\partial_tk=g^{ab}\nabla^2_{ab}k+	\hat{R}[k],\qquad 
\end{align*}
	where
	\[
		\hat{R}[k]
			=g^{-1}*g*R*k.
	\]
	To prove (i), we compute
	\begin{align}
		\partial_t|k|^2&=g^{ab}\nabla^2_{ab}|k|^2-2g^{ab}\langle\nabla_ak,\nabla_bk\rangle+2\langle \hat{R}[k]+\partial_t h*k,k\rangle \label{eq: evolution k squared} \\
		\begin{split}
		\partial_t|\nabla k|^2&=g^{ab}\nabla^2_{ab}|\nabla k|^2-2g^{ab}\langle\nabla_a\nabla k,\nabla_b\nabla k\rangle+2\langle [\nabla,g^{ab}\nabla^2_{ab}]k+ \nabla\hat{R}[k],\nabla k\rangle\\
		&\qquad +\langle   \partial_t h*\nabla k+\nabla \partial_t h*k,\nabla k\rangle.
		\end{split} \label{eq: evolution nabla k squared}
	\end{align}
	Because we have
	\begin{align*}
		[\nabla,g^{ab}\nabla^2_{ab}]k&=\nabla g^{-1}*\nabla^2 k+g^{-1}*R*\nabla k+g^{-1}*\nabla R*k,
	\end{align*}
	and $R$ and $\nabla R$ are bounded for $h\in\mathcal{F}$,
	part (i) follows from Theorem \ref{Lpmaxprinc} (i) applied to
	\begin{align*}
		u=k\in C^{\infty}(S^2M) \quad \text{ and } \quad u=(k,\nabla k)\in C^{\infty}(S^2M\oplus T^*M\otimes S^2M).
	\end{align*}
	It is convenient to prove (v) now. In this case, we apply Theorem \ref{Lpmaxprinc} (i) to $u=\nabla k\in C^{\infty}(S^2M)$ and regard the terms in \eqref{eq: evolution nabla k squared} containing $k$ as part of the inhomogeneity. 
	All these terms are of the form 
	\begin{equation} \label{eq: k terms, first kind}
		\nabla  \partial_t h*k, \quad \n g^{-1} * g * R * k, \quad g^{-1} * \n g * R * k, \quad g^{-1} * g * \nabla R * k.
	\end{equation}
	We have
	\begin{align} \label{eq: h terms, first kind}
		\left\|\nabla  \partial_t h\right\|_{L^r}\leq 
		C\left\| \partial_t h\right\|_{L^{\infty}}<C,\qquad	\left\| R\right\|_{W^{1,s}}
			\leq C<\infty
	\end{align}
	for all $r\in [p_0,\infty]$ and $s\in [1,\infty]$, with constants depending only on $\mathcal{U},\Lambda$ and $p_0$. Here, the first estimate follows from Proposition \ref{prop : kernel properties} and the fact that $T_h\mathcal{F}$ is finite-dimensional so that all the norms on it are equivalent. The second estimate follows again from Proposition \ref{prop : kernel properties} and the fact that every Ricci-flat ALE manifold is of order $-n+1$ \cite{BKN89}.
	
Thus, applying the short-time estimates (i) for the $L^{q}$-norm of $k$, shows that all terms of the form \eqref{eq: k terms, first kind} are actually in $L^{p}$. 
	Therefore, an application of Theorem \ref{Lpmaxprinc} (i) now proves (v). Note that the factor $t^{1/p}$ in the statement of (v) comes from the estimate 
\begin{align}\label{eq : inhomogeneity estimate}
\left\|e^{\int_s^t\psi(r)dr} H_4(s)\right\|_{L^p([0,t]\times M)}
\leq e^{Ct}\left\|H_4(s)\right\|_{L^p([0,t]\times M)}\leq e^{Ct}t^{1/p}\sup_{s\in [0,T]}\left\|H_4(s)\right\|_{L^p}
\end{align} 
for the expression involving $H_4$ in Theorem \ref{Lpmaxprinc}.

	For (ii), we additionally compute
	\begin{equation} \label{eq: evolution nabla quared k}
	\begin{split}
		\partial_t|\nabla^2k|^2&=g^{ab}\nabla^2_{ab}|\nabla^2k|^2-2g^{ab}\langle\nabla_a\nabla^2k,\nabla_b\nabla^2k\rangle+2\langle[\nabla^2,g^{ab}\nabla^2_{ab}]k+ \nabla^2\hat{R}[k],\nabla^2k\rangle\\
		&\qquad +\langle \partial_t h*\nabla^2k+\nabla^2\partial_th*k+\nabla\partial_th*\nabla k,\nabla^2 k\rangle.
	\end{split}
	\end{equation}
	Using 
	\begin{align*}
	[\nabla^2,g^{ab}\nabla^2_{ab}]k
		&=[\nabla^2,g^{ab}]\nabla^2_{ab}k+g^{ab}[\nabla^2,\nabla^2_{ab}]k\\
		&= \nabla(\nabla g^{-1}*\nabla^2k )+\nabla g^{-1}*\nabla^3k+ g^{-1}* \nabla^2 R*k \\
		&\qquad +g^{-1}* \nabla R*\nabla k+ g^{-1} * R*\nabla^2 k,
	\end{align*}
	we can rewrite equation \eqref{eq: evolution nabla quared k} as
	\begin{align*}
		\partial_t|\nabla^2k|^2
			&=g^{ab}\nabla^2_{ab}|\nabla^2k|^2-2g^{ab}\langle\nabla_a\nabla^2k,\nabla_b\nabla^2k\rangle + 2\langle\nabla(\nabla g^{-1}*\nabla^2k+ \nabla\hat{R}[k]),\nabla^2k\rangle \\
			&\qquad + 2\langle\nabla g^{-1}*\nabla^3k+g^{-1}*\nabla^2 R*k+g^{-1}*\nabla R*\nabla k+g^{-1}*R*\nabla^2 k,\nabla^2k\rangle\\
			&\qquad +\langle \partial_t h*\nabla^2k+\nabla(\nabla\partial_th*k)+\nabla\partial_th*\nabla k,\nabla^2 k\rangle.
	\end{align*}
	Part (ii) follows from Theorem \ref{Lpmaxprinc} (ii), applied to 
	\begin{align*}
		u=(k,\nabla k,\nabla^2k)\in C^{\infty}(S^2M\oplus T^*M\otimes S^2M \oplus T^*M\otimes T^*M\otimes S^2M).
	\end{align*}
	Here, we have to apply the extended version of the maximum principle because we need to deal with the terms $\nabla(\nabla g^{-1}*\nabla^2k+ \nabla\hat{R}[k])$ and $\nabla(\nabla\partial_th*k)$ without using second derivatives of $g^{-1}$ and $\partial_th$, respectively.
		For part (iii), we first compute
	\begin{align*}
		\partial_t|\nabla^2(At^{\frac{1}{2}}k)|^2&=\partial_t (A^2t|\nabla^2k|^2)\\
		&=A^2|\nabla^2k|^2+A^2t\left( g^{ab}\nabla^2_{ab}|\nabla^2k|^2-2g^{ab}\langle\nabla_a\nabla^2k,\nabla_b\nabla^2k\rangle\right)\\
		&\qquad +2A^2t\left(\langle\nabla(\nabla g^{-1}*\nabla^2k+ \nabla\hat{R}[k])+\nabla g^{-1}*\nabla^3k+\nabla R*k+R*\nabla k,\nabla^2k\rangle\right)\\
		&\qquad +A^2t\left(\langle \partial_t h*\nabla^2k+\nabla^2\partial_th*k+\nabla\partial_th*\nabla k,\nabla^2 k\rangle\right)\\
		&=A^2|\nabla^2k|^2+\left( g^{ab}\nabla^2_{ab}|\nabla^2(At^{\frac{1}{2}}k)|^2-2g^{ab}\langle\nabla_a\nabla^2(At^{\frac{1}{2}}k),\nabla_b\nabla^2(At^{\frac{1}{2}}k)\rangle\right)\\
		&\qquad+2\langle\nabla(\nabla g^{-1}*\nabla^2(At^{\frac{1}{2}}k)+ At^{\frac{1}{2}}\nabla\hat{R}[k]),\nabla^2(At^{\frac{1}{2}}k)\rangle \\
		&\qquad +\langle\nabla g^{-1}*\nabla^3(At^{\frac{1}{2}}k)+At^{\frac{1}{2}}\nabla R*k+At^{\frac{1}{2}}R*\nabla k,\nabla^2(At^{\frac{1}{2}}k)\rangle\\
		&\qquad +\langle \partial_t h*\nabla^2(At^{\frac{1}{2}}k)+At^{\frac{1}{2}}\nabla^2\partial_th*k+At^{\frac{1}{2}}\nabla\partial_th*\nabla k,\nabla^2(At^{\frac{1}{2}}k)\rangle.
	\end{align*}
	Part (iii) then also follows from Theorem \ref{Lpmaxprinc} (ii), applied to 
	\begin{align*}
		u=(k,\nabla k,\nabla^2(At^{\frac{1}{2}}k))\in C^{\infty}(S^2M\oplus T^*M\otimes S^2M \oplus T^*M\otimes T^*M\otimes S^2M),
	\end{align*}
	where $A$ has to be chosen small in dependence of $p_0$.
	Similarly as (iii), part (iv)
	 follows from Theorem \ref{Lpmaxprinc} (ii), applied to 
	\begin{align*}
		u=(k,\nabla (At^{\frac{1}{2}}k),\nabla^2(Bt k))\in C^{\infty}(S^2M\oplus T^*M\otimes S^2M \oplus T^*M\otimes T^*M\otimes S^2M),
	\end{align*}
	where $A$ has to be chosen small in dependence of $p_0$ and $B$ has to be chosen small in dependence of $p_0$ and $A$. The details are left to the reader.
	
It remains to prove (vi). It seems natural to argue as in the proof of (v) and to regard the terms in the evolution of $\nabla^2k$ which contain $k$ and $\nabla k$ as a part of the inhomogeneity. However, in this case, one of the inhomogeneous terms coming from $\n^2\hat{R}[k]$ would be of the form $\n^2(g^{-1}* g) * R * k$ and this can not be covered by Theorem \ref{Lpmaxprinc} without assumptions on $\n^2g$.
To overcome this problem, let us first work with the simpler equation
\begin{align}
\label{eq : a simpler linear heat equation}
\partial_t\varphi=g^{ab}\nabla^2_{ab}\varphi.
\end{align}
By removing some of the curvature terms in the above proofs, we see that the estimates in (i)-(v) of this lemma do also hold for solutions of \eqref{eq : a simpler linear heat equation}. Now we are going to show that (vi) holds for these solutions as well. We see that 
	\begin{align*}
		\partial_t|\nabla^2\varphi|^2
			&=g^{ab}\nabla^2_{ab}|\nabla^2\varphi|^2-2g^{ab}\langle\nabla_a\nabla^2\varphi,\nabla_b\nabla^2\varphi\rangle + 2\langle\nabla(\nabla g^{-1}*\nabla^2\varphi),\nabla^2\varphi\rangle \\
			&\qquad + 2\langle\nabla g^{-1}*\nabla^3\varphi+g^{-1}*\nabla^2 R*\varphi+g^{-1}*\nabla R*\nabla \varphi+g^{-1}*R*\nabla^2 \varphi,\nabla^2\varphi\rangle\\
			&\qquad +\langle \partial_t h*\nabla^2\varphi+\nabla(\nabla\partial_th*\varphi)+\nabla\partial_th*\nabla \varphi,\nabla^2 \varphi\rangle,
	\end{align*}
	where in comparison to the respective expression for $		\partial_t|\nabla^2{k}|^2$ further above, the critical term dropped.
Now we regard all terms which contain $\varphi$ and $\nabla \varphi$ as a part of the inhomogeneity. 
	All these terms are of the form
	\begin{align} \label{eq: higher order mixed terms}
		\nabla(\nabla\partial_th* \varphi),\qquad \nabla\partial_th*\nabla \varphi,\qquad g^{-1} * \nabla^2 R*\varphi,\qquad g^{-1}*\nabla R*\nabla \varphi.
	\end{align}
Similarly as in \eqref{eq: h terms, first kind},
	\begin{align*}
		\left\|\nabla  \partial_t h\right\|_{W^{1,r}}\leq C		\left\|\partial_t h\right\|_{L^{\infty}}<C,\qquad		
		\left\| R\right\|_{W^{2,s}}\leq C<\infty
	\end{align*}
	for all $r\in [p_0,\infty]$ and $s\in [1,\infty]$, with constants depending only on $\mathcal{U},\Lambda$ and $p_0$.
	Applying the short-time estimates (i) for the $W^{1,q}$-norm of $k$ thus shows that all terms in \eqref{eq: higher order mixed terms}  are in $L^p$. 	Therefore, an application of Theorem \ref{Lpmaxprinc} (ii) and \eqref{eq : inhomogeneity estimate} now proves (vi) for solutions of \eqref{eq : a simpler linear heat equation} (even without the $t^{1/2}$-factor).
	
For $t\geq s$, define the propagation operator $P(g,h)_{s\to t}(\varphi_s):=\varphi|_{r=t}$, where $\varphi$ is the solution of the initial value problem
\begin{align*}
\partial_r\varphi=g^{ab}\nabla^2_{ab}\varphi,\qquad \varphi|_{r=s}=\varphi_s.
\end{align*}
By the Duhamel principle, the solution of the initial value problem
\begin{align*}
\partial_tk=g^{ab}\nabla^2_{ab}k+\hat{R}[k],\qquad k|_{t=0}=k_0
\end{align*}
is given by
\begin{align*}
k_t=P(g,h)_{0\to t}k_0+\int_0^tP(g,h)_{s\to t}\hat{R}[k]_sds.
\end{align*}
Because (vi) (without the $t^{1/2}$-factor) holds for solutions of \eqref{eq : a simpler linear heat equation}, we have
\begin{align*}
\left\|\n^2P(g,h)_{0\to t}k_0\right\|_{L^p}\leq e^{C\cdot t}(\left\|\nabla^2 k_0\right\|_{L^{p}}+Ct^{1/p}\left\|k_0\right\|_{W^{1,q}})
\end{align*}
 and because (iii) holds as well, 
\begin{align*}
\left\| \nabla^2 \int_0^tP(g,h)_{s\to t}\hat{R}[k]_sds\right\|_{L^p}&\leq C\int_0^t(t-s)^{-1/2}e^{C(t-s)}\left\| \hat{R}[k]_s\right\|_{W^{1,p}}ds\\
&\leq  Ct^{1/2}e^{Ct}\sup_{s\in[0,t]}\left\| \hat{R}[k]_s\right\|_{W^{1,p}}\\
&\leq  Ct^{1/2}e^{Ct}\sup_{s\in[0,t]}\left\| R\right\|_{W^{1,r}}\left\|k_s\right\|_{W^{1,q}}\\
&\leq  Ct^{1/2}e^{Ct}\left\|k_0\right\|_{W^{1,q}},
\end{align*}
where $\frac{1}{r}=\frac{1}{p}-\frac{1}{q}$.
Here, we used the H\"{o}lder inequality in the third estimate and part (i) of the lemma in the last one. Together, we get the claimed estimate for $\nabla^2k_t$.
\end{proof}
\begin{lem}\label{lem : difference of heat flows}
Let $\U$ and $\F$ be as in Proposition \ref{prop: near metrics}.
Let $g_t, \tilde{g}_t \in \U$ and $h_t, \tilde{h}_t \in \U \cap \F$ be 1-parameter families of metrics on $M$, such that
	\begin{align*}
					|\nabla^{h_t} g_t|_{h_t} 
			\leq \Lambda, \qquad
		\left| \d_t h_t \right|_{h_t} 
		\leq \Lambda,  \qquad
							|\nabla^{\tilde{h}_t} \tilde{g}_t|_{\tilde{h}_t} 
			\leq \Lambda, \qquad
		\left| \d_t \tilde{h}_t \right|_{\tilde{h}_t} 
		 \leq \Lambda
	\end{align*}
	for all $t \geq 0$ and a time-independent constant $\Lambda>0$.  
	Let $1<p_0\leq p\leq p_1<\infty$ and suppose that $\left\|g-\tilde{g}\right\|_{W^{1,\infty}}<\epsilon$ where $\epsilon=\epsilon(p_0,p_1)>0$ is a small constant.
	Let $k_t$ $\tilde{k}_t$, $t\in [0,T]$ be solutions of the evolution equations
	\begin{align*}
		\partial_tk+\Delta_{L,g,h}k=0,\qquad \partial_t\tilde{k}+\Delta_{L,\tilde{g},\tilde{h}}\tilde{k}=0
	\end{align*}
	with initial data $k_0$, $\tilde{k}_0$, respectively.
	In the following, all covariant derivatives and norms are taken with respect to $h_t$.
	\begin{itemize}
		\item[(i)] If $l\in\left\{0,1,2\right\}$ and $k_0,\tilde{k}_0\in W^{l,p}$, then $k_t-\tilde{k}_t\in W^{l,p}$ for all $t\geq0$ and we have
		\begin{align*}
		\left\|k_t-\tilde{k}_t\right\|_{W^{l,p}}
			&\leq e^{C_1\cdot t}\left\|k_0-\tilde{k}_0\right\|_{W^{l,p}}\\
			&\qquad+\sup_{s\in [0,t]}\left(\left\|g_s-\tilde{g}_s\right\|_{W^{1,\infty}}+\left\|h_s-\tilde{h}_s\right\|_{W^{2+l,\infty}}\right)e^{C_2\cdot t}\left\|\tilde{k}_0\right\|_{W^{l,p}}
		\end{align*}
		for some constants $C_i=C_i(\Lambda,n,\mathcal{U},\epsilon,p_0,p_1, T, l)$, $i=1,2$.
		\item[(ii)] If $k_0,\tilde{k}_0\in W^{1,p}$, then $k_t-\tilde{k}_t\in W^{2,p}$ for all $t>0$ and we have
		\begin{align*}\left\|k_t-\tilde{k}_t\right\|_{W^{1,p}}&+C_1\cdot t^{1/2}\cdot \left\|\nabla^2(k_t-\tilde{k}_t)\right\|_{L^p}\leq e^{C_2\cdot t}\left\|k_0-\tilde{k}_0\right\|_{W^{1,p}}\\
			&\qquad+\sup_{s\in [0,t]}\left(\left\|g_s-\tilde{g}_s\right\|_{W^{1,\infty}}+\left\|h_s-\tilde{h}_s\right\|_{W^{4,\infty}}\right)e^{C_3\cdot t}\left\|\tilde{k}_0\right\|_{W^{1,p}}
		\end{align*}		
		for some constants $C_i=C_i(\Lambda,n,\mathcal{U}, \epsilon,p_0, p_1,T)$, $i=1,2,3$.
		\item[(iii)] If $k_0,\tilde{k}_0\in L^{p}$, then $k_t-\tilde{k}_t\in W^{2,p}$ for all $t>0$ and we have
		\begin{align*}&\left\|k_t-\tilde{k}_t\right\|_{W^{1,p}}+C_1\cdot t^{1/2}\cdot \left\|\nabla (k_t-\tilde{k}_t)\right\|_{L^p}+C_2\cdot t\cdot \left\|\nabla^2(k_t-\tilde{k}_t)\right\|_{L^p}\\
			&	\leq e^{C_3\cdot t}\left\|k_0-\tilde{k}_0\right\|_{L^p}
			+\sup_{s\in [0,t]}\left(\left\|g_s-\tilde{g}_s\right\|_{W^{1,\infty}}+\left\|h_s-\tilde{h}_s\right\|_{W^{4,\infty}}\right)e^{C_4\cdot t}\left\|\tilde{k}_0\right\|_{L^p}
		\end{align*}
		for some constants $C_i=C_i(\Lambda,n,\mathcal{U},\epsilon,p_0, p_1,T)$, $i=1,2,3,4$.		
		
		\item[(iv)] If $q\in [p,p_1]$, $k(0),\tilde{k}_0\in L^{q}$ and $\nabla k_0,\nabla \tilde{k}_0\in L^{p}$, then $\nabla( k_t-\tilde{k}_t)\in L^{p}$ for all $t\in [0,T]$ and we have
		\begin{align*}
		&\left\|\nabla (k_t-\tilde{k}_t)\right\|_{L^{p}}
			\leq e^{C_1\cdot t}\left(\left\|\nabla (k_0-\tilde{k}_0)\right\|_{L^{p}}+C_2t^{1/p}\left\| k_0-\tilde{k}_0\right\|_{L^{q}}\right)\\
			&\qquad\qquad+\sup_{s\in [0,t]}\left(\left\|g_s-\tilde{g}_s\right\|_{W^{1,\infty}}+\left\|h_s-\tilde{h}_s\right\|_{W^{3,\infty}}\right)e^{C_1\cdot t}\left(\left\|\nabla\tilde{k}_0\right\|_{L^{p}}+C_2t^{1/p}\left\|\tilde{k}_0\right\|_{L^{q}}\right)
		\end{align*}
		for some constants $C_i=C_i(\Lambda,n,\mathcal{U},\epsilon, p_0,p_1,T)$, $i=1,2$.
		\item[(v)] If $q\in [p,p_1]$, $k_0,\tilde{k}_0\in W^{1,q}$ and $\nabla^2 k_0,\nabla^2 \tilde{k}_0\in L^{p}$, then $\nabla^2( k_t-\tilde{k}_t)\in L^{p}$ for all $t\in [0,T]$ and we have
		\begin{align*}
		&\left\|\nabla^2 (k_t-\tilde{k}_t)\right\|_{L^{p}}
			\leq e^{C_1\cdot t}\left(\left\|\nabla^2 (k_0-\tilde{k}_0)\right\|_{L^{p}}+C_2(t^{1/p}+t^{1/2}) \left\| k_0-\tilde{k}_0\right\|_{W^{1,q}}\right)\\
			&\quad+\sup_{s\in [0,t]}\left(\left\|g_s-\tilde{g}_s\right\|_{W^{1,\infty}}+\left\|h_s-\tilde{h}_s\right\|_{W^{4,\infty}}\right) e^{C_1\cdot t}
			\left(\left\|\nabla^2\tilde{k}_0\right\|_{L^{p}}+C_2(t^{1/p}+t^{1/2})\left\|\tilde{k}_0\right\|_{W^{1,q}}\right)
		\end{align*}
		for some constants $C_i=C_i(\Lambda,n,\mathcal{U},\epsilon, p_0,p_1,T)$, $i=1,2$.
	\end{itemize}
\end{lem}
\begin{proof}
Let us first remark that because $g_t,\tilde{g}_t,h_t,\tilde{h}_t\in\mathcal{U}$, we have
\begin{align*}
&\frac{1}{\Lambda}h_t
			\leq g_t\leq \Lambda\cdot h_t,
			\qquad 
		\frac1{\Lambda} h_t 
			\leq \hat h 
			\leq \Lambda h_t,\\
			&\frac{1}{\Lambda}\tilde{h}_t
			\leq \tilde{g}_t\leq \Lambda\cdot \tilde{h}_t,
			\qquad 
		\frac1{\Lambda} \tilde{h}_t 
			\leq \hat h 
			\leq \Lambda \tilde{h}_t,
\end{align*}
provided that $\Lambda>0$ is chosen sufficiently large. Thus, the assumptions of Theorem \ref{Lpmaxprinc} are satisfied. 
Because $T_h\mathcal{F}$ is finite-dimensional, all the norms on it are equivalent. By Proposition \ref{prop : kernel properties} and the assumption $|\partial_th_t|_{h_t}\leq\Lambda$, we thus get for every $k\in\N_0$ and $r\in(1,\infty]$ a bound on the $W^{k,r}$-norm of $\partial_th_t$. We will use this fact frequently in the proof.

	Since there are several metrics involved in this proof, let us use the notation
	\[
		R_{g,h}[k]_{ij}
			:= -k_{ab}g^{ka}h^{lb}g_{ip}h^{pq}R_{jklq}-k_{ab}g^{ka}h^{lb}g_{jp}h^{pq}R_{iklq}
			= g^{-1} * g * R * k.
	\]
	We have the evolution equations
	\begin{align*}
		\partial_tk
			&=g^{ab}\nabla^2_{ab}k+R_{g,h}[k],\\
		\partial_t\tilde{k}
			&=\tilde{g}^{ab}\tilde{\nabla}^2_{ab}\tilde{k}+R_{\tilde{g},\tilde{h}}[\tilde{k}]=g^{ab}\nabla^2_{ab}\tilde{k}+(\tilde{g}^{ab}-g^{ab}){\nabla}^2_{ab}\tilde{k}+\tilde{g}^{ab}(\tilde{\nabla}^2_{ab}-\nabla^2_{ab})(\tilde{k})+R_{\tilde{g},\tilde{h}}[\tilde{k}] \\
			&=g^{ab}\nabla^2_{ab}\tilde{k}+\nabla_a[(\tilde{g}^{ab}-g^{ab}){\nabla}_{b}\tilde{k}]-[\nabla_a(\tilde{g}^{ab}-g^{ab})]\nabla_b\tilde{k}+\tilde{g}^{ab}(\tilde{\nabla}^2_{ab}-\nabla^2_{ab})(\tilde{k})+R_{\tilde{g},\tilde{h}}[\tilde{k}], \\
		\partial_t(k-\tilde{k})
			&= g^{ab}\nabla^2_{ab}(k-\tilde{k})+(g^{ab}-\tilde{g}^{ab})\nabla^2_{ab}\tilde{k}
		+\tilde{g}^{ab}(\nabla^2_{ab}-\tilde{\nabla}^2_{ab})(\tilde{k})+R_{g,h}[k]-R_{\tilde{g},\tilde{h}}[\tilde{k}]\\
			&= g^{ab}\nabla^2_{ab}(k-\tilde{k})+\nabla_a[(g^{ab}-\tilde{g}^{ab})\nabla_{b}\tilde{k}]-[\nabla_a(g^{ab}-\tilde{g}^{ab})]\nabla_{b}\tilde{k}\\
			&\qquad+\tilde{g}^{ab}(\nabla^2_{ab}-\tilde{\nabla}^2_{ab})(\tilde{k})+R_{g,h}[k]-R_{\tilde{g},\tilde{h}}[\tilde{k}].
	\end{align*}
	Note that
	\begin{align*}
		R_{g,h}[k]-R_{\tilde{g},\tilde{h}}[\tilde{k}]&=(g^{-1}-\tilde{g}^{-1})*\tilde{g}*\tilde{R}*\tilde{k}
		+g^{-1}*(g-\tilde{g})*\tilde{R}*\tilde{k}\\
		&\qquad +g^{-1}*g*(R-\tilde{R})*\tilde{k}+g^{-1}*g*R*(k-\tilde{k})
	\end{align*}
	and
	\begin{align*}
		\nabla^2\tilde{k}-\tilde{\nabla}^2\tilde{k}
			&=\nabla^2(h-\tilde{h})*\tilde{h}^{-1}*\tilde{k}+\tilde{h}^{-2}*\nabla(h-\tilde{h})*\nabla(h-\tilde{h})*\tilde{k}+\tilde{h}^{-1}*\nabla(h-\tilde{h})*\nabla\tilde{k},\\
		R-\tilde{R}
			&=\nabla^2(h-\tilde{h})*\tilde{h}^{-1}+\tilde{h}^{-2}*\nabla(h-\tilde{h})*\nabla(h-\tilde{h}).
	\end{align*}
	Thus all these terms are easy to handle because they are at most first order in $\tilde{k}$ and $k-\tilde{k}$.
	We have the evolution equations
	\begin{align*}
	\partial_t|\tilde{k}|^2
		&\leq g^{ab}{\nabla}^2_{ab}|\tilde{k}|^2-2g^{ab}\langle\nabla_a\tilde{k},\nabla_b\tilde{k}\rangle 
		+2\langle\nabla_a[(\tilde{g}^{ab}-g^{ab}){\nabla}_{b}\tilde{k}],\tilde{k}\rangle\\&\qquad
		+2\langle\nabla_a[g^{ab}-\tilde{g}^{ab}]\nabla_b\tilde{k}+\tilde{g}^{ab}(\tilde{\nabla}^2_{ab}-\nabla^2_{ab})(\tilde{k})+R_{\tilde{g},\tilde{h}}[\tilde{k}] + \d_t h* \tilde k ,\tilde{k}\rangle,\\
	\partial_t|k-\tilde{k}|^2
		&\leq g^{ab}{\nabla}^2_{ab}|k-\tilde{k}|^2-2g^{ab}\langle\nabla_a(k-\tilde{k}),\nabla_b(k-\tilde{k})\rangle + 2\langle \nabla_a[(g^{ab}-\tilde{g}^{ab})\nabla_{b}\tilde{k}], k-\tilde{k}\rangle\\
		&\qquad + 2\langle -
		[\nabla_a(g^{ab}-\tilde g^{ab})]\nabla_{b}\tilde{k}+\tilde{g}^{ab}(\nabla^2_{ab}-\tilde{\nabla}^2_{ab})(\tilde{k})+R_{g,h}[k]-R_{\tilde{g},\tilde{h}}[\tilde{k}], k-\tilde{k}\rangle \\
		&\qquad + \langle \d_t h * (k - \tilde k), k-\tilde{k}\rangle.
	\end{align*}
	The crucial point in applying Theorem \ref{Lpmaxprinc} (i) and (ii) is to handle the off diagonal terms appropriately. For this purpose, we write
	\begin{align*}
		u
			=(u_1,u_2)
			=(k-\tilde{k},AB\tilde{k})\in C^{\infty}(S^2M\oplus S^2M),
	\end{align*}
	where
	\begin{align*}
			A:=\sup_{t\in[0,T]}\left\{\left\|g-\tilde{g}\right\|_{W^{1,\infty}}+\left\|h-\tilde{h}\right\|_{W^{2,\infty}}\right\},
	\end{align*}
	and $B>0$ is a constant which is yet to be chosen.
	Writing
	\begin{align*}
		|u|^2=|u_1|^2+|u_2|^2=|k-\tilde{k}|^2+A^2B^2|\tilde{k}|^2,
	\end{align*}
	we get the inequality
	\begin{align*}
		\partial_t|u|^2&\leq g^{ab}{\nabla}^2_{ab}|u|^2-2g^{ab}\langle\nabla_au,\nabla_bu\rangle +2\langle\nabla_a[(\tilde{g}^{ab}-g^{ab}){\nabla}_{b}u_2],u_2\rangle\\&\qquad
		+2\langle\nabla_a[g^{ab}-\tilde{g}^{ab}]\nabla_bu_2+\tilde{g}^{ab}(\tilde{\nabla}^2_{ab}-\nabla^2_{ab})(u_2)+R_{\tilde{g},\tilde{h}}[u_2],u_2\rangle\\
		&\qquad +\frac{2}{AB}\langle - \nabla_a[(g^{ab}-\tilde{g}^{ab})\nabla_{b}u_2]+ [\nabla_a(\tilde{g}^{ab}-g^{ab})]\nabla_{b}u_2+\tilde{g}^{ab}(\nabla^2_{ab}-\tilde{\nabla}^2_{ab})(u_2), u_1\rangle \\
		&\qquad + 2\langle R_{g,h}[k]-R_{\tilde{g},\tilde{h}}[\tilde{k}], u_1\rangle + C\abs{u}^2.
	\end{align*}
	Observe that for every $\epsilon>0$, we can choose $B>0$ so large that
	\begin{align*}
		\frac{1}{AB}|(g^{ab}-\tilde{g}^{ab})\nabla_{b}u_2|\leq \frac{1}{B}|\nabla u|\leq \epsilon |\nabla u|,
	\end{align*}
	where we used the definition of $A$.
	It is also straightforward to see that
	\begin{align*}
		\frac{1}{AB}\left(|[\nabla_a(\tilde{g}^{ab}-g^{ab})]\nabla_{b}u_2|+|\tilde{g}^{ab}(\nabla^2_{ab}-\tilde{\nabla}^2_{ab})(u_2)|\right)\leq C (|\nabla u_2|+|u_2|)
	\end{align*}
	where $C$ is independent of $A$ by the definition of $A$. Furthermore,
	\begin{align*}
		2 |R_{g,h}[k]-R_{\tilde{g},\tilde{h}}[\tilde{k}]|
			\leq C(|u_1|+|u_2|)\leq C|u|
	\end{align*}
	with $C$ independent of $A$.
	For this reason, we can write schematically
	\begin{align*}
		2\langle\nabla_a[(\tilde{g}^{ab}-g^{ab}){\nabla}_{b}u_2],u_2+\frac{1}{AB}u_1\rangle&=  \langle \nabla_a ( (H_3)^{ab}\nabla_b u),u\rangle,\\
		\langle \tilde{g}^{ab}(\tilde{\nabla}^2_{ab}-\nabla^2_{ab})(u_2),u_2+\frac{1}{AB}u_1\rangle&=\langle H_1(u)+H_2(\nabla u),u\rangle, \\
				\langle\nabla_a[g^{ab}-\tilde{g}^{ab}]\nabla_bu_2,u_2+\frac{1}{AB}u_1\rangle&=\langle \overline{H}_2(\nabla u),u\rangle,\\
		\langle R_{\tilde{g},\tilde{h}}[u_2],u_2\rangle +
		\langle R_{g,h}[k]-R_{\tilde{g},\tilde{h}}[\tilde{k}], u_1\rangle
		&=\langle \overline{H}_1(u),u\rangle,
	\end{align*}
	for  some endomorphisms $H_i$, $\overline{H}_i$ satisfying the conditions of Theorem \ref{Lpmaxprinc} (i), with $E$ being $E=S^2M\oplus S^2M$. Note that the scalar products are taken on $S^2M$ on the left hand side and on $S^2M\oplus S^2M$ on the right hand side. Applying Theorem \ref{Lpmaxprinc} (i) yields (i) for $l=0$.
	
	Before we continue estimating derivatives, we remark that for any $(2,0)$-tensor $v$, we have
	\begin{align*}
		[\nabla,v^{ab}\nabla^2_{ab}]k
		&=\nabla v*\nabla^2k+v*R*\nabla k+v*\nabla R*k,\\
		[\nabla^2,v^{ab}\nabla^2_{ab}]k&=
		\nabla(\nabla v*\nabla^2k)+\nabla v*\nabla^3k+v*\nabla^2R*k+v*\nabla R*\nabla k+v*R*\nabla^2 k,
	\end{align*}
	from which we conclude
	\begin{align*}
		\partial_t\nabla\tilde{k}&=g^{ab}\nabla^2_{ab}\nabla\tilde{k}+
		\nabla\left\{\tilde{g}^{ab}(\tilde{\nabla}^2_{ab}-\nabla^2_{ab})(\tilde{k})+R_{\tilde{g},\tilde{h}}[\tilde{k}]\right\}+\nabla_a[(\tilde{g}^{ab}-g^{ab})*\nabla_{b}\nabla\tilde{k}]\\
		&\qquad +\nabla\partial_th*\tilde{k}+\partial_th*\nabla\tilde{k}\\
		&\qquad +\nabla (g^{-1}-\tilde{g}^{-1})*\nabla^2\tilde{k}+(g^{-1}-\tilde{g}^{-1})*R*\nabla \tilde{k}+(g^{-1}-\tilde{g}^{-1})*\nabla R*\tilde{k},\\
		\partial_t\nabla(k-\tilde{k})&=
		g^{ab}\nabla^2_{ab}\nabla(k-\tilde{k})
		+\nabla\left\{\tilde{g}^{ab}(\nabla^2_{ab}-\tilde{\nabla}^2_{ab})(\tilde{k})+R_{g,h}[k]-R_{\tilde{g},\tilde{h}}[\tilde{k}]\right\}
		\\
		&\qquad +\nabla_a[(g^{ab}-\tilde{g}^{ab})*\nabla_{b}\nabla\tilde{k}]+\nabla\partial_th*(k-\tilde{k})+\partial_th*\nabla(k-\tilde{k})\\
		&\qquad +\nabla g^{-1}*\nabla^2(k-\tilde{k})+g^{-1}*R*\nabla (k-\tilde{k})+g^{-1}*\nabla R*(k-\tilde{k})
		\\
		&\qquad +\nabla (g^{-1}-\tilde{g}^{-1})*\nabla^2\tilde{k}+(g^{-1}-\tilde{g}^{-1})*R*\nabla \tilde{k}+(g^{-1}-\tilde{g}^{-1})*\nabla R*\tilde{k},\\
		\partial_t\nabla^2\tilde{k}&=g^{ab}\nabla^2_{ab}\nabla^2\tilde{k}+
		\nabla^2\left\{\tilde{g}^{ab}(\tilde{\nabla}^2_{ab}-\nabla^2_{ab})(\tilde{k})+R_{\tilde{g},\tilde{h}}[\tilde{k}]\right\}
		+\nabla_a[(\tilde{g}^{ab}-g^{ab})*\nabla_{b}\nabla^2\tilde{k}]
		\\&\qquad+\nabla^2\partial_th*\tilde{k}+\nabla\partial_th*\nabla\tilde{k}+\partial_th*\nabla^2\tilde{k}\\
		&\qquad+\nabla(\nabla (g^{-1}-\tilde{g}^{-1})*\nabla^2\tilde{k})+\nabla (g^{-1}-\tilde{g}^{-1})*\nabla^3\tilde{k}\\&\qquad+(g^{-1}-\tilde{g}^{-1})*\nabla^2R*\tilde{k}+(g^{-1}-\tilde{g}^{-1})*\nabla R*\nabla \tilde{k}+(g^{-1}-\tilde{g}^{-1})*R*\nabla^2 \tilde{k},\\
		\partial_t\nabla^2(k-\tilde{k})&=
		g^{ab}\nabla^2_{ab}\nabla^2(k-\tilde{k})
		+\nabla^2\left\{\tilde{g}^{ab}(\nabla^2_{ab}-\tilde{\nabla}^2_{ab})(\tilde{k})+R_{g,h}[k]-R_{\tilde{g},\tilde{h}}[\tilde{k}]\right\}
		\\
		&\qquad +\nabla_a[(g^{ab}-\tilde{g}^{ab})*\nabla_{b}\nabla^2\tilde{k}]\\
		&\qquad+\nabla^2\partial_th*(k-\tilde{k})+\nabla\partial_th*\nabla(k-\tilde{k})+\partial_th*\nabla^2(k-\tilde{k})\\
		&\qquad +\nabla g^{-1}*\nabla^2(k-\tilde{k})+g^{-1}*R*\nabla (k-\tilde{k})+g^{-1}*\nabla R*(k-\tilde{k})
		\\
		&\qquad +\nabla (g^{-1}-\tilde{g}^{-1})*\nabla^2\tilde{k}+(g^{-1}-\tilde{g}^{-1})*R*\nabla \tilde{k}+(g^{-1}-\tilde{g}^{-1})*\nabla R*\tilde{k}\\
		&\qquad+\nabla(\nabla (g^{-1}-\tilde{g}^{-1})*\nabla^2\tilde{k})+\nabla (g^{-1}-\tilde{g}^{-1})*\nabla^3\tilde{k}\\&\qquad+(g^{-1}-\tilde{g}^{-1})*\nabla^2R*\tilde{k}+(g^{-1}-\tilde{g}^{-1})*\nabla R*\nabla \tilde{k}+(g^{-1}-\tilde{g}^{-1})*R*\nabla^2 \tilde{k}\\
		&\qquad+\nabla(\nabla g^{-1}*\nabla^2(k-\tilde{k}))+\nabla g^{-1}*\nabla^3(k-\tilde{k})\\&\qquad+g^{-1}*\nabla^2R*(k-\tilde{k})+g^{-1}*\nabla R*\nabla(k-\tilde{k})+g^{-1}*R*\nabla^2 (k-\tilde{k}).
	\end{align*}
	In the following, we sketch to which expressions we have to apply Theorem \ref{Lpmaxprinc} (ii) in order to get all the other cases of the Lemma. 
	The details are left to the reader.
	
	The cases $l=1,2$ in (i) follow from applying Theorem \ref{Lpmaxprinc} (ii) to
	\begin{align*}
		u&=(k-\tilde{k},AB\tilde{k},\nabla(k-\tilde{k}),AB\nabla\tilde{k})\in C^{\infty}(S^2M^{\oplus2}, (T^*M\otimes S^2M)^{\oplus2}),\\
		A&:=\sup_{t\in[0,T]}\left\{\left\|g-\tilde{g}\right\|_{W^{1,\infty}}+\left\|h-\tilde{h}\right\|_{W^{3,\infty}}\right\}
	\end{align*}
	and to 
	\begin{align*}
		u&=(k-\tilde{k},AB\tilde{k},\nabla(k-\tilde{k}),AB\nabla\tilde{k},\nabla^2(k-\tilde{k}),AB\nabla^2\tilde{k}) \\
		&\qquad \in C^{\infty}\left((\R\oplus T^*M\oplus T^*M^{\otimes2})\otimes S^2M^{\oplus2}\right),
	\end{align*}
	where
	\begin{align*}
		A :=\sup_{t\in[0,T]}\left\{\left\|g-\tilde{g}\right\|_{W^{1,\infty}}+\left\|h-\tilde{h}\right\|_{W^{4,\infty}}\right\},
	\end{align*}
	and in both cases, $B>0$ is a constant which is suitably chosen.
	To prove (ii), we  apply Theorem \ref{Lpmaxprinc} (ii) to
	\begin{align*}
		u&=(k-\tilde{k},AB_1\tilde{k},\nabla(k-\tilde{k}),AB_1\nabla\tilde{k},AB_2t^{\frac{1}{2}}\nabla^2(k-\tilde{k}),AB_1B_2t^{\frac{1}{2}}\nabla^2\tilde{k}),\\
		A&:=\sup_{t\in[0,T]}\left\{\left\|g-\tilde{g}\right\|_{W^{1,\infty}}+\left\|h-\tilde{h}\right\|_{W^{4,\infty}}\right\},
	\end{align*}
	where $B_1>0$ is a large constant and $B_2>0$ is a small one.
	To prove (iii), we apply it to
	\begin{align*}
		u&=(k-\tilde{k},AB_1\tilde{k},B_2 t^{\frac{1}{2}}\nabla(k-\tilde{k}),AB_1 B_2t^{\frac{1}{2}} \nabla\tilde{k},B_3t\nabla^2(k-\tilde{k}),AB_1B_3t\nabla^2\tilde{k}),\\
		A&:=\sup_{t\in[0,T]}\left\{\left\|g-\tilde{g}\right\|_{W^{1,\infty}}+\left\|h-\tilde{h}\right\|_{W^{4,\infty}}\right\},
	\end{align*}
	where $B_1>0$ is a large constant and $B_2,B_3>0$ are small ones.
	Case (iv) follows from applying Theorem \ref{Lpmaxprinc} (ii) to
	\begin{align*}
		u&=(\nabla(k-\tilde{k}),AB\nabla\tilde{k}),\\
		A&:=\sup_{t\in[0,T]}\left\{\left\|g-\tilde{g}\right\|_{W^{1,\infty}}+\left\|h-\tilde{h}\right\|_{W^{3,\infty}}\right\},
	\end{align*}
	where $B>0$ is a large constant. Here, the terms $k-\tilde{k},AB\tilde{k}$ are treated as inhomogeneities which can be bounded using (i), cf. also the proof of the previous lemma.

It remains to prove (v). Analogous to part (iv), we would want to apply Theorem \ref{Lpmaxprinc} (ii) to
	\begin{align*}
		u&=(\nabla^2(k-\tilde{k}),AB\nabla^2\tilde{k}),\\
		A&:=\sup_{t\in[0,T]}\left\{\left\|g-\tilde{g}\right\|_{W^{1,\infty}}+\left\|h-\tilde{h}\right\|_{W^{4,\infty}}\right\},
	\end{align*}
	with a large constant $B>0$ and treat the terms $k-\tilde{k},AB\tilde{k},\nabla(k-\tilde{k}),AB\nabla\tilde{k}$ as inhomogeneities. However, as in the proof of Lemma \ref{lem : short-time estimates} (vi), we have problematic inhomogeneous terms. In the evolution of  $\nabla^2\tilde{k}$ and $\nabla^2(k-\tilde{k})$, we have the terms $\nabla^2I$, $\nabla^2J$, respectively, where
\begin{equation}\label{eq:define_I,J}
\begin{split}
	I&:=\tilde{g}^{ab}(\tilde{\nabla}^2_{ab}-\nabla^2_{ab})(\tilde{k})+R_{\tilde{g},\tilde{h}}[\tilde{k}],\\
	J&:=\tilde{g}^{ab}(\nabla^2_{ab}-\tilde{\nabla}^2_{ab})(\tilde{k})+R_{g,h}[k]-R_{\tilde{g},\tilde{h}}[\tilde{k}].
\end{split}
\end{equation}
The second derivatives of $I$ and $J$ contain tensor products of second derivatives of $g$ and $\tilde{g}$ with zero and first derivatives of $\tilde{k}$ and $k-\tilde{k}$.
Thus we can not apply Theorem \ref{Lpmaxprinc} directly without making assumptions on second derivatives of $g$ and $\tilde{g}$. To overcome this problem, we proceed similarly as in the proof of Lemma \ref{lem : short-time estimates} (vi) and first consider the reduced system 
\begin{equation}\label{eq:simpler_system}
\begin{split}
	\partial_t\tilde{\varphi}
					&=g^{ab}\nabla^2_{ab}\tilde{\varphi}+\nabla_a[(\tilde{g}^{ab}-g^{ab}){\nabla}_{b}\tilde{\varphi}]-[\nabla_a(\tilde{g}^{ab}-g^{ab})]\nabla_b\tilde{\varphi}, \\	
		\partial_t(\varphi-\tilde{\varphi})
			&= g^{ab}\nabla^2_{ab}(\varphi-\tilde{\varphi})+\nabla_a[(g^{ab}-\tilde{g}^{ab})\nabla_{b}\tilde{\varphi}]-[\nabla_a(g^{ab}-\tilde{g}^{ab})]\nabla_{b}\tilde{\varphi},
\end{split}
\end{equation}
where in comparison to the system on $\tilde{k}$ and $k-\tilde{k}$, the critical terms $I$ and $J$ are dropped.
It is clear from the above arguments that $\varphi_t-\tilde{\varphi}_t$
satisfies the estimates (i)-(iv) of this lemma (with $k$ and $\tilde{k}$ obviously being replaced by $\varphi$ and $\tilde{\varphi}$). To check (v), observe that if we compare the equations for $\partial_t\nabla^2\tilde{\varphi}$ and $\partial_t\nabla^2(\varphi-\tilde{\varphi})$ to those for $\partial_t\nabla^2\tilde{k}$ and $\partial_t\nabla^2(k-\tilde{k})$, the expressions $\nabla^2I$ and $\nabla^2J$ just drop out. Under the assumptions of the lemma, we can therefore apply Theorem \ref{Lpmaxprinc} (ii) to
	\begin{align*}
		u&=(\nabla^2(\varphi-\tilde{\varphi}),AB\nabla^2\tilde{\varphi}),\\
		A&:=\sup_{t\in[0,T]}\left\{\left\|g-\tilde{g}\right\|_{W^{1,\infty}}+\left\|h-\tilde{h}\right\|_{W^{4,\infty}}\right\},
	\end{align*}
	with a large constant $B>0$, where we treat the terms $\varphi-\tilde{\varphi},AB\tilde{\varphi},\nabla(\varphi-\tilde{\varphi}),AB\nabla\tilde{\varphi}$ as inhomogeneities. Using \eqref{eq : inhomogeneity estimate} in addition, we obtain estimate (v) (even without the $t^{1/2}$-factors) for $\varphi-\tilde{\varphi}$.

To handle the full system, define for $s\geq r$ the propagation operator \begin{align*}Q(g,h)_{r\to s}(\tilde{\varphi}_r,\varphi_r-\tilde{\varphi}_r):=(\tilde{\varphi}|_{t=s},(\varphi-\tilde{\varphi})|_{t=s}),
\end{align*}
 where the tuple $(\tilde{\varphi},\varphi-\tilde{\varphi})$
 solves \eqref{eq:simpler_system}
 with initial data $(\tilde{\varphi}_r,\varphi_r-\tilde{\varphi}_r)$ given at time $t=r$. By the Duhamel principle, the solution for the full system on $(\tilde{k},k-\tilde{k})$ is given by
 \begin{align*}
(\tilde{k}_t,k_t-\tilde{k}_t)= Q(g,h)_{0\to t}(\tilde{k}_0,k_0-\tilde{k}_0)
+\int_0^tQ(g,h)_{s\to t}(I_s,J_s)ds,
 \end{align*}
 where $I$ and $J$ are defined in \eqref{eq:define_I,J}.
For obtaining the desired estimate, it suffices to consider the second component of this expression, which we can write as
\begin{align*}
k_t-\tilde{k}_t= Q_2(g,h)_{0\to t}(\tilde{k}_0,k_0-\tilde{k}_0)
+\int_0^tQ_2(g,h)_{s\to t}(I_s,J_s)ds.
\end{align*}
Here, $Q_2(g,h)_{0\to t}$ denotes the second component of the propagation operator $Q(g,h)_{0\to t}$. By the triangle inequality, 
\begin{align*}
\left\|\nabla^2(k_t-\tilde{k}_t)\right\|_{L^p}\leq
\left\|\nabla^2(Q_2(g,h)_{0\to t}(\tilde{k}_0,k_0-\tilde{k}_0))\right\|_{L^p}
+ \int_0^t\left\|\nabla^2(Q_2(g,h)_{s\to t}(I_s,J_s))\right\|_{L^p}ds.
\end{align*}
Because the estimate (v) holds for the reduced system \eqref{eq:simpler_system} without the $t^{1/2}$-factors, we have
\begin{align*}
&\left\|\nabla^2(Q_2(g,h)_{0\to t}(\tilde{k}_0,k_0-\tilde{k}_0))\right\|_{L^p} \leq e^{C t}\left(\left\|\nabla^2 (k_0-\tilde{k}_0)\right\|_{L^{p}}+Ct^{1/p}\left\| k_0-\tilde{k}_0\right\|_{W^{1,q}}\right)\\
			&\qquad +\sup_{s\in [0,t]}\left(\left\|g_s-\tilde{g}_s\right\|_{W^{1,\infty}}+\left\|h_s-\tilde{h}_s\right\|_{W^{4,\infty}}\right) e^{C t}
			\left(\left\|\nabla^2\tilde{k}_0\right\|_{L^{p}}
			+Ct^{1/p}\left\|\tilde{k}_0\right\|_{W^{1,q}}\right).
\end{align*}
Because (ii) holds for the reduced system as well, we have
\begin{align*}
&\int_0^t\left\|\nabla^2(Q_2(g,h)_{s\to t}(I_s,J_s))\right\|_{L^p}ds\leq
C\int_0^t(t-s)^{-1/2}e^{C(t-s)}\left\|J_s\right\|_{W^{1,p}}ds\\
&\qquad\qquad+C\int_0^t(t-s)^{-1/2}e^{C (t-s)}\sup_{r\in [s,t]}\left(\left\|g_r-\tilde{g}_r\right\|_{W^{1,\infty}}+\left\|h_r-\tilde{h}_r\right\|_{W^{4,\infty}}\right)\left\|I_s\right\|_{W^{1,p}}ds\\
&\qquad\leq Ct^{1/2}e^{Ct}\left(\sup_{s\in [0,t]}\left\|J_s\right\|_{W^{1,p}}
+
\sup_{s\in [0,t]}\left(\left\|g_s-\tilde{g}_s\right\|_{W^{1,\infty}}+\left\|h_s-\tilde{h}_s\right\|_{W^{4,\infty}}\right)
\sup_{s\in [0,t]}\left\|I_s\right\|_{W^{1,p}}\right).
\end{align*}
The desired inequality in (v) follows from estimating the $W^{1,p}$ norms of $I$ and $J$ using the H\"{o}lder inequality, Lemma \ref{lem : short-time estimates} (i) and (vi) applied to $\tilde{k}$ and Lemma \ref{lem : difference of heat flows} (i) applied to $k-\tilde{k}$.
\end{proof}
\subsection{Short-time estimates for the Ricci-de Turck flow}\label{subsec: shorttime estimates}
We consider a fixed Ricci-flat background metric $h$ and the $h$-gauged Ricci-de Turck flow $g(t)$.
Defining $k(t)=g(t)-h$, we recall equation \eqref{eq : expression RdT3}, which takes the form
\begin{equation} \label{eq : expression RdT3 local}
	\partial_tk+\Delta k	
		=F_4(g^{-1},g^{-1},\nabla k,\nabla k)+F_5(g^{-1},g,R,k)+\nabla_a((g^{ab}-h^{ab})\nabla_bk_{ij}),
\end{equation}
where the covariant derivatives, Laplacians and curvature are all with respect to $h$.
For convenience, let us abbreviate
\begin{align*}
	R[k]
		&=F_5(g^{-1},g,R,k), \\
	Q_0[k]
		&=F_4(g^{-1},g^{-1},\nabla k,\nabla k), \\
	Q_1[k]
		&= \n_\a ((h+k)^{\a\b}-h^{\a\b})\nabla_\b k_{ij}.
\end{align*}
We use the convention here that a latin letter power of $\n$,  like $\n^l$, denotes the $l$'th covariant derivative, whereas a greek index, like $\n^\a$ or $\n_\a$ denotes a component of the covariant derivative in abstract index notation.
Thus, if $k$ evolves according to \eqref{eq : expression RdT3 local}, then we have
\begin{align*}
	\partial_t |k|^{2}+\Delta |k|^{2}
		&=-2|\nabla k|^2+2\langle R[k]+Q_0[k]+Q_1[k],k\rangle
\end{align*}
and for covariant derivatives,
\begin{equation} \label{eq: evolution nabla l k}
\begin{split}
	\partial_t |\nabla^lk|^{2}+\Delta |\nabla^lk|^{2}
		=&2\langle [\nabla^l,\Delta]k+\nabla^l(R[k]+Q_0[k])+[\nabla^l, Q_1][k]+ Q_1[\nabla^lk],\nabla^lk\rangle\\ 
		&\quad-2|\nabla^{l+1} k|^2.
\end{split}
\end{equation}
\begin{lem}\label{lem : Ricci flow short-time estimates in Ck}
	Let $h$ be a fixed Ricci-flat ALE metric and let $T > 0$.
	For every $m_0\in \N_0$, there exists an $\e_0 > 0$ such that if $g(t)$ is a solution to the $h$-gauged Ricci-de Turck flow for $t \in [0, T]$ and $k(t):=g(t)-h$ satisfies
	\begin{align*}
		\left\|k(0)\right\|_{L^{\infty}}<\e_0,
	\end{align*}
	then for every $l \in \N_0$ such that $l \leq m_0$ there exists a constant $C = C(h, T, m_0, \e_0)$ such that
	\[
		\left\|\nabla^l k(t)\right\|_{L^{\infty}}
			\leq C t^{-l/2}\cdot\left\| k(0)\right\|_{L^{\infty}}.
	\]
\end{lem}
\begin{proof}
This is a standard short-time existence result, see e.g.~\cite{Bam14}*{Proposition 2.8}, which easily carries over to the present situation.
\end{proof}
\begin{lem}\label{F_k_estimate}
	Let $h$ be a fixed Ricci-flat ALE metric and let $T > 0$.
	For every $\delta>0$ and $m\in \N_0$, there exists an $\e > 0$ and constants 
	\[
		c_0, \hdots, c_m > 0,
	\]
	independent of $h$, $T$, and $\e$ and a constant
	\[
		d_m > 0
	\]
	depending on $h$, $T$, $\de$, $m$ and $\e$, such that if $g(t)$ is a solution to the $h$-gauged Ricci-de Turck flow for $t \in [0, T]$ and $k(t):=g(t)-h$ satisfies
	\[
		\left\|k(0)\right\|_{L^\infty} 
			< \e,
	\]
	then the function
	\begin{align*}
		P_m(t)
			:=\sum_{l=0}^m c_l\cdot t^l\cdot |\nabla^lk(t)|^2,
	\end{align*}
	satisfies the evolution inequality
	\begin{align*}
		\partial_tP_m + \Delta P_m
			\leq -(2-\delta)G_m + d_m P_m + 2\sum_{l=0}^m c_l\cdot t^l\cdot \langle Q_1[\nabla^{l}k],\nabla^{l}k\rangle
	\end{align*}
	for all $t \in [0, T]$, where
	\[
		G_m(t)
			=\sum_{l=0}^m c_l\cdot t^l\cdot |\nabla^{l+1}k(t)|^2.
	\]
\end{lem}
\begin{proof}
	Fix an $\e_0 > 0$ small enough in order to apply Lemma \ref{lem : Ricci flow short-time estimates in Ck} with $m_0 := m + 1$.
	All constants denoted $C$ in this proof may depend on $h, T, \de, m, \e_0$, without further mentioning, and might change from line to line.
	By Lemma \ref{lem : Ricci flow short-time estimates in Ck}, we have
	\begin{equation} \label{eq: nabla l k derivatives}
		\left\|\nabla^lk(t)\right\|_{L^{\infty}}
			\leq C t^{-l/2}\left\|k(0)\right\|_{L^{\infty}}
	\end{equation}
	for all $0 \leq l\leq m + 1$ and $t\in (0,T]$.
	We assume from now on that
	\[
		\norm{k(0)}_{L^\infty}
			< \e,
	\]
	where $\e \in (0, \e_0]$ is still to be chosen (the constants denoted $C$ will be independent of $\e$).
	Since $\norm{k(0)}_{L^\infty} < \e_0$, the estimate \eqref{eq: nabla l k derivatives} implies that if $\e$ is small enough, then it follows that $\norm{g(t)^{-1}}_{L^\infty} \leq C$, and hence
	\[
		\left\|\nabla^l g\right\|_{L^{\infty}} + \left\|\nabla^l g^{-1}\right\|_{L^{\infty}}
			\leq C t^{-l/2}\left\|k(0)\right\|_{L^{\infty}}.
	\]
	We estimate each term on the right hand side of \eqref{eq: evolution nabla l k} one-by-one.
	Standard computations and \eqref{eq: nabla l k derivatives} imply that
	\begin{equation} \label{eq: short time first commutator}
		\langle[\nabla^l,\Delta]k,\nabla^lk\rangle
			=\sum_{j=0}^l\nabla^jR*\nabla^{l-j}k*\nabla^l k
			\leq C \sum_{j=0}^l|\nabla^j k|^2
	\end{equation}
	for all $t \in [0, T]$ and all $l \leq m$.
	Recalling that $R[k] = F_5(g^{-1},g,R,k)$, we estimate
	\begin{equation} \label{eq: short time R term}
	\begin{split}
	\langle \n^l R[k],\nabla^lk\rangle
		&\leq C \sum_{l_1+l_2+l_3+l_4=l}|\nabla^{l_1}k||\nabla^{l_2}g^{-1}||\nabla^{l_3}g||\nabla^{l_4}R||\nabla^lk|\\
		&\leq C \sum_{i + j \leq l} |\nabla^j k| t^{- \frac i2} \norm{k(0)}_{L^\infty}^i |\nabla^lk|\\
		&= C \left( \sum_{j = 0}^l |\nabla^j k| |\nabla^l k | + \sum_{\stackrel{i + j \leq l}{i \geq 1}} |\nabla^j k| t^{- \frac i2} \norm{k(0)}_{L^\infty}^i |\nabla^lk| \right) \\	
		&\leq C \sum_{j = 0}^l |\nabla^j k|^2 + C\cdot \e \cdot  \sum_{\stackrel{i + j \leq l}{i \geq 1}} t^{- i}|\nabla^j k|^2 \\
		&\leq C \sum_{j = 0}^{l} | \nabla^j k|^2 + C\cdot \e \cdot \sum_{j = 0}^{l-1} t^{j-l}|\nabla^j k|^2\\
		&\leq C \sum_{j = 0}^{l} | \nabla^j k|^2 + C\cdot \e \cdot \sum_{j = 1}^{l-1} t^{j-(l+1)}|\nabla^j k|^2+Ct^{-l}|k|^2		
	\end{split}
	\end{equation}
	Recalling that $Q_0[k] = F_4(g^{-1},g^{-1},\nabla k,\nabla k)$, we estimate
	\begin{equation} \label{eq: short time Q 0 term}
	\begin{split}
	\langle \nabla^lQ_0[k],\nabla^l k\rangle
		&\leq C \sum_{j = 0}^l |\nabla^{l-j}\left( g^{-1}*g^{-1} * \n k \right)| \cdot |\nabla^{j + 1}k| \cdot |\nabla^lk|\\
		&\leq C \left( t \e^{-1}\sum_{j = 0}^l |\nabla^{l-j}\left( g^{-1}*g^{-1} * \n k \right)|^2 \cdot |\nabla^{j + 1}k|^2 + \frac 14 t^{-1}\e |\nabla^l k|^2 \right)\\
		&\leq C \left(  \sum_{j = 0}^l t^{j-l} \norm{k(0)}_{L^\infty}^2 \e^{-1} \cdot |\nabla^{j + 1}k|^2 + \frac 14 t^{-1}\e |\nabla^l k|^2 \right)\\
		&\leq C\cdot \e \left(  \sum_{j = 0}^l t^{j-l} |\nabla^{j + 1}k|^2 + \frac 14 t^{-1} |\nabla^l k|^2 \right)\\
		&\leq C\cdot \e \sum_{j = 1}^{l+1} t^{j-(l+1)} |\nabla^jk|^2
	\end{split}
	\end{equation}
	for all $l \leq m$.
	Continuing with the next term, note that
	\begin{align*}
		[\n^l, Q_1][k]
			= \nabla^l\nabla_\a(\Psi^{\a\b}\nabla_\b k)-\nabla_\a \Psi^{\a\b}\nabla_\b\nabla^l k
	\end{align*}
	where $\Psi^{\a\b}=(h+k)^{\a\b}-h^{\a\b}$. 
	We rewrite this as
	\begin{align*}
		\nabla^l\nabla_\a(\Psi^{\a\b}\nabla_\b k)-\nabla_\a\Psi^{\a\b}\nabla_\b\nabla^l k
			&=[\nabla^l,\nabla_\a]\Psi^{\a\b}\nabla_\b k+
		\nabla_\a([\nabla^l,\Psi^{\a\b}]\nabla_\b k)+\nabla_\a(\Psi^{\a\b}[\nabla^l,\nabla_\b]k)\\
		&=\sum_{j=0}^l\nabla^{j + 1} \Psi*\nabla^{l+1-j}k+\sum_{l_1+l_2+l_3=l} \nabla^{l_1} R *\nabla^{l_2}\Psi*\nabla^{l_3}k
	\end{align*}
	and therefore
	\begin{align*}
		\langle[\nabla^l, Q_1][k],\nabla^lk\rangle
			=\sum_{j=0}^l\nabla^{j + 1}\Psi*\nabla^{l+1-j}k*\nabla^lk+\sum_{l_1+l_2+l_3=l} \nabla^{l_1}R*\nabla^{l_2}\Psi*\nabla^{l_3}k*\nabla^lk.
	\end{align*}
	By \eqref{eq: nabla l k derivatives}, we have that
	$\left\|\nabla^j  \Psi \right\|_{L^{\infty}}
		\leq C \cdot t^{-j/2}\left\|k(0)\right\|_{L^{\infty}}$ for all $j \leq l+1$.
	Hence the first of these two expressions is estimated by
	\begin{equation} \label{eq: first part PSI estimate}
	\begin{split}
		\sum_{j=0}^l\nabla^{j+1}\Psi*\nabla^{l+1-j}k*\nabla^lk
			&\leq C \left( \epsilon^{-1}\cdot t \sum_{j=0}^l |\nabla^{j+1}\Psi|^2|\nabla^{l+1-j}k|^2+\epsilon\cdot t^{-1}\cdot|\nabla^lk|^2\right)\\
		&\leq C \cdot\epsilon\left(\sum_{j=0}^lt^{-j}|\nabla^{l+1-j}k|^2+ t^{-1}\cdot|\nabla^lk|^2\right) \\
		&\leq  C \cdot\epsilon\cdot\sum_{j=1}^{l+1}t^{-(l+1)+j}|\nabla^jk|^2,
	\end{split}
	\end{equation}
	for all $l \leq m$, while the other one is estimated by
	\begin{equation} \label{eq: second part PSI estimate}
	\begin{split}
		\sum_{l_1+l_2+l_3=l} &\nabla^{l_1}R*\nabla^{l_2}\Psi*\nabla^{l_3}k*\nabla^lk \\
			&\leq C\left(\epsilon^{-1}\sum_{l_1+l_2+l_3=l} |\nabla^{l_1}R|^2|\nabla^{l_2}\Psi|^2|\nabla^{l_3}k|^2+ \epsilon|\nabla^lk|^2\right)\\
			&\leq C\cdot\epsilon\left(\sum_{l_2+l_3\leq l}t^{-l_2}|\nabla^{l_3}k|^2+ |\nabla^lk|^2\right)\\
			&\leq C\cdot\epsilon\left(\sum_{l_2+l_3= l}t^{-l_2}|\nabla^{l_3}k|^2+ |\nabla^lk|^2\right)\\
			&= C\cdot\epsilon\sum_{j=0}^lt^{-l+j}|\nabla^{j}k|^2\\
			&\leq C\cdot\epsilon\sum_{j=1}^lt^{-(l+1)+j}|\nabla^{j}k|^2+Ct^{-l}|k|^2
	\end{split}
	\end{equation}
	for all $l \leq m$.
	By equation \eqref{eq: evolution nabla l k} and the estimates \eqref{eq: short time first commutator}, \eqref{eq: short time R term}, \eqref{eq: short time Q 0 term}, \eqref{eq: first part PSI estimate} and \eqref{eq: second part PSI estimate}, we get
\begin{align*}
	\partial_tP_m+\Delta P_m
		&\leq \sum_{l=0}^ml c_l t^{l-1}|\nabla^lk|^2 
			+ C \sum_{l=0}^m c_l t^l \sum_{j=0}^l|\nabla^jk|^2 \\*
		&\qquad +   C \cdot \e \cdot \sum_{l=0}^m c_l\sum_{j=1}^{l+1}t^{j-1}|\nabla^{j}k|^2  +C\left(\sum_{l=0}^mc_l\right)|k|^2\\*
		&\qquad + 2\sum_{l=0}^m c_l t^l\langle Q_1[\nabla^lk],\nabla^lk\rangle - 2\sum_{l=0}^mc_l\cdot t^l\cdot|\nabla^{l+1}k|^2\\
		&= (- 2 + C\cdot \e) c_m t^m |\nabla^{m+1}k|^2+C\left(\sum_{l=0}^mc_l\right)|k|^2 \\*
		&\qquad + \sum_{l=0}^{m-1}\left( (l+1)c_{l+1} - 2c_l + C \cdot\e  \cdot \sum_{j = l}^{m} c_j \right) \cdot t^l\cdot |\nabla^{l+1}k|^2 \\*
		&\qquad + C \sum_{l=0}^m c_l t^l \sum_{j=0}^l|\nabla^jk|^2
			+2\sum_{l=0}^m c_l\cdot t^l\langle Q_1[\nabla^lk],\nabla^lk\rangle.
\end{align*}
In the equality above, we used that by index shifting and interchanging $j$ and $l$,
\begin{align*}
\sum_{l=0}^m c_l\sum_{j=1}^{l+1}t^{j-1}|\nabla^{j}k|^2&=\sum_{l=0}^m c_l\sum_{j=0}^{l}t^{j}|\nabla^{j+1}k|^2\\
&=\sum_{0\leq l\leq j\leq m}c_j t^{l}|\nabla^{l+1}k|^2
\\ &=
c_mt^m|\nabla^{m+1}k|^2 +\sum_{l=0}^{m-1}\left(\sum_{j=l}^mc_j\right)t^{l}|\nabla^{l+1}k|^2.
\end{align*}
For a sufficiently large $d_m > 0$, we obtain
\begin{align*}
	\partial_tP_m+\Delta P_m
				&\leq  (- 2 + C\cdot \e) c_m t^m |\nabla^{m+1}k|^2 \\*
		&\qquad + \sum_{l=0}^{m-1}\left( (l+1)c_{l+1} - 2c_l+C\cdot \e \cdot c_l + C \cdot\e  \cdot \sum_{j = l+1}^{m} c_j \right) \cdot t^l\cdot |\nabla^{l+1}k|^2 \\*
		&\qquad+ d_m P_m 
			+2\sum_{l=0}^m c_l\cdot t^l\langle Q_1[\nabla^lk],\nabla^lk\rangle.
%
%
\end{align*}
Choosing $\e>0$ so small that $C\cdot\e<\frac{\delta}{2}$, $c_m = 1$ and then iteratively $c_{m-1}, \hdots, c_0$ such that
\[
	(l + 1)c_{l+1} +\frac{1}{2}\delta \sum_{j=l+1}^mc_j
		\leq \frac{1}{2}\delta c_l
\]
ensures
\[
	(l + 1)c_{l+1} +C\cdot \e\sum_{j=l+1}^mc_j
		\leq (\delta-C\cdot\e)c_l
\]
%
and completes the proof.
\end{proof}
\begin{rem}
	By multiplying all $c_i$ in Lemma \ref{F_k_estimate} with $c_0^{-1}$, we can always choose $c_0 = 1$, so that $P_0(0)=|k|^2$. 
	We assume that this convention holds from now on.
\end{rem}
\begin{lem}\label{lem : Ricci flow short-time estimates}
	Let $h$ be a fixed Ricci-flat ALE metric and let $T > 0$ and let $p_0 > 1$.
	Then for every $m \in \N$, there exists an $\e > 0$ such that if $g(t)$ is a solution to the $h$-gauged Ricci-de Turck flow for $t \in [0, T]$ and $k(t):=g(t)-h$ satisfies
	\begin{align*}
		\norm{k(0)}_{L^\infty}
			&< \e, \\
		\norm{k(0)}_{L^p}
			&< \infty,
	\end{align*}
	for some $p \in [p_0, \infty)$, then for every $l \in \N_0$, $l \leq m$, there exists a constant 
	\[
		C = C(h,m,\e,T,p_0),
	\]
	but independent of $p$, such that
	\begin{align*}
		\left\|\nabla^l k(t)\right\|_{L^p}
			\leq C t^{-l/2} \left\| k(0)\right\|_{L^p}.
	\end{align*}
\end{lem}
\begin{proof}
Choose $\de > 0$ such that $1 + \de < p_0$.
Applying Lemma \ref{F_k_estimate} with this choice of $\de$ implies that there is an $\e > 0$ and constants $c_0, \hdots, c_m > 0$, such that
\begin{align*}
	\partial_t|u_m|^2
		\leq h^{ab}\nabla^2_{ab}|u_m|^2 +D_m |u_m|^2+\langle \nabla_a((h+k)^{ab}-h^{ab})\nabla_b u_m,u_m\rangle-2(1-\delta)|\nabla u_m|^2,
\end{align*}
where
\begin{align*}
	u_m(t)
		:= \left(\sqrt{c_l}^{1/2} t^{l/2} \nabla^l k(t) \right)_{l = 0}^m \in C^{\infty}\left(\bigoplus_{l=0}^m (T^*M)^{\otimes l}\otimes S^2M\right),
\end{align*}
and where we have used that
\begin{align*}
	P_m
		&= |u_m|^2 \\
	G_m
		&= |\nabla u_m|^2.
\end{align*}
Now, if $\epsilon>0$ is chosen small enough,
\begin{align*}
\left\|k(t)\right\|_{L^{\infty}}<C\epsilon
\end{align*}
for all $t\in (0,T]$. Thus,
 $H_3^{ab}:=(h+k)^{ab}-h^{ab}$ is so small that we can apply Theorem \ref{Lpmaxprinc} with $h$ playing the role of $g$ there (which means that $\n h$ is trivially bounded, since it vanishes). Then Theorem \ref{Lpmaxprinc} implies
\begin{align*}
	\sqrt{c_l} t^{l/2}\left\|\nabla^lk(t)\right\|_{L^p}\leq\left\| u_m(t)\right\|_{L^p}\leq C \left\|u_m(0)\right\|_{L^p}=C\left\|k(0)\right\|_{L^p}
\end{align*}
and the assertion follows.
\end{proof}

\section{The Ricci-de Turck flow and a mixed evolution problem}\label{sec : RdT mixed evolution}
Throughout this section, let $\widehat{h}$ be an integrable Ricci-flat ALE metric with a parallel spinor, $p\in (1,\infty)$ and $\mathcal{U}$ be an $L^{[p,\infty]}$-neighbourhood of $\widehat{h}$ in the space of metrics, on which the projection map 
	\begin{align*}
		\Phi:\mathcal{U}\to\mathcal{U}\cap\mathcal{F}
	\end{align*}
of Subsection \ref{subsec: proj map} is well-defined. 

\subsection{A Ricci-de Turck flow with moving gauge}

\begin{definition}
A family of metrics $g_t$ in $\U$ is called a Ricci-de Turck flow with moving gauge, if it satisfies the evolution equation
	\[
		\partial_tg
			=-2\Ric_{g}+\mathcal{L}_{V(g,\Phi(g))}g.
	\]
\end{definition}
By Lemma \ref{lem : expression RdT1} and since $\Phi(g)$ is Ricci-flat, the Ricci-de Turck flow with moving gauge, written in terms of $h=\Phi(g)$ and $k=g-h$, expands as
\begin{align}
	\label{eq : RdT1}\partial_tg+\Delta_{L,h}k&=H_1:=F_1(g^{-1},g^{-1},\nabla k,\nabla k)+F_2(g^{-1},R,k,k)+F_3(g^{-1},k,\nabla^2 k),\\
	\label{eq : RdT2}\partial_tg+\Delta_{L,g,h}k&=H_2:=F_1(g^{-1},g^{-1},\nabla k,\nabla k),\\
	\label{eq : RdT3}	\partial_tg+\Delta_{h}k&=F_4(g^{-1},g^{-1},\nabla k,\nabla k)+F_5(g^{-1},g,R,k)+\nabla_a((g^{ab}-h^{ab})\nabla_bk_{ij}),
\end{align}
where the $F_i$ are tensor fields, viewed as $C^{\infty}$-multilinear maps.  We will work with these expressions in the following.
\begin{lem}\label{lem : Ricci flow moving gauge short-time estimates}
	Let $g_t\subset \U$, $t\in[0,T]$ be a solution of the Ricci-de Turck flow with moving gauge $h_t=\Phi(g_t)$ and $k_t:=g_t-h_t$.
	\begin{itemize}
\item[(i)]	For all $r\in (n,\infty)$, there is an $\epsilon>0$ such that if 
	\begin{align*}
		\left\| k_t\right\|_{W^{2,r}}<\epsilon\qquad\forall t\in [0,T],\qquad \left\|k_0\right\|_{L^{p}}<\infty \text{ for some }p\in [p_0,\infty),
	\end{align*}
 there exists for every $m\in \N_0$ a constant $C_m=C(m,\epsilon,h,T,r,p_0,\U)$ (but independent from $p$) such that
	\begin{align*}
		\left\|\nabla^m k_t\right\|_{L^p}\leq C_m\cdot t^{-m/2}\cdot(\left\| k_0\right\|_{L^p}+\sup_{s\in [0,t]}\left\| k_s\right\|_{W^{2,r}}).
	\end{align*}
	\item[(ii)] For all $r\in (n,\infty)$ and $m\in \N_0$, there is an $\epsilon>0$ such that if 
		\begin{align*}
\left\| k_t\right\|_{W^{2,r}}<\epsilon\qquad\forall t\in [0,T],\qquad \left\|k_0\right\|_{W^{m,p}}<\infty \text{ for some }p\in [p_0,\infty),
	\end{align*} 
 there exists a constant $C_m=C(m,\epsilon,h,T,r,p_0,\U)$ (but independent from $p$) such that
\begin{align*}
		\left\| k_t\right\|_{W^{m,p}}\leq C_m\cdot (\left\| k_0\right\|_{W^{m,p}}+\sup_{s\in [0,t]}\left\| k_s\right\|_{W^{2,r}}).
	\end{align*}
	\end{itemize}
\end{lem}
\begin{proof}
With the notation from Subsection \ref{subsec: shorttime estimates} we can write \eqref{eq : RdT3} as
\begin{align*}
\partial_tg+\Delta_hk&=R[k]+Q_0[k]+\nabla Q_1[k].
\end{align*}
From that, we get
\begin{align*}
	\partial_t k+\Delta_hk=\partial_tg+\Delta_hk-\partial_th
&=	R[k]+Q_0[k]+\nabla Q_1[k]-D_g\Phi(\partial_tg)\\
&=R[k]+Q_0[k]+\nabla Q_1[k]-D_g\Phi(-\Delta_{L,h}k+H_1)\\
&=R[k]+Q_0[k]+\nabla Q_1[k]-D_g\Phi(H_1).
\end{align*}
Here, we used the chain rule applied to $h=\Phi(g)$, Formula \eqref{eq : RdT1} and Lemma \ref{lem: DPhi vanishes}. In comparison to the Ricci de Turck flow with fixed gauge, we thus get an additional term which is
\begin{align*}
D_g\Phi(H_1(t))= D_g\Phi(g^{-1}*g^{-1}*\nabla k*\nabla k+g^{-1}*R*k*k+g^{-1}*k*\nabla^2k).
\end{align*}
A straightforward calculation now yields
\begin{align*} 
	\partial_t |\nabla^lk|^{2}+\Delta |\nabla^lk|^{2}
		=&2\langle [\nabla^l,\Delta]k+\nabla^l(R[k]+Q_0[k])+[\nabla^l, Q_1][k]+ Q_1[\nabla^lk],\nabla^lk\rangle\\ 
		&\quad-2|\nabla^{l+1} k|^2
+(\partial_t h)(\nabla^l k,\nabla^lk)\\
&\qquad+2\langle [\partial_t,\nabla^l]k,\nabla^lk\rangle-2\langle\nabla^l(	D_g\Phi(H_1)),\nabla^lk\rangle		,
\end{align*}
where the norms and the covariant derivatives are both taken with respect to $h_t$.
Observe that in comparison to the corresponding formula for the Ricci-de Turck flow with fixed gauge \eqref{eq: evolution nabla l k}, we have obtained three additional terms.
For the first two of them, we have
\begin{equation}
\begin{split}\label{eq : additional terms 1}
(\partial_t h)(\nabla^l k,\nabla^lk)&\leq C|\partial_th||\nabla^lk|^2,\\
\langle [\partial_t,\nabla^l]k,\nabla^lk\rangle &\leq
C \sum_{m=1}^l |\nabla^m\partial_t h| |\nabla^{l-m}k||\nabla^lk|.
\end{split}
\end{equation}
Now recall that
\begin{align*}
\partial_th=D_g\Phi(H_1),\qquad H_1=g^{-1}*g^{-1}*\nabla k*\nabla k+g^{-1}*R*k*k+g^{-1}*k*\nabla^2k.
\end{align*}
Thus, a combination of Lemma \ref{lem : projection lemma} (iii) and the H\"{o}lder inequality shows that
\begin{align}\label{eq : additional terms 2}
	\left\| \nabla^m \partial_th\right\|_{L^\infty}
\leq C \left\|H_1\right\|_{L^{r/2}}	
	\leq C\cdot\left\| k\right\|_{W^{2,r}}^2\leq C \epsilon
\end{align}
for all $m\in\N_0$.

Now let us prove (i). Define  
	\begin{align*}
		u_m(t):=(\sqrt{C_l\cdot t^l}\nabla^lk(t))_{l=0}^m\in C^{\infty}\left(\bigoplus_{l=0}^m (T^*M)^{\otimes l}\otimes S^2M\right).
	\end{align*}
We get by the same computation as in the proof of Lemma \ref{lem : Ricci flow short-time estimates} that
\begin{equation}\label{eq : diff inequality 1}
\begin{split}
		\partial_t|u_m|^2&\leq h^{ab}\nabla^2_{ab}|u_m|^2+D_m |u_m|^2+\langle 
		\nabla_a((h+k)^{ab}-h^{ab})\nabla_b u_m,u_m\rangle\\
		&\qquad -2(1-\delta)|\nabla u_m|^2-2\langle w_m,u_m\rangle.
		\end{split}
\end{equation}
Here, two of the additional terms in the evolution of $\partial_tk$ were captured by $D_m |u_m|^2$ due to \eqref{eq : additional terms 1} and \eqref{eq : additional terms 2}
and the third of them is captured by
\begin{align*}
w_m(t):=(\sqrt{C_l\cdot t^l}\nabla^lD_g\Phi(H_1(t)))_{l=0}^m\in C^{\infty}\left(\bigoplus_{l=0}^m (T^*M)^{\otimes l}\otimes S^2M\right).
\end{align*}
Again, a combination of Lemma \ref{lem : projection lemma} (iii) and the Hölder inequality shows that
\begin{align*}
	\left\| \nabla^m\circ D_g\Phi(H_1)\right\|_{L^p}
\leq C \left\|H_1\right\|_{L^{r/2}}	
	\leq C\cdot\left\| k\right\|_{W^{2,r}}^2	
	\leq C\cdot\left\| k\right\|_{W^{2,r}}
\end{align*}
for all $m\in\N_0$.
Therefore
\begin{align*}
\left\| w_m(s)\right\|_{L^p([0,t]\times M)}\leq C\sup_{s\in [0,T]}
\left\| w_m(s)\right\|_{L^p(M)}
\leq  C\sup_{s\in [0,T]}\left\| k_s\right\|_{W^{2,r}}.
\end{align*}
Note that the first term in \eqref{eq : diff inequality 1} is $ h^{ab}\nabla^2_{ab}|u_m|^2$, as opposed to the more general form $g^{ab}\nabla^2_{ab}|u_m|^2$ in Theorem \ref{Lpmaxprinc}.
This means that we now can apply Theorem  \ref{Lpmaxprinc} with the $g_t$ \emph{there} (not the $g_t=h_t+k_t$ in the \emph{current} lemma) given by $g_t=h_t$.
We conclude  
 	\begin{align*}
		\sqrt{C_l\cdot t^l}\left\|\nabla^lk(t)\right\|_{L^p}\leq\left\| u_m(t)\right\|_{L^p}&\leq C \left\|u_m(0)\right\|_{L^p}+  C\sup_{s\in [0,T]}\left\| k_s\right\|_{W^{2,r}}
		\\
		&	
			=C\left\|k(0)\right\|_{L^p}+ C\sup_{s\in [0,T]}\left\| k_s\right\|_{W^{2,r}},
	\end{align*} 
which proves (i).

For proving (ii),
 we define  
	\begin{align*}
		u_m(t):=(\nabla^lk(t))_{l=0}^m\in C^{\infty}\left(\bigoplus_{l=0}^m (T^*M)^{\otimes l}\otimes S^2M\right).
	\end{align*}
	Doing the same computations as in the proof of Lemma \ref{lem : Ricci flow short-time estimates} (up to ignoring the derivatives of the $t$-factors) together with \eqref{eq : additional terms 1} and \eqref{eq : additional terms 2}, we get
\begin{align*}
		\partial_t|u_m|^2&\leq h^{ab}\nabla^2_{ab}|u_m|^2+D_m |u_m|^2+\langle 
		\nabla_a((h+k)^{ab}-h^{ab})\nabla_b u_m,u_m\rangle\\
		&\qquad -2(1-\delta)|\nabla u_m|^2-2\langle w_m,u_m\rangle
\end{align*}
	with
	\begin{align*}
w_m(t)=(\nabla^lD_g\Phi(H_1(t)))_{l=0}^m\in C^{\infty}\left(\bigoplus_{l=0}^m (T^*M)^{\otimes l}\otimes S^2M\right).
\end{align*}
As in part (i),
\begin{align*}
\left\| w_m(s)\right\|_{L^p([0,t]\times M)}\leq   C\sup_{s\in [0,T]}\left\| k_s\right\|_{W^{2,r}}.
\end{align*}
In this case, Theorem \ref{Lpmaxprinc} (again with $g_t=h_t$ for the $g_t$ in this theorem) yields
 	\begin{align*}
	\left\|k(t)\right\|_{W^{m,p}}=\left\| u_m(t)\right\|_{L^p}&\leq C \left\|u_m(0)\right\|_{L^p}+  C\sup_{s\in [0,T]}\left\| k_s\right\|_{W^{2,r}}\\
		&		=C\left\|k(0)\right\|_{W^{m,p}}+ C\sup_{s\in [0,T]}\left\| k_s\right\|_{W^{2,r}},
	\end{align*} 
which proves (ii).
\end{proof}
\begin{prop}\label{prop : alternative mod RdTf}
	Let $g_t$ be a smooth family of Riemannian metrics in $\U$ and let $h_{\infty}$ be some fixed metric in $\U \cap \F$.
	Suppose that $\U$ is so small that $\Pi_{h,\bar{h}}^\perp$ is invertible for all $h, \bar{h} \in \U \cap \F$ (cf. Lemma \ref{lem : invert projection}).
	Then $g_t$ is a Ricci-de Turck flow with moving gauge if and only if the components 
	\begin{align*}
		h_t&=\Phi(g_t),\qquad k_t=g_t-h_t,\qquad \overline{k}_t=\Pi^{\perp}_{h_t,h_{\infty}}(k_t)
	\end{align*}
	satisfy the coupled system
	\begin{align*}
		\partial_th&=D_g\Phi(H_1),\\
		\partial_t\overline{k}+\Delta_{L,\infty}\overline{k}&=\Pi^{\perp}_{\infty}[(\Delta_{L,\infty}-\Delta_{L,h})(k)+
		(1-D_g\Phi)(H_1)],
	\end{align*}
	where $H_1$ is defined in \eqref{eq : RdT1} and $\Pi^{\perp}_{\infty} := \Pi^{\perp}_{h_{\infty}}$. 
	On the other hand, it is also equivalent to the system
	\begin{align*}
		\partial_th&=D\Phi_g(H_1),\\
		\partial_t\overline{k}+\Delta_{L,g,h}\overline{k}&=[\Delta_{L,g,h},\Pi^{\perp}_{\infty}](k)+\Pi^{\perp}_{\infty}[D_g\Phi((\Delta_{L,g,h}-\Delta_{L,h})(k))+(1-D_g\Phi)(H_2)].
	\end{align*}
\end{prop}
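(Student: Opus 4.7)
The plan is to derive both systems by differentiating the defining relations $h_t = \Phi(g_t)$ and $\overline{k}_t = \Pi^{\perp}_{h_\infty}(k_t)$, exploiting two algebraic facts. First, Lemma \ref{lem: DPhi vanishes} tells us that $D_g\Phi$ vanishes on $\ker_{L^2}(\Delta_{L,h})^{\perp}$ when $h = \Phi(g)$. Second, since $k = g - \Phi(g)$ lies in $\ker_{L^2}(\Delta_{L,h})^{\perp}$ by construction of $\Phi$, self-adjointness of $\Delta_{L,h}$ yields $\Delta_{L,h} k \in \ker_{L^2}(\Delta_{L,h})^{\perp}$, and hence $D_g\Phi(\Delta_{L,h} k) = 0$. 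This single cancellation is what will produce the clean right-hand sides appearing in both coupled systems.

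For the forward direction of the first equivalence, I would differentiate $h_t = \Phi(g_t)$ in $t$ and substitute expansion \eqref{eq : RdT1}, obtaining $\partial_t h = D_g\Phi(-\Delta_{L,h} k + H_1) = D_g\Phi(H_1)$ by the cancellation above. Subtracting from \eqref{eq : RdT1} gives $\partial_t k + \Delta_{L,h} k = (1 - D_g\Phi)(H_1)$. I would then apply the \emph{time-independent} projection $\Pi^{\perp}_{\infty}$ and commute it through $\Delta_{L,\infty}$, which is legitimate because the latter preserves the $L^2(h_\infty)$-orthogonal splitting by self-adjointness, to arrive at the first system. The derivation of the second system is analogous, starting from \eqref{eq : RdT2} instead of \eqref{eq : RdT1}. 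The differences are twofold: $\Delta_{L,g,h}$ does not commute with $\Pi^{\perp}_{\infty}$, which leaves behind the commutator term $[\Delta_{L,g,h},\Pi^{\perp}_{\infty}](k)$; and $D_g\Phi(\Delta_{L,g,h} k)$ must be rewritten as $D_g\Phi((\Delta_{L,g,h} - \Delta_{L,h})(k))$ by inserting a zero via the same cancellation.

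For the converse direction, the smallness hypothesis on $\U$ together with Lemma \ref{lem : invert projection} allows me to recover $k_t := (\Pi^{\perp}_{h_t, h_\infty})^{-1}(\overline{k}_t) \in \ker_{L^2}(\Delta_{L,h_t})^{\perp}$ from any solution $(h_t, \overline{k}_t)$ of either coupled system, and then set $g_t := h_t + k_t$. By uniqueness of the orthogonal decomposition underlying the construction of $\Phi$, we have $\Phi(g_t) = h_t$ automatically, and reversing the computation above verifies that $g_t$ solves the modified Ricci-de Turck equation. The main obstacle throughout is just the careful bookkeeping of which projections commute with which operators and of the precise domains on which $(\Pi^{\perp}_{h_t,h_\infty})^{-1}$ is defined; the one nontrivial algebraic input is the vanishing $D_g\Phi(\Delta_{L,h} k) = 0$, which is what makes both formulations collapse to their stated form.
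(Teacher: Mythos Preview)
Your proposal is correct and follows essentially the same approach as the paper's proof: both differentiate $h_t=\Phi(g_t)$ via the chain rule, invoke Lemma~\ref{lem: DPhi vanishes} to kill $D_g\Phi(\Delta_{L,h}k)$, apply $\Pi^{\perp}_{\infty}$ to the resulting $k$-equation, and then appeal to the invertibility of $\Pi^{\perp}_{h_t,h_\infty}$ (Lemma~\ref{lem : invert projection}) for the equivalence. Your treatment of the converse direction is in fact somewhat more explicit than the paper's, which simply remarks that $k$ and $\overline{k}$ carry the same information.
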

\begin{proof}
At first, note that $\Delta_{L,h}k$ is orthogonal to $\ker_{L^2}(\Delta_{L,h})$, because $\Delta_{L,h}$ is self-adjoint. Thus, Lemma \ref{lem: DPhi vanishes} implies $D_g\Phi(\Delta_{L,h}k)=0$.
	Splitting \eqref{eq : RdT1} up into $g=h+k=\Phi(g)+(1-\Phi)(g)$ therefore yields the equivalent system
	\begin{align*}
		\partial_th&=D_g\Phi(\partial_tg)=D_g\Phi(-\Delta_{L,h}k+H_1)=
		D_g\Phi(H_1),\\
		\partial_tk&=(1-D_g\Phi)(\partial_tg)=(1-D_g\Phi)(-\Delta_{L,h}k+H_1)=-\Delta_{L,h}k+(1-D_g\Phi)(H_1).
	\end{align*}
	Applying $\Pi^{\perp}_{\infty}$ to the second equation yields
	\begin{align*}
		\partial_t\overline{k}=\Pi^{\perp}_{\infty}(\partial_tk)&=\Pi^{\perp}_{\infty}(-\Delta_{L,h}k+(1-D_g\Phi)(H_1))\\
		&=-\Pi^{\perp}_{\infty}(\Delta_{L,\infty}k)+ \Pi^{\perp}_{\infty}((\Delta_{L,\infty}-\Delta_{L,h})(k)+(1-D_g\Phi)(H_1))\\
		&=-\Delta_{L,\infty}\overline{k}+ \Pi^{\perp}_{\infty}((\Delta_{L,\infty}-\Delta_{L,h})(k)+(1-D_g\Phi)(H_1)).
	\end{align*}
	By construction, $\overline{k}=\Pi_{h,h_{\infty}}^\perp(k)$. 
	By assumption, $\Pi_{h,h_{\infty}}^\perp$ is invertible, so that $k$ is determined by $\overline{k}$. 
	Therefore, the evolution equations on  $k$ and $\overline{k}$ are actually equivalent.
	
	It remains to show equivalence of the Ricci flow to the second system. 
	This is done similarly.
	The first equation of the system is the same as the first equation from the first system. 
	For the second equation, we get from \eqref{eq : RdT2} and the chain rule that
	\begin{align*}
		\partial_tk&=(1-D_g\Phi)(\partial_tg)=(1-D_g\Phi)(-\Delta_{L,g,h}k+H_2).
	\end{align*}
	Applying $\Pi^{\perp}_{\infty}$ yields
	\begin{align*}
		\partial_t\overline{k}&=\Pi^{\perp}_{\infty}(\partial_tk)=\Pi^{\perp}_{\infty}[(1-D_g\Phi)(-\Delta_{L,g,h}k+H_2)]\\
		&=-\Pi^{\perp}_{\infty}(\Delta_{L,g,h}k)+\Pi^{\perp}_{\infty}[D_g\Phi(\Delta_{L,g,h}k)+(1-D_g\Phi)(H_2)]\\
		&=-\Delta_{L,g,h}\overline{k}+[\Delta_{L,g,h},\Pi^{\perp}_{\infty}](k)+\Pi^{\perp}_{\infty}[D_g\Phi((\Delta_{L,g,h}-\Delta_{L,h})(k))+(1-D_g\Phi)(H_2)].
	\end{align*}
	Note again that by assumption, $k$ and $\overline{k}$ contain the same information. Therefore, the evolution equations on  $k$ and $\overline{k}$ are equivalent.
\end{proof}
We will use both forms of the Ricci-de Turck flow equation at once to obtain the Ricci flow as a fixed point argument.
\subsection{A mixed evolution operator}
Let $h_{\infty}\in\U \cap\F$ be a fixed metric and $g_t\in\U$ and 
	${h_t\in \U \cap \F}$, 
$t\geq1$ be smooth families of evolving metrics. 
We then have the operators $\Delta_{L,\infty}:=\Delta_{L,h_{\infty}}$ and $\Delta_{L,g,h}$ where we suppressed the dependence of $g,h$ on $t$. 
We build now a  mixed evolution operator, depending on both operators, and hence on the metrics $g_t,h_t$ and $h_{\infty}$. 
For $t\geq 1$ and $s\in [1,t]$, we consider the evolution problem
\begin{align*}
	\partial_rk+\Delta_{L,\infty}k
		&=0,\qquad \text{ for }r\in [s,\max\left\{t-1,s\right\}],\\
	\partial_rk+\Delta_{L,g,h}k
		&=0,\qquad \text{ for }r\in [\max\left\{t-1,s\right\},t],\\
	k|_{r = s}
		&=k'.
\end{align*}
We now define for $1\leq s\leq t$ the operator $P(g,h,h_{\infty})_{s\to t}$ as the map which associates to given initial data $k'$ the solution of one of the above initial value problems.
\begin{rem}
	\begin{itemize}
		\item
		Note that by construction, the mixed solution operator $P(g,h,h_{\infty})_{s\to t}$ depends continuously on all involved parameters. 
		\item Note that if $t>2$ and $s\in [1,t-1)$, we have 
		\begin{align*}P(g,h,h_{\infty})_{s\to t}=P(g,h,h_{\infty})_{t-1\to t}\circ e^{-(t-1-s)\Delta_{L,h_{\infty}}}.
		\end{align*}
	\end{itemize}
\end{rem}
Let us now extend this construction to the inhomogeneous problem. 
For $t\geq 1$ and $s\in [1,t]$, we consider the inhomogeneous evolution problem
\begin{align*}
	\partial_rk+\Delta_{L,\infty}k
		&=F_r,\qquad \text{ for }r\in [s,\max\left\{t-1,s\right\}],\\
	\partial_rk+\Delta_{L,g,h}k
		&=F_r,\qquad \text{ for }r\in [\max\left\{t-1,s\right\},t],\\
	k|_{r = s}
		&=k'.
\end{align*}
We now define for $1\leq s\leq t$ the operator $Q(g,h,h_{\infty})_{s\to t}(k',F)$ as the map which associates to given initial data $k=k_s$ the solution of one of the above initial value problems. Observe that the Duhamel principle also holds in a very general setting so that
\begin{align}\label{eq : inhom mixed system}
	Q(g,h,h_{\infty})_{s\to t}(k',F)=P(g,h,h_{\infty})_{s\to t}(k')+\int_{s}^tP(g,h,h_{\infty})_{r\to t}(F_r)dr.
\end{align}
\subsection{The Ricci flow as a mixed evolution problem}\label{subsec : mixed_ev_problem}
Now let us turn back to the Ricci-de Turck flow with moving gauge. Let $g_t$ be such a flow and $h_{\infty}\in\U\cap \mathcal{F}$ be arbitrary. Introduce the quantities
\begin{align*}
	h_t&=\Phi(g_t),\qquad k_t=g_t-h_t,\qquad \overline{k}_t=\Pi^{\perp}_{h_t,h_{\infty}}(k_t).
\end{align*}
Due to Proposition \ref{prop : alternative mod RdTf}, the flow equation is equivalent to	
\begin{equation}\begin{split}\label{eq : RdT moving 1}
		\partial_th&=D_g\Phi(H_1),\\
		\partial_t\overline{k}+\Delta_{L,\infty}\overline{k}&=\Pi^{\perp}_{\infty}[(\Delta_{L,\infty}-\Delta_{L,h})(k)+
		(1-D_g\Phi)(H_1)]
	\end{split}
\end{equation}
and
\begin{equation}\begin{split}\label{eq : RdT moving 2}
		\partial_th&=D_g\Phi(H_1),\\
		\partial_t\overline{k}+\Delta_{L,g,h}\overline{k}&=[\Delta_{L,g,h},\Pi^{\perp}_{\infty}](k)+\Pi^{\perp}_{\infty}[D_g\Phi((\Delta_{L,g,h}-\Delta_{L,h})(k))+(1-D_g\Phi)(H_2)].
	\end{split}
\end{equation}
Let $g_0$ be an initial metric and $g_t$, $t\in[0,1]$ be a $\widehat{h}$-gauged Ricci-de Turck flow, starting at $g_0$. Split $g_1=\Phi(g_1)+(1-\Phi)(g_1)$ and let $h_1=\Phi(g_1)$ and $k_1=(1-\Phi)(g_1)$. 
Continue now with the Ricci-de Turck flow with moving gauge and let $h_t,\overline{k}_t$ as above. 
For fixed $t\in[1,2]$, we regard $(h_t,\overline{k}_t)$ as a solution to \eqref{eq : RdT moving 2}. 
For $t>2$, we regard $(h_t,\overline{k}_t)$ as the tuple obtained from solving \eqref{eq : RdT moving 1} for time $s\in [1,t-1]$ and \eqref{eq : RdT moving 2} for time $s\in [t-1,t]$. 
Due to \eqref{eq : inhom mixed system}, this implies that
\begin{equation}
	\begin{split}\label{eq : identity1}
		h_t&=h_1+\int_1^tD_g\Phi(H_1(s))ds,\\
		\overline{k}_t&=P(g,h,h_{\infty})_{1\to t}(\Pi^{\perp}_{h_\infty}(k_1))+\int_1^tP(g,h,h_{\infty})_{s\to t}I(t,s,k,h,h_{\infty})ds,
	\end{split}
\end{equation}
with 
\begin{align*}
	&I(t,s,k,h,h_{\infty})=\chi_{[1,\max\left\{t-1,1\right\}]}(s)\cdot\left\{\Pi^{\perp}_{\infty}[(\Delta_{L,\infty}-\Delta_{L,h})(k)+
	(1-D_g\Phi)(H_1)]\right\}\\&\qquad+\chi_{(\max\left\{t-1,1\right\},t]}(s)\cdot\left\{[\Delta_{L,g,h},\Pi^{\perp}_{\infty}](k)+\Pi^{\perp}_{\infty}[D_g\Phi((\Delta_{L,g,h}-\Delta_{L,h})(k))+(1-D_g\Phi)(H_2)]\right\}
\end{align*}
for $t>1$ and $s\in [1,t]$.
\begin{remark}
	This complicated construction resolves the regularity problem that was addressed in Subsection \ref{subsubseq : boundary terms}.
\end{remark}
Now we use \eqref{eq : identity1} to identify the map of which the Ricci-de Turck flow with moving gauge is a fixed point. For this purpose, let  $h_t$, $t\in [1,\infty]$ a smooth curve in $\U\cap\mathcal{F}$ which converges to a limit metric $h_{\infty}$. Furthermore, let $k_t$, $t\in [1,\infty)$ be family of symmetric $2$-tensors such that $k_t\perp \ker_{L^2}(\Delta_{L,h_t})$.
Finally, assume that $g_t:=h_t+k_t\in \U$. Define
\begin{align*}
	\overline{\psi}_1(h,k)_t&:=h_1+\int_1^tD_g\Phi(H_1(s))ds,\qquad t\in [1,\infty],\\
	\overline{\psi}_2(h,k)_t&:=P(g,h,h_{\infty})_{1\to t}(\Pi^{\perp}_{h_\infty}(k_1))+\int_1^tP(g,h,h_{\infty})_{s\to t}I(t,s,k,h,h_{\infty})ds, \qquad t\in [1,\infty].
\end{align*}
In order to get again a curve in $\U\cap\F$ and family of symmetric $2$-tensors which are orthogonal to $ \ker_{L^2}(\Delta_{L,h_t})$, we define as correction terms
\begin{align*}
	\psi_1(h,k)&=\Phi(\overline{\psi}_1(h,k)),\\
	\psi_2(h,k)&=(\Pi^{\perp}_{h,h_{\infty}})^{-1}(\Pi^{\perp}_{h_{\infty}}\overline{\psi}_2(h,k)).
\end{align*}
These two maps unify to the map $\psi(h,k)=(\psi_1(h,k),\psi_2(h,k))$ and due to \eqref{eq : identity1}, the Ricci-de Turck flow with moving gauge (viewed as the tuple $(h_t,k_t)$) is a fixed point of this map. Our goal in the next section is to identify this map as a contraction map in a suitable Banach space so that a Ricci-de Turck flow with moving gauge can be found via an iteration procedure.
\section{The iteration map}\label{sec : iteration map}
In this section we are going to study the map $\psi$, which we just defined at the end of Subsection \ref{subsec : mixed_ev_problem}, in detail.
\subsection{Estimates for the linear problem}
Let us first summarize some results for the linearized version of the problem.
For this purpose, let $\widehat{h}$ be an integrable Ricci-flat ALE metric with a parallel spinor, $\mathcal{U}$ a small neighbourhood of $\widehat{h}$ in the space of metrics such that Proposition \ref{prop: near metrics} and Lemma \ref{lem : elliptic regularity} hold.
For $h\in \mathcal{U}\cap\mathcal{F}$ let $\Delta_{L,h}$ be its Lichnerowicz Laplacian. The norms and covariant derivatives in this subsection are taken with respect to $h$. 
 The following result is Theorem 6.9 in our companion paper \cite{KP2020}.
\begin{thm}[Heat kernel and derivative estimates]\label{thm : linear estimates}Let $h\in \mathcal{U}\cap\mathcal{F}$.
	\begin{itemize}
		\item[(i)] 	For each $1 < p \leq q < \infty$, there exists a constant $C = C(p, q,\mathcal{U})$ such that for all $t>0$, we have
		\[
		\norm{e^{-t\Delta_{L,h} }\circ \Pi^{\perp}_h}_{L^p,L^q} \leq Ct^{-\frac n2 \left(\frac1p - \frac1q\right)}
		\]
		\item[(ii)] For each $i\in\left\{1,\ldots,n-1\right\}$ and $p\in (1,\frac ni)$, there exists a constant $C = C(p,i,\mathcal{U})$ such that for all $t>0$, we have
		\[
		\norm{\nabla^i \circ e^{-t\Delta_{L,h} }\circ \Pi^{\perp}_h}_{L^p,L^p} \leq Ct^{-\frac i2}.
		\]
		\item[(iii)]
		For each $i\in\N$, $p\in [\frac ni,\infty)\cap (1,\infty)$, $t_0>0$ and $\epsilon>0$, there exists $C=C(p,i,t_0,\epsilon,\mathcal{U})$ such that for all $t>t_0$, we have
		\[
		\norm{\nabla^i\circ e^{-t\Delta_{L,h} }\circ \Pi^{\perp}_h}_{L^{p},L^p} \leq Ct^{-\frac{n}{2p}+\epsilon}.
		\]
	\end{itemize}
\end{thm}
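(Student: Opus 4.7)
The plan is to exploit the parallel spinor hypothesis through the intertwining identity $\Phi \circ \Delta_L = D^2_{T^*M} \circ \Phi$ recorded in \eqref{eq : Dirac}. Since $\Phi$ is a parallel bundle endomorphism (hence bounded with bounded inverse uniformly on $M$ and commuting with $\nabla$), every claim about $e^{-t\Delta_L}$ and its derivatives reduces to the corresponding claim for the semigroup generated by the squared twisted Dirac operator $D^2_{T^*M}$, and the projections $\Pi^{\perp}_h$ correspond to projecting off $\ker_{L^2}(D^2_{T^*M}) = \ker_{L^2}(D_{T^*M})$. The virtue of working with $D^2$ rather than $\Delta_L$ is that the Weitzenböck/Bochner formula $D^2 = \nabla^*\nabla + \mathcal R$ has a bounded zero-order curvature term, so the operator is genuinely non-negative and amenable to standard heat-kernel technology.

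For statement (i), the first step is to establish a Gaussian on-diagonal bound $|e^{-tD^2}(x,y)| \leq C t^{-n/2}$ for all $t > 0$, after composing with $\Pi^{\perp}$. For small $t$ this follows from parabolic parametrix constructions on the asymptotically Euclidean geometry (e.g., Li--Yau/Grigor'yan methods, or the parametrix approach comparing with the Euclidean heat kernel, whose validity on ALE spaces uses the Sobolev inequality available there). For large $t$, contractivity together with the fact that projecting off the $L^2$-kernel prevents stabilization yields the global on-diagonal bound; the decisive input is \eqref{eq : decay kernel}, $\ker_{L^2}(\Delta_L) \subset \O_\infty(r^{-n})$, which ensures that $\Pi^{\perp}_h$ is bounded on $L^p$ for every $p\in(1,\infty)$ (cf.\ Lemma \ref{lem : projection lemma}). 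Given the on-diagonal $L^1\to L^\infty$ bound together with the $L^p$-contractivity of the semigroup, the estimate $\|e^{-t\Delta_L}\circ \Pi^{\perp}_h\|_{L^p\to L^q}\leq C t^{-\frac{n}{2}(1/p-1/q)}$ is a textbook consequence of the Riesz--Thorin interpolation theorem.

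For statements (ii) and (iii), the derivatives are controlled by combining the semigroup smoothing with Riesz transform bounds on the ALE manifold. Writing $\nabla^i e^{-t\Delta_L}\Pi^{\perp}_h = (\nabla^i D^{-i})(D^i e^{-tD^2}) \Phi$ schematically, the factor $D^i e^{-tD^2}$ is bounded on $L^p$ with operator norm $\leq C t^{-i/2}$ by spectral calculus/semigroup theory, and the Riesz transform $\nabla^i D^{-i}$ is bounded on $L^p$ for the non-endpoint range $p \in (1, n/i)$, which is exactly the constraint imposed in (ii). The boundedness of the Riesz transform in this range on ALE manifolds can be obtained from the Gaussian bound of step one by the Calderón--Zygmund/Coulhon--Duong type machinery adapted to the asymptotically Euclidean setting, using that the Ricci curvature vanishes at infinity and decays as $r^{-n-\tau}$. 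For (iii), where $p \geq n/i$, the Riesz transform fails at these exponents and one instead interpolates between the bound in (ii) and an $L^\infty$-smoothing bound, accepting a loss of $t^{\epsilon}$; equivalently one splits $e^{-tD^2} = e^{-(t/2)D^2}\cdot e^{-(t/2)D^2}$, applies (i) to map $L^p \to L^{p'}$ with $p' = p + \delta$ slightly larger so that $\nabla^i e^{-(t/2)D^2}$ lands in the Riesz range for $L^{p'}$, and then uses the bound from (ii) composed back to $L^p$. The small exponent $\epsilon$ absorbs the width of this splitting, and the requirement $t \geq t_0$ removes the short-time singularity at the endpoint. The main obstacle I expect is controlling the long-time behaviour: the continuous spectrum of $\Delta_L$ reaches down to $0$, so unlike the strictly positive case one cannot get exponential decay, and the polynomial decay rate $t^{-\frac{n}{2}(1/p-1/q)}$ must be extracted entirely from the Euclidean-like volume growth at infinity, which in turn forces the careful use of the decay \eqref{eq : decay kernel} to ensure that the projection $\Pi^\perp_h$ does not spoil the $L^p$ estimates.
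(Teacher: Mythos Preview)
The paper does not prove this theorem: it is quoted verbatim as ``Theorem 6.11 in our companion paper \cite{KP2020}'', with no argument given here. So there is no in-paper proof to compare against. What the paper does tell us is that the parallel spinor hypothesis enters precisely through the intertwining \eqref{eq : Dirac}, and that the companion paper \cite{KP2020} is devoted to establishing exactly these heat kernel estimates. Your overall architecture --- transfer to $D_{T^*M}^2$ via $\Phi$, Gaussian bounds plus interpolation for (i), Riesz-transform-type bounds for (ii), an $\epsilon$-lossy argument for (iii) --- is therefore aligned with the intended strategy.

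That said, two points in your sketch need attention. First, you assert that $\Phi$ has ``bounded inverse''. The paper only says $\Phi$ is a parallel bundle map $S^2M \to S\otimes T^*M$; these bundles have different ranks, and the literature (Wang, Dai--Wang--Wei) constructs $\Phi$ as an isometric \emph{embedding}, not an isomorphism. This is enough to transfer \emph{upper} bounds from $e^{-tD^2}$ to $e^{-t\Delta_L}$ (since $\|\nabla^i e^{-t\Delta_L}k\|_{L^p}=\|\nabla^i e^{-tD^2}\Phi(k)\|_{L^p}$), but you should not invoke an inverse. Second, your argument for (iii) is backwards: you propose to pass from $L^p$ to $L^{p'}$ with $p'>p$ and then apply the Riesz bound from (ii), but (ii) requires $p'<n/i$ while you are in the regime $p\ge n/i$, so enlarging $p'$ moves you further from the admissible range, not into it. The $\epsilon$-loss in (iii) must come from a different mechanism (for instance, short-time smoothing $\nabla^i e^{-\Delta_L}:L^q\to L^q$ for any $q$, combined with the long-time $L^p\to L^q$ bound from (i) with $q$ large but finite, the gap $\tfrac{n}{2q}$ producing the $\epsilon$). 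Finally, you correctly identify the long-time behaviour on $\ker_{L^2}(\Delta_L)^\perp$ as the crux, but ``contractivity together with projecting off the kernel prevents stabilization'' is not an argument; extracting the sharp $t^{-n/2}$ on-diagonal decay for the \emph{projected} kernel on an ALE manifold with continuous spectrum down to zero is exactly the substantial content of \cite{KP2020}.
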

In \cite{KP2020}, the constants are given for a fixed metric $h\in \U\cap\F$. It follows however from the proof of this theorem and the estimates for two different $h,\overline{h}\in \U\cap\F$ in Lemma \ref{lem : elliptic regularity} that the constants can be chosen uniformly for all $h\in\U\cap\F$.
We can also state the result more generally as follows.
\begin{cor}
[Heat kernel and derivative estimates]\label{cor : linear estimates}Let $h\in \mathcal{U}\cap\mathcal{F}$, $1 < p \leq q < \infty$ and $i\in\N$.
\begin{itemize}
	\item[(i)] If $\frac{n}{2}\left( \frac{1}{p}-\frac{1}{q} \right)+\frac{i}{2} < \frac{n}{2p}$ there exists a constant $C = C(p, q,i,\mathcal{U})$ such that for all $t>0$, we have
	\[
		\norm{\nabla^i\circ e^{-t\Delta_{L,h} }\circ \Pi^{\perp}_h}_{L^p,L^q} \leq Ct^{-\frac n2 \left(\frac1p - \frac1q\right)-\frac{i}{2}}.
	\]
		\item[(ii)] If $\frac{n}{2}\left(\frac{1}{p}-\frac{1}{q}\right)+\frac{i}{2}\geq\frac{n}{2p}$ there exists for each  $t_0>0$ and $\epsilon>0$ a constant $C = C(p, q,i,t_0,\epsilon,\mathcal{U})$ such that for all $t>t_0$, we have
	\[
	\norm{\nabla^i\circ e^{-t\Delta_{L,h} }\circ \Pi^{\perp}_h}_{L^p,L^q} \leq Ct^{-\frac{n}{2p}+\epsilon}.
	\]
\end{itemize}
\end{cor}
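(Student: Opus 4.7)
My approach is to exploit the semigroup property to split $e^{-t\Delta_L}$ as a composition at time $t/2$, which lets me combine the $L^p \to L^q$ mapping estimate from Theorem \ref{thm : linear estimates}(i) with one of the derivative estimates from Theorem \ref{thm : linear estimates}(ii) or (iii). Since $\Pi^\perp_h$ is the orthogonal projection onto $\ker_{L^2}(\Delta_L)^\perp$, it commutes with $e^{-t\Delta_L}$ and satisfies $\Pi^\perp_h \circ \Pi^\perp_h = \Pi^\perp_h$, so for $t>0$ I may write
\[
\nabla^i \circ e^{-t\Delta_L} \circ \Pi^\perp_h = \bigl(\nabla^i \circ e^{-(t/2)\Delta_L} \circ \Pi^\perp_h\bigr) \circ \bigl(e^{-(t/2)\Delta_L} \circ \Pi^\perp_h\bigr).
\]
The second factor will be bounded $L^p \to L^q$ by Theorem \ref{thm : linear estimates}(i), contributing $(t/2)^{-\frac{n}{2}(1/p - 1/q)}$; the first factor will be bounded $L^q \to L^q$ by either (ii) or (iii) of Theorem \ref{thm : linear estimates}, depending on which range of $q$ we are in.

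The dichotomy in the corollary is exactly the dichotomy between the regimes covered by (ii) and (iii): the inequality $\frac{n}{2}(1/p-1/q)+i/2 < \frac{n}{2p}$ simplifies to $i < n/q$, i.e.\ $q < n/i$. In case (i) I therefore apply Theorem \ref{thm : linear estimates}(ii) with exponent $q$ to obtain
\[
\bigl\|\nabla^i \circ e^{-(t/2)\Delta_L} \circ \Pi^\perp_h\bigr\|_{L^q,L^q} \leq C (t/2)^{-i/2};
\]
note that the constraint $q<n/i$ together with $q>1$ automatically forces $i \leq n-1$, so the hypotheses of (ii) are met. Multiplying the two estimates yields the claimed bound $C t^{-\frac{n}{2}(1/p-1/q) - i/2}$.

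In case (ii) I instead apply Theorem \ref{thm : linear estimates}(iii) with exponent $q$, which is admissible since $q \geq n/i$ and $q \in (1,\infty)$. For $t > 2 t_0$ this gives
\[
\bigl\|\nabla^i \circ e^{-(t/2)\Delta_L} \circ \Pi^\perp_h\bigr\|_{L^q,L^q} \leq C (t/2)^{-\frac{n}{2q} + \epsilon},
\]
and the combined exponent becomes
\[
-\tfrac{n}{2}\bigl(\tfrac{1}{p} - \tfrac{1}{q}\bigr) - \tfrac{n}{2q} + \epsilon \;=\; -\tfrac{n}{2p} + \epsilon,
\]
as required. The only minor bookkeeping is that the lower time threshold $t > t_0$ in Theorem \ref{thm : linear estimates}(iii) becomes $t > 2t_0$ after the splitting, which is harmless since the statement of (ii) merely requires validity for $t$ larger than some $t_0 > 0$ that may be adjusted. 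I do not anticipate any serious obstacle: the corollary is essentially a packaging of the three estimates of Theorem \ref{thm : linear estimates} into a single uniform form by decoupling spatial integrability from differentiability via the semigroup.
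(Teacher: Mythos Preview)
Your proposal is correct and follows essentially the same approach as the paper: split $e^{-t\Delta_L}$ at $t/2$ via the semigroup property, insert $\Pi^\perp_h$ using idempotence and commutation, and then bound the two factors using Theorem~\ref{thm : linear estimates}(i) for $L^p\to L^q$ and Theorem~\ref{thm : linear estimates}(ii) or (iii) for $L^q\to L^q$. In fact your write-up spells out the ``case by case analysis'' (in particular the observation that the hypothesis of part (i) is equivalent to $q<n/i$) that the paper leaves implicit.
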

\begin{proof}
	Writing
	\begin{align*}
	\nabla^i\circ e^{-t\Delta_{L,h} }\circ \Pi^{\perp}_h=\nabla^i\circ e^{-\frac{t}{2}\Delta_{L,h} }\circ e^{-\frac{t}{2}\Delta_{L,h} }\circ \Pi^{\perp}_h\circ \Pi^{\perp}_h=\nabla^i\circ e^{-\frac{t}{2}\Delta_{L,h} }\circ \Pi^{\perp}_h\circ e^{-\frac{t}{2}\Delta_{L,h} }\circ  \Pi^{\perp}_h,
	\end{align*}
	we immediately get
	\begin{align*}
		\norm{\nabla^i\circ e^{-t\Delta_{L,h} }\circ \Pi^{\perp}_h}_{L^p,L^q}\leq 	\norm{\nabla^i\circ e^{-\frac{t}{2}\Delta_{L,h} }\circ \Pi^{\perp}_h}_{L^q,L^q}	\norm{ e^{-\frac{t}{2}\Delta_{L,h} }\circ \Pi^{\perp}_h}_{L^p,L^q},
	\end{align*}
	and the estimate follows from Theorem \ref{thm : linear estimates} and a case by case analysis.
\end{proof}
These estimates are in sharp contrast to the scalar Laplacian on Euclidean space. For its heat flow, the estimates in Theorem \ref{thm : linear estimates} (ii) and Corollary \ref{cor : linear estimates} (i) hold for any choice of $1\leq p\leq q\leq\infty$ and $i\in\N_0$. These estimates can be derived from the explicit form of the Euclidean heat kernel. In particular, there is no dichotomy as in the parts (ii) and (iii) of Theorem \ref{thm : linear estimates} and as in Corollary \ref{cor : linear estimates} for the heat flow on Euclidean space.

 However, we can get rid of this dichotomy if we use particular differential operators instead of full covariant derivatives. Let
\begin{align*}
	DV_h(k)=\frac{d}{dt}|_{t=0}V(h+tk,h),\qquad
	D\Ric_h(k)=\frac{d}{dt}|_{t=0}\Ric_{h+tk},
\end{align*}
be the Fr\'{e}chet derivatives of the de Turck vector field in the first component and the Ricci tensor, respectively. The following result is \cite{KP2020}*{Theorem 6.11}.
\begin{thm}[Special derivative estimates]\label{thm : special derivative estimates}
	Let $h\in \mathcal{U}\cap\mathcal{F}$ and $\Delta_{L,h}$ its Lichnerowicz Laplacian. Then for each $p\in (1,\infty)$ there exists a constant $C=C(p,\mathcal{U})$ such that
	\begin{align*}
	\norm{ DV_h\circ e^{-t\Delta_{L,h}}}_{L^p,L^p}\leq C t^{-\frac12},\qquad
	\norm{D\Ric_h\circ e^{-t\Delta_{L,h}}}_{L^p,L^p}\leq C t^{-1}.
	\end{align*}
\end{thm}
Thus for large $p$, the decay rates for the first order operator $DV_h$ and the second order operator $D\Ric_h$ are stronger than for $\nabla$ and $\nabla^2$, respectively.

\subsection{The Banach space}\label{subsec : Banach space}
Let $\widehat{h}$ be an integrable Ricci-flat ALE metric with a parallel spinor.
Let $q\in (1,n)$ and $\mathcal{U}$ be an $L^{[q,\infty]}$-neighbourhood of $\widehat{h}$ in the space of metrics, which is so small that the projection map 
	\begin{align*}
		\Phi:\mathcal{U}\to\mathcal{U}\cap\mathcal{F}
	\end{align*}
of Subsection \ref{subsec: proj map} is well-defined. In addition let $r \in (n,\infty)$ be so large that
\begin{align*}
\frac{n}{2}\left(\frac{1}{q}-\frac{1}{r}\right)&>\frac{1}{2}.
\end{align*}
The H\"{o}lder exponent $q$ is the one in which our initial data will lie. The exponent $r$ is an auxiliary exponent which is on the one hand finite (which is useful for the projection map $\Phi$) but on the other hand so large that $W^{i,r}\subset W^{i-1,\infty}$ by Sobolev embedding. 

We equip the space of maps $k:[1,\infty)\to C^{\infty}(S^2M)$ with the norms
\begin{align*}
\left\|k\right\|_{X_{q,r}(\hat{h})}
	&:=\sup_{t\geq1}\Big\{\left\|k\right\|_{L^q(\hat{h})}+t^{\frac{1}{2}}\left\|\nabla k\right\|_{L^q(\hat{h})} \\
	&\qquad \qquad +t^{\min\left\{1,\frac{n}{2} \left( \frac{1}{q}-\frac{1}{r} \right) \right\}}\left\|\nabla^2 k\right\|_{L^q(\hat{h})}+ t^{\frac{n}{2} \left( \frac{1}{q}-\frac{1}{r} \right) }\left\|k\right\|_{W^{2,r}(\hat{h})}\Big\},\\
\left\|k\right\|_{Z_{q,r}(\hat{h})}
	&:=\sup_{t\geq1}\left\{\left\|k\right\|_{L^q(\hat{h})}+t^{n\left(\frac{1}{q}-\frac{1}{r}\right)}\left\|\partial_tk\right\|_{L^q(\hat{h})}\right\},
\end{align*}
and the set of maps $(h,k):[1,\infty)\to C^{\infty}(S^2M\oplus S^2M)$ with the norms
\begin{align*}
\left\|(h,k)\right\|_{Y_{q,r}(\hat{h})}&:=\left\|h\right\|_{Z_{q,r}(\hat{h})}+\left\|k\right\|_{X_{q,r}(\hat{h})}
\end{align*}
In the first slot, we will typically insert $h-\hat{h}$, where $h=h_t$ is a family of Ricci-flat metrics in $\U\cap\F$. Here, we defined the norms with respect to the fixed metric $\hat{h}$, but the norms can also be taken with respect to a family $\overline{h}_t$, $t\in [1,\infty)$ of Riemannian metrics in a natural way.
\begin{lem}
 Let $\overline{h}_t$, $t\in [1,\infty)$ be a family of metrics in $\U\cap\F$.
  Then there exists a constant $C=C(q,r,\U)$ such that
\begin{align*}
\frac{1}{C}\left\|k\right\|_{X_{q,r}(\hat{h})}\leq \left\|k\right\|_{X_{q,r}(\overline{h})}\leq C\left\|k\right\|_{X_{q,r}(\hat{h})},\qquad
\frac{1}{C}\left\|h\right\|_{Z_{q,r}(\hat{h})}\leq \left\|h\right\|_{Z_{q,r}(\overline{h})}\leq C\left\|h\right\|_{Z_{q,r}(\hat{h})}.	
	\end{align*}
\end{lem}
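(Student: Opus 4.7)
The plan is to deduce the equivalence from a single input: Proposition~\ref{prop: near metrics}, which tells us that every $\overline{h}_t\in\mathcal{U}\cap\mathcal{F}$ satisfies $\overline{h}_t-\hat{h}\in\O_{\infty}(r^{-n})$, with constants depending only on the fixed neighbourhood $\mathcal{U}$ (hence uniform in $t$). From this I extract pointwise bounds $C^{-1}\hat{h}\leq\overline{h}_t\leq C\hat{h}$, the analogous bounds on inverses, and the equivalence $C^{-1}\,dV_{\hat{h}}\leq dV_{\overline{h}_t}\leq C\,dV_{\hat{h}}$ of the volume forms, with $C=C(\mathcal{U})$.

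From these fiberwise bounds one immediately obtains equivalence of the $L^q$-norm (and the $L^r$-norm) of any tensor field, with constants depending only on $\mathcal{U}$. This already handles the $\|k\|_{L^q}$ piece of $X_{q,r}$, and both pieces of the $Z_{q,r}$-norm, because $k$ and $\partial_tk$ are tensors whose $L^q$-norms transform only through the pointwise fiber metric and the volume form.

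For the derivative pieces I introduce $A_t:=\nabla^{\overline{h}_t}-\nabla^{\hat{h}}$, which is a pointwise universal tensor expression in $\overline{h}_t^{-1}$ and $\nabla^{\hat{h}}(\overline{h}_t-\hat{h})$. Proposition~\ref{prop: near metrics} together with the asymptotics of its derivatives give $|A_t|_{\hat{h}}\in\O_{\infty}(r^{-n-1})$ and $|\nabla^{\hat{h}}A_t|_{\hat{h}}\in\O_{\infty}(r^{-n-2})$ uniformly in $t$, so $A_t$ is uniformly bounded in $L^\infty$ together with its first covariant derivative. Hence
\begin{align*}
|\nabla^{\overline{h}_t}k|_{\overline{h}_t}&\leq C\left(|\nabla^{\hat{h}}k|_{\hat{h}}+|k|_{\hat{h}}\right),\\
|(\nabla^{\overline{h}_t})^2k|_{\overline{h}_t}&\leq C\left(|(\nabla^{\hat{h}})^2k|_{\hat{h}}+|\nabla^{\hat{h}}k|_{\hat{h}}+|k|_{\hat{h}}\right),
\end{align*}
and symmetrically with the roles of $\overline{h}_t$ and $\hat{h}$ swapped (the same argument applies since $\hat{h}-\overline{h}_t\in\O_\infty(r^{-n})$ as well). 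Integrating in $L^q$ and $L^r$, the lower order terms appearing on the right hand side are already controlled by lower-order pieces of the $X_{q,r}$-norm (multiplied by the appropriate power of $t$, which only helps since $t\geq 1$), so they can be absorbed into the overall norm. This yields equivalence of $\|\nabla k\|_{L^q}$, $\|\nabla^2k\|_{L^q}$ and $\|k\|_{W^{2,r}}$ under the two choices of background.

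The only subtlety is uniformity of the constants in $t$, but this is automatic: since $\overline{h}_t$ lies in the same fixed $\mathcal{U}$ for every $t$, Proposition~\ref{prop: near metrics} produces a single $t$-independent constant $C(\mathcal{U})$, and nothing else in the argument depends on $t$. I expect no genuine obstacle; the proof is a book-keeping exercise once the uniform $\O_\infty$-bounds are in hand.
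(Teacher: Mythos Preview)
Your approach is the same as the paper's, which simply writes ``By standard estimates, this follows easily from Proposition~\ref{prop: near metrics}, as $\overline{h}-\hat{h}\in\O_\infty(r^{-n})$.'' The outline is correct, but one step is not quite right as stated. When you absorb the lower-order error terms coming from $A_t=\nabla^{\overline{h}_t}-\nabla^{\hat{h}}$, your parenthetical ``multiplied by the appropriate power of $t$, which only helps since $t\geq 1$'' points in the wrong direction: the error in $t^{1/2}\|\nabla k\|_{L^q}$ is $t^{1/2}\|A_t*k\|_{L^q}$, and bounding this via $\|A_t\|_{L^\infty}$ gives $C\,t^{1/2}\|k\|_{L^q}$, which is \emph{not} dominated by the $X_{q,r}$-norm (the norm only contains $\sup_t\|k\|_{L^q}$, with no growing prefactor). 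The same issue arises for the $t^{\min\{1,\frac{n}{2}(\frac1q-\frac1r)\}}$-weighted second-derivative term.

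The fix is to route these lower-order errors through the $W^{2,r}$-piece instead. Since $A_t\in\O_\infty(r^{-n-1})\subset L^q$ and, by Sobolev embedding ($r>n$), $\|k_t\|_{L^\infty}\leq C\|k_t\|_{W^{2,r}}\leq C\,t^{-\min\{1,\frac{n}{2}(\frac1q-\frac1r)\}}\|k\|_{X_{q,r}}$ with exponent strictly larger than $\tfrac12$ by the standing assumption $\frac{n}{2}(\frac1q-\frac1r)>\frac12$, one obtains
\[
t^{1/2}\|A_t*k_t\|_{L^q}\leq t^{1/2}\|A_t\|_{L^q}\,\|k_t\|_{L^\infty}\leq C\,\|k\|_{X_{q,r}}.
\]
The second-derivative errors and the $W^{2,r}$-piece itself are handled analogously (the latter directly via $\|A_t\|_{L^\infty}$, $\|\nabla A_t\|_{L^\infty}$, since all terms there share the same $t$-weight). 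With this adjustment the argument goes through and matches the paper's intended ``standard estimates''.
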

\begin{proof}
By Lemma \ref{lem : elliptic regularity}, we have uniform bounds on $\overline{h}_t-\hat{h}$ in $W^{2,q'}$ for all $q'\in [q,\infty]$. Therefore,  the $W^{2,q'}$-norms coming from $\overline{h}_t$ and $\hat{h}$ are uniformly equivalent, and so the $Z_{q,r}$-norms are by definition equivalent as well. For the $X_{q,r}$-norm, we first consider $L^q$ norms of the first two derivatives of $k$.
We have
\begin{align*}
\overline{\nabla} k-\hat{\nabla} k
=T*k,\qquad
\overline{\nabla}^2 k-\hat{\nabla}^2 k= \hat{\nabla}T*k+T*\hat{\nabla}k+T*T*k,
\end{align*}
where $T_{ij}^l:=\Gamma(\overline{h})_{ij}^l-\Gamma(\hat{h})_{ij}^l$.
Because $1<q<n<r<\infty$, we find $r'\in (1,\infty)$ such that 
\begin{align*}
\frac{1}{q}=\frac{1}{r'}+\frac{1}{r}.
\end{align*}
Therefore, with $t\geq 1$, we get by the H\"{o}lder inequality, the equivalence of the $W^{2,q'}$-norms for all $q'\in [q,\infty]$ and Lemma \ref{lem : elliptic regularity} that
\begin{align*}
t^{\frac{1}{2}}\left\|\overline{\nabla} k\right\|_{L^q(\overline{h})}
&\leq 
C t^{\frac{1}{2}}\left\|\hat{\nabla} k\right\|_{L^q(\overline{h})}+ 
C t^{\frac{1}{2}}\left\| T\right\|_{L^{r'}(\overline{h})}
\left\|\overline{\nabla} k\right\|_{L^r(\overline{h})}\\
&\leq C t^{\frac{1}{2}}\left\|\hat{\nabla} k\right\|_{L^q(\hat{h})}+ 
C \left\| T\right\|_{L^{r'}(\hat{h})}
 t^{\frac{1}{2}}\left\|\overline{\nabla} k\right\|_{L^r(\hat{h})}\\
&\leq C t^{\frac{1}{2}}\left\|\hat{\nabla} k\right\|_{L^q(\hat{h})}+ 
C \left\| \overline{h}-\hat{h}\right\|_{W^{1,r'}(\hat{h})}
t^{\frac{n}{2} \left( \frac{1}{q}-\frac{1}{r} \right) }\left\|\overline{\nabla} k\right\|_{L^r(\hat{h})}\\
&\leq C t^{\frac{1}{2}}\left\|\hat{\nabla} k\right\|_{L^q(\hat{h})}+ 
C \left\| \overline{h}-\hat{h}\right\|_{L^q(\hat{h})}
t^{\frac{n}{2} \left( \frac{1}{q}-\frac{1}{r} \right) }\left\|\overline{\nabla} k\right\|_{L^r(\hat{h})}\\
&\leq C \left\|k\right\|_{X_{q,r}(\hat{h})}.
\end{align*}
Similarly, with $t\geq1$ and $\beta=\min\left\{1,\frac{n}{2} \left( \frac{1}{q}-\frac{1}{r}\right) \right\}$,
\begin{align*}
t^{\beta}\left\|\overline{\nabla}^2 k\right\|_{L^q(\overline{h})}
&\leq Ct^{\beta}\left\|\hat{\nabla}^2 k\right\|_{L^q(\overline{h})}
+Ct^{\beta}\left\| T\right\|_{W^{1,r'}(\overline{h})}
\left\| k\right\|_{W^{1,r}(\overline{h})}
+Ct^{\beta}\left\|T* T\right\|_{W^{1,r'}(\overline{h})}
\left\| k\right\|_{L^{r}(\overline{h})}\\
&\leq Ct^{\beta}\left\|\hat{\nabla}^2 k\right\|_{L^q(\overline{h})}+
C\left( \left\| T\right\|_{W^{1,r'}(\overline{h})}+
\left\| T\right\|_{W^{1,2r'}(\overline{h})}^2\right)t^{\beta}\left\| k\right\|_{W^{1,r}(\overline{h})}\\
&\leq Ct^{\beta}\left\|\hat{\nabla}^2 k\right\|_{L^q(\hat{h})}+
C\left( \left\| T\right\|_{W^{1,r'}(\hat{h})}+
\left\| T\right\|_{W^{1,2r'}(\hat{h})}^2\right)t^{\beta}\left\| k\right\|_{W^{1,r}(\hat{h})}\\
&\leq Ct^{\beta}\left\|\hat{\nabla}^2 k\right\|_{L^q(\hat{h})}+
C\left( \left\| \overline{h}-\hat{h}\right\|_{W^{2,r'}(\hat{h})}+
\left\| \overline{h}-\hat{h}\right\|_{W^{2,2r'}(\hat{h})}^2\right)t^{\beta}\left\| k\right\|_{W^{1,r}(\hat{h})}\\
&\leq Ct^{\beta}\left\|\hat{\nabla}^2 k\right\|_{L^q(\hat{h})}+
C \left\| \overline{h}-\hat{h}\right\|_{L^{q}(\hat{h})}t^{\frac{n}{2} \left( \frac{1}{q}-\frac{1}{r}\right)}\left\| k\right\|_{W^{1,r}(\hat{h})}\\
&\leq C \left\|k\right\|_{X_{q,r}(\hat{h})}
\end{align*}
Because the $W^{2,q'}$-norms are equivalent for all $q'\in [q,\infty]$, this already implies the inequality $\left\|k\right\|_{X_{q,r}(\overline{h})}\leq C \left\|k\right\|_{X_{q,r}(\hat{h})}$. The converse estimate is shown analogously.
\end{proof}
Due to this lemma, we will from now on suppress the dependence of the norm on the metric for notational convenience and allow any curve $\overline{h}_t$ in $\U\cap\F$. 
The next lemma justifies the $X$-part of the norm
\begin{lem}\label{lem: Linear evolution and Xnorm}
Let $h\in\U\cap\F$ and $\Delta_{L,h}$ its Lichnerowicz Laplacian
Then there exists a constant $C=C(q,r,\U)$ such that
	\begin{align*}
	\left\|e^{-t\Delta_{L,h}}\circ \Pi_{h}^\perp(k_0) |_{[1,\infty)}\right\|_{X_{q,r}}\leq C \left\|k_0\right\|_{L^q}.
	\end{align*}
	for all  $k_0\in L^q(S^2M)$.
	\end{lem}
\begin{proof}
	This is immediate from Theorem \ref{thm : linear estimates}.\end{proof}


 \begin{lem}\label{lem : inftycontrol}
 Let $h:[1,\infty)\to \U$ be smooth and suppose that $\left\|h-\widehat{h}\right\|_{Z_{q,r}}<\infty$. Then, 
  $h_t$ converges in $L^q$ to a metric $h_{\infty}\to \overline{\U}$ as $t\to\infty$ and 
  \begin{align*}
  \left\|h_{\infty}-\widehat{h}\right\|_{L^q}\leq \left\|h-\widehat{h}\right\|_{Z_{q,r}}.
  \end{align*} 
  Moreover, if $\left\|h-\widehat{h}\right\|_{Z_{q,r}}$ is sufficiently small, $h_{\infty}\in \U$.
 \end{lem}
 \begin{proof}
 Let $\left\{t_i\right\}_{i\in\N}$ be a sequence of real numbers converging to $\infty$. Then,
 \begin{align*}
 \left\|h_{t_{i+1}}-h_{t_{i}}\right\|_{L^q}\leq \int_{t_i}^{t_{i+1}}\left\|\partial_th\right\|_{L^q}dt&\leq \int_{t_{i}}^{t_{i+1}}\left\|\partial_th\right\|_{L^q}dt \\
 &\leq \int_{t_{i}}^{t_{i+1}}t^{-n(\frac{1}{q}-\frac{1}{r})}dt\left\|h\right\|_{Z_{q,r}}\leq [t^{1-n(\frac{1}{q}-\frac{1}{r})}]_{t_{i}}^{t_{i+1}}\left\|h\right\|_{Z_{q,r}}.
 \end{align*}
 By the assumptions on $q$ and $r$, $1-n(\frac{1}{q}-\frac{1}{r})<0$. Thus, $[t^{1-n(\frac{1}{q}-\frac{1}{r})}]_{t_{i}}^{t_{i+1}}$ is a Cauchy sequence and $h_{t_i}$ is as well. Thus $h_{t_i}$ converges in $L^q$ as $i\to\infty$. Because sequence was arbitrary the limit $h_{\infty}$ is independent of the choice of sequence. Hence $h_t\to h_{\infty}$ as $t\to\infty$ and we get
 \begin{align*}
 \left\|h_{\infty}-\widehat{h}\right\|_{L^q}=\lim_{t\to\infty} \left\|h_{t}-\widehat{h}\right\|_{L^q}\leq \sup_{t\geq 1}\left\|h_{t}-\widehat{h}\right\|_{L^q}\leq \left\|h\right\|_{Z_{q,r}},
 \end{align*}
 which establishes the estimate. The final statement of the lemma is obvious.
 \end{proof}
\begin{lem}\label{lem : normcontrol}
Let   $(h_t)_{t\geq 1}$ be a family of metrics in $\U\cap\F$, $(k_t)_{t\geq1}$ be a family of symmetric $2$-tensors, and $g_t=h_t+k_t$, $t\in [1,\infty)$. Recall that
\begin{align*}
	\left\|k\right\|_{L^{[q,\infty]}}=\left\|k\right\|_{L^q}+\left\|k\right\|_{L^{\infty}}.
	\end{align*}
	Then, there exists a constant $C=C(q,r,\U)$ such that
	\begin{align*}
	\left\|g-\hat{h}\right\|_{L^{[q,\infty]}}+	\left\|g-\hat{h}\right\|_{W^{1,\infty}}\leq C\left\|(h-\hat{h},k)\right\|_{Y_{q,r}}.
	\end{align*}
	\end{lem}
\begin{proof}We have $g-\hat{h}=k+h-\hat{h}$. Thus
by Lemma \ref{lem : elliptic regularity}, Sobolev embedding and the definition of the time-dependant norms, we have
\begin{align*}
	\left\|g-\hat{h}\right\|_{{L^{[q,\infty]}}}+	\left\|g-\hat{h}\right\|_{W^{1,\infty}}&\leq 
C\left(\left\|k\right\|_{L^{q}}+\left\|k\right\|_{W^{1,\infty}}+\left\|h-\widehat{h}\right\|_{L^{q}}+\left\|h-\widehat{h}\right\|_{W^{1,\infty}}\right)\\
&\leq C\left(\left\|k\right\|_{L^{q}}+\left\|k\right\|_{W^{2,r}}+\left\|h-\widehat{h}\right\|_{L^{q}}\right)\\
&\leq C\left(\left\|k\right\|_{X_{q,r}}+\left\|h-\widehat{h}\right\|_{Z_{q,r}}\right)\\
&= C\left\|(h-\hat{h},k)\right\|_{Y_{q,r}},
\end{align*}	
which proves the lemma.
\end{proof}

\begin{lem}\label{lem : Phiwelldefined}Let   $(h_t)_{t\geq 1}$ be a family of metrics in $\U\cap\F$ and $(k_t)_{t\geq1}$ be a family of symmetric $2$-tensors.
	Then there exists an $\epsilon>0$ such that $\psi$ is well-defined
	if $\left\|(h-\hat{h},k)\right\|_{Y_{q,r}}<\epsilon$.
\end{lem}
\begin{proof}Let $g_t=h_t+k_t$.
	In order to show that $\psi$ is well-defined, we have to ensure that the terms involving the projection map $\Phi$ make sense. These terms are given by $D_g\Phi$ and 
	\begin{align*}
	\psi_1(h,k)
		=\Phi(\overline{\psi}_1(h,k))
		=\Phi\left(h_1+\int_1^tD_g\Phi(H_1(s))ds\right).
	\end{align*}
	In order to do so, we have to show that
	\begin{align*}
	g\in \mathcal{U} ,\qquad  h_1+\int_1^tD_g\Phi(H_1(s))ds\in \mathcal{U},
	\end{align*}
	where $\U$ is the $L^{[q,\infty]}$-neighbourhood which is the domain of definition of $\Phi$.
	At first, Lemma \ref{lem : normcontrol} shows that $g\in \mathcal{U}$, if $\left\|(h-\hat{h},k)\right\|_{Y_{q,r}}$ is chosen small enough.
	Let now $q'\in (1,q]$, $r'\in[r,\infty)$ and $q''\in (1,\infty]$.
	Using Lemma \ref{lem : projection lemma} (iii), 
	\begin{align*}
	-n\left(\frac{1}{q'}-\frac{1}{r'}\right)
		\leq - n\left(\frac{1}{q}-\frac{1}{r}\right)
		< - 1,
	\end{align*}
	and \eqref{eq : RdT1}, we establish the estimate	
	\begin{equation}
	\begin{split}\label{eq : H_1 estimate}
	\left\| \int_1^tD_g\Phi(H_1(s))ds\right\|_{L^{q''}} &\leq C\int_1^t\left\| H_1(s)\right\|_{L^{\frac{r'}{2}}}ds\\
		&\leq C\int_1^t [\left\|\nabla k\right\|_{L^{r'}}^2+\left\|\nabla^2k\right\|_{L^{r'}}\left\|k\right\|_{L^{r'}}+\left\|R\right\|_{L^{\infty}}\left\| k\right\|_{L^{r'}}^2]ds\\
		&\leq C\int_1^t s^{-n(\frac{1}{q'}-\frac{1}{r'})}ds\cdot \left\|k\right\|_{X_{q',r'}}^2\\
		&\leq C\left\|k\right\|_{X_{q',r'}}^2.
	\end{split}
	\end{equation}
    Here, we use \eqref{eq : H_1 estimate} only for $q''\in \left\{q,\infty\right\}$, $q'=q$ and $r'=r$, but later, we will make use of this inequality in full generality.
	We get
	\begin{align*}
	\left\| h_1+\int_1^tD_g\Phi(H_1(s))ds-\hat{h} \right\|_{L^{[q,\infty]}}&\leq \left\| \int_1^tD_g\Phi(H_1(s))ds \right\|_{L^{[q,\infty]}}+ \left\| h_1-\hat{h} \right\|_{L^{[q,\infty]}}\\
	&\leq C\left\|k\right\|_{X_{q,r}}^2+ \left\| h_1-\hat{h} \right\|_{Z_{q,r}}
	\end{align*}
	so that
	\begin{align*}
    h_1+\int_1^tD_g\Phi(H_1(s))ds\in \mathcal{U},
	\end{align*}
	if $\left\|(h-\hat{h},k)\right\|_{Y_{q,r}}$ is chosen small enough.
	\end{proof}

\subsection{Mapping properties of the iteration map}\label{subsection : mapping properties}
Let  $(M^n,\hat{h})$ be an integrable ALE manifold with a parallel spinor and $q\in (1,n)$, $r\in(n,\infty)$ so that
\begin{align*}
	\frac{n}{2}\left(\frac{1}{q}-\frac{1}{r}\right)
		&>\frac{1}{2}.
\end{align*}
Moreover, let $\U$ be an $L^{[q,\infty]}$-neighbourhood of $\hat{h}$ in the space of metrics, which is so small that the projection map $\Phi:\mathcal{U}\to\mathcal{U}\cap\mathcal{F}$ of Subsection \ref{subsec: proj map} is defined. Let furthermore
$h:=(h_t)_{t\geq 1}$ be a family of metrics in $\U\cap\F$ and $k:=(k_t)_{t\geq1}$ be a family of symmetric $2$-tensors. We finally set $g:=(g_t)_{t\geq 1}:=(h_t+k_t)_{t\geq 1}$. 


The goal of this subsection is to derive the following mapping property:
\begin{thm}\label{thm : psimapping}
	There exists an $\epsilon>0$ such that if  $\left\|(h-\hat{h},k)\right\|_{Y_{q,r}}<\epsilon$, there exists a constant $C=C(q,r,\U,\epsilon)$ such that
	 the map $\psi$ satisfies the estimate
	\begin{align*}
	\left\|\psi(h,k)-(\hat{h},0)\right\|_{Y_{q,r}}\leq C\left(\left\|k_1\right\|_{W^{2,q}}+\left\|k_1\right\|_{W^{2,\infty}}+\left\|h_1-\hat{h}\right\|_{L^q}+\left\|k\right\|_{X_{q,r}}\left\|(h-\hat{h},k)\right\|_{Y_{q,r}}\right).
	\end{align*}
	\end{thm}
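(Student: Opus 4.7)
The plan is to separate the $Y_{q,r}$-bound into independent estimates of $\|\psi_1(h,k) - \hat h\|_{Z_{q,r}}$ and $\|\psi_2(h,k)\|_{X_{q,r}}$. In both cases I first use that the outer maps $\Phi$ and $(\Pi^\perp_{h,h_\infty})^{-1}$ are bounded near the base point: the Lipschitz property of $\Phi$ (from Lemma \ref{lem : DPhi vanishes} and its proof) and Lemma \ref{lem : projection lemma}(iv), (vii) reduce the estimates to those of $\overline\psi_1$ in $Z_{q,r}$ and $\overline\psi_2$ in $X_{q,r}$, up to cross-terms controlled by $\|h - \hat h\|_{L^q}$ times the norms already being estimated, which may be absorbed because $\|(h - \hat h, k)\|_{Y_{q,r}} < \epsilon$.

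For the $Z_{q,r}$-part of $\psi_1$, the $L^q$-contribution comes from $\|h_1 - \hat h\|_{L^q} + \int_1^\infty \|D_g\Phi(H_1(s))\|_{L^q}\,ds$. By Lemma \ref{lem : projection lemma}(iii), $D_g\Phi$ maps $L^{r/2}\to L^q$, and the $H_1$-estimate derived in the proof of Lemma \ref{lem : Phiwelldefined} (with $q' = q$, $r' = r$) gives $\|H_1(s)\|_{L^{r/2}} \leq Cs^{-n(1/q - 1/r)}\|k\|_{X_{q,r}}^2$; since $n(1/q - 1/r) > 1$ by hypothesis, the integral converges. The time-derivative slot of $Z_{q,r}$ follows at once from $\partial_t\psi_1 = D\Phi \circ D_g\Phi(H_1)$ and the same estimate, giving $t^{n(1/q-1/r)}\|\partial_t\psi_1\|_{L^q} \leq C\|k\|_{X_{q,r}}^2$.

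For $\overline\psi_2$, I split $\overline\psi_2 = A + B$ where $A = P(g,h,h_\infty)_{1\to t}(\Pi^\perp_{h_\infty} k_1)$ and $B = \int_1^t P(g,h,h_\infty)_{s\to t}\, I(t,s,k,h,h_\infty)\,ds$. To estimate $A$, for $t\geq 2$ I factor $P_{1\to t} = P_{t-1\to t}\circ e^{-(t-2)\Delta_{L,h_\infty}}$, apply Lemma \ref{lem : short-time estimates} to bound $P_{t-1\to t}$ between $W^{l,p}$-spaces for $l \leq 2$, and apply Corollary \ref{cor : linear estimates} to the middle factor to get the polynomial decay in $L^q$ and in $L^r$ corresponding to each component of the $X_{q,r}$-norm; for $t \in [1,2]$ Lemma \ref{lem : short-time estimates} acts directly. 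The assumption $k_1 \in W^{2,q}\cap W^{2,\infty}$ enters both for direct $W^{2,q}$-control on short time and to provide, via the $W^{2,\infty}$ factor, the bound on $\|k_1\|_{W^{2,r}}$ required by the $X_{q,r}$-norm.

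To estimate $B$, I split the integral at $s = \max\{1, t-1\}$. On the interior interval $s \in [1, t-1]$ I write $P_{s\to t} = P_{t-1\to t}\circ e^{-(t-1-s)\Delta_{L,h_\infty}}$: Lemma \ref{lem : short-time estimates} handles the short-time part, and Corollary \ref{cor : linear estimates} gives a factor of order $(t-1-s)^{-\beta}$ depending on the derivative order and target exponent. The inhomogeneity $I$ is a sum of terms each of which is a pointwise product of a small factor (either $\|h - h_\infty\|$ in $L^q$ and $L^\infty$, or $\|k\|_{L^\infty}$ and $\|\nabla k\|_{L^\infty}$, the latter controlled via Sobolev embedding $W^{2,r}\hookrightarrow W^{1,\infty}$ for $r > n$) and a factor whose $L^{r/2}$- or $L^q$-norm decays in $s$ at a rate read off from the $X_{q,r}$- and $Z_{q,r}$-weights, so that $\|I(t,s,\cdots)\|_{L^{\cdot}} \leq C s^{-\alpha}\|k\|_{X_{q,r}}\|(h - \hat h, k)\|_{Y_{q,r}}$. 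The resulting time convolution is exactly of the form covered by Lemma \ref{important_technical_lemma}, which delivers the required decay in $t$. On the boundary interval $s \in (t-1, t)$, $P_{s\to t}$ is the short-time evolution of the modified Lichnerowicz operator $\Delta_{L,g,h}$ and is controlled by Lemma \ref{lem : short-time estimates}; the $I$-integrand is bounded pointwise in $s$ on this unit interval, contributing a quadratic term in $\|k\|_{X_{q,r}}\|(h-\hat h, k)\|_{Y_{q,r}}$.

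The main technical obstacle is the bookkeeping in the interior integral for $B$: for each of the three components of the $X_{q,r}$-norm, for each summand of $I$, and for both possible target spaces $L^q$ and $L^r$, one must identify the correct exponents $\alpha, \beta$ so that Lemma \ref{important_technical_lemma} yields exactly the weight $t^0$, $t^{-1/2}$, or $t^{-\min\{1, n/2(1/q-1/r)\}}$ appearing in the definition of $\|\cdot\|_{X_{q,r}}$; the borderline case $\max\{\alpha,\beta\} = 1$ in Lemma \ref{important_technical_lemma} has to be circumvented by strictness of the inequality $n/2(1/q - 1/r) > 1/2$ and the freedom to shift within $(1, n)$.
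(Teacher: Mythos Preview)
Your proposal is correct and follows essentially the same route as the paper: split $\psi$ into $\psi_1$ and $\psi_2$, reduce via boundedness of $\Phi$ and $(\Pi^\perp_{h,h_\infty})^{-1}$ to $\overline\psi_1$ and $\overline\psi_2$, treat $\psi_1$ by the $H_1$-estimate from Lemma~\ref{lem : Phiwelldefined}, and for $\overline\psi_2$ factor the evolution as $P_{t-1\to t}\circ e^{-(t-1-s)\Delta_{L,h_\infty}}$ combining Lemma~\ref{lem : short-time estimates} with Corollary~\ref{cor : linear estimates}, then close with Lemma~\ref{important_technical_lemma}.

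Two minor points where the paper is slightly more explicit than your sketch. First, the paper uses a three-way split of the $s$-integral ($[1,t-2]$, $[t-2,t-1]$, $[t-1,t]$) rather than your two-way split at $t-1$; this is because the mapping estimate~\eqref{eq : estimates mixed linear evolution2} is only stated for $s\le t-2$, so the unit interval $[t-2,t-1]$ is handled directly by Lemma~\ref{lem : short-time estimates}(iii) without passing through the heat-kernel decay. Your two-way split works too, since on $[t-2,t-1]$ the factor $(t-1-s)^{-\beta}$ is bounded anyway, but you should note this. Second, the mechanism you allude to for avoiding the borderline $\max\{\alpha,\beta\}=1$ is made concrete in the paper by inserting an auxiliary exponent $p'\in(1,q)$ in the $L^{p'}\to L^{q'}$ estimate of the heat semigroup, gaining an extra $(t-s)^{-\frac n2(\frac1{p'}-\frac1q)}$ that pushes $\beta$ strictly above the threshold; your phrase ``freedom to shift within $(1,n)$'' is exactly this.
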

The proof of this theorem is split up in Propositions \ref{psi1mapping} and \ref{psi2mapping} below, in which the estimates for the components of $\psi$ are established. In fact we will also prove estimates  for certain other H\"{o}lder exponents $q'\in (1,q]$ and $r'\in [r,\infty)$. These more general estimates will be important later in the paper for detecting the optimal convergence behaviour for the Ricci-de Turck flow with moving gauge.
\begin{prop}\label{psi1mapping}
	There exists an $\epsilon>0$ such that if  $\left\|(h-\hat{h},k)\right\|_{Y_{q,r}}<\epsilon$, there exists for all  $q'\in (1,q]$ and $r'\in [r,\infty)$ a constant $C=C(q,r,q',r',\U,\epsilon)$ such that	
\begin{align*}
\left\|\psi_1(h,k)-\hat{h}\right\|_{Z_{q',r'}}\leq C\left(\left\|h_1-\hat{h}\right\|_{L^q}+\left\|k\right\|_{X_{q',r'}}^2\right).
\end{align*}
	\end{prop}
\begin{proof}
	Due to Lemma \ref{lem : Phiwelldefined}, $\psi$ is well defined under the assumption of the proposition.
		Due to Lemma \ref{lem : projection lemma} (iii), $D_g\Phi$ is bounded on $L^{q'}$ for each $q'\in(1,\infty)$ and $g\in\mathcal{U}$. Therefore, $\Phi$ is also Lipschitz with respect to the $L^{q'}$-norm.
	Using $\hat{h}=\Phi(\hat{h})$, $h_1=\Phi(h_1)$ and the estimate in \eqref{eq : H_1 estimate}, we then get
	\begin{align*}
	\left\|\psi_1(h,k)-\hat{h}\right\|_{L^{q'}}
	&\leq \left\| \Phi\left(h_1+\int_1^tD_g\Phi(H_1(s))ds \right)-\Phi(h_1)\right\|_{L^{q'}}
	+\left\|\Phi(h_1)-\Phi(\hat{h})\right\|_{L^{q'}}\\
	&\leq C\left\| \int_1^tD_g\Phi(H_1(s))ds\right\|_{L^{q'}}+\left\|h_1-\hat{h}\right\|_{L^{q'}}\\
	&\leq C\left\|k\right\|_{X_{q',r'}}^2+C\left\|h_1-\hat{h}\right\|_{L^{q'}}.
	\end{align*}
	Secondly, we estimate, similarly as in \eqref{eq : H_1 estimate}
	\begin{align*}
	t^{n\left(\frac{1}{q'}-\frac{1}{r'}\right)}\left\|\partial_t\psi_1(h,k)\right\|_{L^{q'}}
		&\leq Ct^{n\left(\frac{1}{q'}-\frac{1}{r'}\right)} \left\| D_{\overline{\psi}_1(h,k)}\Phi[D_g\Phi(H_1)]\right\|_{L^{q'}}\\
		&\leq C t^{n\left(\frac{1}{q'}-\frac{1}{r'}\right)}\left[\left\|\nabla k\right\|_{L^{r'}}^2+\left\|\nabla^2k\right\|_{L^{r'}}\left\|k\right\|_{L^{r'}}+\left\|R\right\|_{L^{\infty}}\left\| k\right\|_{L^{r'}}^2\right]\\
		&\leq C\left\|k\right\|_{X_{q',r'}}^2,
	\end{align*}
	which finishes the proof of the proposition.
\end{proof}
\begin{prop}\label{psi2mapping}
There exists an $\epsilon>0$ such that if  $\left\|(h-\hat{h},k)\right\|_{Y_{q,r}}<\epsilon$, there exists a constant $C=C(q,r,\U,\epsilon)$ such that
	\begin{align*}
		\left\|\psi_2(h,k)\right\|_{X_{q,r}}&\leq C
		\left[\left\|k_1 \right\|_{W^{2,q}}+\left\|k_1 \right\|_{W^{2,r}}
	+\left(\left\|k\right\|_{X_{q,r}}+\left\|h-\hat{h}\right\|_{Z_{q,r}}\right)\left\|k\right\|_{X_{q,r}}\right].
	\end{align*}
Moreover, for any $q'\in (1,q]$ satisfying
	\begin{align*}
	\frac{n}{2} \left( \frac{1}{q'}-\frac{1}{q} \right)
		\leq \min\left\{\frac{n}{2r},n\left(\frac{1}{q}-\frac{1}{r}\right)-1,\frac{1}{2}\right\},
	\end{align*}
 there exists a constant $C=C(q,q',r,\epsilon)$ such that
		\begin{align*}
	\left\|\psi_2(h,k)\right\|_{X_{q',r}}
		&\leq C \left[\left\|k_1 \right\|_{W^{2,q'}}+\left\|k_1 \right\|_{W^{2,r}}
	+\left( \left\|k\right\|_{X_{q,r}}+\left\|h-\hat{h}\right\|_{Z_{q,r}}\right) \left\|k\right\|_{X_{q,r}}\right].
	\end{align*} 
	\end{prop}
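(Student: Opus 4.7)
The strategy is to bound $\overline{\psi}_2(h,k)$ in $X_{q,r}$ and then transfer this bound via Lemma \ref{lem : projection lemma} (iv) and (vii); the inverse projection $(\Pi^\perp_{h,h_\infty})^{-1}$ acts boundedly on the $L^{q}$- and $W^{j,q}$-pieces of the norm up to an error absorbed into $\|h-\hat h\|_{L^q}\,\|k\|_{X_{q,r}}$. Decompose
\[
\overline{\psi}_2(h,k)_t = P(g,h,h_\infty)_{1\to t}\bigl(\Pi^\perp_{h_\infty}(k_1)\bigr) + \int_1^t P(g,h,h_\infty)_{s\to t}\, I(t,s,k,h,h_\infty)\,ds
\]
and treat the two summands separately.

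\textbf{Linear term.} For $t \in [1,2]$ the short-time estimates of Lemma \ref{lem : short-time estimates} yield all components of $\|\cdot\|_{X_{q,r}}$ directly from $\|k_1\|_{W^{2,q}} + \|k_1\|_{W^{2,r}}$. For $t \ge 2$, factor $P_{1\to t} = P_{t-1\to t} \circ e^{-(t-2)\Delta_{L,h_\infty}} \circ \Pi^\perp_\infty$: Corollary \ref{cor : linear estimates} gives the polynomial decay built into $X_{q,r}$ from the heat-flow factor, and $P_{t-1\to t}$, a short-time evolution of length $1$ for the modified Lichnerowicz operator $\Delta_{L,g,h}$, is controlled by Lemma \ref{lem : short-time estimates}, whose coefficient hypothesis is secured by Lemma \ref{lem : normcontrol} when $\|(h-\hat h, k)\|_{Y_{q,r}}$ is small.

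\textbf{Convolution term.} The indicator functions in $I$ split the integral into a bulk part on $s \in [1, \max\{t-1,1\}]$, where the kernel is $e^{-(t-s)\Delta_{L,\infty}}\circ\Pi^\perp_\infty$ and the source is $\Pi^\perp_\infty[(\Delta_{L,\infty}-\Delta_{L,h})(k) + (1-D_g\Phi)H_1]$, and a boundary layer $s \in [\max\{t-1,1\}, t]$, where the kernel is the modified Lichnerowicz evolution and the source contains in addition $[\Delta_{L,g,h}, \Pi^\perp_\infty](k)$ and $(1-D_g\Phi)H_2$. Since $h - h_\infty \in \O_\infty(r^{-n})$ by Proposition \ref{prop: near metrics}, the operator $\Delta_{L,\infty}-\Delta_{L,h}$ is second-order with coefficients in $L^\infty$ decaying at infinity, so $\|(\Delta_{L,\infty}-\Delta_{L,h})(k)\|_{L^{q'}}$ is dominated by $\|h-h_\infty\|_{L^\infty}\|k\|_{W^{2,q'}}$ and hence by $s^{-\alpha}\|h-\hat h\|_{Z_{q,r}}\|k\|_{X_{q,r}}$ for appropriate $\alpha$; the quadratic tensors $H_1, H_2$ of Lemma \ref{lem : expression RdT1} satisfy $\|H_i\|_{L^{r'/2}} \lesssim s^{-\alpha}\|k\|_{X_{q,r}}^2$ by H\"older, just as in \eqref{eq : H_1 estimate}; the commutator and $D_g\Phi$ contributions are bounded via Lemma \ref{lem : projection lemma}. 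Corollary \ref{cor : linear estimates} (respectively Lemma \ref{lem : short-time estimates}) applied to $P_{s\to t}$ produces $(t-s)^{-\beta}$ factors, and Lemma \ref{important_technical_lemma} closes the time convolution at the required $X_{q,r}$-rate.

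\textbf{Main obstacle and the $X_{q',r}$-estimate.} The second estimate follows the same scheme with $L^{q}$ replaced by $L^{q'}$, and the three constraints on $q'$ encode exactly the conditions under which Lemma \ref{important_technical_lemma} returns the required $t^{-\alpha}$ decay without logarithmic loss: $\tfrac{n}{2}(\tfrac{1}{q'}-\tfrac{1}{q}) \le n(\tfrac{1}{q}-\tfrac{1}{r})-1$ keeps the $H_i$-convolution integrable at infinity, $\tfrac{n}{2}(\tfrac{1}{q'}-\tfrac{1}{q}) \le \tfrac{1}{2}$ matches the $t^{1/2}$ weight on $\nabla k$, and $\tfrac{n}{2}(\tfrac{1}{q'}-\tfrac{1}{q}) \le \tfrac{n}{2r}$ matches the weight on the $W^{2,r}$-component. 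The main obstacle is the second-derivative piece of the norm: the short-time bound $\|\nabla^2 e^{-\tau\Delta_L}\|_{L^{q'} \to L^{q'}} \lesssim \tau^{-1}$ is not integrable near $\tau=0$, which is precisely why the mixed evolution operator of Subsection \ref{subsubseq : boundary terms} was designed to absorb the critical divergence term $\nabla_a((g^{ab}-h^{ab})\nabla_b k)$ into $\Delta_{L,g,h}$ on $[t-1,t]$; verifying that this construction closes the iteration without ever generating third derivatives of $k$ is the most delicate book-keeping step.
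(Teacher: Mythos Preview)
Your overall strategy matches the paper's: reduce to $\overline{\psi}_2$ via Lemma \ref{lem : projection and Banach space} (only part (iv) of Lemma \ref{lem : projection lemma} is needed here; part (vii) is for the contraction estimate, and no residual error term $\|h-\hat h\|_{L^q}\|k\|_{X_{q,r}}$ appears), then split into the linear piece (Lemma \ref{lem : linear part}) and the convolution, subdividing the latter into the bulk $s\in[1,t-2]$, the transition $[t-2,t-1]$, and the boundary $[t-1,t]$ (Lemmas \ref{lem : nonlinear part 1}, \ref{lem : nonlinear part 1.5}, \ref{lem : nonlinear part 2}).

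There is, however, a genuine gap in the $q'<q$ case. You bound $\|(\Delta_{L,\infty}-\Delta_{L,h})(k)\|_{L^{q'}}$ by $\|h-h_\infty\|_{L^\infty}\|k\|_{W^{2,q'}}$ and then claim this is controlled by $s^{-\alpha}\|h-\hat h\|_{Z_{q,r}}\|k\|_{X_{q,r}}$. But $\|k_s\|_{W^{2,q'}}$ is \emph{not} dominated by $\|k\|_{X_{q,r}}$ when $q'<q$: the $X_{q,r}$-norm controls $W^{2,q}$ and $W^{2,r}$, and there is no interpolation downward to $q'<q$. The paper's fix (Lemma \ref{lem : nonlinear part 0.1}) is to estimate this source in $L^{p'}$ for an \emph{arbitrarily small} $p'$ by putting the two derivatives of $k$ into $L^{r}$ (controlled by $X_{q,r}$ with decay $s^{-\frac{n}{2}(\frac1q-\frac1r)}$) and $h_s-h_\infty$ into the complementary large Lebesgue space; the passage from $L^{p'}$ to the target $L^{q''}$, $q''\in\{q',r\}$, is then supplied by the heat-kernel bound $\|P_{s\to t}\circ\Pi^\perp\|_{L^{p'}\to L^{q''}}\le C(t-s)^{-\frac n2(\frac1{p'}-\frac1{q'})-\alpha(q'',i)}$ of Lemma \ref{lem : linear part}, and it is this extra $(t-s)$-decay that produces the $X_{q',r}$-weights via Lemma \ref{important_technical_lemma}. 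Relatedly, your attribution of the three constraints is inverted: $\frac{n}{2}(\frac1{q'}-\frac1q)\le n(\frac1q-\frac1r)-1$ is what makes the $(\Delta_{L,\infty}-\Delta_{L,h})$-convolution close (the source decays as $s^{1-\frac{3n}{2}(\frac1q-\frac1r)}$), the $\tfrac12$ constraint governs the $H_1$-integral through Lemma \ref{lem : nonlinear part 0.2}, and the $\tfrac{n}{2r}$ constraint is the H\"older condition $\frac1{q'}<\frac1q+\frac1r$ used in Lemma \ref{lem : nonlinear part 0.2} and in the boundary-layer Lemmas \ref{lem : nonlinear part 1.5} and \ref{lem : nonlinear part 2}.
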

	We prove the second inequality as the first one is a special case of the second. It will follow from a series of lemmas which cover the remainder of this subsection. For all these lemmas, we will assume that 
	\begin{align*}\left\|(h-\hat{h},k)\right\|_{Y_{q,r}}<\epsilon
\end{align*}	
	 for some sufficiently small $\epsilon>0$.
	\begin{lem}\label{lem : projection and Banach space}
	For any $q'\in (1,q]$ and $r'\in [r,\infty)$, there exists a constant $C=C(q',r',\U,\epsilon)$ such that
		\begin{align*}
	\left\|\psi_2(h,k)\right\|_{X_{q',r'}}=\left\|(\Pi^{\perp}_{h,h_{\infty}})^{-1}\Pi^{\perp}_{h_{\infty}}(\overline{\psi}_2(h,k))\right\|_{X_{q',r'}}\leq C	\left\|\overline{\psi}_2(h,k)\right\|_{X_{q',r'}}.
	\end{align*}
	\end{lem}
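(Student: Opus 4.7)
The plan is to apply Lemma \ref{lem : projection lemma} (iv) termwise to the four pieces of the $X_{q',r'}$-norm. Recall that $(\Pi^{\perp}_{h,h_\infty})^{-1} = \id - (\Pi^{\parallel}_{h_\infty,h})^{-1}\circ \Pi^{\parallel}_h$, so the correction lies in the finite-dimensional space $\ker_{L^2}(\Delta_{L,h_\infty})\subset \O_\infty(r^{-n})$, where all Sobolev norms are mutually equivalent. This is the content of Lemma \ref{lem : projection lemma} (iv).

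First, I would estimate the three $L^{q'}$-pieces pointwise in $t$ by Lemma \ref{lem : projection lemma} (iv): for $l\in\{0,1,2\}$,
\[
\bigl\|\nabla^l (\Pi^{\perp}_{h,h_\infty})^{-1}(\overline{\psi}_2)\bigr\|_{L^{q'}} \leq C\bigl(\|\nabla^l \overline{\psi}_2\|_{L^{q'}} + \|\overline{\psi}_2\|_{L^{r'}}\bigr).
\]
Multiplying by the respective time weights ($1$, $t^{1/2}$, $t^{\min\{1,\frac{n}{2}(1/q'-1/r')\}}$) and taking the supremum over $t\geq 1$, each $\|\nabla^l \overline{\psi}_2\|_{L^{q'}}$-term is directly controlled by $\|\overline{\psi}_2\|_{X_{q',r'}}$.

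For the $W^{2,r'}$ part I would instead apply Lemma \ref{lem : projection lemma} (iv) with both exponents equal to $r'$, yielding, for $l\in\{0,1,2\}$,
\[
\bigl\|\nabla^l (\Pi^{\perp}_{h,h_\infty})^{-1}(\overline{\psi}_2)\bigr\|_{L^{r'}} \leq C\bigl(\|\nabla^l \overline{\psi}_2\|_{L^{r'}} + \|\overline{\psi}_2\|_{L^{r'}}\bigr),
\]
both terms of which, after weighting by $t^{\min\{1,\frac{n}{2}(1/q'-1/r')\}}$, are bounded by $\|\overline{\psi}_2\|_{X_{q',r'}}$.

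The only place a weight comparison is needed is absorbing the lower-order term $\|\overline{\psi}_2\|_{L^{r'}}$ arising from the gradient bound: one needs $t^{1/2}\|\overline{\psi}_2\|_{L^{r'}} \leq C\|\overline{\psi}_2\|_{X_{q',r'}}$ for all $t\geq 1$. Since $q'\leq q$ and $r'\geq r$ give $\frac{n}{2}(\frac{1}{q'}-\frac{1}{r'}) \geq \frac{n}{2}(\frac{1}{q}-\frac{1}{r}) > \frac{1}{2}$, we have $\min\{1,\frac{n}{2}(\frac{1}{q'}-\frac{1}{r'})\} > \frac{1}{2}$, so for $t\geq 1$,
\[
t^{1/2}\|\overline{\psi}_2\|_{L^{r'}} \leq t^{\min\{1,\frac{n}{2}(1/q'-1/r')\}}\|\overline{\psi}_2\|_{L^{r'}} \leq \|\overline{\psi}_2\|_{X_{q',r'}}.
\]
This matches up all weights, and the claimed inequality follows by summing the four estimates. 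The argument is essentially bookkeeping once Lemma \ref{lem : projection lemma} (iv) is in hand; there is no substantive obstacle.
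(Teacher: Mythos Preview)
Your proposal is correct and follows essentially the same approach as the paper: apply Lemma~\ref{lem : projection lemma}~(iv) to each of the four pieces of the $X_{q',r'}$-norm and absorb the lower-order $\|\overline{\psi}_2\|_{L^{r'}}$-terms using the weight comparison $\tfrac12 \leq \min\{1,\tfrac{n}{2}(\tfrac{1}{q'}-\tfrac{1}{r'})\}$. Your handling of the weights is in fact slightly cleaner than the paper's, which writes the exponent $\tfrac{n}{2}(\tfrac{1}{q'}-\tfrac{1}{r'})$ in places where the capped exponent $\beta=\min\{1,\tfrac{n}{2}(\tfrac{1}{q'}-\tfrac{1}{r'})\}$ is what is actually controlled by the norm.
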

	\begin{proof}
		Let us abbreviate $\overline{\psi}_2:=\overline{\psi}_2(h,k)_t$ for fixed $h,k,t$ in the proof. 
Due to the assumptions on $q$ and $r$, we have
\begin{align*}
	\beta
		:=\min\left\{\frac{n}{2}\left( \frac{1}{q'}-\frac{1}{r'} \right) ,1\right\}
		\geq \min\left\{\frac{n}{2}\left( \frac{1}{q}-\frac{1}{r} \right),1\right\}
		\geq \frac{1}{2}.
\end{align*}		
		Therefore, using Lemma \ref{lem : projection lemma} (ii) and (iv), we get
	\begin{align*}
	\left\|(\Pi^{\perp}_{h,h_{\infty}})^{-1}\Pi^{\perp}_{h_{\infty}}(\overline{\psi}_2)\right\|_{L^{q'}}
		&\leq C	\left\|\overline{\psi}_2\right\|_{L^{q'}},\\
	t^{\frac{1}{2}}\left\|\nabla(\Pi^{\perp}_{h,h_{\infty}})^{-1}\Pi^{\perp}_{h_{\infty}}(\overline{\psi}_2)\right\|_{L^{q'}}
		&\leq C	t^{\frac{1}{2}}\left\|\nabla\overline{\psi}_2\right\|_{L^{q'}}+Ct^{\frac{n}{2} \left( \frac{1}{q'}-\frac{1}{r'}\right)} \left\|\overline{\psi}_2\right\|_{L^{r'}},\\
	t^{\beta}\left\|\nabla^2(\Pi^{\perp}_{h,h_{\infty}})^{-1}\Pi^{\perp}_{h_{\infty}}(\overline{\psi}_2)\right\|_{L^{q'}}
		&\leq C	t^{\beta}\left\|\nabla^2\overline{\psi}_2\right\|_{L^{q'}}+	t^{\frac{n}{2} \left( \frac{1}{q'}-\frac{1}{r'} \right) }\left\|\overline{\psi}_2\right\|_{L^{r'}},\\
	t^{\frac{n}{2}\left(\frac{1}{q'}-\frac{1}{r'}\right)}\left\|(\Pi^{\perp}_{h,h_{\infty}})^{-1}\Pi^{\perp}_{h_{\infty}}(\overline{\psi}_2)\right\|_{W^{2,r'}}
		&\leq C	t^{\frac{n}{2}\left(\frac{1}{q'}-\frac{1}{r'}\right)}\left\|\overline{\psi}_2\right\|_{W^{2,r'}},
	\end{align*}
and the lemma follows from the definition of the $X_{q',r'}$-norm.
\end{proof}
\begin{proof}[Proof of Proposition \ref{psi2mapping}]
By Lemma \ref{lem : projection and Banach space} and the triangle inequality,  we have
	\begin{align*}
	\left\|{\psi}_2(h,k)\right\|_{X_{q',r}}\leq C	\left\|\overline{\psi}_2(h,k)\right\|_{X_{q',r}}
		&\leq C\left\| P(g,h,h_{\infty})_{1\to t}(\Pi^{\perp}_{h_\infty}(k_1))\right\|_{X_{q',r'}} \\
		&\quad + C\left\|\int_1^{t}  P(g,h,h_{\infty})_{s\to t}I(t,s,k,h,h_{\infty})ds\right\|_{X_{q',r'}}.
	\end{align*}
The first term on the right hand side is treated in Lemma \ref{lem : linear part} below. The integrand will be split up in three terms which are estimated in Lemma \ref{lem : nonlinear part 1}, Lemma \ref{lem : nonlinear part 1.5} and Lemma \ref{lem : nonlinear part 2} below. 
\end{proof}
\begin{lem}\label{lem : linear part}
For any $q'\in (1,q]$ and $r'\in [r,\infty)$, there exists a constant $C=C(q,r,q',r',\U,\epsilon)$ such that
	\begin{align}\label{eq : estimates mixed linear evolution 1}
	\left\|P(g,h,h_{\infty})_{1\to t}(\Pi^{\perp}_{h_\infty}(k_1))\right\|_{X_{q',r'}}&\leq C
		(\left\|k_1 \right\|_{W^{2,q'}}+\left\|k_1 \right\|_{W^{2,r'}}	).
	\end{align}
	 Moreover, there exists for all $p'\in (1,q']$, $q''\in \left\{q',r' \right\}$, $i\in\left\{0,1,2 \right\}$ a constant $C=C(q,r,p',q'',i,\U)$ such that for all times $t\geq 3$ and $s\in [1,t-2]$  we get the estimate
	 		\begin{align}\label{eq : estimates mixed linear evolution2}
			\left\|\nabla^i\circ P(g,h,h_{\infty})_{s\to t} (\Pi^{\perp}_{h_\infty}(\Theta))\right\|_{L^{q''}} 
				&\leq C (t-s)^{-\frac{n}{2}\left( \frac{1}{p'}-\frac{1}{q'} \right)-\alpha(q'',i)} \left\|\Theta\right\|_{L^{p'}},
		\end{align}
		for any $\Theta\in L^{p'}(S^2M)$. Here, the function    $\alpha:\left\{q',r' \right\}\times\left\{0,1,2 \right\}\to\R$ is defined by
		$	\alpha(q',0)=0$, $\alpha(q',1)=\frac{1}{2}, \alpha(q',2)=\min\left\{1,\frac{n}{2}(\frac{1}{q'}-\frac{1}{r'})\right\}$ and $\alpha(r',i)=\frac{n}{2}(\frac{1}{q'}-\frac{1}{r'})$.
	\end{lem}
\begin{proof}
By Lemma \ref{lem : normcontrol}, the assumption $\left\|(h-\hat{h},k)\right\|_{Y_{q,r}}<\epsilon$ implies smallness of $\left\| k\right\|_{W^{1,\infty}}$. In particular, $\left\|g^{-1}\right\|_{W^{1,\infty}}<\infty$ (recall that $g=h+k$). Therefore, for $t\in[1,3]$, the short-time estimates in Lemma \ref{lem : short-time estimates} (i) and (ii) ensure that
\begin{align*}
		\left\|P(g,h,h_{\infty})_{1\to t}(\Pi^{\perp}_{h_\infty}(k_1))\right\|_{W^{2,q'}}&\leq C\left\|k_1 \right\|_{W^{2,q'}},\\ \left\|P(g,h,h_{\infty})_{1\to t}(\Pi^{\perp}_{h_\infty}(k_1))\right\|_{W^{2,r'}}&\leq C\left\|k_1 \right\|_{W^{2,r'}}.
\end{align*}
From now on, let $t\geq 3$ and $s\in [1,t-2]$. We combine different parts of Lemma \ref{lem : short-time estimates} (to pass from $t-1$ to $t$) and Corollary \ref{cor : linear estimates} (to pass from $1$ to $t-1$). We first prove \eqref{eq : estimates mixed linear evolution2}. The more special estimate \eqref{eq : estimates mixed linear evolution 1} then follows from setting $\Theta=k_1$, $s=1$, $p'=q'$ and using the definition of the $X_{q',r'}$-norm.
At first, by Lemma \ref{lem : short-time estimates} (i), Corollary \ref{cor : linear estimates} and Lemma \ref{lem : projection lemma} (ii), we clearly have
\begin{align*}
	\left\| P(g,h,h_{\infty})_{1\to t}(\Pi^{\perp}_{h_\infty}(\Theta))\right\|_{L^{q'}}
		&= \left\|P(g,h,h_{\infty})_{t-1\to t}\circ e^{-(t-1-s)\Delta_{L,h_{\infty}}}(\Pi^{\perp}_{h_\infty}(\Theta))\right\|_{L^{q'}}\\
		&\leq C\left\| e^{-(t-1-s)\Delta_{L,h_{\infty}}}(\Pi^{\perp}_{h_\infty}(\Theta))\right\|_{L^{q'}}\\
		&\leq C(t-1-s)^{-\frac{n}{2}\left(\frac{1}{p'}-\frac{1}{q'}\right)}\left\|\Theta\right\|_{L^{p'}}
\end{align*}
We again have
\begin{align*}
	\beta:=\min\left\{\frac{n}{2}\left(\frac{1}{q'}-\frac{1}{r'}\right),1\right\}\geq 
\min\left\{\frac{n}{2}\left(\frac{1}{q}-\frac{1}{r}\right),1\right\}\geq \frac{1}{2}.
\end{align*}
Thus by using Lemma \ref{lem : short-time estimates} (v), Corollary \ref{cor : linear estimates} and Lemma \ref{lem : projection lemma} (ii), we get
\begin{align*}
	&\left\|\nabla\circ P(g,h,h_{\infty})_{s\to t}(\Pi^{\perp}_{h_\infty}(\Theta))\right\|_{L^{q'}}
		=\left\|\nabla\circ P(g,h,h_{\infty})_{t-1\to t}\circ e^{-(t-1-s)\Delta_{L,h_{\infty}}}(\Pi^{\perp}_{h_\infty}(\Theta))\right\|_{L^{q'}}\\ 
	&\qquad\qquad \leq C\left( \left\|\nabla\circ e^{-(t-1-s)\Delta_{L,h_{\infty}}}(\Pi^{\perp}_{h_\infty}(\Theta))\right\|_{L^{q'}}+\left\| e^{-(t-1-s)\Delta_{L,h_{\infty}}}(\Pi^{\perp}_{h_\infty}(\Theta))\right\|_{L^{r'}}
\right)\\ 
	&\qquad\qquad \leq  C (t-1-s)^{-\frac{n}{2}\left( \frac{1}{p'}-\frac{1}{q'} \right)-\frac{1}{2}}\left\| \Theta\right\|_{L^{p'}}+C(t-1-s)^{-\frac{n}{2}\left( \frac{1}{p'}-\frac{1}{q'} \right) -\frac{n}{2}\left( \frac{1}{q'}-\frac{1}{r'} \right) }\left\| \Theta\right\|_{L^{p'}} \\
	&\qquad\qquad \leq C (t-1-s)^{-\frac{n}{2}\left( \frac{1}{p'}-\frac{1}{q'} \right) -\frac{1}{2}}\left\| \Theta\right\|_{L^{p'}}
\end{align*}
and by using Lemma \ref{lem : short-time estimates} (vi), Corollary \ref{cor : linear estimates} and Lemma \ref{lem : projection lemma} (ii),
\begin{align*}
	&\left\|\nabla^2\circ P(g,h,h_{\infty})_{s\to t}(\Pi^{\perp}_{h_\infty}(\Theta))\right\|_{L^{q'}}
		= \left\|\nabla^2\circ P(g,h,h_{\infty})_{t-1\to t}\circ e^{-(t-s-1)\Delta_{L,h_{\infty}}}(\Pi^{\perp}_{h_\infty}(\Theta))\right\|_{L^{q'}}\\
 		&\qquad\qquad\leq C\left( \left\|\nabla^2\circ e^{-(t-1-s)\Delta_{L,h_{\infty}}}(\Pi^{\perp}_{h_\infty}(\Theta))\right\|_{L^{q'}}+\left\| e^{-(t-1-s)\Delta_{L,h_{\infty}}}(\Pi^{\perp}_{h_\infty}(\Theta))\right\|_{W^{1,r'}}
\right) \\ 
		&\qquad\qquad\leq  C (t-1-s)^{-\frac{n}{2} \left( \frac{1}{p'}-\frac{1}{q'} \right) -\beta}\left\| \Theta\right\|_{L^{p'}}+C(t-1-s)^{-\frac{n}{2}\left( \frac{1}{p'}-\frac{1}{q'} \right) -\frac{n}{2}\left( \frac{1}{q'}-\frac{1}{r'} \right) }\left\| \Theta\right\|_{L^{p'}}\\
		&\qquad\qquad\leq  C (t-1-s)^{-\frac{n}{2}\left( \frac{1}{p'}-\frac{1}{q'} \right) -\beta}\left\| \Theta\right\|_{L^{p'}}.
\end{align*}
Finally, by Lemma \ref{lem : short-time estimates} (iv), Corollary \ref{cor : linear estimates} and Lemma \ref{lem : projection lemma} (ii), we have
\begin{align*}
	\left\|P(g,h,h_{\infty})_{s\to t}(\Pi^{\perp}_{h_\infty}(\Theta))\right\|_{W^{2,r'}}
		&= \left\| P(g,h,h_{\infty})_{t-1\to t}\circ e^{-(t-1-s)\Delta_{L,h_{\infty}}}(\Pi^{\perp}_{h_\infty}(\Theta))\right\|_{W^{2,r'}}\\ 
		&\leq C \left\| e^{-(t-1-s)\Delta_{L,h_{\infty}}}(\Pi^{\perp}_{h_\infty}(\Theta))\right\|_{L^{r'}} \\ 
		&\leq  C (t-1-s)^{-\frac{n}{2} \left( \frac{1}{p'}-\frac{1}{q'} \right)-\frac{n}{2}\left( \frac{1}{q'}-\frac{1}{r'}\right) } \left\| \Theta\right\|_{L^{p'}},
\end{align*}
which finishes the proof of the lemma.
\end{proof}	
\begin{lem}\label{lem : nonlinear part 0.1}
	For all $q'\in (1,q]$, $r'\in [r,\infty)$ and $p'\in (1,r']$, 
there exists a constant $C=C(q',r',p',\U,\epsilon)$ such that for all times	
	 $t\in [1,\infty)$ and $t'\in [t,\infty]$, we have
	\begin{align*}
	\left\|(\Delta_{L,h_{t'}}-\Delta_{L,h_t})(k_t)\right\|_{L^{p'}} 
		\leq Ct^{1-\frac{3n}{2}\left(\frac{1}{q'}-\frac{1}{r'}\right)} \left\|h-\hat{h}\right\|_{Z_{q',r'}}\left\|k\right\|_{X_{q',r'}}.
	\end{align*}
\end{lem}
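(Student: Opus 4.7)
The plan is to decompose $(\Delta_{L,h_{t'}}-\Delta_{L,h_t})(k_t)$ schematically as a sum of terms
\begin{align*}
(h_{t'}-h_t) \ast \nabla^2 k_t, \quad \nabla(h_{t'}-h_t) \ast \nabla k_t, \quad \nabla^2(h_{t'}-h_t) \ast k_t,
\end{align*}
together with a curvature-correction term $(R_{h_{t'}} - R_{h_t}) \ast k_t$, where covariant derivatives are taken with respect to a fixed background connection (e.g.\ that of $\hat{h}$), and then to bound each piece by a H\"older inequality. The entire argument rests on one key observation: writing $h_{t'} - h_t = \int_t^{t'} \partial_s h_s\, ds$, the integrand $\partial_s h_s$ lies in the finite-dimensional subspace $T_{h_s}\mathcal{F}$, which by Proposition \ref{prop : kernel properties} consists of smooth tensors in $\mathcal{O}_\infty(r^{-n})$. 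Since $\mathcal{F}$ and the tangent spaces vary smoothly over $\mathcal{U}\cap\mathcal{F}$, all Sobolev norms are equivalent on these tangent spaces, with constants uniform over the neighbourhood $\mathcal{U}$.

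This norm equivalence yields, for every $k \in \N_0$ and every $p \in (1,\infty]$,
\begin{align*}
	\|\partial_s h_s\|_{W^{k, p}} \leq C \|\partial_s h_s\|_{L^{q'}} \leq C\, s^{-n(1/q' - 1/r')}\, \|h-\hat{h}\|_{Z_{q', r'}}.
\end{align*}
The standing assumption $\tfrac{n}{2}(1/q-1/r) > \tfrac12$ from Subsection \ref{subsection : mapping properties}, combined with $q'\leq q$, $r'\geq r$, implies $n(1/q'-1/r') > 1$, so the integral over $[t,t']$ converges and
\begin{align*}
	\|h_{t'}-h_t\|_{W^{k,p}} \leq C\, t^{1 - n(1/q'-1/r')}\, \|h-\hat{h}\|_{Z_{q',r'}}.
\end{align*}

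For the top-order term, I would apply H\"older's inequality $\|(h_{t'}-h_t) \ast \nabla^2 k_t\|_{L^{p'}} \leq \|h_{t'}-h_t\|_{L^a} \|\nabla^2 k_t\|_{L^b}$ with $\tfrac{1}{p'} = \tfrac{1}{a}+\tfrac{1}{b}$, choosing $b \in [q',r']$ so that $\|\nabla^2 k_t\|_{L^b} \leq C\, t^{-\min\{1, n/2(1/q'-1/r')\}}\|k\|_{X_{q',r'}}$ by interpolation of the $X_{q',r'}$ norm; by the previous step $\|h_{t'}-h_t\|_{L^a}$ still decays at rate $t^{1-n(1/q'-1/r')}$. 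Multiplying the two rates produces the claimed factor $t^{1 - \frac{3n}{2}(1/q'-1/r')}$. The terms $\nabla^l(h_{t'}-h_t) \ast \nabla^{2-l}k_t$ for $l=1,2$ and the curvature-type term are treated analogously: higher derivatives of $h_{t'}-h_t$ cost nothing by the norm equivalence on $T_{h_s}\mathcal{F}$, while the lower-derivative factors of $k_t$ decay at least as fast in the $X_{q',r'}$ norm as $\nabla^2 k_t$, so each contribution is bounded by the same rate.

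The main obstacle is to verify the uniformity of the norm-equivalence constants on the tangent spaces $T_{h_s}\mathcal{F}$ as $s$ varies, and to handle $p' < q'$ where the naive H\"older split is not available; in the latter case one chooses $b$ close to $q'$ (so $\|\nabla^2 k_t\|_{L^b}$ is still controlled via interpolation) and uses the $L^a$ bound on $h_{t'}-h_t$ with $a = (1/p' - 1/b)^{-1}$, which remains of the correct decay order by the finite-dimensional norm equivalence.
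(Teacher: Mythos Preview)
Your approach is correct and is essentially the same as the paper's: both rest on the finite-dimensional norm equivalence for $\partial_s h_s\in T_{h_s}\mathcal{F}\subset\O_\infty(r^{-n})$, integrating the $Z_{q',r'}$-bound on $\partial_s h_s$ over $[t,t']$, and a H\"older product estimate against $k_t$. The paper's execution is slightly cleaner: rather than decomposing the difference of Laplacians term by term and interpolating in $b$, it uses the single H\"older split $\tfrac{1}{p'}=\tfrac{1}{p''}+\tfrac{1}{r'}$ (always available since $p'\le r'$) to get $\|(\Delta_{L,h_{t'}}-\Delta_{L,h_t})(k_t)\|_{L^{p'}}\le C\|h_{t'}-h_t\|_{W^{2,p''}}\|k_t\|_{W^{2,r'}}$, then reads off $\|k_t\|_{W^{2,r'}}$ directly from the $X_{q',r'}$-norm; this makes your separate treatment of the case $p'<q'$ and the interpolation step unnecessary.
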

\begin{proof}
	Choose $p''\in (p',\infty]$ so that $\frac{1}{p'}=\frac{1}{p''}+\frac{1}{r'}$. Then by the H\"{o}lder inequality and Lemma \ref{lem : elliptic regularity},
\begin{equation}
\begin{split}
\label{eq : h minus limit h}
	\left\|(\Delta_{L,h_{t'}}-\Delta_{L,h_t})(k_t)\right\|_{L^{p'}}&\leq C\left\| h_t-h_{t'}\right\|_{W^{2,p''}}\left\|k_t\right\|_{W^{2,r'}}\\
		&\leq C\left\| h_t-h_{t'}\right\|_{L^{q'}}\left\|k_t\right\|_{W^{2,r'}}\\
		&\leq C \int_t^{t'}\left\|\partial_sh_s\right\|_{L^{q'}}ds\cdot\left\|k_t\right\|_{W^{2,r'}}\\
		&\leq C\int_t^{\infty} s^{-n\left(\frac{1}{q'}-\frac{1}{r'}\right)}ds\cdot \left\|h-\hat{h}\right\|_{Z_{q',r'}} t^{-\frac{n}{2}\left(\frac{1}{q'}-\frac{1}{r'}\right)}\left\|k\right\|_{X_{q',r'}}\\
		&\leq Ct^{1-\frac{3n}{2}\left(\frac{1}{q}-\frac{1}{r}\right)} \left\|h-\hat{h}\right\|_{Z_{q',r'}}\left\|k\right\|_{X_{q',r'}},
\end{split}
\end{equation}
which finishes the proof.	
\end{proof}
\begin{lem}\label{lem : nonlinear part 0.2}
	For $q'\in (1,q]$ satisfying
	\begin{align*}
	\frac{n}{2}\left(\frac{1}{q'}-\frac{1}{q}\right)
		<\frac{n}{2r},	
	\end{align*}
	there exists a constant $C=C(q,q',r,\U,\epsilon)$ such that for all $t\geq 1$, we have
	\begin{align*}
	\left\|(1-D_g\Phi)(H_1)(t)\right\|_{L^{q'}}&\leq Ct^{-\beta-\frac{n}{2}\left(\frac{1}{q}-\frac{1}{r}\right)}\left\|k\right\|_{X_{q,r}}^2,
	\end{align*}
	where $\beta=\min\left\{1,\frac{n}{2}\left(\frac{1}{q}-\frac{1}{r}\right)\right\}>\frac{1}{2}$.
\end{lem}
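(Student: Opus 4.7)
The plan is to first use boundedness of $D_g\Phi$ on $L^{q'}$ (from Lemma \ref{lem : projection lemma}(iii)) to reduce the task to bounding $\|H_1\|_{L^{q'}}$. Then decompose $H_1 = F_1 + F_2 + F_3$ as in \eqref{eq : expression RdT2}, with the $g^{-1}$-factors uniformly bounded via Lemma \ref{lem : normcontrol}. Each $F_i$ is estimated by Hölder plus Sobolev embedding. The algebraic heart of the matter is that the hypothesis $\frac{n}{2}(\frac{1}{q'}-\frac{1}{q}) < \frac{n}{2r}$ is precisely the condition ensuring that the conjugate Hölder exponent $s_1$ defined by $\frac{1}{s_1} + \frac{1}{q} = \frac{1}{q'}$ satisfies $s_1 > r$, which in turn permits the Sobolev embedding $W^{2,r}\hookrightarrow L^{s_1}$ (valid since $r > n$) to control $\|k\|_{L^{s_1}}$ by $\|k\|_{W^{2,r}} \leq Ct^{-\beta}\|k\|_{X_{q,r}}$.

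For $F_3 = g^{-1}*k*\nabla^2 k$, Hölder with exponents $(s_1, q)$ as above immediately gives
\begin{align*}
\|F_3\|_{L^{q'}} \leq C\|k\|_{L^{s_1}}\|\nabla^2 k\|_{L^q} \leq Ct^{-2\beta}\|k\|_{X_{q,r}}^2.
\end{align*}
For $F_2 = g^{-1}*R*k*k$, Proposition \ref{prop: near metrics} together with the Ricci-flat asymptotics gives $R \in \mathcal{O}_\infty(r^{-n-2})$, hence $R \in L^{q'}$ for every $q' > 1$; then Hölder with $\|R\|_{L^{q'}}$ (finite) and $\|k\|_{L^\infty}^2 \leq C\|k\|_{W^{2,r}}^2 \leq Ct^{-2\beta}\|k\|_{X_{q,r}}^2$ closes that term.

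The remaining term $F_1 \sim \nabla k * \nabla k$ is the main technical obstacle: a naive Hölder split $\|\nabla k\|_{L^q}\|\nabla k\|_{L^b}$ only uses the slow decay $\|\nabla k\|_{L^q}\leq Ct^{-1/2}\|k\|_{X_{q,r}}$ and yields an insufficient rate $t^{-1/2-\beta}$. The cure is to trade one derivative via a Gagliardo--Nirenberg identity obtained by integration by parts:
\begin{align*}
\|\nabla k\|_{L^{2q'}}^{2} \leq C\|k\|_{L^{s_1}}\|\nabla^2 k\|_{L^{q}}, \qquad \tfrac{1}{s_1}+\tfrac{1}{q}=\tfrac{1}{q'}.
\end{align*}
With $s_1 > r$ by the hypothesis on $q'$, Sobolev yields the strong decay $\|k\|_{L^{s_1}}\leq Ct^{-\beta}\|k\|_{X_{q,r}}$, which together with $\|\nabla^2 k\|_{L^q}\leq t^{-\beta}\|k\|_{X_{q,r}}$ gives $\|F_1\|_{L^{q'}}\leq Ct^{-2\beta}\|k\|_{X_{q,r}}^2$. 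Summing the three estimates and recalling $2\beta = \beta + \min(1,\gamma/2) \geq \beta + \frac{n}{2}(\frac{1}{q}-\frac{1}{r})$ in the relevant regime yields the claimed inequality; only the Gagliardo--Nirenberg step for $F_1$ is non-routine, everything else being straightforward Hölder and Sobolev bookkeeping.
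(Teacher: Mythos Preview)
Your approach is essentially identical to the paper's: both reduce to $\|H_1\|_{L^{q'}}$ via Lemma~\ref{lem : projection lemma}(iii), introduce the conjugate exponent (your $s_1$ is exactly the paper's $r'$ defined by $\tfrac{1}{q'}=\tfrac{1}{q}+\tfrac{1}{r'}$), treat $F_2,F_3$ by H\"older plus Sobolev, and control $F_1$ via the same Gagliardo--Nirenberg interpolation $\|\nabla k\|_{L^{2q'}}^2 \le C\|k\|_{L^{r'}}\|\nabla^2 k\|_{L^q}$. One bookkeeping point to fix: your closing inequality $2\beta \ge \beta + \tfrac{n}{2}(\tfrac{1}{q}-\tfrac{1}{r})$ fails when $\tfrac{n}{2}(\tfrac{1}{q}-\tfrac{1}{r}) > 1$; the paper sidesteps this by reading $\|k\|_{W^{2,r}} \le t^{-\frac{n}{2}(\frac{1}{q}-\frac{1}{r})}\|k\|_{X_{q,r}}$ from the $X_{q,r}$-norm (the weight used consistently in all its proofs), so that the product $\|\nabla^2 k\|_{L^q}\|k\|_{L^{s_1}}$ already carries the exponent $-\beta-\tfrac{n}{2}(\tfrac{1}{q}-\tfrac{1}{r})$ directly.
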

\begin{proof}
	By the condition on $q'$, we may choose $r'\in (r,\infty]$, such that
	\begin{align*}
	\frac{1}{q'}=\frac{1}{q}+\frac{1}{r'}.
	\end{align*}	
	By Lemma \ref{lem : projection lemma} (iii), the H\"{o}lder inequality and Sobolev embedding, we get
	\begin{align*}
	\left\|(1-D_g\Phi)(H_1)(t)\right\|_{L^{q'}}\leq C\left\|H_1(t)\right\|_{L^{q'}}
	&\leq C(\left\|\nabla^2k_t\right\|_{L^{q}}\left\|k_t\right\|_{L^{r'}}+\left\|\nabla k_t\right\|_{L^{2q'}}^2+\left\| R\right\|_{L^{q'}}\left\|\nabla k_t\right\|_{L^{\infty}}^2)\\
	&\leq C(\left\|\nabla^2k_t\right\|_{L^{q}}\left\|k_t\right\|_{W^{1,r}}+(1+\left\| R\right\|_{L^{q'}})\left\|\nabla k_t\right\|_{W^{1,r}}^2)\\	
	&\leq C\left(t^{-\beta-\frac{n}{2}\left(\frac{1}{q}-\frac{1}{r}\right)}+t^{-n\left(\frac{1}{q}-\frac{1}{r}\right)}\right)\left\|k\right\|_{X_{q,r}}^2\\
	&\leq Ct^{-\beta-\frac{n}{2}\left(\frac{1}{q}-\frac{1}{r}\right)}\left\|k\right\|_{X_{q,r}}^2
	\end{align*}
	with $\beta=\min\left\{1,\frac{n}{2}\left(\frac{1}{q}-\frac{1}{r}\right)\right\}>\frac{1}{2}$.
	\end{proof}
\begin{lem}\label{lem : nonlinear part 1}
For all $q'\in (1,q],r'\in [r,\infty)$ satisfying
	\begin{align}\label{eq : condition for q' r'}
		\frac{n}{2}\left(\frac{1}{q'}-\frac{1}{q}\right)+\frac{n}{2}\left(\frac{1}{r}-\frac{1}{r'}\right)
			< \min\left\{n\left(\frac{1}{q}-\frac{1}{r}\right)-1,\frac{1}{2}\right\},
			\qquad 	\frac{n}{2}\left(\frac{1}{q'}-\frac{1}{q}\right)
			<\frac{n}{2r},	
	\end{align}
there exists a constant $C=C(q,r,q',r',\U,\epsilon)$ such that for all $t\geq3$, we get the estimate
	\begin{align*}
	\left\|\int_1^{t-2}  P(g,h,h_{\infty})_{s\to t}I(t,s,k,h,h_{\infty})ds\right\|_{X_{q',r'}}\leq C	(\left\|k\right\|_{X_{q,r}}+\left\|h-\hat{h}\right\|_{Z_{q,r}})\left\|k\right\|_{X_{q,r}}.	\end{align*}
\end{lem}
\begin{proof}	
By the triangle inequality, we first split the term further up as
	\begin{align*}
	& \left\|\int_1^{t-2}  P(g,h,h_{\infty})_{s\to t}I(t,s,k,h,h_{\infty})ds\right\|_{X_{q',r'}} \\
		&\qquad \leq \left\|\int_1^{t-2} P(g,h,h_{\infty})_{s\to t}\Pi^{\perp}_{\infty}[(\Delta_{L,\infty}-\Delta_{L,h})(k)ds\right\|_{X_{q',r'}}\\
	 	&\qquad \quad + \left\|\int_1^{t-2} P(g,h,h_{\infty})_{s\to t}\Pi^{\perp}_{\infty}(1-D_g\Phi)(H_1)ds\right\|_{X_{q',r'}}.
	\end{align*}
	Now let us consider the first of these terms. We estimate each of the terms of the $X_{q',r'}$-norm separately.
Let $q''\in\left\{q',r'\right\}$, $i\in\left\{0,1,2\right\}$ and $\alpha:\left\{q',r' \right\}\times\left\{0,1,2 \right\}\to\R$ be the function from Lemma \ref{lem : linear part}. Let $p'\in (1,q')$ be small.
By Lemma \ref{lem : linear part} and Lemma \ref{lem : nonlinear part 0.1}, we get
	\begin{align*}
	&\left\|\nabla^i\int_1^{t-2} P(g,h,h_{\infty})_{s\to t}\Pi^{\perp}_{\infty}[(\Delta_{L,\infty}-\Delta_{L,h})(k)ds\right\|_{L^{q''}}\\
	&\qquad\leq\int_1^{t-2}	\left\|\nabla^i\circ P(g,h,h_{\infty})_{s\to t}\Pi^{\perp}_{\infty}\right\|_{L^{p'},L^{q''}} \left\|(\Delta_{L,{\infty}}-\Delta_{L,h})(k)\right\|_{L^{p'}}ds\\
	&\qquad\leq C\int_1^{t-1}(t-s)^{-\frac{n}{2}(\frac{1}{p'}-\frac{1}{q'})-\alpha(q'',i)}s^{1-\frac{3n}{2}(\frac{1}{q}-\frac{1}{r})}ds\cdot\left\|h-\hat{h}\right\|_{Z_{q,r}}\left\|k\right\|_{X_{q,r}}\\
	&\qquad \leq C t^{-\alpha(q'',i)}\left\|h-\hat{h}\right\|_{Z_{q,r}}\left\|k\right\|_{X_{q,r}}.
	\end{align*}
	The last inequality is justified by Lemma \ref{important_technical_lemma}:
We have
\begin{align*}
	\frac{n}{2}\left(\frac{1}{p'}-\frac{1}{q'}\right)+\alpha(q'',i)>	\alpha(q'',i),\qquad\qquad
		\frac{3n}{2}\left(\frac{1}{q}-\frac{1}{r}\right)-1
			>\frac{n}{2}\left(\frac{1}{q}-\frac{1}{r}\right)
			\geq \alpha(q'',i)
\end{align*}	
and \eqref{eq : condition for q' r'} implies
	\begin{align*}
	\frac{n}{2}\left(\frac{1}{p'}-\frac{1}{q'}\right)+\alpha(q'',i)+\frac{3n}{2}\left(\frac{1}{q}-\frac{1}{r}\right)-2
>\frac{n}{2}\left(\frac{1}{p'}-\frac{1}{r'}\right)-1+\alpha(q'',i)	
	>	\alpha(q'',i),
	\end{align*}
	for all choices of $q''$ and $i$, provided that $p'$ is chosen small enough.
We have thus shown that
	\begin{align*}
	 &\left\|\int_1^{t-2} P(g,h,h_{\infty})_{s\to t}\Pi^{\perp}_{\infty}[(\Delta_{L,\infty}-\Delta_{L,h})(k)ds\right\|_{X_{q',r'}}\leq C\left\|h-\hat{h}\right\|_{Z_{q,r}}\left\|k\right\|_{X_{q,r}}.
		\end{align*}
For the other part of the integral, the estimate is slightly different.			
				 By Lemma \ref{lem : linear part} and Lemma \ref{lem : nonlinear part 0.2}
			\begin{align*}
	&	\left\|\nabla^i\int_1^{t-2} P(g,h,h_{\infty})_{s\to t}\Pi^{\perp}_{\infty}
		(1-D_g\Phi)(H_1)ds\right\|_{L^{q''}}\\ 
		&\qquad\leq 	\int_1^{t-2}
		\left\| \nabla^iP(g,h,h_{\infty})_{s\to t}\Pi^{\perp}_{\infty}\right\|_{L^{q'},L^{q''}}\left\|
		(1-D_g\Phi)(H_1)\right\|_{L^{q'}}ds\\
		&\qquad\leq 	\int_1^{t-1}(t-s)^{-\alpha(q'',i)}s^{-\frac{1}{2}-\frac{n}{2}(\frac{1}{q}-\frac{1}{r})}ds\cdot \left\|k\right\|_{X_{q,r}}^2\\
		&\qquad\leq C t^{-\alpha(q'',i)}\cdot\left\|k\right\|_{X_{q,r}}^2.	
				\end{align*}
			The last inequality is again justified by Lemma \ref{important_technical_lemma} and \eqref{eq : condition for q' r'}, since
		\begin{align*}
			\frac{1}{2}+\frac{n}{2}\left(\frac{1}{q}-\frac{1}{r}\right)
				>1 ,
				\qquad \frac{1}{2}+\frac{n}{2}\left(\frac{1}{q}-\frac{1}{r}\right)
				>\frac{n}{2}\left(\frac{1}{q'}-\frac{1}{r'}\right)
				\geq \alpha(q'',i).
		\end{align*}
This proves
		\begin{align*}
			\left\|\int_1^{t-2} P(g,h,h_{\infty})_{s\to t}\Pi^{\perp}_{\infty}
	(1-D_g\Phi)(H_1)ds\right\|_{X_{q',r'}}
				\leq C\left\|k\right\|_{X_{q,r}}^2
		\end{align*}	
and we conclude
		\begin{align*}
			\left\|\int_1^{t-2} P(g,h,h_{\infty})_{s\to t}I(t,s,k,h,h_{\infty})ds\right\|_{X_{q',r'}}
				\leq C\left(\left\|k\right\|_{X_{q,r}}+\left\|h-\hat{h}\right\|_{Z_{q,r}}\right) \left\|k\right\|_{X_{q,r}},
		\end{align*}
		as desired
\end{proof}

\begin{lem}\label{lem : nonlinear part 1.5}
For all $q'\in (1,q]$ with
	\begin{align*}
	\frac{n}{2}\left(\frac{1}{q'}-\frac{1}{q}\right)
		<\frac{n}{2r},
\end{align*}
there exists a constant $C=C(q,r,q',\U,\epsilon)$ such that for all $t\geq2$, we have
	\begin{align*}
		&\left\|\int_{\max\left\{t-2,1\right\}}^{t-1}  P(g,h,h_{\infty})_{s\to t}I(t,s,k,h,h_{\infty})ds\right\|_{X_{q',r}}
			 \leq C \left(\left\|k\right\|_{X_{q,r}}+\left\|h-\hat{h}\right\|_{Z_{q,r}}\right) \left\|k\right\|_{X_{q,r}}.
	\end{align*}
\end{lem}
\begin{proof}
	By Lemma \ref{lem : short-time estimates} (iv), we first have
	\begin{align*}
	\left\|\int_{\max\left\{t-2,1\right\}}^{t-1}  P(g,h,h_{\infty})_{s\to t}I(t,s,k,h,h_{\infty})ds\right\|_{W^{2,q''}}
	\leq C\sup_{s\in [\max\left\{t-2,1\right\},t-1]}\left\|I(t,s,k,h,h_{\infty})\right\|_{L^{q''}}
		\end{align*}
	for $q''\in \left\{q',r\right\}$. Recall that 
	\begin{align*}
	I(t,s,k,h,h_{\infty})=\Pi^{\perp}_{h_\infty}[(\Delta_{L,\infty}-\Delta_{L,h})(k)
 + 	(1-D_g\Phi)(H_1)]
	\end{align*}
for $s\in [\max\left\{t-2,1\right\},t-1]$. With the help of Lemma \ref{lem : projection lemma} (ii) and Lemma \ref{lem : nonlinear part 0.1}, we get
\begin{align*}
	\sup_{s\in [\max\left\{t-2,1\right\},t-1]} \left\|\Pi^{\perp}_{h_\infty}(\Delta_{L,\infty}-\Delta_{L,h_s})(k_s)\right\|_{L^{q''}} 
		&\leq \sup_{s\in [\max\left\{t-2,1\right\},t-1]} \left\|(\Delta_{L,\infty}-\Delta_{L,h_s})(k_s)\right\|_{L^{q''}} 
		\\
		&\leq Ct^{1-\frac{3n}{2} \left( \frac{1}{q}-\frac{1}{r} \right) } \left\|h-\hat{h}\right\|_{Z_{q,r}}\left\|k\right\|_{X_{q,r}}.
\end{align*}
 Lemma \ref{lem : projection lemma} (ii) and Lemma \ref{lem : nonlinear part 0.2} yield
\begin{align*}
	\sup_{s\in [\max\left\{t-2,1\right\},t-1]} \left\|\Pi^{\perp}_{h_\infty}(1-D_g\Phi)(H_1)(s)\right\|_{L^{q'}}
	&\leq 	\sup_{s\in [\max\left\{t-2,1\right\},t-1]} \left\|(1-D_g\Phi)(H_1)(s)\right\|_{L^{q'}}
	\\
		&\leq Ct^{-\beta-\frac{n}{2}\left( \frac{1}{q}-\frac{1}{r} \right) }\left\|k\right\|_{X_{q,r}}^2,
\end{align*}
and estimates as in the proof of Lemma \ref{lem : nonlinear part 0.2} yield
	\begin{align*}
	\sup_{s\in [\max\left\{t-2,1\right\},t-1]}	\left\|(1-D_g\Phi)(H_1)(s)\right\|_{L^{r}}\leq
    	\sup_{s\in [\min\left\{t-2,1\right\},t-1]}	\left\|k_s\right\|_{W^{2,r}}^2	
    	\leq Ct^{-n \left( \frac{1}{q}-\frac{1}{r} \right) }\left\|k\right\|_{X_{q,r}}^2.
\end{align*}
The function $\alpha$ defined in Lemma \ref{lem : linear part} satisfies
\begin{align*}
	\alpha(q'',i)\leq \beta+\frac{n}{2} \left( \frac{1}{q}-\frac{1}{r} \right)
		\leq n \left( \frac{1}{q}-\frac{1}{r} \right),
		\qquad\qquad \alpha(q'',i)
		\leq \frac{3n}{2}\left( \frac{1}{q}-\frac{1}{r} \right) -1
\end{align*}
and the statement follows from putting these estimates together.
\end{proof}
In the remainder of this subsection, we estimate the integral from $\max\left\{t-1,1\right\}$ to $t$. We start with a technical lemma.
\begin{lem}\label{lem : integral term}
For  $q'\in (1,q]$ and $r'\in [r,\infty)$ satisfying
\begin{align*}
\frac{1}{q'}<\frac{1}{q}+\frac{1}{r'},
\end{align*}
there exists a constant $C=C(q,r,q',r',\U,\epsilon)$ such that for all times $t\geq 1$ and
$s\in [\max\left\{t-1,1\right\},t]$,
  we have
 \begin{align*}
 \left\|I(t,s,k,h,h_{\infty})\right\|_{W^{1,r'}}&\leq C(\left\|k\right\|_{W^{2,r'}}+\left\|h-h_{\infty}\right\|_{L^q})\left\|k\right\|_{W^{2,r'}},\\
  \left\|I(t,s,k,h,h_{\infty})\right\|_{W^{1,q'}}&\leq C(\left\|k\right\|_{W^{2,r'}}+\left\|\nabla k\right\|_{W^{1,q}}+\left\|h-h_{\infty}\right\|_{L^q})\left\|k\right\|_{W^{2,r'}}.
 \end{align*}
	\end{lem}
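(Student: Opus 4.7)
The plan is to bound each of the three summands of $I(t,s,k,h,h_\infty)$ separately. For $s\in (\max\{t-1,1\},t]$, Proposition \ref{prop : alternative mod RdTf} gives
\begin{align*}
I = [\Delta_{L,g,h},\Pi^\perp_\infty](k) + \Pi^\perp_\infty D_g\Phi\bigl((\Delta_{L,g,h}-\Delta_{L,h})k\bigr) + \Pi^\perp_\infty(1-D_g\Phi)(H_2),
\end{align*}
with $H_2=F_1(g^{-1},g^{-1},\nabla k,\nabla k)$ from Lemma \ref{lem : expression RdT1}. Throughout I would repeatedly invoke: the mapping properties of $\Pi^\perp_\infty$ and $D_g\Phi$ in Lemma \ref{lem : projection lemma}; the Sobolev embedding $W^{2,r'}\hookrightarrow W^{1,\infty}$, valid since $r'\geq r>n$; Hölder's inequality under the condition $\tfrac1{q'}<\tfrac1q+\tfrac1{r'}$; and the fact that $\ker_{L^2}(\Delta_{L,h_\infty})\subset \O_\infty(r^{-n})$ together with finite-dimensionality of $T_h\F$ make all Sobolev norms on these spaces mutually equivalent.

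First I would bound the $H_2$-term. Since $H_2$ is $C^\infty$-bilinear in $\nabla k$ with coefficients involving $g^{-1}$, the product rule and $W^{2,r'}\hookrightarrow W^{1,\infty}$ yield $\|H_2\|_{W^{1,r'}}\leq C\|k\|_{W^{2,r'}}^2$. For the $W^{1,q'}$ bound, one factor of $\nabla k$ is placed in $L^\infty$ (via $W^{2,r'}$) and the other in $L^{q'}$ via either $W^{2,r'}$ or $W^{1,q}$, which is permitted by the assumed Hölder relation; this produces the extra $\|\nabla k\|_{W^{1,q}}$ term in the second inequality. Composition with the bounded operators $\Pi^\perp_\infty$ and $1-D_g\Phi$ from Lemma \ref{lem : projection lemma} (ii),(iii) preserves these bounds. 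For the middle term I would use that $D_g\Phi$ has range in the finite-dimensional $T_{\Phi(g)}\F$ of smooth $\O_\infty(r^{-n})$-tensors, so by Lemma \ref{lem : projection lemma} (iii) composing with $\Pi^\perp_\infty D_g\Phi$ gains arbitrarily many derivatives. Hence it suffices to bound $\|(\Delta_{L,g,h}-\Delta_{L,h})k\|_{L^{q'}}$. Using the formula $(\Delta_{L,g,h}-\Delta_{L,h})k=F_3(g^{-1},k,\nabla^2 k)+F_2(g^{-1},R,k,k)$ from Lemma \ref{lem : expression RdT1}, Hölder and Sobolev give $\|(\Delta_{L,g,h}-\Delta_{L,h})k\|_{L^{q'}}\leq C\|k\|_{L^\infty}(\|\nabla^2 k\|_{L^{q'}}+\|k\|_{L^{q'}})\leq C\|k\|_{W^{2,r'}}^2$.

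The main obstacle is the commutator $[\Delta_{L,g,h},\Pi^\perp_\infty](k)=-[\Delta_{L,g,h},\Pi^\parallel_\infty](k)$, and this is where the $\|h-h_\infty\|_{L^q}$ factor enters. I would decompose $\Delta_{L,g,h}=\Delta_{L,h_\infty}+(\Delta_{L,g,h}-\Delta_{L,h_\infty})$ and use $[\Delta_{L,h_\infty},\Pi^\parallel_\infty]=0$ to reduce to $[\Delta_{L,g,h}-\Delta_{L,h_\infty},\Pi^\parallel_\infty](k)$. The term $(\Delta_{L,g,h}-\Delta_{L,h_\infty})\Pi^\parallel_\infty(k)=\sum_i (k,e_i(h_\infty))_{L^2(h_\infty)}(\Delta_{L,g,h}-\Delta_{L,h_\infty})e_i(h_\infty)$ is controlled in any $W^{\ell,r'}$ by $C\|k\|_{L^q}\cdot(\|k\|_{W^{2,r'}}+\|h-h_\infty\|_{L^q})$, because the inner product is bounded by $\|k\|_{L^q}$ (as $e_i\in L^{q^*}$ for all $q^*$) while $(\Delta_{L,g,h}-\Delta_{L,h_\infty})e_i$ is a smooth tensor whose norm is a polynomial in $k$, $h-h_\infty$, and their derivatives, all bounded by $\|k\|_{W^{2,r'}}+\|h-h_\infty\|_{L^q}$ via Sobolev embedding and norm equivalence on $T_{\hat h}\F$ (here using Proposition \ref{prop: near metrics}). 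The remaining term $\Pi^\parallel_\infty\bigl((\Delta_{L,g,h}-\Delta_{L,h_\infty})k\bigr)=\sum_i \bigl((\Delta_{L,g,h}-\Delta_{L,h_\infty})k,e_i(h_\infty)\bigr)_{L^2(h_\infty)} e_i(h_\infty)$ naively would require $W^{3,r'}$-regularity of $k$; the resolution is to integrate by parts twice, transferring both derivatives onto the fast-decaying $e_i$, with vanishing boundary terms because $k,e_i$ decay at infinity. This produces the same bound $C\|k\|_{L^q}\cdot(\|k\|_{W^{2,r'}}+\|h-h_\infty\|_{L^q})$. Summing the three contributions and absorbing $\|k\|_{L^q}\lesssim\|k\|_{W^{2,r'}}$ yields both inequalities claimed by the lemma.
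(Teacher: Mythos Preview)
Your overall strategy matches the paper's: you decompose $I$ into its three summands, treat $H_2$ via H\"older and the Sobolev embedding $W^{2,r'}\hookrightarrow W^{1,\infty}$, exploit the smoothing property of $D_g\Phi$ (Lemma \ref{lem : projection lemma} (iii)) for the middle term, and handle the commutator by passing to $\Pi^\parallel_\infty$ and using finite-dimensionality of the kernel together with elliptic regularity for $h-h_\infty$. The paper splits the commutator as $[\Delta_{L,g,h}-\Delta_{L,h},\Pi^\perp_\infty]+[\Delta_{L,h}-\Delta_{L,h_\infty},\Pi^\perp_\infty]$, while you combine these via $[\Delta_{L,h_\infty},\Pi^\parallel_\infty]=0$; the two decompositions are equivalent.

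There is, however, a genuine slip in your last step. You bound the commutator contribution by $C\|k\|_{L^q}\bigl(\|k\|_{W^{2,r'}}+\|h-h_\infty\|_{L^q}\bigr)$ and then invoke ``$\|k\|_{L^q}\lesssim\|k\|_{W^{2,r'}}$''. That inequality is \emph{false}: since $q<r'$ and $M$ has infinite volume, $W^{2,r'}$ does not embed into $L^q$. The remedy is immediate: when bounding $(k,e_i)_{L^2(h_\infty)}$, pair $k\in L^{r'}$ against $e_i\in L^{(r')^*}$ (legitimate because $e_i\in\O_\infty(r^{-n})$) rather than $L^q$ against $L^{q^*}$; this yields $|(k,e_i)|\leq C\|k\|_{L^{r'}}\leq C\|k\|_{W^{2,r'}}$ directly. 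Separately, your claim that $\Pi^\parallel_\infty\bigl((\Delta_{L,g,h}-\Delta_{L,h_\infty})k\bigr)$ ``naively would require $W^{3,r'}$-regularity'' is a misdiagnosis: by Lemma \ref{lem : projection lemma} (i), $\Pi^\parallel_\infty$ maps any $L^s$ into any $W^{l,r}$, so the $W^{1,r'}$-norm of the output is controlled by any $L^s$-norm of the input, and only second derivatives of $k$ enter. The integration-by-parts detour is therefore unnecessary, though not incorrect.
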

\begin{proof}
Recall that for $s\in[\max\left\{t-1,1\right\},t]$, we have
\begin{align*}
I(t,s,k,h,h_{\infty})=[\Delta_{L,g,h},\Pi^{\perp}_{\infty}](k)+\Pi^{\perp}_{\infty}[D_g\Phi((\Delta_{L,g,h}-\Delta_{L,h})(k))+(1-D_g\Phi)(H_2)]
\end{align*}
Because $[\Delta_{L,g,h},\Pi^{\perp}_{\infty}]=[\Delta_{L,g,h},1-\Pi^{\parallel}_{\infty}]=-[\Delta_{L,g,h},\Pi^{\parallel}_{\infty}]$, we can rewrite the above expression as
\begin{align*}
I(t,s,k,h,h_{\infty})&=-[\Delta_{L,g,h}-\Delta_{L,h},\Pi^{\parallel}_{\infty}](k)\\
&\qquad-[\Delta_{L,h}-\Delta_{L,\infty},\Pi^{\parallel}_{\infty}](k)\\
&\qquad+\Pi^{\perp}_{\infty}[D_g\Phi((\Delta_{L,g,h}-\Delta_{L,h})(k))]\\
&\qquad +\Pi^{\perp}_{\infty}[(1-D_g\Phi)(H_2)].
\end{align*}
To obtain the estimates of the lemma, we will discuss each of these four terms separately.
Using $g=h+k$, the first of these terms can be written as
\begin{align*}
[\Delta_{L,g,h}-\Delta_{L,h},\Pi^{\parallel}_{\infty}](k)&=(\Delta_{L,g,h}-\Delta_{L,h})\Pi^{\parallel}_{\infty}(k)-\Pi^{\parallel}_{\infty}((\Delta_{L,g,h}-\Delta_{L,h})(k))
\\&=k*\nabla^2\Pi^{\parallel}_{\infty}(k)+R*k*\Pi^{\perp}_{\infty}(k)-\Pi^{\parallel}_{\infty}(k*\nabla^2k+R*k*k).
\end{align*}
Using the H\"{o}lder inequality and Lemma \ref{lem : projection lemma} (i) yields
\begin{align*}
\left\| [\Delta_{L,g,h}-\Delta_{L,h},\Pi^{\parallel}_{\infty}](k)\right\|_{W^{1,r'}}&\leq \left\| k*\nabla^2\Pi^{\parallel}_{\infty}(k) \right\|_{W^{1,r'}}+\left\|R*k*\Pi^{\perp}_{\infty}(k) \right\|_{W^{1,r'}}\\
&\qquad+\left\|\Pi^{\parallel}_{\infty}(k*\nabla^2k+R*k*k) \right\|_{W^{1,r'}}\\
&\leq C\left(\left\| k \right\|_{W^{1,r'}}
\left\|\Pi^{\parallel}_{\infty}(k) \right\|_{W^{3,r'}}
+\left\| k*\nabla^2k+R*k*k\right\|_{L^{\frac{r'}{2}}}\right)\\
&\leq C\left\| k \right\|_{W^{2,r'}}^2
\end{align*}
To prove the same bound for the $W^{1,q'}$-norm, we proceed analogously, but with different H\"{o}lder exponents. At first, because $1< q'<r'<\infty$ and
\begin{align*}
\frac{1}{q'}<\frac{1}{q}+\frac{1}{r'},
\end{align*}
we find $r''\in(q,\infty)$ such that
\begin{align*}
\frac{1}{q'}=\frac{1}{r'}+\frac{1}{r''}.
\end{align*}
Now we get
\begin{align*}
\left\| [\Delta_{L,g,h}-\Delta_{L,h},\Pi^{\parallel}_{\infty}](k)\right\|_{W^{1,q'}}&\leq \left\| k*\nabla^2\Pi^{\parallel}_{\infty}(k) \right\|_{W^{1,q'}}+\left\|R*k*\Pi^{\perp}_{\infty}(k) \right\|_{W^{1,q'}}\\
&\qquad+\left\|\Pi^{\parallel}_{\infty}(k*\nabla^2k+R*k*k) \right\|_{W^{1,q'}}\\
&\leq C\left(\left\| k \right\|_{W^{1,r'}}
\left\|\Pi^{\parallel}_{\infty}(k) \right\|_{W^{3,r''}}
+\left\| k*\nabla^2k+R*k*k\right\|_{L^{\frac{r'}{2}}}\right)\\
&\leq C\left\| k \right\|_{W^{2,r'}}^2,
\end{align*}
which finishes the discussion for the first term. For the second and the third term, we can develop the estimates of the $W^{1,r'}$ and the $W^{1,q'}$-norms simultaneously.
The second term which can be schematically written as
\begin{align*}
[\Delta_{L,h}-\Delta_{L,\infty},\Pi^{\parallel}_{\infty}](k)&=(\Delta_{L,h}-\Delta_{L,\infty})\Pi^{\parallel}_{\infty}(k)-\Pi^{\parallel}_{\infty}((\Delta_{L,h}-\Delta_{L,\infty})(k))\\
&=(h-h_{\infty})*\nabla^2\Pi^{\parallel}_{\infty}(k)+\nabla(h-h_{\infty})*\nabla\Pi^{\parallel}_{\infty}(k)\\
&\qquad +\nabla^2(h-h_{\infty})*\Pi^{\parallel}_{\infty}(k)+R*(h-h_{\infty})*\Pi^{\parallel}_{\infty}(k)\\
&\qquad- \Pi^{\parallel}_{\infty}( (h-h_{\infty})*\nabla^2k+\nabla(h-h_{\infty})*\nabla k)\\
&\qquad- \Pi^{\parallel}_{\infty}(\nabla^2(h-h_{\infty})*k+R*(h-h_{\infty})*k).
\end{align*}
Now, let $p'\in\left\{q',r'\right\}$.
By the H\"{o}lder inequality, Lemma \ref{lem : projection lemma} (i) and Lemma \ref{lem : elliptic regularity}, we obtain
\begin{align*}
&\left\| [\Delta_{L,h}-\Delta_{L,\infty},\Pi^{\parallel}_{\infty}](k)\right\|_{W^{1,p'}} \\
	&\quad \leq
\left\|(h-h_{\infty})*\nabla^2\Pi^{\parallel}_{\infty}(k) \right\|_{W^{1,p'}}+
\left\| \nabla(h-h_{\infty})*\nabla\Pi^{\parallel}_{\infty}(k)\right\|_{W^{1,p'}}\\
	&\quad \qquad +\left\| \nabla^2(h-h_{\infty})*\Pi^{\parallel}_{\infty}(k)\right\|_{W^{1,p'}}+
\left\|R*(h-h_{\infty})*\Pi^{\parallel}_{\infty}(k)\right\|_{W^{1,p'}}\\
	&\quad \qquad +\left\|\Pi^{\parallel}_{\infty}(\nabla^2(h-h_{\infty})*k+R*(h-h_{\infty})*k)\right\|_{W^{1,p'}}\\
	&\quad \leq C\left\|h-h_{\infty}\right\|_{W^{3,\infty}}
\left\|\Pi^{\parallel}_{\infty}(k)\right\|_{W^{3,p'}}
\\
	&\quad \qquad
 +C\left\| \nabla^2(h-h_{\infty})*k+R*(h-h_{\infty})*k\right\|_{L^{r'}}\\
 	&\quad \leq C\left\|h-h_{\infty}\right\|_{W^{3,\infty}}\left\|k\right\|_{W^{2,r'}}\\
	&\quad \leq C\left\|h-h_{\infty}\right\|_{L^{q}}\left\|k\right\|_{W^{2,r'}}
\end{align*}
Using Lemma \ref{lem : projection lemma} (ii) and (iii) and the H\'{o}lder inequality, the third term is estimated as
\begin{align*}
\left\|\Pi^{\perp}_{\infty}[D_g\Phi((\Delta_{L,g,h}-\Delta_{L,h})(k))\right\|_{W^{1,p'}}&\leq
C\left\|(\Delta_{L,g,h}-\Delta_{L,h})(k)\right\|_{L^{\frac{r'}{2}}}\\
&= C\left\|k*\nabla^2k+\nabla k*\nabla k+R*k*k\right\|_{L^{\frac{r'}{2}}}\\
&\leq
 C\left\|k\right\|_{W^{2,r'}}^2.
\end{align*}
In order to estimate the final term,
  we
recall that $H_2=g^{-1}*g^{-1}*\nabla k*\nabla k$ which yields pointwise bounds
\begin{align*}
|H_2|\leq C|\nabla k|^2,\qquad |\nabla H_2|\leq C( |\nabla k||\nabla^2 k|+ |\nabla k|^3).
\end{align*}
By using Lemma \ref{lem : projection lemma} (i) and (iii), applying the H\"{o}lder inequality to the above pointwise bounds and finally using Sobolev embedding, we get
\begin{align*}
\left\|\Pi^{\parallel}_{\infty}(1-D_g\Phi)(H_2)\right\|_{W^{1,r'}}
\leq C \left\|H_2\right\|_{W^{1,r'}}
\leq C  \left\|\nabla k\right\|_{W^{1,r'}}\left\|\nabla k\right\|_{L^{\infty}}
\leq C\left\|k\right\|_{W^{2,r'}}^2.
\end{align*}
This finishes the discussion of the $W^{1,r'}$-norm. Let us proceed with the $W^{1,q'}$-norm.
Because $\frac{1}{q}\leq \frac{1}{q'}<\frac{1}{q}+\frac{1}{r}$, we may find $r''\in (r',\infty]$ such that 
\begin{align*}
\frac{1}{q'}=\frac{1}{q}+\frac{1}{r''}.
\end{align*}
Again by using Lemma \ref{lem : projection lemma} (i) and (iii), applying the H\"{o}lder inequality to the above pointwise bounds and finally using Sobolev embedding, we get
\begin{align*}
\left\|\Pi^{\parallel}_{\infty}(1-D_g\Phi)(H_2)\right\|_{W^{1,q'}}\leq
C \left\|H_2\right\|_{W^{1,q'}}\leq C \left\|\nabla k\right\|_{W^{1,q}}\left\|\nabla k\right\|_{L^{r''}}
\leq C\left\|\nabla k\right\|_{W^{1,q}}\left\|k\right\|_{W^{2,r'}}.
\end{align*}
This establishes the estimates for the fourth and final term and finishes the proof of the lemma.
	\end{proof}
\begin{lem}\label{lem : nonlinear part 2} For $q'\in (1,q]$ satisfying
\begin{align*}
\frac{1}{q'}<\frac{1}{q}+\frac{1}{r},
\end{align*}
there exists a constant $C=C(q,r,q',\U,\epsilon)$ such that for all $t\geq1$, we have
	\begin{align*}
	&\left\|\int_{\max\left\{t-1,1\right\}}^t  P(g,h,h_{\infty})_{s\to t}I(t,s,k,h,h_{\infty})ds\right\|_{X_{q',r}}
	\leq C \left(\left\|k\right\|_{X_{q,r}}+\left\|h-\hat{h}\right\|_{Z_{q,r}}\right)\left\|k\right\|_{X_{q,r}}.
	\end{align*}
\end{lem}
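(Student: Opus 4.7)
The strategy mirrors Lemma \ref{lem : nonlinear part 1}, but since the integration interval $[\max\{t-1,1\},t]$ has length at most $1$ and on this interval $P(g,h,h_\infty)_{s\to t}$ is by construction the evolution operator of $\Delta_{L,g,h}$, I will use the short-time parabolic estimates of Lemma \ref{lem : short-time estimates} in place of Theorem \ref{thm : linear estimates}. This is exactly the purpose of the mixed operator $P$ explained in Subsection \ref{subsubseq : boundary terms}.

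By Minkowski's integral inequality the $X_{q',r}$-norm of the integral is bounded by the $s$-integrals of the four pieces of the $X_{q',r}$-norm applied to $P_{s\to t}(\Theta_s)$, where $\Theta_s := I(t,s,k,h,h_\infty)$, and I write $\beta := \min\{1,\tfrac n2(\tfrac1q-\tfrac1r)\} \geq \tfrac12$. Lemma \ref{lem : short-time estimates}(ii)-(iii), applied with $t-s\leq 1$, yield for $p' \in \{q',r\}$ the estimates
\begin{align*}
\|P_{s\to t}(\Theta)\|_{W^{1,p'}} \leq C\|\Theta\|_{W^{1,p'}}, \qquad \|\nabla^2 P_{s\to t}(\Theta)\|_{L^{p'}} \leq C(t-s)^{-1/2}\|\Theta\|_{W^{1,p'}},
\end{align*}
together with the $W^{2,r}$-bound $\|P_{s\to t}(\Theta)\|_{W^{2,r}}\leq C(t-s)^{-1/2}\|\Theta\|_{W^{1,r}}$, and the $(t-s)^{-1/2}$ singularity is integrable over $[\max\{t-1,1\},t]$.

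Next I substitute the pointwise bounds from Lemma \ref{lem : integral term} together with the $X_{q,r}$-bounds $\|k_s\|_{W^{2,r}}\leq Cs^{-\frac n2(\frac1q-\frac1r)}\|k\|_{X_{q,r}}$ and $\|\nabla k_s\|_{W^{1,q}}\leq Cs^{-\beta}\|k\|_{X_{q,r}}$, and the decay estimate $\|h_s-h_\infty\|_{L^q}\leq Cs^{1-n(\frac1q-\frac1r)}\|h-\hat{h}\|_{Z_{q,r}}$ obtained by integrating $\partial_rh_r$ in time exactly as in \eqref{eq : h minus limit h}. Using $s\sim t$ on the integration interval this yields
\begin{align*}
\|\Theta_s\|_{W^{1,p'}} \leq C\bigl(t^{-\beta-\frac n2(\frac1q-\frac1r)}\|k\|_{X_{q,r}}^2 + t^{1-\frac{3n}{2}(\frac1q-\frac1r)}\|h-\hat{h}\|_{Z_{q,r}}\|k\|_{X_{q,r}}\bigr).
\end{align*}
A short case analysis using $\beta\leq \tfrac n2(\tfrac1q-\tfrac1r)$, $\beta\geq\tfrac12$ and $\tfrac n2(\tfrac1q-\tfrac1r)>\tfrac12$ shows that the $t^{1/2}$ and $t^{\beta}$ weights coming from the definition of the $X_{q',r}$-norm absorb the resulting $t$-exponents. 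The minor relaxation from $q$ to $q'$ is covered by the hypothesis $\tfrac1{q'}<\tfrac1q+\tfrac1r$, which is exactly what Lemma \ref{lem : integral term} requires.

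The main obstacle is the second-derivative piece: a naive bound $\|\nabla^2 P_{s\to t}(\Theta)\|_{L^{p'}}\leq C(t-s)^{-1}\|\Theta\|_{L^{p'}}$ would produce a non-integrable singularity and fail to close the iteration. This is precisely what the construction of $P$ avoids: the dangerous divergence-form term $\nabla((g^{-1}-h^{-1})\ast\nabla k)$ appearing in \eqref{eq : RdT1} is absorbed into $\Delta_{L,g,h}$ via \eqref{eq : RdT2}, and therefore does \emph{not} appear in $\Theta_s$ on $[t-1,t]$. Consequently $\Theta_s$ has the $W^{1,p'}$-regularity needed to invoke the integrable $(t-s)^{-1/2}$-gain of Lemma \ref{lem : short-time estimates}(iii), and the iteration argument closes.
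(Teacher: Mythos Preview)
Your approach is essentially the same as the paper's: apply the short-time smoothing estimate Lemma~\ref{lem : short-time estimates}(iii) on the unit-length interval $[\max\{t-1,1\},t]$ to reduce the $W^{2,p'}$-norm of the integral to $\sup_s\|I(t,s,k,h,h_\infty)\|_{W^{1,p'}}$, then invoke Lemma~\ref{lem : integral term} and substitute the decay rates coming from the $X_{q,r}$- and $Z_{q,r}$-norms. One small slip: you write $\|\nabla k_s\|_{W^{1,q}}\leq Cs^{-\beta}\|k\|_{X_{q,r}}$, but the $X_{q,r}$-norm only gives $\|\nabla k_s\|_{L^q}\leq s^{-1/2}\|k\|_{X_{q,r}}$ and $\|\nabla^2 k_s\|_{L^q}\leq s^{-\beta}\|k\|_{X_{q,r}}$, so for $s\geq 1$ the correct bound is $\|\nabla k_s\|_{W^{1,q}}\leq Cs^{-1/2}\|k\|_{X_{q,r}}$; this replaces your $t^{-\beta-\frac n2(\frac1q-\frac1r)}$ by the (still sufficient) $t^{-\frac12-\beta}$, matching the paper's computation, and the rest of your argument goes through unchanged.
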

\begin{proof}
 Let $p'\in\left\{q',r\right\}$.
By short-time estimates (Lemma \ref{lem : short-time estimates} (iii)), we have
					\begin{align*}
\left\|\int_{\max\left\{t-1,1\right\}}^t  P(g,h,h_{\infty})_{s\to t}I(t,s,k,h,h_{\infty})ds\right\|_{W^{2,p'}}&\leq C\sup_ {s\in[\max\left\{t-1,1\right\},t]}	\left\|I(t,s,k,h,h_{\infty})\right\|_{W^{1,p'}}.
\end{align*}
From Lemma \ref{lem : integral term}, we have
 \begin{align*}
\left\|I(t,s,k,h,h_{\infty})\right\|_{W^{1,p'}}&\leq C(\left\|k\right\|_{W^{2,r}}+\left\|\nabla k\right\|_{W^{1,q}}+\left\|h-h_{\infty}\right\|_{L^q})\left\|k\right\|_{W^{2,r}}.
\end{align*}
By definition of the norms, we have
\begin{align*}
	\sup_ {s\in[\max\left\{t-1,1\right\},t]}\left\|h-h_{\infty}\right\|_{L^{q}}\left\|k\right\|_{W^{2,r}}
		&\leq Ct^{1-n\left(\frac{1}{q}-\frac{1}{r}\right)-\frac{n}{2}\left(\frac{1}{q}-\frac{1}{r}\right) }\left\|h-\hat{h}\right\|_{Z_{q,r}}\left\|k\right\|_{X_{q,r}},\\
	\sup_ {s\in[\max\left\{t-1,1\right\},t]}\left\|k\right\|_{W^{2,r}}^2
		&\leq Ct^{-n \left( \frac{1}{q}-\frac{1}{r} \right)} \left\|k\right\|_{X_{q,r}}^2,\\
	\sup_ {s\in[\max\left\{t-1,1\right\},t]}\left\|\nabla k\right\|_{W^{1,q}}\left\|k\right\|_{W^{2,r}}
		&\leq C t^{-\frac{1}{2}-\frac{n}{2}\left( \frac{1}{q}-\frac{1}{r}\right) }\left\|k\right\|_{X_{q,r}}^2.
\end{align*}	
Combining all these estimates and using the conditions on $q'$, we get
\begin{align*}
	&\left\|\int_{\max\left\{t-1,1\right\}}^t P(g,h,h_{\infty})_{s\to t}I(t,s,k,h,h_{\infty})ds\right\|_{W^{2,p'}} \\ 
	&\qquad\qquad\qquad\qquad \leq C t^{-\frac{n}{2}\left(\frac{1}{q'}-\frac{1}{r}\right) }\left( \left\|k\right\|_{X_{q,r}}+\left\|h-\hat{h}\right\|_{Z_{q,r}} \right) \left\|k\right\|_{X_{q,r}},
\end{align*}
which yields the desired estimate.
\end{proof}
\subsection{Contraction properties of the iteration map}
Let  $(M^n,\hat{h})$ be an integrable ALE manifold with a parallel spinor and $q\in (1,n)$, $r\in(n,\infty)$ be H\"{o}lder exponents so that
\begin{align*}
	\frac{n}{2}\left(\frac{1}{q}-\frac{1}{r}\right)
		&>\frac{1}{2},\qquad 	\frac{n}{2}\left(\frac{1}{q}-\frac{1}{r}\right)
		\neq 1
\end{align*}
Moreover, let $\U$ be an $L^{[q,\infty]}$-neighbourhood of $\hat{h}$ in the space of metrics, which is so small that the projection map $\Phi:\mathcal{U}\to\mathcal{U}\cap\mathcal{F}$ of Subsection \ref{subsec: proj map} is defined.

 Let furthermore
$h:=(h_t)_{t\geq 1}$, $\tilde{h}=(\tilde{h}_t)_{t\geq1}$ be families of metrics in $\U\cap\F$ with  $\tilde{h}_1=h_1$  and $k:=(k_t)_{t\geq1}$, $(\tilde{k}_t)_{t\geq1}$
 be families of symmetric $2$-tensors with $k_1=\tilde{k}_1$. Define the families $g:=(g_t)_{t\geq 1}:=(h_t+k_t)_{t\geq 1}$ and $\tilde{g}:=(\tilde{g}_t)_{t\geq 1}:=(\tilde{h}_t+\tilde{k}_t)_{t\geq 1}$. 
 The goal of this subsection is to establish the following contraction estimate:
\begin{thm}\label{thm : psicontracting}
	There exists an $\epsilon>0$ such that if $\left\|(h-\hat{h},k)\right\|_{Y_{q,r}}+\left\|(\tilde{h}-\hat{h},\tilde{k})\right\|_{Y_{q,r}}<\epsilon$, there exists a constant $C=C(q,r,\U,\epsilon)$ such that
	the operator $\psi$ satisfies the estimate
	\begin{align*}
	\left\|\psi(h,k)-\psi(\tilde{h},\tilde{k})\right\|_{Y_{q,r}}&\leq C\left(\left\|k_1\right\|_{W^{2,q}}+\left\|k_1\right\|_{W^{2,r}}\right)\left\|(h-\tilde{h},k-\tilde{k})\right\|_{Y_{q,r}}\\
	&\qquad +C\left(\left\|(h-\hat{h},k)\right\|_{Y_{q,r}}+\left\|(\tilde{h}-\hat{h},\tilde{k})\right\|_{Y_{q,r}}\right)\left\|(h-\tilde{h},k-\tilde{k})\right\|_{Y_{q,r}}.
	\end{align*}
\end{thm}
The proof of this theorem is split up in Propositions \ref{psi1contraction} and \ref{psi2contraction} below, in which the estimates for the components of $\psi$ are established.
At first, recall that 
	\begin{align*}
H_1=F_1(g^{-1},g^{-1},\nabla k,\nabla k)+F_2(g^{-1},R,k,k)+F_3(g^{-1},k,\nabla^2k)
\end{align*}
and abbreviate
	\begin{align*}
&F_1:=F_1(g^{-1},g^{-1},\nabla k,\nabla k),\qquad &\tilde{F}_1:=F_1(\tilde{g}^{-1},\tilde{g}^{-1},\tilde{\nabla}\tilde{k},\tilde{\nabla}\tilde{k}),
\\
&F_2:=F_2(g^{-1},R,k,k),\qquad &\tilde{F}_2:=F_2(\tilde{g}^{-1},\tilde{R},\tilde{k},\tilde{k}),\\
&F_3:=F_3(g^{-1},k,\nabla^2k),\qquad &\tilde{F}_3:=F_3(\tilde{g}^{-1},\tilde{k},\tilde{\nabla}^2\tilde{k}).
\end{align*}
For the remainder of this Subsection, we will furthermore always assume that 
\begin{align*}\left\|(h-\hat{h},k)\right\|_{Y_{q,r}}+\left\|(\tilde{h}-\hat{h},\tilde{k})\right\|_{Y_{q,r}}<\epsilon,
\end{align*}
where $\epsilon>0$ is chosen sufficiently small.
\begin{lem}\label{lem : F_1-tildeF_1}
There exists a constant $C=C(\U,\epsilon)$ such that we have the pointwise estimates
\begin{align*}
	|F_1-\tilde{F}_1|&\leq C|k-\tilde{k}||\nabla k|^2+C|\nabla (k-\tilde{k})|(|\nabla k|+|\nabla\tilde{k}|)\\
		&\qquad +C(|h-\tilde{h}|+|\nabla (h-\tilde{h})|)(|\nabla k|+|\nabla\tilde{k}|+|k|+|\tilde{k}|)^2, \\
	|F_2-\tilde{F}_2|
		&\leq C |k-\tilde{k}|(|k|+|\tilde{k}|)(|R|+|\tilde{R}|)+C(|h-\tilde{h}|+|\nabla(h-\tilde{h})|+|\nabla^2(h-\tilde{h})|)		|k|^2,\\
	|F_3-\tilde{F}_3|&\leq C(|k-\tilde{k}||\nabla^2 k|+|\tilde{k}||\nabla^2(k-\tilde{k})|) \\
	&\qquad + C(|h-\tilde{h}||k||\nabla^2k|+|\nabla(h-\tilde{h})||\nabla \tilde{k}||\tilde{k}|+|\nabla(h-\tilde{h})|^2|\tilde{k}|^2).	
\end{align*}
\end{lem}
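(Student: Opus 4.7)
The strategy is to exploit the fact that each $F_i$ is $C^{\infty}(M)$-multilinear in its explicit arguments (with coefficient tensors built from $h$-parallel pieces), and to telescope each difference slot-by-slot: for a multilinear expression $F(X_1,\dots,X_m)$,
\[
F(X_1,\dots,X_m)-F(Y_1,\dots,Y_m)=\sum_{j=1}^{m}F(Y_1,\dots,Y_{j-1},X_j-Y_j,X_{j+1},\dots,X_m).
\]
This reduces matters to pointwise control of each $X_j-Y_j$ together with pointwise control of the surviving factors; the latter is uniform in $t$ by the smallness hypothesis combined with Lemma \ref{lem : normcontrol} (for $g^{-1},\tilde g^{-1}$) and Proposition \ref{prop: near metrics} (for $R,\tilde R$ and for the $h$-parallel coefficient tensors).

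The key pointwise comparisons I will use are
\[
|g^{-1}-\tilde g^{-1}|\leq C|g-\tilde g|\leq C(|h-\tilde h|+|k-\tilde k|)
\]
from the identity $A^{-1}-B^{-1}=A^{-1}(B-A)B^{-1}$, together with the Christoffel-difference bound $|\Gamma(h)-\Gamma(\tilde h)|\leq C(|\nabla(h-\tilde h)|+|h-\tilde h|)$ and its consequences for covariant derivatives of $k,\tilde k$:
\begin{align*}
|\nabla k-\tilde\nabla\tilde k|&\leq|\nabla(k-\tilde k)|+C(|\nabla(h-\tilde h)|+|h-\tilde h|)|\tilde k|,\\
|\nabla^2 k-\tilde\nabla^2\tilde k|&\leq|\nabla^2(k-\tilde k)|+C(|\nabla(h-\tilde h)|+|h-\tilde h|)(|\nabla k|+|\nabla\tilde k|)\\
&\qquad+C\bigl(|\nabla^2(h-\tilde h)|+|\nabla(h-\tilde h)|^2+|h-\tilde h|\bigr)|\tilde k|,
\end{align*}
via $(\nabla-\tilde\nabla)T=(\Gamma(h)-\Gamma(\tilde h))\ast T$ and the expansion $(\nabla^2-\tilde\nabla^2)T=\tilde\nabla((\nabla-\tilde\nabla)T)+(\nabla-\tilde\nabla)\nabla T$. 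The curvature comparison
\[
|R-\tilde R|\leq C\bigl(|\nabla^2(h-\tilde h)|+|\nabla(h-\tilde h)|^2+|h-\tilde h|\bigr)
\]
is standard.

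Inserting these estimates into the telescopings for $F_1=F_1(g^{-1},g^{-1},\nabla k,\nabla k)$, $F_2=F_2(g^{-1},R,k,k)$ and $F_3=F_3(g^{-1},k,\nabla^2 k)$ and applying the triangle inequality produces the three stated bounds; any nominally higher-order cross term (e.g.\ $|k-\tilde k||\nabla\tilde k|^2$) can be rewritten using $\nabla\tilde k=\nabla k-\nabla(k-\tilde k)$ and reabsorbed into the listed terms via the smallness of $\|(h-\hat h,k)\|_{Y_{q,r}}+\|(\tilde h-\hat h,\tilde k)\|_{Y_{q,r}}$. The main technical point I anticipate is pure bookkeeping: at each slot of each telescoping I must carefully choose which base point to linearize around so that the curvature-of-perturbed-metric contribution $|\nabla^2(h-\tilde h)|$ surfaces only through $R-\tilde R$ inside the $F_2$ bound, rather than contaminating $F_1$ or $F_3$, and so that the $F_3$ bound retains the stated product structure. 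Once that bookkeeping is fixed, each estimate reduces to repeated application of pointwise Cauchy--Schwarz.
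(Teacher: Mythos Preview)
Your proposal is correct and follows essentially the same approach as the paper: telescope each multilinear $F_i$ slot by slot, control $g^{-1}-\tilde g^{-1}$ via $|k-\tilde k|+|h-\tilde h|$, and handle the differences $\nabla k-\tilde\nabla\tilde k$, $\nabla^2 k-\tilde\nabla^2\tilde k$, $R-\tilde R$ through the Christoffel-difference tensor $T=\Gamma(h)-\Gamma(\tilde h)$ and the schematic identities $\nabla\tilde k-\tilde\nabla\tilde k=T*\tilde k$, $R-\tilde R=\nabla T+T*T$, $\nabla^2\tilde k-\tilde\nabla^2\tilde k=\nabla T*\tilde k+T*\nabla\tilde k+T*T*\tilde k$. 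The paper invokes Lemma~\ref{lem : normcontrol} for the $W^{1,\infty}$-smallness of $k,\tilde k$ exactly as you do, carries out the telescoping explicitly only for $F_1$, and then declares the other two cases analogous and ``left to the reader''; your write-up is in fact more explicit about the second-derivative comparison and the bookkeeping concern you flag.
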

\begin{proof}Note that Lemma \ref{lem : normcontrol} ensures that $\left\|k\right\|_{W^{1,\infty}}+\left\|\tilde{k}\right\|_{W^{1,\infty}}$ is small.
	We first look at the difference
	\begin{align*}
	F_1(g^{-1},g^{-1},\nabla k,\nabla k)-F_1(\tilde{g}^{-1},\tilde{g}^{-1},\tilde{\nabla}\tilde{k},\tilde{\nabla}\tilde{k})&=
	F_1	(g^{-1}-\tilde{g}^{-1},g^{-1},\nabla k,\nabla k)\\
	&\qquad +F_1(\tilde{g}^{-1},g^{-1}-\tilde{g}^{-1},\nabla k,\nabla k)\\
	&\qquad +F_1(\tilde{g}^{-1},\tilde{g}^{-1},\nabla k-\tilde{\nabla}\tilde{k},\nabla k)\\
	&\qquad +F_1(\tilde{g}^{-1},\tilde{g}^{-1},\tilde{\nabla}\tilde{k},\nabla k-\tilde{\nabla}\tilde{k}).
	\end{align*}
	Note that 
	\begin{align*}	|g^{-1}-\tilde{g}^{-1}|\leq C(|k-\tilde{k}|+|h-\tilde{h}|).
	\end{align*}
	 Note further, that by using the tensor $T_{ij}^k:=\Gamma_{ij}^k-\tilde{\Gamma}_{ij}^k$, we can write $\nabla \tilde{k}-\tilde{\nabla}\tilde{k}=T*\tilde{k}$.
The tensor $T$ is schematically of the form $T=\tilde{h}^{-1}*\nabla(h-\tilde{h})$. Therefore we get
	\begin{align*}
 |\nabla k-\tilde{\nabla}\tilde{k}|\leq C(|\nabla (k-\tilde{k})|+|T||\tilde{k}|),
 \qquad |T|\leq C|\nabla (h-\tilde{h})|,
	\end{align*}
 and the first inequality is obtained from combining all these estimates.
The other estimates are performed similarly, using in addition
\begin{align*}
R-\tilde{R}=\nabla T+T*T, \qquad \nabla^2 \tilde{k}-\tilde{\nabla}^2\tilde{k}=\nabla T*\tilde{k}+T*\nabla\tilde{k}+T* T*\tilde{k}.
\end{align*}
The details are left to the reader.
\end{proof}
\begin{lem}\label{lem : H_1-tildeH_1} 
There exists a constant $C=C(q,r,\U,\epsilon)$ such that we have
	\begin{align*}
	\left\|H_1-\tilde{H}_1\right\|_{L^q}
		&\leq Ct^{-\frac{1}{2}-\frac{n}{2}\left(\frac{1}{q}-\frac{1}{r}\right) }
	\left(\left\|h-\tilde{h}\right\|_{Z_{q,r}}+\left\|k-\tilde{k}\right\|_{X_{q,r}}\right)
	\left(\left\|k\right\|_{X_{q,r}}+\left\|\tilde{k}\right\|_{X_{q,r}}\right).
	\end{align*}
	\end{lem}
\begin{proof}
	By Lemma \ref{lem : F_1-tildeF_1} and the H\"{o}lder inequality, we conclude
	\begin{align*}
	\left\|F_1-\tilde{F}_1\right\|_{L^{q}}&\leq C\left\|k-\tilde{k}\right\|_{L^{q}}\left\|\nabla k\right\|_{L^{\infty}}^2+C \left\|\nabla(k-\tilde{k})\right\|_{L^{q}} \left( \left\|\nabla k\right\|_{L^{\infty}}+\left\|\nabla\tilde{k}\right\|_{L^{\infty}}\right) \\
	&\qquad +C\left\|h-\tilde{h}\right\|_{W^{1,{q}}} \left( \left\|k\right\|_{W^{1,\infty}}+\left\|\tilde{k}\right\|_{W^{1,\infty}} \right)^2 \\
	&\leq C\left\|k-\tilde{k}\right\|_{L^{q}}\left\| k\right\|_{W^{2,r}}^2+C \left\|\nabla(k-\tilde{k})\right\|_{L^{q}} \left(\left\| k\right\|_{W^{2,r}}+\left\|\tilde{k}\right\|_{W^{2,r}}\right)\\
	&\qquad +C\left\|h-\tilde{h}\right\|_{W^{1,{q}}} \left( \left\|k\right\|_{W^{2,r}}+\left\|\tilde{k}\right\|_{W^{2,r}} \right)^2\\	
	&\leq Ct^{-n\left(\frac{1}{q}-\frac{1}{r}\right)}\left\|k-\tilde{k}\right\|_{X_{q,r}}\left\|k\right\|_{X_{q,r}}^2
	+t^{-\frac{1}{2}-\frac{n}{2}\left(\frac{1}{q}-\frac{1}{r}\right)}\left\|k-\tilde{k}\right\|_{X_{q,r}} \left( \left\|k\right\|_{X_{q,r}}+\left\|\tilde{k}\right\|_{X_{q,r}}\right) \\
	&\qquad +Ct^{-n\left(\frac{1}{q}-\frac{1}{r}\right)}\left\|h-\tilde{h}\right\|_{X_{q,r}} \left( \left\|k\right\|_{X_{q,r}}+\left\|\tilde{k}\right\|_{X_{q,r}} \right)^2\\
	&\leq Ct^{-\frac{1}{2}-\frac{n}{2}\left(\frac{1}{q}-\frac{1}{r}\right)} \left( \left\|h-\tilde{h}\right\|_{X_{q,r}}+\left\|k-\tilde{k}\right\|_{X_{q,r}} \right) \left( \left\|k\right\|_{X_{q,r}}+\left\|\tilde{k}\right\|_{X_{q,r}} \right).
	\end{align*}
Furthermore, we have
	\begin{align*}
	\left\|F_2-\tilde{F}_2\right\|_{L^{q}}
		&\leq C\left\|k-\tilde{k}\right\|_{L^{\infty}}\left( \left\|k\right\|_{L^{\infty}}+\left\|\tilde{k}\right\|_{L^{\infty}} \right) \left( \left\|{R}\right\|_{L^{q}}+\left\|\tilde{R}\right\|_{L^{q}} \right)
	+ C\left\|h-\tilde{h}\right\|_{W^{2,q}}\left\|k\right\|_{L^{\infty}}^2\\
	&\leq C\left\|k-\tilde{k}\right\|_{W^{2,r}} \left( \left\|k\right\|_{W^{2,r}}+\left\|\tilde{k}\right\|_{W^{2,r}} \right)+ C\left\|h-\tilde{h}\right\|_{L^q}\left\|k\right\|_{W^{2,r}}^2\\
	&\leq Ct^{-n\left(\frac{1}{q}-\frac{1}{r}\right)}
	\left( \left\|h-\tilde{h}\right\|_{Z_{q,r}}+\left\|k-\tilde{k}\right\|_{X_{q,r}} \right)\left(\left\|k\right\|_{X_{q,r}}+\left\|\tilde{k}\right\|_{X_{q,r}} \right)\\
	&\leq Ct^{-n\left(\frac{1}{q}-\frac{1}{r}\right)}
	\left( \left\|h-\tilde{h}\right\|_{Z_{q,r}}+\left\|k-\tilde{k}\right\|_{X_{q,r}} \right)\left(\left\|k\right\|_{X_{q,r}}+\left\|\tilde{k}\right\|_{X_{q,r}} \right).
	\end{align*}
Choose $r'\in (q,\infty)$ such that $\frac{1}{q}=\frac{1}{r'}+\frac{1}{r}$.
 Then the H\"{o}lder inequality yields
\begin{align*}
	\left\|F_3-\tilde{F}_3\right\|_{L^{q}}
		&\leq C\left( \left\|\nabla^2 k\right\|_{L^q}\left\|k-\tilde{k}\right\|_{L^{\infty}}+\left\|\nabla^2(k-\tilde{k})\right\|_{L^{q}}\left\|\tilde{k}\right\|_{L^{\infty}} \right) \\
		&\qquad + C\left\|h-\tilde{h}\right\|_{W^{2,r'}} \left( \left\|k\right\|_{L^{\infty}}\left\|\nabla^2 k\right\|_{L^r}+\left\|\nabla\tilde{k}\right\|_{L^{r}}\left\|\tilde{k}\right\|_{L^{\infty}}+\left\|\tilde{k}\right\|_{L^{r}}\left\|\tilde{k}\right\|_{L^{\infty}} \right) \\
		&\leq C \left( \left\|\nabla^2 k\right\|_{L^q}\left\|k-\tilde{k}\right\|_{W^{2,r}}+\left\|\nabla^2(k-\tilde{k})\right\|_{L^{q}}\left\|\tilde{k}\right\|_{W^{2,r}} \right) \\
		&\qquad + C\left\|h-\tilde{h}\right\|_{L^q} \left( \left\|k\right\|_{W^{2,r}}^2+\left\|\tilde{k}\right\|_{W^{2,r}}^2 \right) \\
		&\leq Ct^{-\beta-\frac{n}{2}\left(\frac{1}{q}-\frac{1}{r}\right) }
\left(\left\|h-\tilde{h}\right\|_{Z_{q,r}}+\left\|k-\tilde{k}\right\|_{X_{q,r}}\right)
\left(\left\|k\right\|_{X_{q,r}}+\left\|\tilde{k}\right\|_{X_{q,r}}\right)\\
&\leq Ct^{-\beta-\frac{n}{2}\left(\frac{1}{q}-\frac{1}{r}\right) }
\left(\left\|h-\tilde{h}\right\|_{Z_{q,r}}+\left\|k-\tilde{k}\right\|_{X_{q,r}}\right)
\left(\left\|k\right\|_{X_{q,r}}+\left\|\tilde{k}\right\|_{X_{q,r}}\right),
\end{align*}
where
\begin{align*}
\beta=\min\left\{1,\frac{n}{2}\left(\frac{1}{q}-\frac{1}{r}\right)\right\}.
\end{align*}
Using that
\begin{align*}
\frac{n}{2}\left(\frac{1}{q}-\frac{1}{r}\right)\geq\beta >\frac{1}{2}
\end{align*}
finishes the proof of the lemma.
\end{proof}
\begin{prop}\label{psi1contraction}
There exists a constant $C=C(q,r,\U,\epsilon)$ such that we have
	\begin{align*}
	\left\|\psi_1(h,k)-\psi_1(\tilde{h},\tilde{k})\right\|_{Z_{q,r}}\leq C\left(\left\|k\right\|_{X_{q,r}}+\left\|\tilde{k}\right\|_{X_{q,r}}\right)\left(\left\|h-\tilde{h}\right\|_{Z_{q,r}}+\left\|k-\tilde{k}\right\|_{X_{q,r}}\right).
	\end{align*}
\end{prop}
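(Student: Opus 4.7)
The plan is to mimic the proof of Proposition \ref{psi1mapping}, but now carrying the extra Lipschitz dependence of $\Phi$ and $D_g\Phi$ on their arguments, which is exactly what Lemma \ref{lem : projection lemma} (iii) and (vi) provide. Note that $h_1$ is the same for $\psi_1(h,k)$ and $\psi_1(\tilde h,\tilde k)$, since it is obtained by evolving a fixed initial metric $g_0$ under the $\hat{h}$-gauged Ricci-de Turck flow on $[0,1]$; in particular the boundary contributions at $s=1$ cancel and only the two time integrals remain to be compared.

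For the $L^q$-part of the $Z_{q,r}$-norm, I would write
\[
\psi_1(h,k)_t - \psi_1(\tilde{h},\tilde{k})_t = \Phi(\overline{\psi}_1(h,k)_t) - \Phi(\overline{\psi}_1(\tilde{h},\tilde{k})_t)
\]
and use Lipschitz continuity of $\Phi$ in $L^q$ (which follows from the bound on $D_g\Phi$ in Lemma \ref{lem : projection lemma} (iii)) to reduce to estimating $\overline{\psi}_1(h,k)_t - \overline{\psi}_1(\tilde{h},\tilde{k})_t$ in $L^q$. Inside the integral I split
\[
D_g\Phi(H_1) - D_{\tilde g}\Phi(\tilde H_1)
= D_g\Phi(H_1 - \tilde H_1) + (D_g\Phi - D_{\tilde g}\Phi)(\tilde H_1),
\]
and estimate the first term by Lemma \ref{lem : projection lemma} (iii) combined with Lemma \ref{lem : H_1-tildeH_1}, and the second by Lemma \ref{lem : projection lemma} (vi) together with Lemma \ref{lem : normcontrol} and the $L^{q'}$-bound on $\tilde H_1$ already established in \eqref{eq : H_1 estimate}. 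Each piece produces a factor $s^{-n(1/q - 1/r)}$ in the integrand, and since $n\bigl(\tfrac{1}{q}-\tfrac{1}{r}\bigr)>1$ by hypothesis, the time integral $\int_1^t s^{-n(1/q-1/r)}\,ds$ is bounded uniformly in $t$, yielding the desired control of the $L^q$-part.

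For the weighted $\partial_t$-part of $Z_{q,r}$, differentiating gives
\[
\partial_t \psi_1(h,k)_t = D_{\overline{\psi}_1(h,k)_t}\Phi\bigl(D_{g_t}\Phi(H_1(t))\bigr),
\]
and similarly for $(\tilde h,\tilde k)$. Introducing and subtracting the mixed term $D_{\overline{\psi}_1(h,k)_t}\Phi(D_{\tilde{g}_t}\Phi(\tilde H_1(t)))$, I would apply Lemma \ref{lem : projection lemma} (iii) and (vi) twice (once on the outer $D\Phi$, once on the inner) to bound the difference by a constant times
\[
\|H_1(t) - \tilde H_1(t)\|_{L^{q'}} + \bigl(\|g_t-\tilde g_t\|_{L^{[p,\infty]}} + \|\overline{\psi}_1(h,k)_t-\overline{\psi}_1(\tilde h,\tilde k)_t\|_{L^{[p,\infty]}}\bigr)\|\tilde H_1(t)\|_{L^{q'}}.
\]
Inserting Lemma \ref{lem : H_1-tildeH_1}, the pointwise bound $\|\tilde H_1(t)\|_{L^{q'}} \lesssim t^{-n(1/q-1/r)}\|\tilde k\|_{X_{q,r}}^2$ (as in \eqref{eq : H_1 estimate}), Lemma \ref{lem : normcontrol}, and the $L^q$-estimate on $\overline{\psi}_1(h,k)_t-\overline{\psi}_1(\tilde h,\tilde k)_t$ just obtained yields a bound of size $t^{-n(1/q-1/r)}$ on the $L^q$-norm of $\partial_t(\psi_1(h,k) - \psi_1(\tilde h,\tilde k))$, which is exactly what the $Z_{q,r}$-norm requires.

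The main obstacle I anticipate is bookkeeping the auxiliary H\"older exponents $q' < q$ needed to apply Lemma \ref{lem : projection lemma} (iii) and (vi) (which trade a strong $L^r$-decay coming from $H_1, \tilde H_1$ for the required $L^q$-output) while keeping $n(1/q' - 1/r) > 1$ so that the time integral still converges; this is a routine interpolation but needs to be checked alongside the smallness hypothesis on $\|(h-\hat h,k)\|_{Y_{q,r}} + \|(\tilde h-\hat h,\tilde k)\|_{Y_{q,r}}$.
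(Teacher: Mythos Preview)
Your proposal is correct and follows essentially the same route as the paper's proof: Lipschitz continuity of $\Phi$ via Lemma \ref{lem : projection lemma} (iii), the splitting $D_g\Phi(H_1)-D_{\tilde g}\Phi(\tilde H_1)=D_g\Phi(H_1-\tilde H_1)+(D_g\Phi-D_{\tilde g}\Phi)(\tilde H_1)$, Lemma \ref{lem : H_1-tildeH_1} for the first piece and Lemma \ref{lem : projection lemma} (vi) for the second, and for the $\partial_t$-part the same three-term telescoping using the outer and inner $D\Phi$. Your worry about auxiliary exponents $q'<q$ is unnecessary: Lemma \ref{lem : projection lemma} (iii) and (vi) already map $L^{q'}\to L^{q''}$ for arbitrary $q',q''\in(1,\infty)$, so one simply takes $\tilde H_1\in L^{r/2}$ as input and $L^q$ as output, with no interpolation needed.
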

\begin{proof}
	We estimate, using Lemma \ref{lem : projection lemma} (iii) and (vi) and Lemma \ref{lem : H_1-tildeH_1},
	\begin{align*}
	&\left\|\psi_1(h,k)-\psi_1(\tilde{h},\tilde{k})\right\|_{L^{[q,\infty]}} \\
	&\quad \leq C \left\|\overline{\psi}_1(h,k)-\overline{\psi}_1(\tilde{h},\tilde{k})\right\|_{L^{[q,\infty]}}\\
	&\quad \leq C\int_1^t\left\| D_g\Phi(H_1(h,k))-D_{\tilde{g}}\Phi(H_1(\tilde{h},\tilde{k}))\right\|_{L^{[q,\infty]}}ds\\
	&\quad \leq C\int_1^t\left\| (D_g\Phi-D_{\tilde{g}}\Phi)(H_1(\tilde{h},\tilde{k}))+D_g\Phi((H_1(h,k)-H_1(\tilde{h},\tilde{k}))\right\|_{L^{[q,\infty]}}ds\\
	&\quad \leq C\int_1^t\left[\left\|g-\tilde{g}\right\|_{L^{[q,\infty]}}\left\|H_1(\tilde{h},\tilde{k})\right\|_{L^{r/2}}+\left\|H_1(h,k)-H_1(\tilde{h},\tilde{k})\right\|_{L^{q}}\right]ds\\
	&\quad \leq C\int_1^t\left[\left(\left\|h-\tilde{h}\right\|_{L^{[q,\infty]}}+\left\|k-\tilde{k}\right\|_{L^{[q,\infty]}}\right)\left\|\tilde{k}\right\|_{W^{2,r}}^2+\left\|H_1(h,k)-H_1(\tilde{h},\tilde{k})\right\|_{L^{q}}\right]ds\\
	&\quad \leq C\int_1^t s^{-\frac{1}{2}-\frac{n}{2}\left(\frac{1}{q}-\frac{1}{r}\right)}ds\left(\left\|k\right\|_{X_{q,r}}+\left\|\tilde{k}\right\|_{X_{q,r}}\right)\left(\left\|h-\tilde{h}\right\|_{Z_{q,r}}+\left\|k-\tilde{k}\right\|_{X_{q,r}}\right)\\
	&\quad \leq C\left(\left\|k\right\|_{X_{q,r}}+\left\|\tilde{k}\right\|_{X_{q,r}}\right)\left(\left\|h-\tilde{h}\right\|_{Z_{q,r}}+\left\|k-\tilde{k}\right\|_{X_{q,r}}\right).
	\end{align*}
	Note that we have used $-\beta>-\frac{1}{2}$ in the last inequality. Note also that we have estimated the $L^q$-norm and the $L^{\infty}$-norm at once. The $L^q$-estimate gives us the first part of the $Z_{q,r}$-norm. Both estimates will be needed at the end of the proof.
	
	Now for the other term of the norm, we estimate using Lemma \ref{lem : projection lemma} (iii) and (vi) again,
	\begin{align*}
	t^{n\left( \frac{1}{q}-\frac{1}{r} \right)}
		&\left\|\partial_t(\psi_1(h,k)-\psi_1(\tilde{h},\tilde{k}))\right\|_{L^q} \\
		&= t^{n\left(\frac{1}{q}-\frac{1}{r}\right)} \left\| D_{\overline{\psi}_1(h,k)}\Phi(D_g\Phi(H_1(h,k)))-D_{\overline{\psi}_1(\tilde{h},\tilde{k})}\Phi(D_{\tilde{g}}\Phi(H_1(\tilde{h},\tilde{k}))) \right\|_{L^q}\\
		&\leq t^{n\left( \frac{1}{q}-\frac{1}{r} \right)} \left\|
	(D_{\overline{\psi}_1(h,k)}\Phi-D_{\overline{\psi}_1(\tilde{h},\tilde{k})}\Phi)(D_{\tilde{g}}\Phi(H_1(\tilde{h},\tilde{k})))\right\|_{L^q}\\
		&\qquad+ t^{n\left( \frac{1}{q}-\frac{1}{r} \right)}\left\|D_{\overline{\psi}_1(h,k)}\Phi(D_g\Phi-D_{\tilde{g}}\Phi)(H_1(\tilde{h},\tilde{k}))\right\|_{L^q}\\
		&\qquad+ t^{n\left(\frac{1}{q}-\frac{1}{r}\right)}\left\|D_{\overline{\psi}_1(h,k)}\Phi(D_g\Phi(H_1(h,k)-H_1(\tilde{h},\tilde{k})))\right\|_{L^q}\\
		&\leq C t^{n\left( \frac{1}{q}-\frac{1}{r} \right)}\left\|H_1(\tilde{h},\tilde{k})\right\|_{L^{r/2}}\left( \left\|\overline{\psi}_1(h,k)-\overline{\psi}_1(\tilde{h},\tilde{k})\right\|_{L^{[q,\infty]}}+\left\|g-\tilde{g}\right\|_{L^{[q,\infty]}}\right) \\
		&\qquad+  C t^{n\left(\frac{1}{q}-\frac{1}{r}\right)}\left\|H_1(h,k)-H_1(\tilde{h},\tilde{k}))\right\|_{L^{q}}\\
		&\leq C t^{n\left(\frac{1}{q}-\frac{1}{r}\right)}\left\|\tilde{k}\right\|_{W^{2,r}}^2\left(\left\|\overline{\psi}_1(h,k)-\overline{\psi}_1(\tilde{h},\tilde{k})\right\|_{L^{[q,\infty]}}+\left\|k-\tilde{k}\right\|_{L^{[q,\infty]}}+\left\|h-\tilde{h}\right\|_{L^{[q,\infty]}}\right)\\
		&\qquad +  C t^{n\left(\frac{1}{q}-\frac{1}{r}\right)}\left\|H_1(h,k)-H_1(\tilde{h},\tilde{k}))\right\|_{L^{q}}\\
		&\leq C\left(\left\|k\right\|_{X_{q,r}}+\left\|\tilde{k}\right\|_{X_{q,r}}\right)\left(\left\|h-\tilde{h}\right\|_{Z_{q,r}}+\left\|k-\tilde{k}\right\|_{X_{q,r}}\right).
	\end{align*}
	In the last inequality, we used  Lemma \ref{lem : H_1-tildeH_1} again and the estimate from the first part of this proof.
\end{proof}

\begin{prop}\label{psi2contraction}
	There exists a constant $C=C(q,r,\U,\epsilon)$ such that we have
	\begin{align*}
		\left\|\psi_2(h,k)-\psi_2(\tilde{h},\tilde{k})\right\|_{X_{q,r}}&\leq C	\left(\left\|k_1 \right\|_{W^{2,r}}+\left\|k_1 \right\|_{W^{2,q}}+\left\|k\right\|_{X_{q,r}}+\left\|\tilde{k}\right\|_{X_{q,r}}
		+\left\|h-\hat{h}\right\|_{Z_{q,r}}\right)\\
		&\qquad\cdot 
		\left(\left\|h-\tilde{h}\right\|_{Z_{q,r}}+\left\|k-\tilde{k}\right\|_{X_{q,r}}\right)
		\end{align*}
	\end{prop}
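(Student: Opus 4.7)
The plan is to mirror the proof of Proposition~\ref{psi2mapping}, replacing each estimate of $\overline{\psi}_2(h,k)$ by the corresponding estimate of the difference $\overline{\psi}_2(h,k)-\overline{\psi}_2(\tilde{h},\tilde{k})$. As a preliminary reduction, I would apply Lemma~\ref{lem : projection lemma}~(iv) and~(vii) to the telescoping identity
\[
	\psi_2(h,k)-\psi_2(\tilde{h},\tilde{k})=(\Pi^{\perp}_{\tilde{h},\tilde{h}_{\infty}})^{-1}\bigl(\overline{\psi}_2(h,k)-\overline{\psi}_2(\tilde{h},\tilde{k})\bigr)+\bigl[(\Pi^{\perp}_{h,h_{\infty}})^{-1}-(\Pi^{\perp}_{\tilde{h},\tilde{h}_{\infty}})^{-1}\bigr]\overline{\psi}_2(h,k).
\]
The first summand is controlled by $\|\overline{\psi}_2(h,k)-\overline{\psi}_2(\tilde{h},\tilde{k})\|_{X_{q,r}}$ term by term (as in Lemma~\ref{lem : projection and Banach space}), while the second is bounded by $C(\|h-\tilde{h}\|_{Z_{q,r}}+\|h_{\infty}-\tilde{h}_{\infty}\|_{L^{q}})\,\|\overline{\psi}_2(h,k)\|_{X_{q,r}}$, and $\|\overline{\psi}_2(h,k)\|_{X_{q,r}}$ is already bounded above by Proposition~\ref{psi2mapping}; note that $\|h_{\infty}-\tilde{h}_{\infty}\|_{L^{q}}\leq\|h-\tilde{h}\|_{Z_{q,r}}$ by passing to the limit inside the $Z$-norm.

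Next, $\overline{\psi}_2(h,k)-\overline{\psi}_2(\tilde{h},\tilde{k})$ splits into a linear initial-data part and an integral part. Writing $P=P(g,h,h_{\infty})$ and $\tilde{P}=P(\tilde{g},\tilde{h},\tilde{h}_{\infty})$, the linear part is
\[
	(P_{1\to t}-\tilde{P}_{1\to t})\Pi^{\perp}_{h_{\infty}}(k_1)+\tilde{P}_{1\to t}(\Pi^{\perp}_{h_{\infty}}-\Pi^{\perp}_{\tilde{h}_{\infty}})(k_1).
\]
The first summand is handled by Lemma~\ref{lem : difference of heat flows}~(i)--(iii), whose error terms involve $\sup_{s}(\|g-\tilde{g}\|_{W^{1,\infty}}+\|h-\tilde{h}\|_{W^{k,\infty}})$; by Lemma~\ref{lem : normcontrol}, Proposition~\ref{prop: near metrics}, and the finite-dimensionality of $T_h\F$ (so all $W^{k,\infty}$-norms on $h-\tilde{h}$ reduce to the $L^{q}$-norm), this is bounded by $\|h-\tilde{h}\|_{Z_{q,r}}+\|k-\tilde{k}\|_{X_{q,r}}$. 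The second summand combines Lemma~\ref{lem : projection lemma}~(v) with the mapping estimates for $\tilde{P}$ from Lemma~\ref{lem : linear part}, producing the factor $\|k_1\|_{W^{2,q}}+\|k_1\|_{W^{2,r}}$ that appears in the stated bound.

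For the integral part, the same telescoping yields two families of terms. The integrals $\int_1^t(P_{s\to t}-\tilde{P}_{s\to t})I_s\,ds$, with $I_s=I(t,s,k,h,h_{\infty})$, are controlled by combining Lemma~\ref{lem : difference of heat flows} with the $L^{p}$-bounds on $I_s$ already obtained in Lemmas~\ref{lem : nonlinear part 0.1}, \ref{lem : nonlinear part 0.2} and~\ref{lem : integral term}; the splitting of $[1,t]$ into $[1,t-2]$, $[t-2,t-1]$ and $[t-1,t]$ used in Lemmas~\ref{lem : nonlinear part 1}, \ref{lem : nonlinear part 1.5} and~\ref{lem : nonlinear part 2} can be re-used verbatim. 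The integrals $\int_1^t\tilde{P}_{s\to t}(I_s-\tilde{I}_s)\,ds$ require difference analogues of those three lemmas; these follow by expanding each piece of $I_s-\tilde{I}_s$ telescopically, with each summand linear in one of $k-\tilde{k}$, $h-\tilde{h}$ or $h_{\infty}-\tilde{h}_{\infty}$, and invoking Lemmas~\ref{lem : F_1-tildeF_1}, \ref{lem : H_1-tildeH_1}, and~\ref{lem : projection lemma}~(v)--(vi), together with the equivalence of Sobolev norms on $T_h\F$.

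The hardest part of this bookkeeping will be to check that after applying Lemma~\ref{important_technical_lemma} to each of the resulting convolution-in-time integrals, every resulting decay rate in $t$ matches the corresponding rate in the $X_{q,r}$-norm; the assumption $\frac{n}{2}\bigl(\tfrac{1}{q}-\tfrac{1}{r}\bigr)\neq 1$ is precisely what rules out the logarithmic loss in~\eqref{eq : central technical lemma}. The most delicate regime is the short-time window $s\in[t-1,t]$, where Lemma~\ref{lem : difference of heat flows}~(ii)--(iii) must be combined with the $W^{1,p}$-bounds on $I_s-\tilde{I}_s$ through a careful choice of intermediate H\"older exponents, in exact parallel with the treatment of Lemma~\ref{lem : nonlinear part 2}. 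Once all pieces are collected, the factors $\|k_1\|_{W^{2,q}}+\|k_1\|_{W^{2,r}}+\|k\|_{X_{q,r}}+\|\tilde{k}\|_{X_{q,r}}$ and $\|h-\tilde{h}\|_{Z_{q,r}}+\|k-\tilde{k}\|_{X_{q,r}}$ emerge as the natural ``size'' and ``difference'' factors, yielding the claimed bound.
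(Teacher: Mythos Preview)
Your overall strategy---split off the outer projection, then decompose $\overline{\psi}_2(h,k)-\overline{\psi}_2(\tilde h,\tilde k)$ into a linear initial-data piece and integral pieces involving either $I-\tilde I$ or $P-\tilde P$---matches the paper. The projection reduction and the treatment of the $\int \tilde P_{s\to t}(I_s-\tilde I_s)\,ds$ terms via telescoping and Lemmas~\ref{lem : F_1-tildeF_1}--\ref{lem : H_1-tildeH_1} are also along the right lines.

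The genuine gap is in your handling of the operator difference $P_{s\to t}-\tilde P_{s\to t}$. You propose to control both the linear part $(P_{1\to t}-\tilde P_{1\to t})\Pi^{\perp}_{h_{\infty}}(k_1)$ and the integral $\int_1^{t}(P_{s\to t}-\tilde P_{s\to t})I_s\,ds$ using Lemma~\ref{lem : difference of heat flows}. But that lemma is a \emph{short-time} result: its constants grow like $e^{C_1 t}$, so it only gives useful information on a bounded window such as $[t-1,t]$ or $[t-4,t-1]$. For the bulk of the integral (say $s\in[1,t-4]$) and for the linear term when $t$ is large, you need a \emph{long-time} estimate of the form
\[
\left\|\nabla^i\bigl(P_{s\to t}-\tilde P_{s\to t}\bigr)\circ\Pi^{\perp}_{h_{\infty}}\right\|_{L^{p'}\to L^{q'}}
\leq C(t-s)^{-\frac{n}{2}\left(\frac{1}{p'}-\frac{1}{q}\right)-\alpha(q',i)}\bigl(\|h-\tilde h\|_{Z_{q,r}}+\|k-\tilde k\|_{X_{q,r}}\bigr),
\]
valid for $t-s\geq 4$. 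This is precisely the content of the paper's Lemma~\ref{lem : difference term 1} (and the remark following it), and its proof is substantially more involved than invoking Lemma~\ref{lem : difference of heat flows}: one first reduces from $[t-1,t]$ to $[1,t-1]$ by the short-time lemma, then writes $\Pi^{\perp}_{h_\infty}(\varphi_{t-1}-\tilde\varphi_{t-1})$ via Duhamel as $e^{-(t-2)\Delta_L}\psi_1+\int_1^{t-1}e^{-(t-1-s)\Delta_L}\Pi^{\perp}(\tilde\Delta_L-\Delta_L)\tilde\varphi\,ds$, and finally splits that integral at $2$ and $t-2$, using \emph{negative} Sobolev spaces $W^{-2,q}$ and duality for $e^{-\Delta_L}$ to absorb the second-order difference $(\tilde\Delta_L-\Delta_L)\tilde\varphi$ near $s=1$ without losing derivatives.

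Relatedly, the paper splits the $(P-\tilde P)I$ integral at $t-4$ rather than $t-2$, precisely because the long-time difference estimate above needs $t-s\geq 4$; your proposed splitting at $t-2$ would not interface with it. Once this long-time operator-difference lemma is in place, the rest of your outline (Lemmas~\ref{lem : difference term 2.2}, \ref{lem : difference term 2.4}, \ref{lem : difference term 3}--\ref{lem : difference term 5} in the paper) goes through essentially as you describe.
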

\begin{proof}
Let us first write
	\begin{equation}\label{eq : Prop 5.32}
	\begin{split}
\psi_2(h,k)-\psi_2(\tilde{h},\tilde{k})
)	&=[(\Pi^{\perp}_{h,h_{\infty}})^{-1}-(\Pi^{\perp}_{\tilde{h},\tilde{h}_{\infty}})^{-1}]\Pi^{\perp}_{h_{\infty}}(\overline{\psi}_2(h,k))\\
&\qquad +(\Pi^{\perp}_{\tilde{h},\tilde{h}_{\infty}})^{-1}
[\Pi^{\perp}_{h_{\infty}}-\Pi^{\perp}_{\tilde{h}_{\infty}}]\overline{\psi}_2(h,k)\\
&\qquad+(\Pi^{\perp}_{\tilde{h},\tilde{h}_{\infty}})^{-1}\Pi^{\perp}_{\tilde{h}_{\infty}}(\overline{\psi}_2(h,k)-\overline{\psi}_2(\tilde{h},\tilde{k}))
\end{split}
	\end{equation}
and estimate term by term. Let us first define a function
 $\alpha:\left\{q,r \right\}\times\left\{0,1,2 \right\}\to\R$ by
		$	\alpha(q,0)=0$, $\alpha(q,1)=\frac{1}{2}, \alpha(q,2)=\min\left\{1,\frac{n}{2}(\frac{1}{q}-\frac{1}{r})\right\}$ and $\alpha(r,i)=\frac{n}{2}(\frac{1}{q}-\frac{1}{r})$ for $i=0,1,2$.	 
Let us also abbreviate for notational simplicity $\overline{\psi}_2:=\overline{\psi}_2(h,k)$  for the moment. Let $p'\in \left\{q,r \right\}$ and $i\in \left\{0,1,2 \right\}$.
Then by Lemma \ref{lem : projection lemma} (vii) and (ii) and Lemma \ref{lem : inftycontrol},
	\begin{align*}
	\left\|\nabla^i[(\Pi^{\perp}_{h,h_{\infty}})^{-1}-(\Pi^{\perp}_{\tilde{h},\tilde{h}_{\infty}})^{-1}]\Pi^{\perp}_{h_{\infty}}(\overline{\psi}_2)\right\|_{L^{p'}}
		&\leq C\left(\left\|h-\tilde{h}\right\|_{L^q}+\left\|h_{\infty}-\tilde{h}_{\infty}\right\|_{L^q}\right)\left\|\Pi^{\perp}_{h_{\infty}}(\overline{\psi}_2)\right\|_{L^{r}},
		\\
&\leq 		C\left\|h-\tilde{h}\right\|_{Z_{q,r}}\left\|\overline{\psi}_2\right\|_{L^{r}}
\\
&\leq 	Ct^{-\alpha(p',i)} \left\|h-\tilde{h}\right\|_{Z_{q,r}} \left\|\overline{\psi}_2\right\|_{X_{q,r}},
	\end{align*}
so that	
	\begin{align*}
	\left\|[(\Pi^{\perp}_{h,h_{\infty}})^{-1}-(\Pi^{\perp}_{\tilde{h},\tilde{h}_{\infty}})^{-1}]\Pi^{\perp}_{h_{\infty}}(\overline{\psi}_2)\right\|_{X_{q,r}}\leq C\left\|h-\tilde{h}\right\|_{Z_{q,r}} \left\|\overline{\psi}_2\right\|_{X_{q,r}}
	\end{align*}
	Similarly, by Lemma \ref{lem : projection lemma} (iv) and (v) and Lemma \ref{lem : inftycontrol},
			\begin{align*}
	\left\|\nabla^i[(\Pi^{\perp}_{\tilde{h},\tilde{h}_{\infty}})^{-1}
[\Pi^{\perp}_{h_{\infty}}-\Pi^{\perp}_{\tilde{h}_{\infty}}]\overline{\psi}_2]\right\|_{L^{p'}}
		&\leq C
	\left\|[\Pi^{\perp}_{h_{\infty}}-\Pi^{\perp}_{\tilde{h}_{\infty}}]\overline{\psi}_2]\right\|_{W^{2,p'}}	\\
	&\leq  C\left\|h_{\infty}-\tilde{h}_{\infty}\right\|_{L^q}
	 \left\|\overline{\psi}_2\right\|_{L^{r}}
	\\
&\leq 	Ct^{-\alpha(p',i)}\left\|h-\tilde{h}\right\|_{Z_{q,r}} \left\|\overline{\psi}_2\right\|_{X_{q,r}}
	\end{align*}
	so that
	\begin{align*}
\left\|[(\Pi^{\perp}_{\tilde{h},\tilde{h}_{\infty}})^{-1}
[\Pi^{\perp}_{h_{\infty}}-\Pi^{\perp}_{\tilde{h}_{\infty}}]\overline{\psi}_2]\right\|_{X_{q,r}}\leq 	C\left\|h-\tilde{h}\right\|_{Z_{q,r}} \left\|\overline{\psi}_2\right\|_{X_{q,r}}.	
	\end{align*}	
	Furthermore, we know from the proof of Proposition \ref{psi2mapping} that
\begin{align*}
\left\|\overline{\psi}_2\right\|_{X_{q,r}}=\left\|\overline{\psi}_2(h,k)\right\|_{X_{q,r}}&\leq C( \left\|k_1\right\|_{W^{2,q}}+\left\|k_1\right\|_{W^{2,r}}+(\left\|k\right\|_{X_{q,r}}+\left\|h-\hat{h}\right\|_{Z_{q,r}})\left\|k\right\|_{X_{q,r}})\\
&\leq 
 C( \left\|k_1\right\|_{W^{2,q}}+\left\|k_1\right\|_{W^{2,r}}+\left\|k\right\|_{X_{q,r}})
\end{align*}
	from which we conclude
	\begin{align*}
&	\left\|[(\Pi^{\perp}_{h,h_{\infty}})^{-1}-(\Pi^{\perp}_{\tilde{h},\tilde{h}_{\infty}})^{-1}]\Pi^{\perp}_{h_{\infty}}(\overline{\psi}_2)\right\|_{X_{q,r}}+
	\left\|[(\Pi^{\perp}_{\tilde{h},\tilde{h}_{\infty}})^{-1}
[\Pi^{\perp}_{h_{\infty}}-\Pi^{\perp}_{\tilde{h}_{\infty}}]\overline{\psi}_2]\right\|_{X_{q,r}}\\
&\qquad\qquad\qquad\qquad \leq C
( \left\|k_1\right\|_{W^{2,q}}+\left\|k_1\right\|_{W^{2,r}}+\left\|k\right\|_{X_{q,r}})\left\|h-\tilde{h}\right\|_{Z_{q,r}}
	\end{align*}
We have so far estimated the first two of the three terms in \eqref{eq : Prop 5.32}.	
It remains to estimate the term
\begin{align*}
(\Pi^{\perp}_{\tilde{h},\tilde{h}_{\infty}})^{-1}\Pi^{\perp}_{\tilde{h}_{\infty}}(\overline{\psi}_2(h,k)-\overline{\psi}_2(\tilde{h},\tilde{k}))
\end{align*}
To do this, note that the proof of Lemma \ref{lem : projection and Banach space} shows that the time-dependant operator $(\Pi^{\perp}_{\tilde{h},\tilde{h}_{\infty}})^{-1}\Pi^{\perp}_{\tilde{h}_{\infty}}$ acting on a family of time-dependent tensors is a bounded map on $X_{q,r}$.
		 Therefore,
	\begin{align*}
	\left\|(\Pi^{\perp}_{\tilde{h},\tilde{h}_{\infty}})^{-1}\Pi^{\perp}_{\tilde{h}_{\infty}}(\overline{\psi}_2(h,k)-\overline{\psi}_2(\tilde{h},\tilde{k}))\right\|_{X_{q,r}}\leq C\left\|\overline{\psi}_2(h,k)-\overline{\psi}_2(\tilde{h},\tilde{k})\right\|_{X_{q,r}}.
	\end{align*}
Thus, to finish the proof, it suffices to establish the estimate
		\begin{align*}
		\left\|\overline{\psi}_2(h,k)-\overline{\psi}_2(\tilde{h},\tilde{k})\right\|_{X_{q,r}}
		&\leq C\left(\left\|h-\tilde{h}\right\|_{Z_{q,r}}+\left\|k-\tilde{k}\right\|_{X_{q,r}}\right)\\
		&\qquad \cdot
		\left(\left\|k_1 \right\|_{W^{2,r}}+\left\|k_1 \right\|_{W^{2,q}}+\left\|k\right\|_{X_{q,r}}+\left\|\tilde{k}\right\|_{X_{q,r}}\right).
	\end{align*}
We rewrite this difference as
\begin{align*}
\overline{\psi}_2(h,k)-&\overline{\psi}_2(\tilde{h},\tilde{k}) \\
	&= P(g,h,h_{\infty})_{1\to t}\circ \Pi^{\perp}_{h_{\infty}}(k_1)-P(\tilde{g},\tilde{h},\tilde{h}_{\infty})_{1\to t}\circ \Pi^{\perp}_{\tilde{h}_{\infty}}(k_1)\\
		&\qquad+\int_1^{t}[  P(g,h,h_{\infty})_{s\to t}I(t,s,k,h,h_{\infty})- P(\tilde{g},\tilde{h},\tilde{h}_{\infty})_{s\to t}I(t,s,\tilde{k},\tilde{h},\tilde{h}_{\infty}) ]ds\\
	&=P(g,h,h_{\infty})_{1\to t}\circ \Pi^{\perp}_{h_{\infty}}(k_1)-P(\tilde{g},\tilde{h},\tilde{h}_{\infty})_{1\to t}\circ \Pi^{\perp}_{\tilde{h}_{\infty}}(k_1)\\
		&\qquad +\int_1^{\max\left\{t-1,1\right\}}  P(g,h,h_{\infty})_{s\to t}[I(t,s,k,h,h_{\infty})-I(t,s,\tilde{k},\tilde{h},\tilde{h}_{\infty})] ds\\
		&\qquad+\int_1^{\max\left\{t-4,1\right\}}  [P(g,h,h_{\infty})_{s\to t}- P(\tilde{g},\tilde{h},\tilde{h}_{\infty})_{s\to t}]I(t,s,\tilde{k},\tilde{h},\tilde{h}_{\infty}) ds\\
		&\qquad+\int_{\max\left\{t-4,1\right\}}^{\max\left\{t-1,1\right\}}  [P(g,h,h_{\infty})_{s\to t}- P(\tilde{g},\tilde{h},\tilde{h}_{\infty})_{s\to t}]I(t,s,\tilde{k},\tilde{h},\tilde{h}_{\infty}) ds\\
		&\qquad+\int_{\max\left\{t-1,1\right\}}^{t}[  P(g,h,h_{\infty})_{s\to t}I(t,s,k,h,h_{\infty})- P(\tilde{g},\tilde{h},\tilde{h}_{\infty})_{s\to t}I(t,s,\tilde{k},\tilde{h},\tilde{h}_{\infty}) ]ds.
\end{align*}	
The terms on the right hand side are estimated 
in a sequence of lemmas which we prove in the remainder of this subsection. More precisely, the first term is estimated by Lemma \ref{lem : difference term 1}, the second term by Lemma \ref{lem : difference term 2.2} and Lemma \ref{lem : difference term 2.4}, the third one by Lemma \ref{lem : difference term 3}, the fourth one by Lemma \ref{lem : difference term 4} and the last one by Lemma \ref{lem : difference term 5}.
\end{proof}	
\begin{lem}\label{lem : difference term 1}
There exists a constant $C=C(q,r,\U,\epsilon)$ such that we have
	\begin{align*}
&	\left\|P(g,h,h_{\infty})_{1\to t}\circ \Pi^{\perp}_{h_{\infty}}(k_1)-P(\tilde{g},\tilde{h},\tilde{h}_{\infty})_{1\to t}\circ \Pi^{\perp}_{\tilde{h}_{\infty}}(k_1) \right\|_{X_{q,r}} \\
	&\qquad\qquad \leq C(\left\|k_1\right\|_{W^{2,q}}+\left\|k_1\right\|_{W^{2,r}})\left( \left\|\tilde{h}-h\right\|_{Z_{q,r}}+ \left\|\tilde{k}-k\right\|_{X_{q,r}}\right).
	\end{align*}
	For any $t\geq5$ and $p'\in (1,q]$ satisfying $\frac{n}{2}\left( \frac{1}{p'}-\frac{1}{r} \right) \neq 1$, we even have a constant  $C=C(q,r,p',\U,\epsilon)$ such that 
	\begin{align*}
	&\left\|t^{\frac{n}{2}\left( \frac{1}{p'}-\frac{1}{q} \right) }(P(g,h,h_{\infty})_{1\to t}\circ \Pi^{\perp}_{h_{\infty}}(k_1)-P(\tilde{g},\tilde{h},\tilde{h}_{\infty})_{1\to t}\circ \Pi^{\perp}_{\tilde{h}_{\infty}}(k_1)) \right\|_{X_{q,r}}\\  
	&\qquad\qquad\leq C \left\|k_1\right\|_{L^{p'}}\left( \left\|\tilde{h}-h\right\|_{Z_{q,r}}+ \left\|\tilde{k}-k\right\|_{X_{q,r}}\right).
\end{align*}	
\end{lem}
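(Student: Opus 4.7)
The plan is to exploit the factorization
\[
P(g,h,h_\infty)_{1\to t}(\Pi^\perp_{h_\infty}(k_1))=P(g,h,h_\infty)_{t-1\to t}\bigl(e^{-(t-2)\Delta_{L,h_\infty}}(\Pi^\perp_{h_\infty}(k_1))\bigr)
\]
for $t\geq 2$, and analogously for the tilded quantities, writing $\Theta:=e^{-(t-2)\Delta_{L,h_\infty}}(\Pi^\perp_{h_\infty}(k_1))$ and $\tilde{\Theta}:=e^{-(t-2)\Delta_{L,\tilde{h}_\infty}}(\Pi^\perp_{\tilde{h}_\infty}(k_1))$. For $t\in[1,2]$, where $P_{1\to t}$ is purely the short-time evolution by $\Delta_{L,g,h}$, I would apply Lemma \ref{lem : difference of heat flows} directly on $[1,t]$; the difference in initial data $\Pi^\perp_{h_\infty}(k_1)-\Pi^\perp_{\tilde{h}_\infty}(k_1)$ is controlled via Lemma \ref{lem : projection lemma}(v) by $C\|h_\infty-\tilde{h}_\infty\|_{L^q}\|k_1\|_{L^q}$, and $\|h_\infty-\tilde{h}_\infty\|_{L^q}\leq C\|h-\tilde{h}\|_{Z_{q,r}}$.

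For $t\geq 2$ I would split the difference as $\mathcal{A}+\mathcal{B}$ with
\[
\mathcal{A}:=\bigl[P(g,h,h_\infty)_{t-1\to t}-P(\tilde{g},\tilde{h},\tilde{h}_\infty)_{t-1\to t}\bigr](\Theta),\qquad \mathcal{B}:=P(\tilde{g},\tilde{h},\tilde{h}_\infty)_{t-1\to t}(\Theta-\tilde{\Theta}).
\]
For $\mathcal{A}$, I apply Lemma \ref{lem : difference of heat flows} on the unit-length interval $[t-1,t]$ with common initial data $\Theta$, so that the initial-data contribution vanishes. The factor of $\sup_s[\|g-\tilde{g}\|_{W^{1,\infty}}+\|h-\tilde{h}\|_{W^{k,\infty}}+\|h_\infty-\tilde{h}_\infty\|_{W^{k,\infty}}]$ is bounded by $C(\|h-\tilde{h}\|_{Z_{q,r}}+\|k-\tilde{k}\|_{X_{q,r}})$ using the Sobolev embedding $W^{2,r}\hookrightarrow W^{1,\infty}$ (since $r>n$) and the equivalence of all Sobolev norms on the finite-dimensional family of Ricci-flat metrics in $\F\cap\U$ (cf.\ Proposition \ref{prop: near metrics}). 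The factor of $\|\Theta\|_{W^{l,q}}+\|\Theta\|_{W^{l,r}}$ at the starting time $t-1$ is bounded by $C(\|k_1\|_{W^{l,q}}+\|k_1\|_{W^{l,r}})$ using Theorem \ref{thm : linear estimates}.

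For $\mathcal{B}$, the operator $P(\tilde{g},\tilde{h},\tilde{h}_\infty)_{t-1\to t}$ is a unit-time short-time evolution and is bounded on $W^{l,p}$ by Lemma \ref{lem : short-time estimates}, so it suffices to estimate $\|\Theta-\tilde{\Theta}\|_{W^{l,p}}$. Here I set $U:=\Theta-\tilde{\Theta}$ and note that $U$ solves
\[
\partial_\sigma U+\Delta_{L,h_\infty}U=(\Delta_{L,\tilde{h}_\infty}-\Delta_{L,h_\infty})\tilde{\Theta},\qquad U|_{\sigma=0}=(\Pi^\parallel_{\tilde{h}_\infty}-\Pi^\parallel_{h_\infty})(k_1),
\]
and apply Duhamel together with Theorem \ref{thm : linear estimates} (and Corollary \ref{cor : linear estimates}) to the $\Pi^\perp_{h_\infty}$-projected part. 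Since $U(0)$ lies in the finite-dimensional sum of the two $L^2$-kernels, all norms on it are equivalent; the forcing is controlled pointwise by $|h_\infty-\tilde{h}_\infty|_{W^{2,\infty}}\cdot\bigl(|\tilde{\Theta}|+|\nabla\tilde{\Theta}|+|\nabla^2\tilde{\Theta}|\bigr)$, and each factor of $\tilde{\Theta}$ carries polynomial heat-kernel decay from Theorem \ref{thm : linear estimates}. The $\Pi^\parallel_{h_\infty}$-projected part is stationary and again controlled on a finite-dimensional space.

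The stronger $t\geq 5$ estimate with an initial $L^{p'}$-norm follows identically, using the improved decay of $e^{-(t-2)\Delta_{L,h_\infty}}:L^{p'}\to L^q$ from Theorem \ref{thm : linear estimates}(i), which supplies the additional factor $t^{-\frac{n}{2}(1/p'-1/q)}$. The main technical obstacle is handling $\mathcal{B}$: keeping the polynomial (rather than exponential) dependence on $t$ over the long interval $[0,t-2]$ requires that the Duhamel integrand, after projection, enjoys the sharp decay provided by Theorem \ref{thm : linear estimates}; an elementary convolution estimate via Lemma \ref{important_technical_lemma} is needed to absorb the remaining time integral.
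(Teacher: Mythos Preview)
Your overall strategy---factorize $P_{1\to t}=P_{t-1\to t}\circ e^{-(t-2)\Delta_{L,h_\infty}}$, split into $\mathcal{A}+\mathcal{B}$, handle $\mathcal{A}$ by Lemma~\ref{lem : difference of heat flows} and $\mathcal{B}$ by Duhamel for $U=\Theta-\tilde{\Theta}$---is exactly what the paper does. The first estimate (with $\|k_1\|_{W^{2,q}}+\|k_1\|_{W^{2,r}}$ on the right) goes through along your lines.

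However, there is a genuine gap in your treatment of the second estimate (with only $\|k_1\|_{L^{p'}}$ on the right). You write that the forcing $(\Delta_{L,\tilde h_\infty}-\Delta_{L,h_\infty})\tilde\Theta_\sigma$ is controlled pointwise by $|h_\infty-\tilde h_\infty|_{W^{2,\infty}}\cdot(|\tilde\Theta|+|\nabla\tilde\Theta|+|\nabla^2\tilde\Theta|)$ and that ``each factor of $\tilde\Theta$ carries polynomial heat-kernel decay''. But near $\sigma=0$ you only have $\tilde\Theta_0=\Pi^\perp_{\tilde h_\infty}k_1\in L^{p'}$, so by parabolic smoothing $\|\nabla^2\tilde\Theta_\sigma\|_{L^{p'}}$ blows up like $\sigma^{-1}$, which is \emph{not} integrable on $[0,1]$. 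Your Duhamel integral therefore does not converge at the initial endpoint, and Lemma~\ref{important_technical_lemma} (which only controls $\int_1^{t-1}$) does not help here.

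The paper resolves this with a step you are missing: it splits off the piece $\int_0^1$ (their $\int_1^2$) and estimates it in a \emph{negative} Sobolev space. Writing $\eta_2=\int_0^1 e^{-(1-\sigma)\Delta_{L,h_\infty}}\Pi^\perp_{h_\infty}(\tilde\Delta_L-\Delta_L)\tilde\Theta_\sigma\,d\sigma$ and $\eta_3=e^{-\Delta_{L,h_\infty}}\eta_2$, one uses that by duality $e^{-\Delta_L}:W^{-2,p'}\to L^{p'}$ and $e^{-\sigma\Delta_L},\Pi^\perp:W^{-2,p'}\to W^{-2,p'}$ are bounded, and that integration by parts against a test tensor gives
\[
\|(\tilde\Delta_L-\Delta_L)\tilde\Theta_\sigma\|_{W^{-2,p'}}\leq C\,\|h_\infty-\tilde h_\infty\|_{W^{2,\infty}}\,\|\tilde\Theta_\sigma\|_{L^{p'}}\leq C\,\|h-\tilde h\|_{Z_{q,r}}\,\|k_1\|_{L^{p'}}.
\]
This transfers the two derivatives from $\tilde\Theta$ onto the smoothing operator and yields $\|\eta_3\|_{L^{p'}}\leq C\|h-\tilde h\|_{Z_{q,r}}\|k_1\|_{L^{p'}}$ with no blowup; the remaining propagation $e^{-(t-4)\Delta_L}$ then supplies the factor $t^{-\frac{n}{2}(\frac{1}{p'}-\frac{1}{q})}$. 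Without this negative-Sobolev step, your argument for the second estimate does not close.
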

\begin{proof}
	We abbreviate, for each $s\in[1,t]$,
	\begin{align*}
	\varphi_s=P(g,h,h_{\infty})_{1\to s}\circ \Pi^{\perp}_{h_{\infty}}(k_1),\qquad \tilde{\varphi}_s=P(\tilde{g},\tilde{h},\tilde{h}_{\infty})_{1\to s}\circ \Pi^{\perp}_{\tilde{h}_{\infty}}(k_1)
	\end{align*}
For $t\leq5$, the desired estimate of the lemma follows from
showing for $q'\in\left\{q,r\right\}$ that
	\begin{align*}
	\left\|\varphi_t-\tilde{\varphi}_t\right\|_{W^{2,q'}}&\leq C
\left\|(	\Pi^{\perp}_{h_{\infty}}-\Pi^{\perp}_{\tilde{h}_{\infty}})(k_1)\right\|_{W^{2,q'}}\\
&\qquad+C\sup_{s\in [1,t]}
\left(\left\|g_s-\tilde{g}_s\right\|_{W^{1,\infty}}+\left\|h_s-\tilde{h}_s\right\|_{W^{4,\infty}}\right)\left\|\Pi^{\perp}_{\tilde{h}_{\infty}}(k_1) \right\|_{W^{2,q'}}\\
&\leq C\left\|h_{\infty}-\tilde{h}_{\infty}\right\|_{L^{q}}\left\|k_1\right\|_{L^{q'}}\\
&\qquad  
+C\sup_{s\in [1,t]}
\left(\left\|k_s-\tilde{k}_s\right\|_{W^{1,\infty}}+\left\|h_s-\tilde{h}_s\right\|_{W^{4,\infty}}\right)\left\|k_1 \right\|_{W^{2,q'}}\\
&\leq C\left\|h_{\infty}-\tilde{h}_{\infty}\right\|_{L^{q}}\left\|k_1\right\|_{L^{q'}}\\
&\qquad  
+C\sup_{s\in [1,t]}
\left(\left\|k_s-\tilde{k}_s\right\|_{W^{2,r}}+\left\|h_s-\tilde{h}_s\right\|_{L^{q}}\right)\left\|k_1 \right\|_{W^{2,q'}}\\
&\leq C\left\|k_1\right\|_{W^{2,q'}}\left(\left\|h-\tilde{h}\right\|_{Z_{q,r}}+\left\|k-\tilde{k}\right\|_{X_{q,r}}\right).
	\end{align*}
Here, we used Lemma \ref{lem : difference of heat flows} (i) in the first inequality,  Lemma \ref{lem : projection lemma} (ii) and (v) as well as the triangle inequality in the second inequality, Sobolev embedding and Lemma \ref{lem : elliptic regularity} in the third inequality and Lemma \ref{lem : inftycontrol} and the definition of the norms in the fourth inequality.
Note that Lemma 3.4 applies to the heat flow of the modified Lichnerowicz Laplacian as well as to the heat flow of the (unmodified) Lichnerowicz Laplacian (as it is just a special case of the former). This is why we can use it here.

	For $t>5$ we proceed as follows:
At first, we estimate $\varphi_t-\tilde{\varphi}_{t}$ by $\varphi_{t-1}-\tilde{\varphi}_{t-1}$ which we do by short-time estimates. 
The term $\varphi_{t-1}-\tilde{\varphi}_{t-1}$ is then easier to estimate in terms of the initial data because we have by construction
\begin{align*}
\varphi_{t-1}=e^{-(t-2)\Delta_{L,h_{\infty}}}\circ\Pi^{\perp}_{h_\infty}(k_1),\qquad \tilde{\varphi}_{t-1}=e^{-(t-2)\Delta_{L,\tilde{h}_{\infty}}}\circ\Pi^{\perp}_{\tilde{h}_\infty}(k_1).
\end{align*}
	Again by Lemma \ref{lem : difference of heat flows} (i), we estimate at first
\begin{align*}
	\left\|\varphi_t-\tilde{\varphi}_t\right\|_{W^{2,r}}
		&\leq C\left(\left\|\varphi_{t-1}-\tilde{\varphi}_{t-1}\right\|_{W^{2,r}}+ \left\|\tilde{\varphi}_{t-1}\right\|_{W^{2,r}}\sup_{s\in [t-1,t]}\left(\left\|g-\tilde{g}\right\|_{W^{1,\infty}}+\left\|h-\tilde{h}\right\|_{W^{4,r}}\right) \right).
\end{align*}	
	For the $W^{2,q}$-norm, we proceed a little bit differently. In this case, we obtain from Lemma \ref{lem : difference of heat flows} (i), (iv) and (v) that
\begin{align*}
	\left\|\varphi_t-\tilde{\varphi}_t\right\|_{L^q}
		&\leq C\left(\left\|\varphi_{t-1}-\tilde{\varphi}_{t-1}\right\|_{L^{q}}+ \left\|\tilde{\varphi}_{t-1}\right\|_{L^{q}}\sup_{s\in [t-1,t]}\left(\left\|g-\tilde{g}\right\|_{W^{1,\infty}}+\left\|h-\tilde{h}\right\|_{W^{2,\infty}}\right)\right),\\
	\left\|\nabla(\varphi_t-\tilde{\varphi}_t)\right\|_{L^q}
		&\leq C\left( \left\|\nabla(\varphi_{t-1}-\tilde{\varphi}_{t-1})\right\|_{L^{q}}+ \left\|\varphi_{t-1}-\tilde{\varphi}_{t-1}\right\|_{L^{r}} \right) \\
		&\qquad + C\left( \left\|\nabla\tilde{\varphi}_{t-1}\right\|_{L^{q}} + \left\|\tilde{\varphi}_{t-1}\right\|_{L^{r}} \right)
	\sup_{s\in [t-1,t]}\left(\left\|g-\tilde{g}\right\|_{W^{1,\infty}} +\left\|h-\tilde{h}\right\|_{W^{3,\infty}} \right),\\
	\left\|\nabla^2(\varphi_t-\tilde{\varphi}_t)\right\|_{L^q}
		&\leq C\left( \left\|\nabla^2(\varphi_{t-1}-\tilde{\varphi}_{t-1})\right\|_{L^{q}}+ \left\|\nabla(\varphi_{t-1}-\tilde{\varphi}_{t-1})\right\|_{L^{r}}+
\left\|\varphi_{t-1}-\tilde{\varphi}_{t-1}\right\|_{L^{r}} \right) \\
		&\qquad +C\left( \left\|\nabla^2\tilde{\varphi}_{t-1}\right\|_{L^{q}}+ 
\left\|\nabla\tilde{\varphi}_{t-1}\right\|_{L^{r}}+
\left\|\tilde{\varphi}_{t-1}\right\|_{L^{r}} \right) \\
		&\qquad\qquad \cdot\sup_{s\in [t-1,t]} \left( \left\|g-\tilde{g}\right\|_{W^{1,\infty}} + \left\|h-\tilde{h}\right\|_{W^{4,\infty}} \right).
\end{align*}
By using $g=h+k$, $\tilde{g}=\tilde{h}+\tilde{k}$ and Lemma \ref{lem : elliptic regularity}, we find
\begin{align*}
	\sup_{s\in [t-1,t]} \left( \left\|g-\tilde{g}\right\|_{W^{1,\infty}}
+\left\|h-\tilde{h}\right\|_{W^{4,\infty}} \right)
		\leq C \left(\left\|k-\tilde{k}\right\|_{X_{q,r}}+\left\|h-\tilde{h}\right\|_{Z_{q,r}}\right).
\end{align*}
Putting these estimates together and multiplying by $t^{\frac{n}{2}\left( \frac{1}{p'}-\frac{1}{q} \right)}$, we get
\begin{align*}
	&\left\| t^{\frac{n}{2}\left(\frac{1}{p'}-\frac{1}{q}\right)}(\varphi_t-\tilde{\varphi}_t)\right\|_{X_{q,r}}
		\leq  C\left\|t^{\frac{n}{2}\left(\frac{1}{p'}-\frac{1}{q}\right)}(\varphi_{t-1}-\tilde{\varphi}_{t-1})\right\|_{X_{q,r}}\\
		&\qquad\qquad\qquad\qquad\qquad\qquad\qquad+ C\left\|t^{\frac{n}{2}\left( \frac{1}{p'}-\frac{1}{q} \right) }\tilde{\varphi}_{t-1}\right\|_{X_{q,r}}\left( \left\|\tilde{h}-h\right\|_{Z_{q,r}}+ \left\|\tilde{k}-k\right\|_{X_{q,r}}\right)\\
		&\leq  C\left\{\left\| t^{\frac{n}{2}\left(\frac{1}{p'}-\frac{1}{q}\right)}(\varphi_{t-1}-\tilde{\varphi}_{t-1})\right\|_{X_{q,r}}+ \left\|k_1\right\|_{L^{p'}}\left( \left\|\tilde{h}-h\right\|_{Z_{q,r}}+ \left\|\tilde{k}-k\right\|_{X_{q,r}}\right)\right\}.
\end{align*}
Note that we used Lemma \ref{lem: Linear evolution and Xnorm} in the last inequality.
From now on we abbreviate for notational convenience
\begin{align*}
\Delta_L:=\Delta_{L,h_{\infty}},\qquad \tilde{\Delta}_L:=\Delta_{L,\tilde{h}_\infty},\qquad \Pi^{\perp}:=\Pi^{\perp}_{h_{\infty}},\qquad \tilde{\Pi}^{\perp}:=\Pi^{\perp}_{\tilde{h}_\infty}.
\end{align*}
We can rewrite the difference  $\varphi_{t-1}-\tilde{\varphi}_{t-1}$ as
\begin{align*}
\varphi_{t-1}-\tilde{\varphi}_{t-1}&=\varphi_{t-1}-\tilde{\Pi}^{\perp}(\varphi_{t-1})+\tilde{\Pi}^{\perp}(\varphi_{t-1}-\tilde{\varphi}_{t-1})\\&=(\Pi^{\perp}-\tilde{\Pi}^{\perp})(\varphi_{t-1})+(\Pi^{\perp}_{\tilde{h}_{\infty},h_{\infty}})^{-1}\circ\Pi^{\perp}(\varphi_{t-1}-\tilde{\varphi}_{t-1}),
\end{align*}
where we used that $\varphi_{t-1}=\Pi^{\perp}(\varphi_{t-1})$ and $\tilde{\varphi}_{t-1}=\tilde{\Pi}^{\perp}(\tilde{\varphi}_{t-1})$. Let us use the notation
\begin{align*}
 \psi_{t-1}=\Pi^{\perp}(\varphi_{t-1}-\tilde{\varphi}_{t-1}).
 \end{align*}
From Lemma \ref{lem : projection lemma} (iv) and (v), we get the estimates
\begin{align*}
\left\|\varphi_{t-1}-\tilde{\varphi}_{t-1}\right\|_{W^{2,r}}
&\leq \left\|(\Pi^{\perp}-\tilde{\Pi}^{\perp})(\varphi_{t-1})\right\|_{W^{2,r}}+\left\|(\Pi^{\perp}_{\tilde{h}_{\infty},h_{\infty}})^{-1}(  \psi_{t-1})\right\|_{W^{2,r}}\\
&\leq C \left(\left\|\tilde{h}-h\right\|_{Z_{q,r}}\left\|\varphi_{t-1}\right\|_{L^r}+\left\|\psi_{t-1}\right\|_{W^{2,r}}\right),
\end{align*}
and
\begin{align*}
\left\|\varphi_{t-1}-\tilde{\varphi}_{t-1}\right\|_{L^q}&\leq C \left(\left\|\tilde{h}-h\right\|_{Z_{q,r}}\left\|\varphi_{t-1}\right\|_{L^r}+\left\|\psi_{t-1}\right\|_{L^q}\right),\\
\left\|\nabla(\varphi_{t-1}-\tilde{\varphi}_{t-1})\right\|_{L^q}&\leq C \left(\left\|\tilde{h}-h\right\|_{Z_{q,r}}\left\|\varphi_{t-1}\right\|_{L^r}+\left\|\nabla\psi_{t-1}\right\|_{L^q}+\left\|\psi_{t-1}\right\|_{L^r}\right),\\
\left\|\nabla^2(\varphi_{t-1}-\tilde{\varphi}_{t-1})\right\|_{L^q}&\leq C \left(\left\|\tilde{h}-h\right\|_{Z_{q,r}}\left\|\varphi_{t-1}\right\|_{L^r}+\left\|\nabla^2\psi_{t-1}\right\|_{L^q}+\left\|\psi_{t-1}\right\|_{W^{1,r}}\right).
\end{align*}
By Corollary \ref{cor : linear estimates}, $\left\|\varphi_{t-1}\right\|_{L^r}\leq Ct^{-\frac{n}{2}\left(\frac{1}{p'}-\frac{1}{r}\right)} \left\|k_1\right\|_{L^{p'}}$ and we conclude
\begin{align*}
	\left\|t^{\frac{n}{2}\left(\frac{1}{p'}-\frac{1}{q}\right)}(\varphi_{t-1}-\tilde{\varphi}_{t-1})\right\|_{X_{q,r}}
		\leq C \left(\left\|k_1\right\|_{L^{p'}}\left\|\tilde{h}-h\right\|_{Z_{q,r}}+\left\|t^{\frac{n}{2}\left(\frac{1}{p'}-\frac{1}{q}\right)}\psi_{t-1}\right\|_{X_{q,r}}\right).
\end{align*}
Thus to finish the proof, it suffices to show
\begin{align*}
	\left\|t^{\frac{n}{2}\left(\frac{1}{p'}-\frac{1}{q}\right)}\psi_{t-1}\right\|_{X_{q,r}}\leq C \left\|k_1\right\|_{L^{p'}}\left\|\tilde{h}-h\right\|_{Z_{q,r}}.
\end{align*}
We are going to establish this estimate for the remainder of this proof.
Because for $s\in [1,t-1]$, $\varphi_s$ and $\tilde{\varphi}_s$ are solutions of the evolution problems
\begin{align*}
\partial_s\varphi_s+\Delta_L\varphi_s&=0,\qquad \varphi_1=\Pi^{\perp}(k_1),\\
\partial_s\tilde{\varphi}_s+\tilde{\Delta}_L\tilde{\varphi}_s&=0,\qquad \tilde{\varphi}_1=\tilde{\Pi}^{\perp}(k_1),
\end{align*}
the quantity $\psi_s:=\Pi^{\perp}(\varphi_{s}-\tilde{\varphi}_{s})$ is a solution of the problem
\begin{align*}
\partial_s\psi_s+\Delta_L\psi_s=\Pi^{\perp}\circ(\tilde{\Delta}_L-\Delta_L)(\tilde{\varphi}_s),\qquad
\psi_1=\Pi^{\perp}\circ(\Pi^{\perp}-\tilde{\Pi}^{\perp})(k_1),
\end{align*}
which is then written by the Duhamel principle as
\begin{align*}
\psi_{t-1}&=e^{-(t-2)\Delta_L}\psi_1+\int_1^{t-1}e^{-(t-1-s)\Delta_L}\circ\Pi^{\perp}(\tilde{\Delta}_L-\Delta_L)(\tilde{\varphi})ds\\
&=e^{-(t-2)\Delta_L}\psi_1+\int_1^{2}e^{-(t-1-s)\Delta_L}\circ\Pi^{\perp}(\tilde{\Delta}_L-\Delta_L)(\tilde{\varphi})ds\\&\qquad
+
\int_{t-2}^{t-1}e^{-(t-1-s)\Delta_L}\circ\Pi^{\perp}(\tilde{\Delta}_L-\Delta_L)(\tilde{\varphi})ds\\
&\qquad+\int_2^{t-2}e^{-(t-1-s)\Delta_L}\circ\Pi^{\perp}(\tilde{\Delta}_L-\Delta_L)(\tilde{\varphi})ds.
\end{align*}
We are now going to estimate these terms separately.
At first, we obtain
\begin{align*}
	t^{\frac{n}{2}\left(\frac{1}{p'}-\frac{1}{q}\right)}
		&\left(\left\| e^{-(t-2)\Delta_L}\psi_1\right\|_{L^q}+
t^{\frac{1}{2}}\left\|\nabla e^{-(t-2)\Delta_L}\psi_1\right\|_{L^q}+t^{\frac{n}{2}\left(\frac{1}{q}-\frac{1}{r}\right)}
\left\|\nabla^2 e^{-(t-2)\Delta_L}\psi_1\right\|_{L^q}\right) \\
		&\leq C(t-2)^{\frac{n}{2}\left(\frac{1}{p'}-\frac{1}{q}\right)} \Big( \left\| e^{-(t-2)\Delta_L}\psi_1\right\|_{L^q}+
(t-2)^{\frac{1}{2}}\left\|\nabla e^{-(t-2)\Delta_L}\psi_1\right\|_{L^q}\\
&\qquad+C(t-2)^{\frac{n}{2}\left(\frac{1}{p'}-\frac{1}{q}\right)}(t-2)^{\frac{n}{2}\left(\frac{1}{q}-\frac{1}{r}\right)}
\left\|\nabla^2 e^{-(t-2)\Delta_L}\psi_1\right\|_{L^q} \Big) \\
&\leq C\left\|\psi_1\right\|_{L^{p'}}\\
&\leq C\left\|\tilde{h}-h\right\|_{Z_{q,r}}\left\|k_1\right\|_{L^{p'}}
\end{align*}
and similarly,
\begin{align*}
	t^{\frac{n}{2}\left(\frac{1}{p'}-\frac{1}{q}\right)} \cdot t^{\frac{n}{2}\left(\frac{1}{p'}-\frac{1}{r}\right)}\left\| e^{-(t-2)\Delta_L}\psi_1\right\|_{W^{2,r}}
		&\leq C(t-2)^{\frac{n}{2}\left(\frac{1}{p'}-\frac{1}{r}\right)}\left\| e^{-(t-2)\Delta_L}\psi_1\right\|_{W^{2,r}}\\
		&\leq C\left\|\psi_1\right\|_{L^{p'}}\\
		&		\leq C\left\|\tilde{h}-h\right\|_{Z_{q,r}}\left\|k_1\right\|_{L^{p'}}
\end{align*}
by Corollary \ref{cor : linear estimates}.
For the next term
\begin{align*}
\int_1^{2}e^{-(t-1-s)\Delta_L}\circ\Pi^{\perp}(\tilde{\Delta}_L-\Delta_L)(\tilde{\varphi})ds=e^{-(t-3)\Delta_L}\left(\int_1^{2}e^{-(2-s)\Delta_L}\circ\Pi^{\perp}(\tilde{\Delta}_L-\Delta_L)(\tilde{\varphi})ds\right),
\end{align*}
we need to use Sobolev spaces of negative order to get rid of the second derivatives of $\tilde{\varphi}$. Let us first abbreviate
\begin{align*}
\eta_2&=\int_1^{2}e^{-(2-s)\Delta_L}\circ\Pi^{\perp}(\tilde{\Delta}_L-\Delta_L)(\tilde{\varphi})ds,\\ \eta_3&=e^{-\Delta_L}\eta_2,\qquad \eta_{t-1}=e^{-(t-4)\Delta_L}\eta_3=e^{-(t-3)\Delta_L}\eta_2.
\end{align*}
Then $\eta_{t-1}$ is the term we wish to estimate. Similarly as above, we have
\begin{align*}
	&t^{\frac{n}{2}\left(\frac{1}{p'}-\frac{1}{q}\right)}\left(\left\|\eta_{t-1}\right\|_{L^q} + t^{\frac{1}{2}}\left\|\nabla\eta_{t-1}\right\|_{L^q}+t^{\frac{n}{2}\left(\frac{1}{q}-\frac{1}{r} \right)} \left\|\nabla^2\eta_{t-1}\right\|_{L^q} \right) \\
	&\qquad\qquad\leq C(t-4)^{\frac{n}{2}\left(\frac{1}{p'}-\frac{1}{q}\right)}\left(\left\|\eta_{t-1}\right\|_{L^q} + (t-4)^{\frac{1}{2}}\left\|\nabla\eta_{t-1}\right\|_{L^q}+(t-4)^{\frac{n}{2}\left(\frac{1}{q}-\frac{1}{r}\right)} \left\|\nabla^2\eta_{t-1}\right\|_{L^q}\right) \\
  	&\qquad\qquad\leq C\left\|\eta_3\right\|_{L^{p'}}
\end{align*}
and similarly,
\begin{align*}
	&t^{\frac{n}{2}\left(\frac{1}{p'}-\frac{1}{q}\right)}\cdot t^{\frac{n}{2}\left(\frac{1}{q}-\frac{1}{r}\right)}\left\| \eta_{t-1}\right\|_{W^{2,r}}
		\leq C\left\|\eta_3\right\|_{L^{p'}}
\end{align*}
by Corollary \ref{cor : linear estimates}. Now because $e^{-\Delta_L}$ extends to bounded maps
\begin{align*}
e^{-\Delta_L}:L^{q'}&\to W^{2,q'},\qquad \forall q'\in (1,\infty),\\
e^{-s\Delta_L}:W^{2,q'}&\to W^{2,q'},\qquad  \forall q'\in (1,\infty), s\in[0,1],
\end{align*}
duality implies that it is also a bounded map
\begin{align*}
e^{-\Delta_L}: W^{-2,q'}&\to L^{q'},\qquad \forall q'\in (1,\infty),\\
e^{-s\Delta_L}:W^{-2,q'}&\to W^{-2,q'},\qquad  \forall q'\in (1,\infty), s\in[0,1].
\end{align*}
Because $\Pi^{\perp}:W^{2,q'}\to W^{2,q'}$ is bounded and self-adjoint on $L^2$, it also admits a bounded extension $\Pi^{\perp}:W^{-2,q'}\to W^{-2,q'}$.
These observations imply
\begin{align*}
\left\|\eta_3\right\|_{L^q}\leq C\left\|\eta_2\right\|_{W^{-2,q}}&=C\left\|\int_1^{2}e^{-(2-s)\Delta_L}\circ\Pi^{\perp}(\tilde{\Delta}_L-\Delta_L)(\tilde{\varphi})ds\right\|_{W^{-2,q}}
\\&
\leq C\sup_{s\in [1,2]}\left\| (\tilde{\Delta}_L-\Delta_L)(\tilde{\varphi})\right\|_{W^{-2,q}}.
\end{align*}
Using the tensor $T_{ij}^k=\Gamma_{ij}^k-\tilde{\Gamma}_{ij}^k$, the difference of two Lichnerowicz Laplacians can be written as
\begin{align}\label{eq : difference LL}
(\tilde{\Delta}_L-\Delta_L)(\tilde{\varphi})&=(h_{\infty}^{-1}-\tilde{h}_{\infty}^{-1})*(\nabla^2\tilde{\varphi}+R*\tilde{\varphi})+\tilde{h}_{\infty}^{-1}*(\nabla T*\tilde{\varphi}+T*\nabla\tilde{\varphi}+T*T*\tilde{\varphi}).
\end{align}
Let now $\chi\in C^{\infty}_{cs}(S^2M)$ be a compactly supported test tensor.
Since we have the schematic form $T=\tilde{h}_{\infty}^{-1}*\nabla(h_{\infty}-\tilde{h}_{\infty})$, suitable integration by parts yields
\begin{align*}
((\tilde{\Delta}_L-\Delta_L)\tilde{\varphi},\chi)_{L^2}&
=((h^{-1}-\tilde{h}_{\infty}^{-1})*(\nabla^2\tilde{\varphi}+R*\tilde{\varphi}),\chi)_{L^2}\\
&\qquad+ (\tilde{h}_{\infty}^{-1}*(\nabla T*\tilde{\varphi}+T*\nabla\tilde{\varphi}+T*T*\tilde{\varphi}),\chi)_{L^2}\\
&\leq C\left\|\tilde{\varphi}\right\|_{L^{q}}\left\|\chi\right\|_{W^{2,q^*}}\left\|\tilde{h}_{\infty}-h_{\infty}\right\|_{W^{2,\infty}}.
\end{align*}
Here, $q^*$ is the conjugate H\"{o}lder exponent of $q$. Using the definition of negative Sobolev spaces and Lemma \ref{lem : elliptic regularity}, we obtain
\begin{align*}
\left\| (\tilde{\Delta}_L-\Delta_L)(\tilde{\varphi})\right\|_{W^{-2,q}}\leq C\left\|\tilde{\varphi}\right\|_{L^{q}}\left\|\tilde{h}_{\infty}-h_{\infty}\right\|_{W^{2,\infty}}\leq C\left\|\tilde{\varphi}\right\|_{L^{q}}\left\|\tilde{h}-h\right\|_{Z_{q,r}},
\end{align*} 
so that
\begin{align*}
\left\|\eta_3\right\|_{L^q}\leq C\sup_{s\in [1,2]}\left\| (\tilde{\Delta}_L-\Delta_L)(\tilde{\varphi})\right\|_{W^{-2,q}}\leq C\left\|\tilde{h}-h\right\|_{Z_{q,r}}\sup_{s\in [1,2]}\left\|\tilde{\varphi}\right\|_{L^{q}}\leq  C\left\|\tilde{h}-h\right\|_{Z_{q,r}}\left\| k_1\right\|_{L^q}.
\end{align*}
Consequently,
\begin{align*}
\left\|\int_1^{2}e^{-(t-1-s)\Delta_L}\circ\Pi^{\perp}(\tilde{\Delta}_L-\Delta_L)(\tilde{\varphi})ds\right\|_{X_{q,r}}\leq  C\left\|\tilde{h}-h\right\|_{Z_{q,r}}\left\| k_1\right\|_{L^q}.
\end{align*}
Let $q'\in\left\{p,r\right\}$. Then,
\begin{align*}
	\left\|\int_{t-2}^{t-1}e^{-(t-1-s)\Delta_L}\circ\Pi^{\perp}(\tilde{\Delta}_L-\Delta_L)(\tilde{\varphi})ds\right\|_{W^{2,q'}}
		&\leq C\sup_{s\in [t-2,t-1]}\left\|(\tilde{\Delta}_L-\Delta_L)(\tilde{\varphi}_s)\right\|_{W^{2,q'}}\\
		&\leq C \left\|h_{\infty}-\tilde{h}_{\infty}\right\|_{W^{4,q'}}\sup_{s\in [t-2,t-1]}\left\|\tilde{\varphi}_s\right\|_{W^{4,\infty}}\\
		&\leq C\left\|h-\tilde{h}\right\|_{Z_{q,r}}\left\|\tilde{\varphi}_{t-3}\right\|_{L^{r}}\\
		&\leq C(t-3)^{-\frac{n}{2}\left(\frac{1}{p'}-\frac{1}{r}\right)}\left\|h-\tilde{h}\right\|_{Z_{q,r}}\left\|k_1\right\|_{L^{p'}}\\
		&\leq Ct^{-\frac{n}{2}\left(\frac{1}{p'}-\frac{1}{r}\right)}\left\|h-\tilde{h}\right\|_{Z_{q,r}}\left\|k_1\right\|_{L^{p'}}.
\end{align*}
Now we estimate the final term. 
Let $\alpha:\left\{q,r\right\}\times \left\{0,1,2\right\}\to\R$ be the function from Lemma \ref{lem : linear part}.
Choose $p''\in  (1,p')$ small and let $q''\in (p'',\infty)$ such that $\frac{1}{p''}=\frac{1}{q''}+\frac{1}{r}$. 
Under these assumptions,
\begin{align*}
	1
		\neq \frac{n}{2}\left(\frac{1}{p''}-\frac{1}{q}\right)+\alpha(q',i)
		&> \frac{n}{2}\left(\frac{1}{p'}-\frac{1}{q}\right)+\alpha(q',i), \\
	1
		\neq \frac{n}{2}\left(\frac{1}{p'}-\frac{1}{r}\right)
		= \frac{n}{2}\left(\frac{1}{p'}-\frac{1}{q}\right)+ \frac{n}{2}\left(\frac{1}{q}-\frac{1}{r}\right) 
		&\geq\frac{n}{2}\left(\frac{1}{p'}-\frac{1}{q}\right)+\alpha(q',i), \\
	\frac{n}{2}\left(\frac{1}{p''}-\frac{1}{q}\right)+\alpha(q',i)+\frac{n}{2}\left(\frac{1}{p'}-\frac{1}{r}\right)-1
		&>\frac{n}{2}(\frac{1}{p'}-\frac{1}{q})+\alpha(q',i).
\end{align*}
By the H\"{o}lder inequality, Corollary \ref{cor : linear estimates} and Lemma \ref{important_technical_lemma}, we therefore get
\begin{align*}
	&\left\|\nabla^i\int_2^{t-2}e^{-(t-1-s)\Delta_L}\circ\Pi^{\perp}(\tilde{\Delta}_L-\Delta_L)(\tilde{\varphi})ds\right\|_{L^{q'}} \\
 		&\qquad\qquad\leq \int_2^{t-2} \left\|\nabla^i e^{-(t-1-s)\Delta_L}\circ\Pi^{\perp}\right\|_{L^{p''}\to L^{q'}}\left\|\tilde{h}_{\infty}-h_{\infty}\right\|_{W^{2,q''}}\left\|\tilde{\varphi}\right\|_{W^{2,r}}ds\\
		&\qquad\qquad\leq C\int_2^{t-2}(t-1-s)^{-\frac{n}{2}\left(\frac{1}{p''}-\frac{1}{q}\right) -\alpha(q',i)}(s-1)^{-\frac{n}{2}\left(\frac{1}{p'}-\frac{1}{r}\right)}ds\cdot \left\|\tilde{h}-h\right\|_{Z_{q,r}}\left\|k_1\right\|_{L^{p'}}\\
		&\qquad\qquad\leq C t^{-\frac{n}{2}\left(\frac{1}{p'}-\frac{1}{q}\right)-\alpha(q',i)}\cdot\left\|\tilde{h}-h\right\|_{Z_{q,r}}\left\|k_1\right\|_{L^{p'}},
\end{align*}
which finishes the proof.
\end{proof}
\begin{rem}\label{rem : difference heat flows long-time estimate}
By shifting the time parameter, we also get under the conditions  of Lemma \ref{lem : difference term 1}, that there exists for $q'\in\left\{q,r\right\}$ a constant  $C=C(q',p',\U,\epsilon)$ such that for $t>s\geq1$  and $t-s\geq 4$, we have
	\begin{align*}
	&\left\|\nabla^i\circ P(g,h,h_{\infty})_{s\to t}\circ \Pi^{\perp}_{h_{\infty}}(k_1)-\nabla^i\circ P(\tilde{g},\tilde{h},\tilde{h}_{\infty})_{s\to t}\circ \Pi^{\perp}_{\tilde{h}_{\infty}}(k_1) \right\|_{L^{q'}}  \\ 
	&\qquad\qquad\leq C (t-s)^{-\frac{n}{2}\left(\frac{1}{p'}-\frac{1}{q}\right)-\alpha(q',i)} \left(\left\|\tilde{k}-k\right\|_{X_{q,r}}+\left\|\tilde{h}-h\right\|_{Z_{q,r}}\right)\left\|k_1\right\|_{L^{p'}},
\end{align*}
where $\alpha$ is the function of Lemma \ref{lem : linear part}.
	\end{rem}
\noindent	
Recall that for $t\geq 2$,
	\begin{align*}
	&\chi_{[1,\max\left\{t-1,1\right\}]}(s)[I(t,s,k,h,h_{\infty})-I(t,s,\tilde{k},\tilde{h},\tilde{h}_{\infty})] \\
		&\qquad =\chi_{[1,\max\left\{t-1,1\right\}]}(s)\Pi^{\perp}_{\infty}[(\Delta_{L,h_{\infty}}-\Delta_{L,h})(k)-(\Delta_{L,\tilde{h}_{\infty}}-\Delta_{L,\tilde{h}})(\tilde{k})]\\
		&\qquad \qquad +\chi_{[1,\max\left\{t-1,1\right\}]}(s)\Pi^{\perp}_{\infty}[(1-D_g\Phi)(H_1)-(1-D_{\tilde{g}}\Phi)(\tilde{H}_1)],
	\end{align*}
so that 
\begin{align*}
&\int_1^{\max\left\{t-1,1\right\}}  P(g,h,h_{\infty})_{s\to t}[I(t,s,k,h,h_{\infty})-I(t,s,\tilde{k},\tilde{h},\tilde{h}_{\infty})] ds\\
&=\int_1^{\max\left\{t-1,1\right\}}  P(g,h,h_{\infty})_{s\to t}\circ \Pi^{\perp}_{\infty}[(\Delta_{L,h_{\infty}}-\Delta_{L,h})(k)-(\Delta_{L,\tilde{h}_{\infty}}-\Delta_{L,\tilde{h}})(\tilde{k})] ds\\
&\qquad+\int_1^{\max\left\{t-1,1\right\}}  P(g,h,h_{\infty})_{s\to t}\circ\Pi^{\perp}_{\infty}[(1-D_g\Phi)(H_1)-(1-D_{\tilde{g}}\Phi)(\tilde{H}_1)] ds.
\end{align*}
We deal with these two terms in the next four lemmas.
\begin{lem}\label{lem : difference term 2.1}
We have for every $p'\in (1,r]$ a constant $C=C(q,r,p',\U,\epsilon)$ such that
\begin{align*}
	&\left\|(\Delta_{L,h_{\infty}}-\Delta_{L,h})(k)-(\Delta_{L,\tilde{h}_{\infty}}-\Delta_{L,\tilde{h}})(\tilde{k})\right\|_{L^{p'}}\\
	&\qquad \qquad \qquad \leq Cs^{-\frac{n}{2}\left(\frac{1}{q}-\frac{1}{r}\right)}\left( \left\|h-\hat{h}\right\|_{Z_{q,r}}\left\|k-\tilde{k}\right\|_{X_{q,r}}+\left\|h-\tilde{h}\right\|_{Z_{q,r}}\left\|\tilde{k}\right\|_{X_{q,r}}\right),
\end{align*}
\end{lem}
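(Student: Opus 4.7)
The natural approach is a telescoping decomposition, mirroring the strategy of Lemma \ref{lem : nonlinear part 0.1} but carried out twice. I would write
\begin{align*}
(\Delta_{L,h_{\infty}}-\Delta_{L,h})(k)-(\Delta_{L,\tilde{h}_{\infty}}-\Delta_{L,\tilde{h}})(\tilde{k})
&=(\Delta_{L,h_{\infty}}-\Delta_{L,h})(k-\tilde{k})\\
&\quad+[(\Delta_{L,h_{\infty}}-\Delta_{L,h})-(\Delta_{L,\tilde{h}_{\infty}}-\Delta_{L,\tilde{h}})](\tilde{k}),
\end{align*}
so that the first piece isolates the dependence on $k-\tilde{k}$ and the second piece isolates the dependence on the triple $(h_\infty-\tilde h_\infty,h-\tilde h)$.

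For the first piece, I would apply Lemma \ref{lem : nonlinear part 0.1} directly (with $(q',r')=(q,r)$ and $t=t'$ replaced by $s$, noting that the lemma only uses monotonicity of the metric curve through the identity $h_\infty-h_s=\int_s^\infty\partial_t h_t\,dt$) to obtain
\[
\left\|(\Delta_{L,h_{\infty}}-\Delta_{L,h})(k-\tilde{k})\right\|_{L^{p'}}\leq Cs^{1-\frac{3n}{2}\left(\frac{1}{q}-\frac{1}{r}\right)}\left\|h-\hat{h}\right\|_{Z_{q,r}}\left\|k-\tilde{k}\right\|_{X_{q,r}}.
\]
Since $\frac{n}{2}(\frac{1}{q}-\frac{1}{r})>\frac12$, we have $1-n(\frac{1}{q}-\frac{1}{r})<0$, so for $s\geq 1$ this is absorbed into $Cs^{-\frac{n}{2}(\frac{1}{q}-\frac{1}{r})}$, which is the rate required by the statement.

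For the second piece, the key observation is that $\Delta_{L,h_\infty}-\Delta_{L,h}$ is a second order differential operator whose coefficients are polynomial expressions in $h-h_\infty$ and its first two derivatives (this follows from the schematic expansion computed in the proof of Lemma \ref{lem : integral term}, compare also the chain of identities in Lemma \ref{lem : nonlinear part 0.1}). Taking the difference kills the common leading-order part and leaves coefficients polynomial in $(h-h_\infty)-(\tilde{h}-\tilde{h}_\infty)$ together with $h-h_\infty$ and $\tilde{h}-\tilde{h}_\infty$. Choosing $p''\in(p',\infty]$ with $\frac{1}{p'}=\frac{1}{p''}+\frac{1}{r}$ (exactly as in Lemma \ref{lem : nonlinear part 0.1}), Hölder's inequality and elliptic regularity on the finite-dimensional moduli $\mathcal{F}$ (which allows us to trade $W^{2,p''}$ for $L^q$) would yield
\[
\left\|[(\Delta_{L,h_{\infty}}-\Delta_{L,h})-(\Delta_{L,\tilde{h}_{\infty}}-\Delta_{L,\tilde{h}})](\tilde{k})\right\|_{L^{p'}}\leq C\left\|(h-h_\infty)-(\tilde{h}-\tilde{h}_\infty)\right\|_{L^q}\left\|\tilde{k}\right\|_{W^{2,r}}.
\]
Now the crucial step: since $h_t\to h_\infty$ and $\tilde{h}_t\to\tilde{h}_\infty$ in the relevant sense, we may write
\[
(h_\infty-h_s)-(\tilde{h}_\infty-\tilde{h}_s)=\int_s^\infty\partial_t(h_t-\tilde{h}_t)\,dt,
\]
so that
\[
\left\|(h-h_\infty)-(\tilde{h}-\tilde{h}_\infty)\right\|_{L^q}\leq \int_s^\infty\left\|\partial_t(h_t-\tilde{h}_t)\right\|_{L^q}dt\leq \int_s^\infty t^{-n\left(\frac{1}{q}-\frac{1}{r}\right)}dt\cdot\left\|h-\tilde{h}\right\|_{Z_{q,r}},
\]
which (as in \eqref{eq : h minus limit h}) is bounded by $Cs^{1-n(\frac{1}{q}-\frac{1}{r})}\|h-\tilde{h}\|_{Z_{q,r}}$. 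Combined with $\|\tilde{k}\|_{W^{2,r}}\leq Cs^{-\frac{n}{2}(\frac{1}{q}-\frac{1}{r})}\|\tilde{k}\|_{X_{q,r}}$, we end up with a bound of order $s^{1-\frac{3n}{2}(\frac{1}{q}-\frac{1}{r})}$, which once again sits below $Cs^{-\frac{n}{2}(\frac{1}{q}-\frac{1}{r})}$ for $s\geq 1$.

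The only technical care required is to justify the reduction from $W^{2,p''}$ to $L^q$ for the coefficient difference. This is the step I expect to be the main obstacle, but it should follow from the elliptic equation satisfied by the difference of Ricci-flat metrics in the gauge and the asymptotic decay provided by Proposition \ref{prop: near metrics}, exactly as it was used implicitly in the proof of Lemma \ref{lem : nonlinear part 0.1}. Summing both estimates completes the proof.
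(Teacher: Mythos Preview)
Your decomposition into the piece $(\Delta_{L,h_{\infty}}-\Delta_{L,h})(k-\tilde{k})$ plus the remainder coincides with the paper's, and your treatment of that first piece via Lemma~\ref{lem : nonlinear part 0.1} is exactly what the paper does.

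Where you diverge is in the second piece. You keep it as a single double difference, try to extract a coefficient of size $\|(h-h_\infty)-(\tilde h-\tilde h_\infty)\|_{L^q}$, and then integrate $\partial_t(h_t-\tilde h_t)$ from $s$ to $\infty$ to get an extra factor $s^{1-n(\frac{1}{q}-\frac{1}{r})}$. The paper instead observes that your second piece equals $(\Delta_{L,h_{\infty}}-\Delta_{L,\tilde h_{\infty}})(\tilde k)-(\Delta_{L,h}-\Delta_{L,\tilde h})(\tilde k)$, and bounds each summand separately by $C\|h-\tilde h\|_{Z_{q,r}}\|\tilde k\|_{W^{2,r}}$ using only that $\|h_\infty-\tilde h_\infty\|_{L^q}$ and $\|h_s-\tilde h_s\|_{L^q}$ are each $\leq\|h-\tilde h\|_{Z_{q,r}}$. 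The time decay $s^{-\frac{n}{2}(\frac{1}{q}-\frac{1}{r})}$ then comes entirely from $\|\tilde k\|_{W^{2,r}}$, and no time integration or structural analysis of the double difference of operators is needed. In particular, the ``main obstacle'' you anticipate --- justifying that the coefficients of the double difference are controlled by $(h-h_\infty)-(\tilde h-\tilde h_\infty)$ alone --- simply does not arise; that claim is in fact not quite true (there are cross terms of order $|h-\tilde h|\cdot|\tilde h-\tilde h_\infty|$), though these would still be harmless. Your route works, but the paper's further split makes the argument a one-liner.
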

\begin{proof}
We first rewrite
\begin{align*}
(\Delta_{L,h_{\infty}}-\Delta_{L,h})(k)-(\Delta_{L,\tilde{h}_{\infty}}-\Delta_{L,\tilde{h}})(\tilde{k})&=
(\Delta_{L,h_{\infty}}-\Delta_{L,\tilde{h}_{\infty}})(\tilde{k})+(\Delta_{L,h}-\Delta_{L,\tilde{h}})(\tilde{k})\\
&\qquad+
(\Delta_{L,h_{\infty}}-\Delta_{L,h})(k-\tilde{k}).
\end{align*}
Pick $r'\in (1,\infty)$ so that $\frac{1}{p'}=\frac{1}{r'}+\frac{1}{r}$.
Then the H\"{o}lder inequality applied to \eqref{eq : difference LL} implies together with Lemma \ref{lem : elliptic regularity} and Lemma \ref{lem : inftycontrol} that
\begin{align*}
\left\|(\Delta_{L,h_{\infty}}-\Delta_{L,\tilde{h}_{\infty}})(\tilde{k})\right\|_{L^{p'}} + &\left\|(\Delta_{L,h}-\Delta_{L,\tilde{h}})(\tilde{k})\right\|_{L^{p'}}\\
&\leq C\left(\left\|h_{\infty}-\tilde{h}_{\infty}\right\|_{W^{2,r'}}+\left\|h-\tilde{h}\right\|_{W^{2,r'}}\right)
\left\|\tilde{k}\right\|_{W^{2,r}}\\
	&\leq C\left\|h-\tilde{h}\right\|_{Z_{q,r}}\left\|\tilde{k}\right\|_{W^{2,r}}\\
	&\leq C\cdot s^{-\frac{n}{2}\left(\frac{1}{q}-\frac{1}{r}\right)}\left\|h-\tilde{h}\right\|_{Z_{q,r}}\left\|\tilde{k}\right\|_{X_{q,r}},
\end{align*}
and it is shown as in \eqref{eq : h minus limit h} that
\begin{align*}
	\left\|(\Delta_{L,h_{\infty}}-\Delta_{L,h})(k-\tilde{k})\right\|_{L^{p'}}
		&\leq C\cdot s^{1-n\left(\frac{1}{q}-\frac{1}{r}\right)-\frac{n}{2}\left(\frac{1}{q}-\frac{1}{r}\right)}\left\|h-\hat{h}\right\|_{Z_{q,r}}\left\|k-\tilde{k}\right\|_{X_{q,r}}\\ 
		&\leq C\cdot s^{-\frac{n}{2}\left(\frac{1}{q}-\frac{1}{r}\right)}\left\|h-\hat{h}\right\|_{Z_{q,r}}\left\|k-\tilde{k}\right\|_{X_{q,r}},
\end{align*}
which yields the desired result.
\end{proof}
\begin{lem}\label{lem : difference term 2.2}
For $t\geq 2$, there exists a constant $C=C(q,r,\U,\epsilon)$ such that we have
\begin{align*}
&\left\|\int_1^{\max\left\{t-1,1\right\}}  P(g,h,h_{\infty})_{s\to t}\circ \Pi^{\perp}_{\infty}[(\Delta_{L,h_{\infty}}-\Delta_{L,h})(k)-(\Delta_{L,\tilde{h}_{\infty}}-\Delta_{L,\tilde{h}})(\tilde{k})] ds\right\|_{X_{q,r}}\\
&\qquad\qquad\leq C \left( \left\|k-\tilde{k}\right\|_{X_{q,r}}+\left\|h-\tilde{h}\right\|_{Z_{q,r}}\right)\left( \left\|h-\hat{h}\right\|_{Z_{q,r}}+\left\|\tilde{k}\right\|_{X_{q,r}}\right)
\end{align*}
\end{lem}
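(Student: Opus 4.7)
The plan is to imitate the proofs of Lemma \ref{lem : nonlinear part 1} and Lemma \ref{lem : nonlinear part 1.5}, feeding in the pointwise bound on the difference of commutator terms that has just been established in Lemma \ref{lem : difference term 2.1}. Abbreviate
\[
J_s := (\Delta_{L,h_{\infty}}-\Delta_{L,h})(k_s)-(\Delta_{L,\tilde h_{\infty}}-\Delta_{L,\tilde h})(\tilde k_s),
\qquad D := \left\|h\right\|_{Z_{q,r}}\!\left\|k-\tilde k\right\|_{X_{q,r}}+\left\|h-\tilde h\right\|_{Z_{q,r}}\!\left\|\tilde k\right\|_{X_{q,r}},
\]
so that Lemma \ref{lem : difference term 2.1} reads $\|J_s\|_{L^{p'}}\le C s^{-\frac{n}{2}(1/q-1/r)} D$ for any $p'\in (1,r]$. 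The desired $X_{q,r}$-bound on the full integral splits, for $t\ge 3$, as
\[
\int_1^{t-1} P(g,h,h_{\infty})_{s\to t}(J_s)\,ds
\;=\; \int_1^{t-2}P(g,h,h_{\infty})_{s\to t}(J_s)\,ds
\;+\;\int_{\max\{t-2,1\}}^{t-1}P(g,h,h_{\infty})_{s\to t}(J_s)\,ds,
\]
the first piece being vacuous for $t\in[1,3]$ and the second one being the only contribution for $t\in [2,3]$.

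For the first piece, I would fix some small $p'\in(1,q)$ and use the long-time mapping estimate \eqref{eq : estimates mixed linear evolution2} of Lemma \ref{lem : linear part} from $L^{p'}$ into $L^{q''}$ for $q''\in\{q,r\}$ and $i\in\{0,1,2\}$, combined with the pointwise estimate of Lemma \ref{lem : difference term 2.1}. This gives
\[
\|\nabla^i P(g,h,h_{\infty})_{s\to t}(J_s)\|_{L^{q''}}
\le C(t-s)^{-\frac{n}{2}(1/p'-1/q)-\alpha(q'',i)}\,s^{-\frac{n}{2}(1/q-1/r)}\,D,
\]
where $\alpha$ is the function of Lemma \ref{lem : linear part}. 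Then Lemma \ref{important_technical_lemma} applied with $\gamma_1:=\frac{n}{2}(1/p'-1/q)+\alpha(q'',i)$ and $\gamma_2:=\frac{n}{2}(1/q-1/r)$ yields exactly the rate $t^{-\alpha(q'',i)}$ required by the $X_{q,r}$-norm, provided one checks $\gamma_1>\alpha(q'',i)$ (automatic from $p'<q$), $\gamma_2\ge\alpha(q'',i)$ (holds with the standing assumption $\frac{n}{2}(1/q-1/r)>\tfrac{1}{2}$ and the definition of $\alpha$), and $\gamma_1+\gamma_2-1\ge \alpha(q'',i)$, which reduces to $\frac{n}{2}(1/p'-1/r)\ge 1$ and is achieved by taking $p'$ sufficiently small. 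The non-exceptional assumption $\frac{n}{2}(1/q-1/r)\ne 1$ and the freedom in choosing $p'$ guarantee $\max\{\gamma_1,\gamma_2\}\ne 1$, so the sharp (not logarithmically worsened) version of Lemma \ref{important_technical_lemma} applies.

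For the second piece, over $[\max\{t-2,1\},t-1]$, the interval has length at most one, so I would invoke the short-time estimate Lemma \ref{lem : short-time estimates} (iii) to bound
\[
\Bigl\|\int_{\max\{t-2,1\}}^{t-1} P(g,h,h_{\infty})_{s\to t}(J_s)\,ds\Bigr\|_{W^{2,q''}}
\le C\sup_{s\in[\max\{t-2,1\},t-1]}\|J_s\|_{L^{q''}}
\]
for $q''\in\{q,r\}$, and then apply Lemma \ref{lem : difference term 2.1} directly. Since $s\sim t$ on this interval, the resulting rate is $t^{-\frac{n}{2}(1/q-1/r)}\,D$, which is stronger than (or equal to) each of the weights $t^{-\alpha(q'',i)}$ appearing in the $X_{q,r}$-norm, so this piece is absorbed. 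Summing the two contributions and using the definition of $D$ gives the asserted estimate. The main technical obstacle is the careful bookkeeping needed to verify that Lemma \ref{important_technical_lemma} really delivers the optimal rate $\alpha(q'',i)$ for all four pairs $(q'',i)\in(\{q\}\times\{0,1,2\})\cup\{(r,2)\}$ simultaneously with a single choice of $p'$, which is precisely where the hypothesis $\frac{n}{2}(1/q-1/r)\ne 1$ imposed at the start of this subsection is used.
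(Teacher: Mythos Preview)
Your proposal is correct and follows essentially the same route as the paper: feed the $L^{p'}$-bound of Lemma~\ref{lem : difference term 2.1} into the $L^{p'}\to L^{q''}$ mapping estimate of Lemma~\ref{lem : linear part} and close with Lemma~\ref{important_technical_lemma}. The only structural difference is that the paper integrates directly over $[1,t-1]$ and invokes Lemma~\ref{important_technical_lemma} in one step, whereas you split into $[1,t-2]$ and $[\max\{t-2,1\},t-1]$ in the style of Lemmas~\ref{lem : nonlinear part 1} and~\ref{lem : nonlinear part 1.5}; your version is arguably the more careful one, since the bound \eqref{eq : estimates mixed linear evolution2} is only stated for $s\in[1,t-2]$. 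Your bookkeeping on the exponents (in particular the requirement $\tfrac{n}{2}(\tfrac{1}{p'}-\tfrac{1}{r})\ge 1$ to control the $\gamma_1+\gamma_2-1$ term) is the correct one; the paper's displayed third inequality is missing a ``$-1$'', but the intended argument is precisely yours.
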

\begin{proof}
Let $\alpha:\left\{q,r\right\}\times \left\{0,1,2\right\}\to\R$ be the function from Lemma \ref{lem : linear part}, $q'\in\left\{q,r\right\}$ and choose $p'\in  (1,q)$ small. 
Then we have
\begin{align*}
	1
		\neq \frac{n}{2}\left(\frac{1}{p'}-\frac{1}{q}\right)+\alpha(q',i)&>\alpha(q',i),\\
1\neq\frac{n}{2}\left(\frac{1}{q}-\frac{1}{r}\right)
		&\geq \alpha(q',i),\\
\frac{n}{2}\left(\frac{1}{p'}-\frac{1}{q}\right)+\alpha(q',i)+\frac{n}{2}\left(\frac{1}{q}-\frac{1}{r}\right)
		&> \alpha(q',i),
\end{align*}
and Lemma \ref{lem : linear part}, Lemma \ref{lem : difference term 2.1} and Lemma \ref{important_technical_lemma} yield
	\begin{align*}
	&\left\|\nabla^{i}	\int_1^{\max\left\{t-1,1\right\}}  P(g,h,h_{\infty})_{s\to t}\circ \Pi^{\perp}_{\infty}[(\Delta_{L,h_{\infty}}-\Delta_{L,h})(k)-(\Delta_{L,\tilde{h}_{\infty}}-\Delta_{L,\tilde{h}})(\tilde{k})] ds\right\|_{L^{q'}}\\
		&\leq \int_1^{\max\left\{t-1,1\right\}}\left\|\nabla^{i}\circ P(g,h,h_{\infty})_{s\to t}\circ \Pi^{\perp}_{\infty}\right\|_{L^{p'},L^{q'}}\left\|
(\Delta_{L,h_{\infty}}-\Delta_{L,h})(k)-(\Delta_{L,\tilde{h}_{\infty}}-\Delta_{L,\tilde{h}})(\tilde{k})\right\|_{L^{p'}}ds\\
		&\leq\int_1^{\max\left\{t-1,1\right\}}(t-s)^{-\frac{n}{2}\left(\frac{1}{p'}-\frac{1}{q}\right)-\alpha(q',i)}s^{-\frac{n}{2}\left(\frac{1}{q}-\frac{1}{r}\right)}ds\left( \left\|h-\hat{h}\right\|_{Z_{q,r}}\left\|k-\tilde{k}\right\|_{X_{q,r}}+\left\|h-\tilde{h}\right\|_{Z_{q,r}}\left\|\tilde{k}\right\|_{X_{q,r}}\right) \\
		&\leq C t^{-\alpha(q',i)}\left( \left\|k-\tilde{k}\right\|_{X_{q,r}}+\left\|h-\tilde{h}\right\|_{Z_{q,r}}\right)\left( \left\|h-\hat{h}\right\|_{Z_{q,r}}+\left\|\tilde{k}\right\|_{X_{q,r}}\right),
	\end{align*}
as desired.
\end{proof}
	\begin{lem}\label{lem : difference term 2.3}
There exists a constant $C=C(q,r,\U,\epsilon)$ such that we have
	\begin{align*}
	\left\|(1-D_g\Phi)(H_1)-(1-D_{\tilde{g}}\Phi)(\tilde{H}_1)\right\|_{L^q}
		&\leq Cs^{-n\left(\frac{1}{q}-\frac{1}{r}\right)} \left(\left\|h-\tilde{h}\right\|_{Z_{q,r}}+\left\|k-\tilde{k}\right\|_{X_{q,r}}\right)\\
		&\qquad\cdot \left(\left\|k\right\|_{X_{q,r}}+\left\|\tilde{k}\right\|_{X_{q,r}}\right)
	\end{align*}
	\end{lem}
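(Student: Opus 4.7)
The plan is to split the difference into two manageable pieces by inserting an intermediate term:
\begin{align*}
(1-D_g\Phi)(H_1)-(1-D_{\tilde g}\Phi)(\tilde H_1)
	= (1-D_g\Phi)(H_1-\tilde H_1) - (D_g\Phi-D_{\tilde g}\Phi)(\tilde H_1).
\end{align*}
For each piece I will apply the appropriate continuity or Lipschitz property of $\Phi$ (Lemma \ref{lem : projection lemma} (iii) and (vi)) to remove the projection operators, at the cost of moving to an appropriate $L^{q'}$ norm of the $H_1$-term, and then apply the pointwise estimates on $H_1-\tilde H_1$ from Lemma \ref{lem : H_1-tildeH_1} (respectively on $\tilde H_1$ alone).

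For the first piece, Lemma \ref{lem : projection lemma} (iii) gives a uniform bound $\|1-D_g\Phi\|_{L^q\to L^q}\leq C$ on the neighbourhood $\mathcal U$, so that
\begin{align*}
\left\|(1-D_g\Phi)(H_1-\tilde H_1)\right\|_{L^q}\leq C\left\|H_1-\tilde H_1\right\|_{L^q},
\end{align*}
and Lemma \ref{lem : H_1-tildeH_1} converts the right-hand side into exactly the desired rate $t^{-n(1/q-1/r)}$ times $\bigl(\|h-\tilde h\|_{Z_{q,r}}+\|k-\tilde k\|_{X_{q,r}}\bigr)\bigl(\|k\|_{X_{q,r}}+\|\tilde k\|_{X_{q,r}}\bigr)$.

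For the second piece, Lemma \ref{lem : projection lemma} (vi) yields
\begin{align*}
\left\|(D_g\Phi-D_{\tilde g}\Phi)(\tilde H_1)\right\|_{L^q}\leq C\left\|g-\tilde g\right\|_{L^{[q,\infty]}}\left\|\tilde H_1\right\|_{L^{r/2}},
\end{align*}
where I may pick the smaller exponent $r/2$ on the $\tilde H_1$-side because the Lipschitz estimate in (vi) allows any $r\in (1,\infty]$. Expanding $\tilde H_1$ as in \eqref{eq : H_1 estimate} and estimating in $L^{r/2}$ via the H\"older inequality gives $\|\tilde H_1\|_{L^{r/2}}\leq C t^{-n(1/q-1/r)}\|\tilde k\|_{X_{q,r}}^2$. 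Combining this with $\|g-\tilde g\|_{L^{[q,\infty]}}\leq C(\|h-\tilde h\|_{Z_{q,r}}+\|k-\tilde k\|_{X_{q,r}})$ (cf. Lemma \ref{lem : normcontrol}), summing the two pieces, and absorbing $\|\tilde k\|_{X_{q,r}}^2$ into the product $\|\tilde k\|_{X_{q,r}}\cdot(\|k\|_{X_{q,r}}+\|\tilde k\|_{X_{q,r}})$ yields the claimed estimate. No genuinely new obstacle appears here; the lemma is essentially a bilinear-type refinement of Proposition \ref{psi1contraction}, and the only thing to be careful about is choosing the correct $L^{r/2}$ slot in the H\"older split so that the time decay matches the decay of $\|H_1\|_{L^q}$ itself.
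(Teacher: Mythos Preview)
Your proposal is correct and follows essentially the same approach as the paper. The only cosmetic difference is that the paper splits into three pieces, writing
\[
(1-D_g\Phi)(H_1)-(1-D_{\tilde g}\Phi)(\tilde H_1)=(H_1-\tilde H_1)-(D_g\Phi-D_{\tilde g}\Phi)(H_1)+D_{\tilde g}\Phi(\tilde H_1-H_1),
\]
and then bounds the first and third pieces separately via Lemma~\ref{lem : projection lemma}~(iii) and Lemma~\ref{lem : H_1-tildeH_1}, while you combine them into the single term $(1-D_g\Phi)(H_1-\tilde H_1)$; the treatment of the $(D_g\Phi-D_{\tilde g}\Phi)$-piece via Lemma~\ref{lem : projection lemma}~(vi) and the $L^{r/2}$-bound on $H_1$ (respectively $\tilde H_1$) is identical.
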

	\begin{proof}
We first write 
\begin{align*}
(1-D_g\Phi)(H_1)-(1-D_{\tilde{g}}\Phi)(\tilde{H}_1)=H_1-\tilde{H}_1+(D_g\Phi-D_{\tilde{g}}\Phi)(H_1)+D_{\tilde{g}}\Phi(\tilde{H}_1-H_1)
\end{align*}	
By Lemma \ref{lem : H_1-tildeH_1}, we already know
\begin{align*}
\left\| H_1-\tilde{H}_1\right\|_{L^q}
	\leq Cs^{-n\left(\frac{1}{q}-\frac{1}{r}\right)} \left(\left\|h-\tilde{h}\right\|_{Z_{q,r}}+\left\|k-\tilde{k}\right\|_{X_{q,r}}\right) \left(\left\|k\right\|_{X_{q,r}}+\left\|\tilde{k}\right\|_{X_{q,r}}\right).
\end{align*}
Moreover by Lemma \ref{lem : projection lemma} (iii),
\begin{align*}
\left\|D_{\tilde{g}}\Phi(\tilde{H}_1-H_1)\right\|_{L^q}
	&\leq \left\|\tilde{H}_1-H_1\right\|_{L^q} \\
	&\leq  Cs^{-n\left(\frac{1}{q}-\frac{1}{r}\right)}
\left(\left\|h-\tilde{h}\right\|_{Z_{q,r}}+\left\|k-\tilde{k}\right\|_{X_{q,r}}\right)
\left(\left\|k\right\|_{X_{q,r}}+\left\|\tilde{k}\right\|_{X_{q,r}}\right).
\end{align*}
Finally by Lemma \ref{lem : projection lemma} (vi) and the definition of $H_1$ (see \eqref{eq : RdT1}),
\begin{align*}
	\left\|(D_g\Phi-D_{\tilde{g}}\Phi)(H_1)\right\|_{L^q}
		&\leq C\left(\left\|h-\tilde{h}\right\|_{L^q}+\left\|k-\tilde{k}\right\|_{L^q} \right) \left\|H_1\right\|_{L^{\frac{r}{2}}}\\
		&\leq C\left( \left\|h-\tilde{h}\right\|_{L^q}+\left\|k-\tilde{k}\right\|_{L^q} \right) \left\|k\right\|_{W^{2,r}}^2\\
		&\leq Cs^{-n \left( \frac{1}{q}-\frac{1}{r} \right) }\left( \left\|h-\tilde{h}\right\|_{Z_{q,r}}+\left\|k-\tilde{k}\right\|_{X_{q,r}} \right) \left\|k\right\|_{X_{q,r}}^2.
\end{align*}
This finishes the proof of the lemma.
	\end{proof}
\begin{lem}\label{lem : difference term 2.4}
For $t\geq 2$, there exists a constant $C=C(q,r,\U,\epsilon)$ such that we have
\begin{align*}
&\left\|\int_{1}^{\max\left\{t-1,1\right\}}P(g,h,h_{\infty})_{s\to t}\circ\Pi^{\perp}_{\infty}	[(1-D_g\Phi)(H_1)-(1-D_{\tilde{g}}\Phi)(\tilde{H}_1)]ds \right\|_{X_{q,r}}\\
&\qquad\qquad\qquad\leq C \left(\left\|h-\tilde{h}\right\|_{Z_{q,r}}+\left\|k-\tilde{k}\right\|_{X_{q,r}}\right)
	\left(\left\|k\right\|_{X_{q,r}}+\left\|\tilde{k}\right\|_{X_{q,r}}\right)
\end{align*}
\end{lem}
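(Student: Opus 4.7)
The strategy is a direct adaptation of the argument used for Lemma \ref{lem : difference term 2.2}, with the ingredient Lemma \ref{lem : difference term 2.1} replaced by the pointwise-in-time difference estimate Lemma \ref{lem : difference term 2.3}. Fix $q'\in\{q,r\}$ and $i\in\{0,1,2\}$, and let $\alpha(q',i)$ be the function from Lemma \ref{lem : linear part}. I would commute $\nabla^i$ inside the convolution integral and invoke the linear estimate \eqref{eq : estimates mixed linear evolution2} of Lemma \ref{lem : linear part}, which gives
\begin{align*}
\left\|\nabla^i\circ P(g,h,h_\infty)_{s\to t}\circ \Pi^{\perp}_{\infty}\right\|_{L^{p'},L^{q'}}
\leq C(t-s)^{-\frac{n}{2}\left(\frac{1}{p'}-\frac{1}{q}\right)-\alpha(q',i)}
\end{align*}
for any $p'\in(1,q]$ (in fact the restriction $s\leq t-2$ in \eqref{eq : estimates mixed linear evolution2} is harmless because the interval $[t-2,t-1]$ can be absorbed by the short-time estimates of Lemma \ref{lem : short-time estimates} together with Lemma \ref{lem : difference term 2.3}). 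Combining with Lemma \ref{lem : difference term 2.3}, applied with the same exponent $p'$, yields the pointwise bound
\begin{align*}
\left\|(1-D_g\Phi)(H_1)(s) - (1-D_{\tilde g}\Phi)(\tilde H_1)(s)\right\|_{L^{p'}}
\leq Cs^{-n\left(\frac{1}{q}-\frac{1}{r}\right)}A\cdot B,
\end{align*}
where $A:=\|h-\tilde h\|_{Z_{q,r}}+\|k-\tilde k\|_{X_{q,r}}$ and $B:=\|k\|_{X_{q,r}}+\|\tilde k\|_{X_{q,r}}$ (the proof of Lemma \ref{lem : difference term 2.3} is actually phrased for the $L^q$-norm, but the same H\"older estimates work for any $p'\in(1,q]$ close to $q$).

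Combining the two estimates and using Lemma \ref{important_technical_lemma} gives
\begin{align*}
&\left\|\nabla^i\int_1^{t-1}P(g,h,h_\infty)_{s\to t}\circ\Pi^{\perp}_{\infty}\bigl[(1-D_g\Phi)H_1-(1-D_{\tilde g}\Phi)\tilde H_1\bigr]ds\right\|_{L^{q'}}\\
&\qquad\leq C\int_1^{t-1}(t-s)^{-\frac{n}{2}\left(\frac{1}{p'}-\frac{1}{q}\right)-\alpha(q',i)}\,s^{-n\left(\frac{1}{q}-\frac{1}{r}\right)}\,ds\cdot A\cdot B\\
&\qquad\leq Ct^{-\alpha(q',i)}\,A\cdot B,
\end{align*}
which is exactly what the definition of the $X_{q,r}$-norm demands. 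To invoke Lemma \ref{important_technical_lemma} one must check that
\begin{align*}
\min\Bigl\{\tfrac{n}{2}\bigl(\tfrac{1}{p'}-\tfrac{1}{q}\bigr)+\alpha(q',i),\;n\bigl(\tfrac{1}{q}-\tfrac{1}{r}\bigr),\;\tfrac{n}{2}\bigl(\tfrac{1}{p'}-\tfrac{1}{q}\bigr)+\alpha(q',i)+n\bigl(\tfrac{1}{q}-\tfrac{1}{r}\bigr)-1\Bigr\}>\alpha(q',i);
\end{align*}
the first term is strictly larger than $\alpha(q',i)$ by choosing $p'<q$, the second follows from $\alpha(q',i)\leq\frac{n}{2}(\frac{1}{q}-\frac{1}{r})$ (by definition of $\alpha$), and the third is ensured by choosing $p'$ sufficiently close to $q$ using the running assumption $\frac{n}{2}(\frac{1}{q}-\frac{1}{r})>\frac{1}{2}$, together with the hypothesis $\frac{n}{2}(\frac{1}{q}-\frac{1}{r})\neq 1$ which excludes the logarithmic case and guarantees that the maxima are $\neq 1$.

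The main technical point, which I expect to require the most care, is that Lemma \ref{lem : linear part} only gives the required long-time operator estimate on the interval $s\in[1,t-2]$. The boundary piece $s\in[t-2,t-1]$ has to be handled separately by the short-time estimates of Lemma \ref{lem : short-time estimates} applied to the mixed evolution operator $P(g,h,h_\infty)_{s\to t}$, together with the H\"older-in-time bound on $(1-D_g\Phi)H_1-(1-D_{\tilde g}\Phi)\tilde H_1$ supplied by Lemma \ref{lem : difference term 2.3}; since the relevant time interval has unit length and the decay factor $s^{-n(1/q-1/r)}$ survives at $s\sim t$, this contribution is easily absorbed into the final bound.
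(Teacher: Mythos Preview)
Your proof is correct and follows the same overall strategy as the paper: combine the linear operator bound from Lemma~\ref{lem : linear part} with the pointwise-in-time difference estimate Lemma~\ref{lem : difference term 2.3}, then close with Lemma~\ref{important_technical_lemma}.

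The only difference is that you introduce an auxiliary exponent $p'<q$, whereas the paper simply takes $p'=q$. This extra step is unnecessary here and creates avoidable overhead. The point is that the $s$-decay exponent already satisfies
\[
n\Bigl(\tfrac{1}{q}-\tfrac{1}{r}\Bigr)>1
\]
by the standing assumption $\tfrac{n}{2}(\tfrac{1}{q}-\tfrac{1}{r})>\tfrac{1}{2}$, so in Lemma~\ref{important_technical_lemma} one is automatically in the case $\delta>1$ and obtains decay $t^{-\gamma}$ with $\gamma=\min\{\alpha(q',i),\,n(\tfrac{1}{q}-\tfrac{1}{r})\}=\alpha(q',i)$, since $\alpha(q',i)\le\tfrac{n}{2}(\tfrac{1}{q}-\tfrac{1}{r})$. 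With $p'=q$ there is no need to argue that Lemma~\ref{lem : difference term 2.3} extends to $L^{p'}$ (which you assert but do not verify), the condition check collapses to the two inequalities above, and the hypothesis $\tfrac{n}{2}(\tfrac{1}{q}-\tfrac{1}{r})\neq 1$ is not used at all. Your remark about the boundary piece $s\in[t-2,t-1]$ is accurate and handled the same way in both approaches (short-time bounds make $(t-s)^{-\alpha(q',i)}$ valid, up to a constant, on all of $[1,t-1]$).
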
	
\begin{proof}
Let $\alpha:\left\{q,r\right\}\times \left\{0,1,2\right\}\to\R$ be the function from Lemma \ref{lem : linear part} and $q'\in\left\{q,r\right\}$. Then we get by Lemma \ref{lem : linear part} and Lemma \ref{lem : difference term 2.3}   that
\begin{align*}
	&\left\|\nabla^i\int_{1}^{\max\left\{t-1,1\right\}}P(g,h,h_{\infty})_{s\to t}\circ\Pi^{\perp}_{\infty}	[(1-D_g\Phi)(H_1)-(1-D_{\tilde{g}}\Phi)(\tilde{H}_1)]ds \right\|_{L^{q'}}\\
	&\qquad\leq \int_1^{\max\left\{t-1,1\right\}}\left\|\nabla^i\circ P(g,h,h_{\infty})_{s\to t}\circ\Pi^{\perp}_{\infty}\right\|_{L^q,L^{q'}}\left\|(1-D_g\Phi)(H_1)-(1-D_{\tilde{g}}\Phi)(\tilde{H}_1)\right\|_{L^q}ds\\
	&\qquad\leq C\int_1^{\max\left\{t-1,1\right\}}(t-s)^{-\alpha(q',i)}s^{-n\left(\frac{1}{q}-\frac{1}{r}\right)}ds\left[\left\|h-\tilde{h}\right\|_{Z_{q,r}}+\left\|k-\tilde{k}\right\|_{X_{q,r}}\right]
	\left[\left\|k\right\|_{X_{q,r}}+\left\|\tilde{k}\right\|_{X_{q,r}}\right]\\
	&\qquad\leq  C\cdot t^{-\alpha(q',i)}\left(\left\|h-\tilde{h}\right\|_{Z_{q,r}}+\left\|k-\tilde{k}\right\|_{X_{q,r}}\right)
	\left(\left\|k\right\|_{X_{q,r}}+\left\|\tilde{k}\right\|_{X_{q,r}}\right).
\end{align*}
The last inequality here follows from Lemma \ref{important_technical_lemma} and
\begin{align*}
	n\left(\frac{1}{q}-\frac{1}{r}\right)>1,
		\qquad n \left(\frac{1}{q}-\frac{1}{r}\right)
		>\frac{n}{2}\left(\frac{1}{q}-\frac{1}{r}\right)
		\geq \alpha(q',i).
\end{align*}
The result is immediate from the definition of the norm.
\end{proof}
\begin{lem}\label{lem : difference term 3}
For $t\geq 5$, there exists a constant $C=C(q,r,\U,\epsilon)$ such that we have
\begin{align*}
&\left\| \int_{1}^{\max\left\{t-4,1\right\}}  [P(g,h,h_{\infty})_{s\to t}- P(\tilde{g},\tilde{h},\tilde{h}_{\infty})_{s\to t}]I(t,s,\tilde{k},\tilde{h},\tilde{h}_{\infty}) ds\right\|_{X_{q,r}}\\
&\qquad\qquad\leq
C \left(\left\|k-\tilde{k}\right\|_{X_{q,r}}+\left\|h-\tilde{h}\right\|_{Z_{q,r}}\right)\left\|\tilde{k}\right\|_{X_{q,r}}
\end{align*}
\end{lem}
\begin{proof}
We split up
		\begin{align*}
 \int_{1}^{\max\left\{t-4,1\right\}}  &[P(g,h,h_{\infty})_{s\to t}- P(\tilde{g},\tilde{h},\tilde{h}_{\infty})_{s\to t}]I(t,s,\tilde{k},\tilde{h},\tilde{h}_{\infty}) ds\\
	&= \int_{1}^{\max\left\{t-4,1\right\}}  [P(g,h,h_{\infty})_{s\to t}\circ \Pi^{\perp}_{\infty}- P(\tilde{g},\tilde{h},\tilde{h}_{\infty})_{s\to t}\circ \tilde{\Pi}^{\perp}_{\infty}][(\Delta_{L,\tilde{h}_{\infty}}-\Delta_{L,\tilde{h}})(\tilde{k})] ds\\
	&\qquad +\int_{1}^{\max\left\{t-4,1\right\}}  [P(g,h,h_{\infty})_{s\to t}\circ \Pi^{\perp}_{\infty}- P(\tilde{g},\tilde{h},\tilde{h}_{\infty})_{s\to t}\circ \tilde{\Pi}^{\perp}_{\infty}][(1-D_{\tilde{g}}\Phi)(\tilde{H}_1)] ds
	\end{align*}
 Let us now estimate the term
\begin{align*}
\int_{1}^{\max\left\{t-4,1\right\}}  [P(g,h,h_{\infty})_{s\to t}\circ \Pi^{\perp}_{\infty}- P(\tilde{g},\tilde{h},\tilde{h}_{\infty})_{s\to t}\circ \tilde{\Pi}^{\perp}_{\infty}][(\Delta_{L,\tilde{h}_{\infty}}-\Delta_{L,\tilde{h}})(\tilde{k})] ds
\end{align*}
Let $\alpha:\left\{q,r\right\}\times \left\{0,1,2\right\}\to\R$ be the function from Lemma \ref{lem : linear part}, $q'\in\left\{q,r\right\}$ and $p'\in (1,q)$ small. 
Then we can use Lemma \ref{lem : nonlinear part 0.1}, Lemma \ref{lem : difference term 1} and Remark \ref{rem : difference heat flows long-time estimate} to obtain
\begin{align*}
	&\left\|\nabla^i\int_{1}^{\max\left\{t-4,1\right\}}  [P(g,h,h_{\infty})_{s\to t}\circ \Pi^{\perp}_{\infty}- P(\tilde{g},\tilde{h},\tilde{h}_{\infty})_{s\to t}\circ \tilde{\Pi}^{\perp}_{\infty}][(\Delta_{L,\tilde{h}_{\infty}}-\Delta_{L,\tilde{h}})(\tilde{k})] ds\right\|_{L^{q'}}\\
		&\leq\int_{1}^{\max\left\{t-4,1\right\}} \left\|\nabla^i[P(g,h,h_{\infty})_{s\to t}\circ \Pi^{\perp}_{\infty}- P(\tilde{g},\tilde{h},\tilde{h}_{\infty})_{s\to t}\circ \tilde{\Pi}^{\perp}_{\infty}]\right\|_{L^{p'},L^{q'}}
\left\|(\Delta_{L,\tilde{h}_{\infty}}-\Delta_{L,\tilde{h}})(\tilde{k})\right\|_{L^{p'}}ds\\
		&\leq C\int_1^{\max\left\{t-4,1\right\}}(t-s)^{-\frac{n}{2}\left( \frac{1}{p'}-\frac{1}{q}\right)-\alpha(q',i)}s^{1-\frac{3n}{2}\left(\frac{1}{q}-\frac{1}{r}\right)}ds \\
		&\qquad \cdot\left(\left\|\tilde{h}-h\right\|_{Z_{q,r}}+\left\|\tilde{k}-k\right\|_{X_{q,r}}\right)\left\|\tilde{h}-\hat{h}\right\|_{Z_{q,r}}\left\|\tilde{k}\right\|_{X_{q,r}}\\
		&\leq C t^{-\alpha(q',i)}\left(\left\|\tilde{h}-h\right\|_{Z_{q,r}}+\left\|\tilde{k}-k\right\|_{X_{q,r}}\right)
		\left\|\tilde{h}-\hat{h}\right\|_{Z_{q,r}}\left\|\tilde{k}\right\|_{X_{q,r}}
\end{align*}
The last inequality is justified by Lemma \ref{important_technical_lemma}, since
\begin{align*}
	\frac{n}{2}\left(\frac{1}{p'}-\frac{1}{q}\right)+\alpha(q',i)
		&>\alpha(q',i),\\
 	\frac{3n}{2}\left(\frac{1}{q}-\frac{1}{r}\right)-1
 		&> \frac{n}{2}\left(\frac{1}{q}-\frac{1}{r}\right)
 		\geq\alpha(q',i),\\
	\frac{n}{2}\left(\frac{1}{p'}-\frac{1}{q}\right)+\alpha(q',i)+\frac{3n}{2}\left(\frac{1}{q}-\frac{1}{r}\right)-2
		&=\frac{n}{2}\left(\frac{1}{p'}-\frac{1}{r}\right)+\alpha(q',i)-1
>\alpha(q',i).
\end{align*}
For the term
\begin{align*}
\int_{1}^{\max\left\{t-4,1\right\}}  [P(g,h,h_{\infty})_{s\to t}\circ \Pi^{\perp}_{\infty}- P(\tilde{g},\tilde{h},\tilde{h}_{\infty})_{s\to t}\circ \tilde{\Pi}^{\perp}_{\infty}][(1-D_{\tilde{g}}\Phi)(\tilde{H}_1)] ds,
	\end{align*}	
			we have, using Lemma \ref{lem : nonlinear part 0.2}, Lemma \ref{lem : difference term 1} and Remark \ref{rem : difference heat flows long-time estimate},
\begin{align*}
	&\left\|\nabla^i \int_{1}^{\max\left\{t-4,1\right\}}  [P(g,h,h_{\infty})_{s\to t}\circ \Pi^{\perp}_{\infty}- P(\tilde{g},\tilde{h},\tilde{h}_{\infty})_{s\to t}\circ \tilde{\Pi}^{\perp}_{\infty}][(1-D_{\tilde{g}}\Phi)(\tilde{H}_1)] ds\right\|_{L^{q'}}\\
	&\qquad\leq \int_{1}^{\max\left\{t-4,1\right\}}\left\|\nabla^i( P(g,h,h_{\infty})_{s\to t}\circ \Pi^{\perp}_{\infty}- P(\tilde{g},\tilde{h},\tilde{h}_{\infty})_{s\to t})\circ \tilde{\Pi}^{\perp}_{\infty}\right\|_{L^{q},L^{q'}}\left\| (1-D_{\tilde{g}}\Phi)(\tilde{H}_1)\right\|_{L^q}ds\\
	&\qquad\leq C\int_{1}^{\max\left\{t-4,1\right\}}(t-s)^{-\alpha(q',i)}s^{-\beta-\frac{n}{2}\left(\frac{1}{q}-\frac{1}{r}\right)}ds
\left(\left\|h-\tilde{h}\right\|_{Z_{q,r}}
+\left\|k-\tilde{k}\right\|_{X_{q,r}}
\right)\left\|\tilde{k}\right\|_{X_{q,r}}^2\\
	&\qquad\leq Ct^{-\alpha(q',i)}\left(\left\|h-\tilde{h}\right\|_{Z_{q,r}}
+\left\|k-\tilde{k}\right\|_{X_{q,r}}\right)	
	\left\|\tilde{k}\right\|_{X_{q,r}}^2,
\end{align*}	
and the last inequality follows from Lemma \ref{important_technical_lemma} since
\begin{align*}
	\beta +\frac{n}{2} \left( \frac{1}{q}-\frac{1}{r} \right)
		>1,
	\qquad  \beta+\frac{n}{2}\left(\frac{1}{q}-\frac{1}{r}\right)
		> \frac{n}{2}\left(\frac{1}{q}-\frac{1}{r}\right)
		\geq \alpha(q',i).
\end{align*}
This finishes the proof.
\end{proof}
\begin{lem}\label{lem : difference term 4}
For $t\geq 2$, there exists a constant $C=C(q,r,\U,\epsilon)$ such that we have
\begin{align*}
&\left\|\int_{\max\left\{t-4,1\right\}}^{\max\left\{t-1,1\right\}}  [P(g,h,h_{\infty})_{s\to t}- P(\tilde{g},\tilde{h},\tilde{h}_{\infty})_{s\to t}]I(t,s,\tilde{k},\tilde{h},\tilde{h}_{\infty}) ds\right\|_{X_{q,r}}\\
	&\qquad\qquad\leq C \left( \left\|k-\tilde{k}\right\|_{X_{q,r}}+\left\|h-\tilde{h}\right\|_{Z_{q,r}} \right) \left\| \tilde{k}\right\|_{X_{q,r}}
\end{align*}
\end{lem}
\begin{proof}
Let $q'=\left\{q,r\right\}$.
 Lemma \ref{lem : difference of heat flows} (iii), Lemma \ref{lem : elliptic regularity} and Sobolev embedding yield
\begin{align*}
	&\left\|\int_{\max\left\{t-4,1\right\}}^{\max\left\{t-1,1\right\}}  [P(g,h,h_{\infty})_{s\to t}- P(\tilde{g},\tilde{h},\tilde{h}_{\infty})_{s\to t}]I(t,s,\tilde{k},\tilde{h},\tilde{h}_{\infty}) ds\right\|_{W^{2,q'}}\\
		&\qquad\qquad \leq C \sup_{s\in [t-5,t]}\left( \left\|g-\tilde{g}\right\|_{W^{1,\infty}}+\left\|h-\tilde{h}\right\|_{W^{4,\infty}}\right) \left\|
I(t,s,\tilde{k},\tilde{h},\tilde{h}_{\infty})\right\|_{L^{q'}}\\
		&\qquad\qquad\leq C \sup_{s\in [t-5,t]}\left(\left\|k-\tilde{k}\right\|_{W^{2,r}}+\left\|h-\tilde{h}\right\|_{L^q} \right) \left\|
I(t,s,\tilde{k},\tilde{h},\tilde{h}_{\infty})\right\|_{L^{q'}}\\
		&\qquad\qquad\leq C \left( \left\|k-\tilde{k}\right\|_{X_{q,r}}+\left\|h-\tilde{h}\right\|_{Z_{q,r}} \right) \sup_{s\in [t-5,t]}\left\|I(t,s,\tilde{k},\tilde{h},\tilde{h}_{\infty})\right\|_{L^{q'}}
\end{align*}
Exacly as in the proof of Lemma \ref{lem : nonlinear part 2}, we get the estimate
\begin{align*}
\sup_{s\in [t-5,t]} \left\|I(t,s,\tilde{k},\tilde{h},\tilde{h}_{\infty})\right\|_{L^{q'}}
	\leq C t^{-\frac{n}{2}\left(\frac{1}{q}-\frac{1}{r}\right)}
\left\|\tilde{k}\right\|_{X_{q,r}},
\end{align*}
which yields the result by definition of the norm.
\end{proof}

\begin{lem}\label{lem : preparation for difference term 5}
There exist a constant $C=C(q,r,\U,\epsilon)$ such that
for $s\in [\max\left\{t-1,1\right\},t]$ and $q'\in\left\{q,r\right\}$, we have
\begin{align*}
&\left\| I(t,s,k,h,h_{\infty})-I(t,s,\tilde{k},\tilde{h},\tilde{h}_{\infty})\right\|_{W^{1,q'}}\\
&\leq C
\left(
\left\|k\right\|_{W^{2,r}}+\left\|\tilde{k}\right\|_{W^{2,r}}
\right)
 \cdot
\left(\left\|k-\tilde{k}\right\|_{W^{2,r}}
+\left\|k-\tilde{k}\right\|_{W^{2,q}}
+\left\|h-\tilde{h}\right\|_{L^q}+\left\|h_{\infty}-\tilde{h}_{\infty}\right\|_{L^q}\right)\\
&\qquad +
C
\left\|k-\tilde{k}\right\|_{W^{2,r}}\left(\left\|\tilde{k}\right\|_{W^{2,r}}+\left\|\tilde{h}-\tilde{h}_{\infty}\right\|_{L^q}\right).
\end{align*}
\end{lem}
\begin{proof}
According to the proof of Lemma \ref{lem : integral term},
we can write, for $s\in [\max\left\{t-1,1\right\},t]$,
\begin{align*}
I(t,s,k,h,h_{\infty})=[\Delta_{L,g,h},\Pi^{\perp}_{\infty}](k)+\Pi^{\perp}_{\infty}[D_g\Phi((\Delta_{L,g,h}-\Delta_{L,h})(k))+(1-D_g\Phi)(H_2)]
\end{align*}
and an analogous expression exists for $I(t,s,\tilde{k},\tilde{h},\tilde{h}_{\infty})$. We are going to estimate all the terms separately. We first analyze the terms containing the different versions of $\Delta_{L}$.

For an arbitrary symmetric $2$-tensor $\Phi$, we can by definition of $\Delta_{L,g,h}$ write schematically
\begin{align*}
\Delta_{L,g,h}\Phi=g^{-1}*\nabla^2\Phi+\Phi*g^{-1}*h^{-1}*h^{-1}*g*R,
\end{align*}
so that
\begin{align*}
\Delta_{L,g,h}\Phi-\Delta_{L,\tilde{g},\tilde{h}}\Phi&=(g^{-1}-\tilde{g}^{-1})*\nabla^2\Phi+\tilde{g}^{-1}*(
\nabla^2\Phi-\tilde{\nabla}^2\Phi)\\
&\qquad +\tilde{g}^{-1}*\nabla(h-\tilde{h})*\nabla\Phi
+\Phi*(g^{-1}-\tilde{g}^{-1})*h^{-1}*h^{-1}*g*R\\
&\qquad+\Phi*\tilde{g}^{-1}*(h^{-1}-\tilde{h}^{-1})*(h^{-1}+\tilde{h}^{-1})*g*R\\
&\qquad +\Phi*\tilde{g}^{-1}*\tilde{h}^{-1}*\tilde{h}^{-1}*(g-\tilde{g})*R\\
&\qquad +\Phi*\tilde{g}^{-1}*\tilde{h}^{-1}*\tilde{h}^{-1}*\tilde{g}*(R-\tilde{R}).
\end{align*}
By the H\"{o}lder inequality, Lemma \ref{lem : elliptic regularity} and Sobolev embedding, we thus get for every $r'\in [1,\infty]$ that
\begin{equation}
\begin{split}\label{eq : estimate difference LL in L^r}
\left\|\Delta_{L,g,h}\Phi-\Delta_{L,\tilde{g},\tilde{h}}\Phi \right\|_{L^{r'}}&\leq C\left(\left\|g-\tilde{g}\right\|_{L^{\infty}}+\left\|h-\tilde{h}\right\|_{W^{2,\infty}}\right)\left\|\Phi\right\|_{W^{2,r'}}\\
&\leq C\left(\left\|k-\tilde{k}\right\|_{L^{\infty}}+\left\|h-\tilde{h}\right\|_{W^{2,\infty}}\right)\left\|\Phi\right\|_{W^{2,r'}}\\
&\leq C\left(\left\|k-\tilde{k}\right\|_{W^{2,r}}+\left\|h-\tilde{h}\right\|_{L^q}\right)\left\|\Phi\right\|_{W^{2,r'}}
\end{split}
\end{equation}
and
\begin{equation}
\begin{split}\label{eq : estimate difference LL in W^1,r}
\left\|\Delta_{L,g,h}\Phi-\Delta_{L,\tilde{g},\tilde{h}}\Phi \right\|_{W^{1,r'}}&\leq C\left(\left\|g-\tilde{g}\right\|_{W^{1,\infty}}+\left\|h-\tilde{h}\right\|_{W^{3,\infty}}\right)\left\|\Phi\right\|_{W^{3,r'}}\\
&\leq C\left(\left\|k-\tilde{k}\right\|_{W^{1,\infty}}+\left\|h-\tilde{h}\right\|_{W^{3,\infty}}\right)\left\|\Phi\right\|_{W^{3,r'}}\\
&\leq C\left(\left\|k-\tilde{k}\right\|_{W^{2,r}}+\left\|h-\tilde{h}\right\|_{L^q}\right)\left\|\Phi\right\|_{W^{3,r'}}.
\end{split}
\end{equation}
Since $\Delta_{L,h}=\Delta_{L,h,h}$, $g=h+k$ and the corresponding identities with tildes hold, we get as special cases that
\begin{align}
\label{eq : estimate difference LL 2 in L^r}
\left\|\Delta_{L,h}\Phi-\Delta_{L,\tilde{h}}\Phi \right\|_{L^{r'}}&\leq C\left\|h-\tilde{h}\right\|_{L^q}\left\|\Phi\right\|_{W^{2,r'}},\\
\label{eq : estimate difference LL 2 in W^1,r}
\left\|\Delta_{L,h}\Phi-\Delta_{L,\tilde{h}}\Phi \right\|_{W^{1,r'}}&\leq C\left\|h-\tilde{h}\right\|_{L^q}\left\|\Phi\right\|_{W^{3,r'}}.\\
\label{eq : estimate difference LL 3 in L^r}
\left\|\Delta_{L,g,h}\Phi-\Delta_{L,h}\Phi \right\|_{L^{r'}}&\leq C\left\|k\right\|_{W^{2,r}}\left\|\Phi\right\|_{W^{2,r'}},\\
\label{eq : estimate difference LL 3 in W^1,r}
\left\|\Delta_{L,g,h}\Phi-\Delta_{L,h}\Phi \right\|_{W^{1,r'}}&\leq C\left\|k\right\|_{W^{2,r}}\left\|\Phi\right\|_{W^{3,r'}}.
\end{align}
A formal computation shows that
\begin{align*}
[\Delta_{L,g,h},\Pi^{\perp}_{\infty}](k)=
-[\Delta_{L,g,h},\Pi^{\parallel}_{\infty}](k)
&=-\Delta_{L,g,h}(\Pi^{\parallel}_{\infty}(k))+\Pi^{\parallel}_{\infty}
(\Delta_{L,g,h}(k))
\end{align*}
and an analogous expression holds for $[\Delta_{L,\tilde{g},\tilde{h}},\tilde{\Pi}^{\perp}_{\infty}](\tilde{k})$. Therefore,
\begin{align*}
[\Delta_{L,g,h},\Pi^{\perp}_{\infty}](k)&-[\Delta_{L,\tilde{g},\tilde{h}},\tilde{\Pi}^{\perp}_{\infty}](\tilde{k})\\
&=
-(\Delta_{L,g,h}-\Delta_{L,\tilde{g},\tilde{h}})\Pi^{\parallel}_{\infty}(k)
-\Delta_{L,\tilde{g},\tilde{h}}((\Pi^{\parallel}_{\infty}-\tilde{\Pi}^{\parallel}_{\infty})(k)
+\tilde{\Pi}_{\infty}^{\parallel}(k-\tilde{k}))\\
&\qquad+({\Pi}_{\infty}^{\parallel}-\tilde{\Pi}_{\infty}^{\parallel})(\Delta_{L,g,h}k)
+{\tilde{\Pi}}_{\infty}^{\parallel}((\Delta_{L,g,h}-\Delta_{L,\tilde{g},\tilde{h}})(k))
+{\tilde{\Pi}}_{\infty}^{\parallel}(\Delta_{L,\tilde{g},\tilde{h}}
(k-\tilde{k}))\\
&=(\Delta_{L,\tilde{g},\tilde{h}}-\Delta_{L,g,h})\Pi^{\parallel}_{\infty}(k)
+\Delta_{L,\tilde{g},\tilde{h}}((\Pi^{\perp}_{\infty}-\tilde{\Pi}^{\perp}_{\infty})(k))\\
&\qquad+[(\Delta_{L,\tilde{h}}-\Delta_{L,\tilde{g},\tilde{h}})
+(\Delta_{L,\tilde{h}_{\infty}}-\Delta_{L,\tilde{h}})
](\tilde{\Pi}_{\infty}^{\parallel}(k-\tilde{k}))\\
&\qquad
+(\tilde{\Pi}_{\infty}^{\perp}-{\Pi}_{\infty}^{\perp})(\Delta_{L,g,h}k)
+{\tilde{\Pi}}_{\infty}^{\parallel}((\Delta_{L,g,h}-\Delta_{L,\tilde{g},\tilde{h}})(k))\\
&\qquad+{\tilde{\Pi}}_{\infty}^{\parallel}(
[(\Delta_{L,\tilde{g},\tilde{h}}-\Delta_{L,\tilde{h}})+(\Delta_{L,\tilde{h}}-\Delta_{L,\tilde{h}_{\infty}})]
(k-\tilde{k})),
\end{align*}
where in the last equation, we used the fact that 
\begin{align*}\Delta_{L,\tilde{h}_{\infty}}\circ {\tilde{\Pi}}_{\infty}^{\parallel}={\tilde{\Pi}}_{\infty}^{\parallel}\circ\Delta_{L,\tilde{h}_{\infty}}=0.
\end{align*}
Now let us estimate this expression term by term. 
By \eqref{eq : estimate difference LL in W^1,r} and Lemma \ref{lem : projection lemma} (i),
we have
\begin{align*}
\left\|
(\Delta_{L,\tilde{g},\tilde{h}}-\Delta_{L,g,h})\Pi^{\parallel}_{\infty}(k)
\right\|_{W^{1,q'}}&\leq
C\left(\left\|k-\tilde{k}\right\|_{W^{2,r}}+\left\|h-\tilde{h}\right\|_{L^{q}}\right)\left\|\Pi^{\parallel}_{\infty}(k)\right\|_{W^{3,q'}}\\
&\leq C\left(\left\|k-\tilde{k}\right\|_{W^{2,r}}+\left\|h-\tilde{h}\right\|_{L^{q}}\right)
\left\|k\right\|_{L^r}.
\end{align*}
By Lemma \ref{lem : projection lemma} (v),
\begin{align*}
\left\|
\Delta_{L,\tilde{g},\tilde{h}}((\Pi^{\perp}_{\infty}-\tilde{\Pi}^{\perp}_{\infty})(k))
\right\|_{W^{1,q'}}\leq C\left\|(\Pi^{\perp}_{\infty}-\tilde{\Pi}^{\perp}_{\infty})(k)\right\|_{W^{3,q'}}
\leq C\left\|h-\tilde{h}\right\|_{L^q}\left\|k\right\|_{L^r}.
\end{align*}
By \eqref{eq : estimate difference LL 2 in W^1,r} and \eqref{eq : estimate difference LL 3 in W^1,r} and Lemma \ref{lem : projection lemma} (i),
\begin{align*}
&\left\|
[(\Delta_{L,\tilde{h}}-\Delta_{L,\tilde{g},\tilde{h}})
+(\Delta_{L,\tilde{h}_{\infty}}-\Delta_{L,\tilde{h}})
](\tilde{\Pi}_{\infty}^{\parallel}(k-\tilde{k}))
\right\|_{W^{1,q'}}\\
&\qquad\qquad\qquad\leq C
\left(\left\|\tilde{k}\right\|_{W^{2,r}}
+\left\|\tilde{h}-\tilde{h}_{\infty}\right\|_{L^{q}}\right)
\left\|\tilde{\Pi}_{\infty}^{\parallel}(k-\tilde{k})
\right\|_{W^{3,q'}}\\
&\qquad\qquad\qquad\leq C
\left(\left\|\tilde{k}\right\|_{W^{2,r}}
+\left\|\tilde{h}-\tilde{h}_{\infty}\right\|_{L^{q}}\right)
\left\|k-\tilde{k}
\right\|_{L^{r}}.
\end{align*}
By Lemma \ref{lem : projection lemma} (v),
\begin{align*}
\left\|(\tilde{\Pi}_{\infty}^{\perp}-{\Pi}_{\infty}^{\perp})(\Delta_{L,g,h}k)\right\|_{W^{1,q'}}\leq
C\left\|h_{\infty}-\tilde{h}_{\infty}\right\|_{L^q}
\left\|\Delta_{L,g,h}k
\right\|_{L^r}
\leq
C\left\|h_{\infty}-\tilde{h}_{\infty}\right\|_{L^q}
\left\|k
\right\|_{W^{2,r}}.
\end{align*}
By Lemma \ref{lem : projection lemma} (i) and \eqref{eq : estimate difference LL in L^r},
\begin{align*}
\left\|
{\tilde{\Pi}}_{\infty}^{\parallel}((\Delta_{L,g,h}-\Delta_{L,\tilde{g},\tilde{h}})(k))
\right\|_{W^{1,q'}}&\leq
C\left\|
(\Delta_{L,g,h}-\Delta_{L,\tilde{g},\tilde{h}})(k)
\right\|_{L^r}\\
&\leq
C\left(\left\|k-\tilde{k}\right\|_{W^{2,r}}+\left\|
h-\tilde{h}
\right\|_{L^{q}}\right)\left\|k\right\|_{W^{2,r}}.
\end{align*}
By Lemma \ref{lem : projection lemma} (i), \eqref{eq : estimate difference LL 2 in L^r} and \eqref{eq : estimate difference LL 3 in L^r},
\begin{align*}
&\left\|
{\tilde{\Pi}}_{\infty}^{\parallel}(
[(\Delta_{L,\tilde{g},\tilde{h}}-\Delta_{L,\tilde{h}})+(\Delta_{L,\tilde{h}}-\Delta_{L,\tilde{h}_{\infty}})]
(k-\tilde{k}))
\right\|_{W^{1,q'}}\\
&\qquad\qquad\qquad\leq
C
\left\|
[(\Delta_{L,\tilde{g},\tilde{h}}-\Delta_{L,\tilde{h}})+(\Delta_{L,\tilde{h}}-\Delta_{L,\tilde{h}_{\infty}})]
(k-\tilde{k})
\right\|_{L^{r}}\\
&\qquad\qquad\qquad\leq C \left(
\left\|\tilde{k}\right\|_{W^{2,r}}+\left\|\tilde{h}-\tilde{h}_{\infty}\right\|_{L^{q}}
\right)\left\|k-\tilde{k}\right\|_{W^{2,r}}.
\end{align*}
Now we write
\begin{align*}
\Pi^{\perp}_{\infty}[D_g\Phi((\Delta_{L,g,h}-\Delta_{L,h})(k))]
&-\tilde{\Pi}^{\perp}_{\infty}[D_{\tilde{g}}\Phi((\Delta_{L,\tilde{g},\tilde{h}}-\Delta_{L,\tilde{h}})(\tilde{k}))]\\
&=(\Pi^{\perp}_{\infty}-\tilde{\Pi}^{\perp}_{\infty})[D_g\Phi((\Delta_{L,g,h}-\Delta_{L,h})(k))]\\
&\qquad+\tilde{\Pi}^{\perp}_{\infty}[(D_g\Phi-D_{\tilde{g}}\Phi)((\Delta_{L,g,h}-\Delta_{L,h})(k))]\\
&\qquad+
\tilde{\Pi}^{\perp}_{\infty}[D_{\tilde{g}}\Phi
((\Delta_{L,g,h}-\Delta_{L,\tilde{g},\tilde{h}})(k)
-(\Delta_{L,h}-\Delta_{L,\tilde{h}})(k))]\\
&\qquad+\tilde{\Pi}^{\perp}_{\infty}[D_{\tilde{g}}\Phi((\Delta_{L,\tilde{g},\tilde{h}}-\Delta_{L,\tilde{h}})(k-\tilde{k}))]
\end{align*}
Again, we estimate term by term. Using Lemma \ref{lem : projection lemma} (v), \eqref{eq : estimate difference LL in L^r} and Sobolev embedding, we get
\begin{align*}
\left\|
(\Pi^{\perp}_{\infty}-\tilde{\Pi}^{\perp}_{\infty})[D_g\Phi((\Delta_{L,g,h}-\Delta_{L,h})(k))]
\right\|_{W^{1,q'}}&\leq C\left\|h_{\infty}-\tilde{h}_{\infty}\right\|_{L^p}\left\| 
(\Delta_{L,g,h}-\Delta_{L,h})(k) \right\|_{L^r}\\
&\leq C\left\|h_{\infty}-\tilde{h}_{\infty}\right\|_{L^q}
\left\|k\right\|_{L^{\infty}}\left\|k\right\|_{W^{2,r}}\\
&\leq C\left\|h_{\infty}-\tilde{h}_{\infty}\right\|_{L^q}
\left\|k\right\|_{W^{2,r}}^2.
\end{align*}
By Lemma \ref{lem : projection lemma} (i) (applied to $
\tilde{\Pi}^{\parallel}_{\infty}=1-
\tilde{\Pi}^{\perp}_{\infty}$) and (vi) (applied to $p=q$), \eqref{eq : estimate difference LL 3 in L^r} and Sobolev embedding,
\begin{align*}
&\left\|\tilde{\Pi}^{\perp}_{\infty}[(D_g\Phi-D_{\tilde{g}}\Phi)((\Delta_{L,g,h}-\Delta_{L,h})(k))]
\right\|_{W^{1,q'}}
\leq C\left\|g-\tilde{g}\right\|_{L^{[q,\infty]}}\left\| 
(\Delta_{L,g,h}-\Delta_{L,h})(k) \right\|_{L^r}\\
&\qquad\qquad\qquad\qquad\leq C\left(\left\|k-\tilde{k}\right\|_{L^{q}}+
\left\|k-\tilde{k}\right\|_{L^{\infty}}+\left\|h-\tilde{h}\right\|_{L^{q}}\right)\left\|k\right\|_{L^{\infty}}\left\|k\right\|_{W^{2,r}}\\
&\qquad\qquad\qquad\qquad\leq C\left(\left\|k-\tilde{k}\right\|_{L^{q}}+
\left\|k-\tilde{k}\right\|_{W^{2,r}}+\left\|h-\tilde{h}\right\|_{L^{q}}\right)\left\|k\right\|_{W^{2,r}}^2
\end{align*}
By Lemma \ref{lem : projection lemma} (i) (applied to $
\tilde{\Pi}^{\parallel}_{\infty}=1-
\tilde{\Pi}^{\perp}_{\infty}$) and (iii), as well as \eqref{eq : estimate difference LL in L^r} and \eqref{eq : estimate difference LL 2 in L^r},
\begin{align*}
&\left\|
\tilde{\Pi}^{\perp}_{\infty}[D_{\tilde{g}}\Phi
((\Delta_{L,g,h}-\Delta_{L,\tilde{g},\tilde{h}})(k)
-(\Delta_{L,h}-\Delta_{L,\tilde{h}})(k))]
\right\|_{W^{1,q'}}
\\
& \qquad\qquad\qquad\qquad\leq
 C\left(\left\|(\Delta_{L,g,h}-\Delta_{L,\tilde{g},\tilde{h}})(k)
\right\|_{L^{r}}
+
\left\|(\Delta_{L,h}-\Delta_{L,\tilde{h}})(k)
\right\|_{L^{r}}\right)
\\
& \qquad\qquad\qquad\qquad\leq
C\left( \left\|k-\tilde{k}\right\|_{W^{2,r}}+\left\|h-\tilde{h}\right\|_{L^q}\right)\left\|k\right\|_{W^{2,r}}
\end{align*}
Finally,
by Lemma \ref{lem : projection lemma} (i) (applied to $
\tilde{\Pi}^{\parallel}_{\infty}=1-
\tilde{\Pi}^{\perp}_{\infty}$) and (iii), as well as \eqref{eq : estimate difference LL 3 in L^r},
\begin{align*}
\left\|\tilde{\Pi}^{\perp}_{\infty}[D_{\tilde{g}}\Phi((\Delta_{L,\tilde{g},\tilde{h}}-\Delta_{L,\tilde{h}})(k-\tilde{k}))]\right\|_{W^{1,q'}} &\leq C\left\|(\Delta_{L,\tilde{g},\tilde{h}}-\Delta_{L,\tilde{h}})(k-\tilde{k})\right\|_{L^r}\\
&\leq C\left\|\tilde{k}\right\|_{W^{2,r}}\left\|k-\tilde{k}\right\|_{W^{2,r}}.
\end{align*}
Now let us look at the remaining expression
\begin{align*}
\Pi^{\perp}_{\infty}[(1-D_g\Phi)(H_2)]
-\tilde{\Pi}^{\perp}_{\infty}[(1-D_{\tilde{g}}\Phi)(\tilde{H}_2)]&=
(\Pi^{\perp}_{\infty}-\tilde{\Pi}^{\perp}_{\infty})[(1-D_g\Phi)(H_2)]\\
&\qquad+
\tilde{\Pi}^{\perp}_{\infty}[(D_{\tilde{g}}\Phi-D_g\Phi)({H}_2)]\\
&\qquad+\tilde{\Pi}^{\perp}_{\infty}[(1-D_{\tilde{g}}\Phi)(H_2-\tilde{H}_2)].
\end{align*}
We estimate the three terms on the right hand side separately.
Recall that $H_2$ has the schematic expression 
\begin{align*}
H_2=g^{-1}*g^{-1}*\nabla k*\nabla k.
\end{align*}
Thus by Lemma \ref{lem : projection lemma} (iii) and (v), and by Sobolev embedding
\begin{align*}
\left\|
(\Pi^{\perp}_{\infty}-\tilde{\Pi}^{\perp}_{\infty})[(1-D_g\Phi)(H_2)]
\right\|_{W^{1,q'}}&\leq
C \left\|h_{\infty}-\tilde{h}_{\infty}\right\|_{L^{q}}\left\|H_2\right\|_{L^r}\\
&\leq C \left\|h_{\infty}-\tilde{h}_{\infty}\right\|_{L^{q}}
\left\|\nabla k\right\|_{L^r}\left\|\nabla k\right\|_{L^{\infty}}\\
&\leq C \left\|h_{\infty}-\tilde{h}_{\infty}\right\|_{L^{q}}
\left\| k\right\|_{W^{2,r}}^2.
\end{align*}
By Lemma \ref{lem : projection lemma} (i) (applied to $
\tilde{\Pi}^{\parallel}_{\infty}=1-
\tilde{\Pi}^{\perp}_{\infty}$) and (vi) (applied to $p=q$), as well as Lemma \ref{lem : elliptic regularity} and Sobolev embedding,
\begin{align*}
\left\|
\tilde{\Pi}^{\perp}_{\infty}[(D_{\tilde{g}}\Phi-D_g\Phi)({H}_2)]
\right\|_{W^{1,q'}}&\leq
C(\left\|g-\tilde{g}\right\|_{L^q}+\left\|g-\tilde{g}\right\|_{L^{\infty}})\left\|H_2\right\|_{L^r}\\
&\leq C\left(\left\|k-\tilde{k}\right\|_{L^q}+
\left\|k-\tilde{k}\right\|_{W^{2,r}}
+\left\|h-\tilde{h}\right\|_{L^{q}}\right)\left\| k\right\|_{W^{2,r}}^2.
\end{align*}
To estimate the final term, we first write schematically
\begin{align*}
H_2-\tilde{H}_2&=g^{-1}*g^{-1}*\nabla k*\nabla k-\tilde{g}^{-1}*\tilde{g}^{-1}*\tilde{\nabla} \tilde{k}*\tilde{\nabla} \tilde{k}\\
&=(g^{-1}-\tilde{g}^{-1})*g^{-1}*\nabla k*\nabla k
+\tilde{g}^{-1}*(g^{-1}-\tilde{g}^{-1})*\nabla k*\nabla k\\
&\qquad
+\tilde{g}^{-1}*\tilde{g}^{-1}*(\nabla-\tilde{\nabla}) k*\nabla k+
\tilde{g}^{-1}*\tilde{g}^{-1}*\tilde{\nabla} (k-\tilde{k})*\nabla k\\
&\qquad
+\tilde{g}^{-1}*\tilde{g}^{-1}*\tilde{\nabla} \tilde{k}*(\nabla-\tilde{\nabla}) k+
\tilde{g}^{-1}*\tilde{g}^{-1}*\tilde{\nabla} \tilde{k}*\tilde{\nabla} (k-\tilde{k})\\
&=(g^{-1}-\tilde{g}^{-1})*g^{-1}*\nabla k*\nabla k
+\tilde{g}^{-1}*(g^{-1}-\tilde{g}^{-1})*\nabla k*\nabla k\\
&\qquad
+\tilde{g}^{-1}*\tilde{g}^{-1}*\tilde{h}^{-1}*\nabla(h-\tilde{h})* k*\nabla k+
\tilde{g}^{-1}*\tilde{g}^{-1}*\tilde{\nabla} (k-\tilde{k})*\nabla k\\
&\qquad
+\tilde{g}^{-1}*\tilde{g}^{-1}*\tilde{\nabla} \tilde{k}*\tilde{h}^{-1}*\nabla(h-\tilde{h})*k+
\tilde{g}^{-1}*\tilde{g}^{-1}*\tilde{\nabla} \tilde{k}*\tilde{\nabla} (k-\tilde{k})
.
\end{align*}
By Lemma \ref{lem : projection lemma} (i) (applied to $
\tilde{\Pi}^{\parallel}_{\infty}=1-
\tilde{\Pi}^{\perp}_{\infty}$) and (iii), as well as Lemma \ref{lem : elliptic regularity} and Sobolev embedding,
\begin{align*}
\left\|
\tilde{\Pi}^{\perp}_{\infty}[(1-D_{\tilde{g}}\Phi)(H_2-\tilde{H}_2)]
\right\|_{W^{1,q'}}&\leq
C\left\|
H_2-\tilde{H}_2
\right\|_{W^{1,q'}}\\
&\leq
C\left\|g-\tilde{g}\right\|_{W^{1,\infty}}\left\|\nabla k\right\|_{W^{1,\infty}}\left\|\nabla k\right\|_{W^{1,q'}}\\
&\qquad +C\left(\left\|\nabla k\right\|_{W^{1,\infty}}+\left\|\tilde{\nabla}\tilde{k}\right\|_{W^{1,\infty}}\right)
\left\|h-\tilde{h}\right\|_{W^{2,\infty}}\left\|k\right\|_{W^{1,q'}}\\
&\qquad  +C \left(\left\|\nabla k\right\|_{W^{1,\infty}}+\left\|\tilde{\nabla}\tilde{k}\right\|_{W^{1,\infty}}\right)
\left\|\tilde{\nabla}(k-\tilde{k})\right\|_{W^{1,q'}}\\
&\leq C
\left(\left\|h-\tilde{h}\right\|_{L^q}
+\left\|k-\tilde{k}\right\|_{W^{2,q'}}
\right)\left(\left\|k\right\|_{W^{2,r}}+\left\|\tilde{k}\right\|_{W^{2,r}}\right),
\end{align*}
which finishes the proof of the lemma.
\end{proof}

\begin{lem}\label{lem : difference term 5}
There exist a constant $C=C(q,r,\U,\epsilon)$ such that we have
\begin{align*}
	&\left\| \int_{\max\left\{t-1,1\right\}}^{t}[  P(g,h,h_{\infty})_{s\to t}I(t,s,k,h,h_{\infty})- P(\tilde{g},\tilde{h},\tilde{h}_{\infty})_{s\to t}I(t,s,\tilde{k},\tilde{h},\tilde{h}_{\infty}) ]ds\right\|_{X_{q,r}}\\
		&\leq C \left( \left\| k-\tilde{k}\right\|_{X_{q,r}}+\left\|h-\tilde{h}\right\|_{Z_{q,r}} \right) \left(
  \left\|\tilde{k}\right\|_{X_{q,r}}  +  \left\|\tilde{h}-\hat{h}\right\|_{Z_{q,r}} 
    \right).
\end{align*}
\end{lem}
\begin{proof}
By Lemma \ref{lem : difference of heat flows} (ii) we get for $q'\in\left\{q,r\right\}$ that
\begin{align*}
	&\left\| \int_{\max\left\{t-1,1\right\}}^{t}[  P(g,h,h_{\infty})_{s\to t}I(t,s,k,h,h_{\infty})- P(\tilde{g},\tilde{h},\tilde{h}_{\infty})_{s\to t}I(t,s,\tilde{k},\tilde{h},\tilde{h}_{\infty}) ]ds\right\|_{W^{2,q'}}\\
		&\qquad\leq C \int_{\max\left\{t-1,1\right\}}^{t}\left[  (t-s)^{-1/2}\sup_{s\in [t-1,t]} \left( \left\|g-\tilde{g}\right\|_{W^{1,\infty}}+\left\|h-\tilde{h}\right\|_{W^{4,\infty}} \right)	\left\| I(t,s,\tilde{k},\tilde{h},\tilde{h}_{\infty})\right\|_{W^{1,q'}} \right]ds\\	
&\qquad\qquad +		  \int_{\max\left\{t-1,1\right\}}^{t}\left[  (t-s)^{-1/2}\sup_{s\in [t-1,t]}
\left\|
I(t,s,k,h,h_{\infty})-
I(t,s,\tilde{k},\tilde{h},\tilde{h}_{\infty})
\right\|_{W^{1,q'}}
	\right]ds	\\
			&\qquad\leq C \sup_{s\in [\max\left\{t-1,1\right\},t]} \left( \left\|g-\tilde{g}\right\|_{W^{1,\infty}}+\left\|h-\tilde{h}\right\|_{W^{4,\infty}} \right) \left\| I(t,s,\tilde{k},\tilde{h},\tilde{h}_{\infty})\right\|_{W^{1,q'}}\\
			&\qquad\qquad +
\sup_{s\in [\max\left\{t-1,1\right\},t]}
\left\|
I(t,s,k,h,h_{\infty})-
I(t,s,\tilde{k},\tilde{h},\tilde{h}_{\infty})
\right\|_{W^{1,q'}}			
			.
\end{align*}		
By the triangle inequality, Sobolev embedding and Lemma \ref{lem : elliptic regularity},
\begin{align*}
\sup_{s\in [\max\left\{t-1,1\right\},t]} \left( \left\|g-\tilde{g}\right\|_{W^{1,\infty}}+\left\|h-\tilde{h}\right\|_{W^{4,\infty}} \right)
&\leq\sup_{s\in [\max\left\{t-1,1\right\},t]} \left( \left\|k-\tilde{k}\right\|_{W^{1,\infty}}+\left\|h-\tilde{h}\right\|_{W^{4,\infty}} \right)\\
&\leq C\sup_{s\in [\max\left\{t-1,1\right\},t]} \left( \left\|k-\tilde{k}\right\|_{W^{2,r}}+\left\|h-\tilde{h}\right\|_{L^q} \right)\\
&\leq C\left( \left\|k-\tilde{k}\right\|_{X_{q,r}}+\left\|h-\tilde{h}\right\|_{Z_{q,r}} \right) 
\end{align*}
By Lemma \ref{lem : integral term} and the definition of the $X_{q,r}$-norm,		\begin{align*}
\sup_{s\in [\max\left\{t-1,1\right\},t]} \left\| I(t,s,\tilde{k},\tilde{h},\tilde{h}_{\infty})\right\|_{W^{1,q'}}
&\leq
C\sup_{s\in [\max\left\{t-1,1\right\},t]}\left(\left\|\tilde{k}\right\|_{W^{2,r}}+\left\|\nabla \tilde{k}\right\|_{W^{1,q}}+\left\|\tilde{h}-\tilde{h}_{\infty}\right\|_{L^q}\right)\left\| \tilde{k}\right\|_{W^{2,r}}
	\\
&\leq C\sup_{s\in [\max\left\{t-1,1\right\},t]}\left(\left\|\tilde{k}\right\|_{W^{2,r}}+\left\|\nabla \tilde{k}\right\|_{W^{1,q}}+\left\|\tilde{h}-\tilde{h}_{\infty}\right\|_{L^q}\right)					\left\| \tilde{k}\right\|_{W^{2,r}}
				\\
				&\leq Ct^{-\frac{n}{2} \left( \frac{1}{q}-\frac{1}{r} \right) }  \left\| \tilde{k}\right\|_{X_{q,r}}.
\end{align*}
Finally, by Lemma \ref{lem : preparation for difference term 5}, Lemma \ref{lem : inftycontrol} and the definition of the $X_{q,r}$- and $Z_{q,r}$-norms,
\begin{align*}
&\sup_{s\in [\max\left\{t-1,1\right\},t]}\left\| I(t,s,k,h,h_{\infty})-I(t,s,\tilde{k},\tilde{h},\tilde{h}_{\infty})\right\|_{W^{1,q'}}\\
&\leq C\sup_{s\in [\max\left\{t-1,1\right\},t]}
\left(
\left\|k\right\|_{W^{2,r}}+\left\|\tilde{k}\right\|_{W^{2,r}}
\right)
 \cdot
\left(\left\|k-\tilde{k}\right\|_{W^{2,r}}
+\left\|k-\tilde{k}\right\|_{W^{2,q}}
+\left\|h-\tilde{h}\right\|_{L^q}+\left\|h_{\infty}-\tilde{h}_{\infty}\right\|_{L^q}\right)\\
&\qquad +
C\sup_{s\in [\max\left\{t-1,1\right\},t]}
\left\|k-\tilde{k}\right\|_{W^{2,r}}\left(\left\|\tilde{k}\right\|_{W^{2,r}}+\left\|\tilde{h}-\tilde{h}_{\infty}\right\|_{L^q}\right)\\
&\leq  
 Ct^{-\frac{n}{2} \left( \frac{1}{q}-\frac{1}{r} \right) } 
\left(  \left\| k\right\|_{X_{q,r}}+\left\| \tilde{k}\right\|_{X_{q,r}}\right)
 \left( \left\|k-\tilde{k}\right\|_{X_{q,r}}+\left\|h-\tilde{h}\right\|_{Z_{q,r}} \right)\\
 &\qquad +
  Ct^{-\frac{n}{2} \left( \frac{1}{q}-\frac{1}{r} \right) } 
  \left\| k-\tilde{k}\right\|_{X_{q,r}}
    \left(
  \left\|\tilde{k}\right\|_{X_{q,r}}  +  \left\|\tilde{h}-\hat{h}\right\|_{Z_{q,r}} 
    \right).
\end{align*}
Putting these estimates together proves the statement.
\end{proof}
\section{Long-time existence and convergence}\label{sec : long-time existence convergence}
In this section, we are going to prove the main results of the paper.
 Throughout the section, let $(M,\hat{h})$ be a Ricci-flat ALE manifold which is integrable and carries a parallel spinor.
\subsection{Establishing a fixed point of the iteration map}
\begin{definition}Let $\U\subset\M$ be a neighbourhood of $\hat{h}$ on which the map $\Phi$ of Subsection \ref{subsec: proj map} is defined.
	We call a family of metrics $g_t$, $t\in [0,\infty)$ in $\U$ a modified Ricci-de Turck flow starting at $g_0$ if $g_t$ satisfies
	\begin{align*}
	\partial_tg_t&=-2\Ric_{g_t}+\mathcal{L}_{V(g_t,\Phi(g_t))}g_t,\qquad t> 1,\\
	\partial_tg_t&=-2\Ric_{g_t}+\mathcal{L}_{V(g_t,\hat{h})}g_t,\qquad t\in [0,1].
	\end{align*}
	In other words, for $t\in [0,1]$, $g_t$ evolves under the Ricci-de Turck flow with reference metric $\hat{h}$ while for $t>1$, $g_t$ evolves under the Ricci-de Turck flow with moving reference metric $\Phi(g_t)$.
\end{definition}

\begin{thm}\label{thm : fixed point}
Let $q\in (1,n)$ and $r\in (n,\infty)$ so large that $\frac{n}{2}\left(\frac{1}{q}-\frac{1}{r}\right) > \frac{1}{2}$ and $\frac{n}{2}\left(\frac{1}{q}-\frac{1}{r}\right) \neq 1$. 
Then for any $\epsilon>0$, we can choose $\delta>0$ so small that if a metric $g_0$ satisfies
\begin{align*}
\left\|g_0-\hat{h}\right\|_{L^{q}}+\left\|g_0-\hat{h}\right\|_{L^{\infty}}<\delta,
\end{align*}
the modified Ricci-de Turck flow $g_t$
starting at $g_0$
is well-defined, exists for all $t\geq0$ and such that for $t\geq1$, the tensors
$
h_t:=\Phi(g_t), k_t:=g_t-h_t
$
satisfy 
\begin{align*}
	\left\|\left(h_t-\hat{h},k_t\right)\right\|_{Y_{q,r}}<\epsilon.
\end{align*}
\end{thm}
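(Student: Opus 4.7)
The plan is to realize the desired modified Ricci-de Turck flow as the unique fixed point of the iteration map $\psi = (\psi_1, \psi_2)$ constructed at the end of Section \ref{sec : RdT mixed evolution}, using the mapping property Theorem \ref{thm : psimapping} and the contraction property Theorem \ref{thm : psicontracting} as the two main inputs. The first step is to run the standard $\hat{h}$-gauged Ricci-de Turck flow on $[0,1]$. Since $\|g_0-\hat{h}\|_{L^\infty}<\delta$ is small, short-time existence applies, and combining the $C^k$-smoothing of Lemma \ref{lem : Ricci flow short-time estimates in Ck} with the $L^p$-smoothing of Lemma \ref{lem : Ricci flow short-time estimates} yields at $t=1$ the bound
\[
\|g_1-\hat{h}\|_{W^{2,q}}+\|g_1-\hat{h}\|_{W^{2,r}}+\|g_1-\hat{h}\|_{W^{2,\infty}} \leq C\delta.
\]
Setting $h_1:=\Phi(g_1)$ and $k_1:=g_1-h_1$ and invoking the Lipschitz properties of $\Phi$ recorded in Lemma \ref{lem : projection lemma} gives $\|h_1-\hat{h}\|_{L^q}+\|k_1\|_{W^{2,q}}+\|k_1\|_{W^{2,r}}+\|k_1\|_{W^{2,\infty}}\leq C\delta$.

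For $t\geq 1$, the plan is to produce the flow by Banach fixed point in the closed ball
\[
B_\epsilon:=\Bigl\{(h,k)\in Y_{q,r}\,:\,h_t\in \mathcal{U}\cap\mathcal{F},\ k_t\perp \ker_{L^2}(\Delta_{L,h_t}),\ \|(h-\hat{h},k)\|_{Y_{q,r}}\leq \epsilon\Bigr\}.
\]
For $(h,k)\in B_\epsilon$, the $Z_{q,r}$-component forces $\|\partial_t h\|_{L^q}\lesssim t^{-n(1/q-1/r)}$, and the hypothesis $\tfrac{n}{2}(1/q-1/r)>\tfrac12$ makes this exponent strictly greater than $1$, so $h_t$ is Cauchy in $L^q$ and its limit $h_\infty$ sits in $\mathcal{U}\cap\mathcal{F}$; this is the reference metric used to define $\psi_2$. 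By construction, $\psi_1$ lands in $\mathcal{F}$ via $\Phi$ and $\psi_2$ is orthogonal to the appropriate kernel via $(\Pi^\perp_{h,h_\infty})^{-1}$, so $\psi$ respects the structural conditions defining $B_\epsilon$. Theorem \ref{thm : psimapping} together with the initial-data bound above yields
\[
\|\psi(h,k)-(\hat{h},0)\|_{Y_{q,r}} \leq C\bigl(\delta+\epsilon\,\|(h-\hat{h},k)\|_{Y_{q,r}}\bigr) \leq C(\delta+\epsilon^2),
\]
while Theorem \ref{thm : psicontracting} (read with the natural multiplicative factor on the right-hand side) gives
\[
\|\psi(h,k)-\psi(\tilde{h},\tilde{k})\|_{Y_{q,r}} \leq C(\delta+\epsilon)\,\|(h-\tilde{h},k-\tilde{k})\|_{Y_{q,r}}.
\]
Fixing $\epsilon$ first so that $C\epsilon\leq 1/4$ and then choosing $\delta$ so that $C\delta\leq \min\{\epsilon/2,1/4\}$, the map $\psi\colon B_\epsilon\to B_\epsilon$ is a strict contraction, and the Banach fixed point theorem produces a unique fixed point $(h,k)\in B_\epsilon$.

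It remains to identify this fixed point with the desired flow. By the derivation of $\psi$ from the integral identity \eqref{eq : identity1} and the equivalence in Proposition \ref{prop : alternative mod RdTf}, $g_t:=h_t+k_t$ solves the Ricci-de Turck flow with moving gauge on $[1,\infty)$; parabolic regularity promotes this to a smooth solution, and uniqueness of the (strictly parabolic) Ricci-de Turck flow glues it to the initial short-time flow at $t=1$, producing the asserted modified Ricci-de Turck flow on $[0,\infty)$ with $\|(h_t-\hat{h},k_t)\|_{Y_{q,r}}\leq\epsilon$ automatic from membership in $B_\epsilon$. The main obstacle is not the contraction step itself, which is formal once Theorems \ref{thm : psimapping} and \ref{thm : psicontracting} are available, but rather the structural verification that $\psi$ is well-defined on the entire ball $B_\epsilon$: one must check that $\Phi$, $D_g\Phi$, and the inverse projection $(\Pi^\perp_{h,h_\infty})^{-1}$ make sense for all $(h,k)\in B_\epsilon$ and that $h_\infty$ extracted as the $L^q$-limit of $h_t$ does indeed lie in $\mathcal{U}\cap\mathcal{F}$. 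These structural facts are encoded in Lemma \ref{lem : Phiwelldefined}, Lemma \ref{lem : normcontrol}, and Lemma \ref{lem : projection lemma}, and once they are in place the fixed-point argument outlined above runs smoothly.
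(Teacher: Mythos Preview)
Your proposal is correct and follows essentially the same approach as the paper: both arguments run the $\hat{h}$-gauged Ricci-de Turck flow on $[0,1]$ to gain regularity, then obtain the solution on $[1,\infty)$ as a fixed point of $\psi$ using Theorems \ref{thm : psimapping} and \ref{thm : psicontracting}, and finally identify the fixed point with the genuine flow via uniqueness for the strictly parabolic equation. The only difference is cosmetic: the paper constructs the Picard iterates $(h^{(i)},k^{(i)})$ explicitly and shows they form a Cauchy sequence, whereas you invoke the Banach fixed point theorem directly on the closed ball $B_\epsilon$; these are equivalent. You also correctly read Theorem \ref{thm : psicontracting} with the $\|k_1\|$-term multiplying the difference (as the paper itself does in its proof and as Proposition \ref{psi2contraction} confirms), which is necessary for the contraction.
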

\begin{proof} By Lemma \ref{lem : Ricci flow short-time estimates in Ck}, we can for any given $\epsilon_0>0$ pick $\delta>0$ so small such that the bound
\begin{align*}
\left\|g_0-\hat{h}\right\|_{L^{\infty}}<\delta
\end{align*}
implies 
\begin{align*}
\left\|g_t-\hat{h}\right\|_{L^{\infty}}<\epsilon_0
\end{align*}
for all $t\in [0,1]$.
	Therefore, for any given $\epsilon_1>0$, Lemma \ref{lem : Ricci flow short-time estimates} enables us to choose $\delta>0$ so small that the Ricci-de Turck flow $g_t$ with background metric $\hat{h}$ with initial data $g_0$ exists up to time $t=1$ and satisfies
	\begin{align*}
	\left\|g_1-\hat{h}\right\|_{W^{2,q}}+\left\|g_1-\hat{h}\right\|_{W^{2,\infty}}< \epsilon_1.
   \end{align*}
   Due to interpolation, this also implies
 	\begin{align*}
 \left\|g_1-\hat{h}\right\|_{W^{2,r}}< \epsilon_1
 \end{align*}  
for $r\in (q,\infty)$.
  For any given $\epsilon_2>0$, we may choose $\epsilon_1>0$ so small that the projection map 
  \begin{align*}
  \Phi:\mathcal{M}\supset \mathcal{U}\to\mathcal{F}
  \end{align*}
  from Subsection \ref{subsec: proj map} can be applied to $g_1$ and such that the tensors
  \begin{align*}
  h_1:=\Phi(g_1),\qquad k_1:=g_1-\Phi(g_1)
  \end{align*}
  satisfy
  \begin{align*}
  \left\|k_1\right\|_{W^{2,q}}+  \left\|k_1\right\|_{W^{2,r}}+  \left\|k_1\right\|_{W^{2,\infty}}+ \left\|h_1-\hat{h}\right\|_{L^q}<\epsilon_2.
  \end{align*}
  We now define for $t\geq 1$
  \begin{align*}
  h_t^{(1)}:=h_1,\qquad  k_t^{(1)}:=e^{-(t-1)\Delta_{L,h_1}}k_1.
  \end{align*}
  It follows from Lemma \ref{lem : linear part} (applied to the special case where $g=h=h_{\infty}$ are all equal to $h_1$) that
  \begin{align*}
  \left\|k_t^{(1)}\right\|_{X_{q,r}}
  	\leq C \left(\left\|k_1\right\|_{W^{2,q}}+ \left\|k_1\right\|_{W^{2,r}}\right)
  \end{align*}
  and it is clear from the definition of the norm that
 \begin{align*}
  \left\|h_t^{(1)}-\hat{h}\right\|_{Z_{q,r}}= \left\|h_1-\hat{h}\right\|_{Z_{q,r}}= \left\|h_1-\hat{h}\right\|_{L^q}.
  \end{align*}
  Therefore, for any given $\epsilon_3>0$, we may choose $\epsilon_2>0$ so small that
  \begin{align*}
   \left\|\left( h_t^{(1)}-\hat{h},k_t^{(1)} \right) \right\|_{Y_{q,r}}< \epsilon_3.
  \end{align*}
  Inductively, we define the tuple
  \begin{align*}
 	\left( h_t^{(i+1)}, k_t^{(i+1)}\right)
 		:=\psi\left(h^{(i)},k^{(i)}\right).
  \end{align*}
    Now we claim that we can choose $\epsilon_2$ and $\epsilon_3 $ so small that
\begin{align*}
	\left\|\left(h_t^{(i)}-\hat{h},k_t^{(i)}\right)\right\|_{Y_{q,r}}
		< \epsilon_3
\end{align*}  
for all $i\in\N$. We prove this by induction on $i$. The claim obviously holds for $i=1$. 
Recall that by definition of the norms in Subsection \ref{subsec : Banach space}, we have
\begin{align*}
\left\| k\right\|_{X_{q,r}}\leq \left\|(h-\hat{h},k)\right\|_{Y_{q,r}}.
\end{align*}
Thus by Theorem \ref{thm : psimapping}, there exists an $\epsilon_4$ such that the estimate
\begin{align}\label{eq : psimapping}
	\left\| \psi(h,k)-(\hat{h},0)\right\|_{Y_{q,r}}
		\leq C_1 \left( \left\|k_1\right\|_{W^{2,q}}+\left\|k_1\right\|_{W^{2,\infty}}+\left\|h_1-\hat{h}\right\|_{L^q}+\left\|(h-\hat{h},k)\right\|_{Y_{q,r}}^2 \right)
\end{align}
holds for some constant $C_1>0$, as long as $\left\|(h-\hat{h},k)\right\|_{Y_{q,r}}<\epsilon_4$. If we choose $\epsilon_2 $ and $\epsilon_3$ such that 
\begin{align*}
\epsilon_2<\frac{\epsilon_3}{2C_1},\qquad \epsilon_3< \min\left\{\frac{1}{2C_1},{\epsilon_4}\right\},
\end{align*}
then the induction assumption implies
\begin{align*}
	\left\|\left(h^{(i)}-\hat{h},k^{(i)}\right) \right\|_{Y_{q,r}}<\epsilon_3<\epsilon_4.
\end{align*}
Now \eqref{eq : psimapping} implies
\begin{align*}
	\left\| \left( h^{(i+1)}-\hat{h},k^{(i+1)}\right) \right\|_{Y_{q,r}}
		&=\left\|\psi \left( h^{(i)},k^{(i)}\right)-\left(\hat{h},0 \right) \right\|_{Y_{q,r}}\\
		&\leq C_1\left( \left\|k_1\right\|_{W^{2,q}}+\left\|k_1\right\|_{W^{2,\infty}}+\left\|h_1-\hat{h}\right\|_{L^q}+\left\| \left( h^{(i)}-\hat{h},k^{(i)} \right) \right\|_{Y_{q,r}}^2 \right) \\
		&\leq C_1(\epsilon_2+\epsilon_3^2)\leq C_1\epsilon_2+(C_1\epsilon_3)\epsilon_3\leq \frac{\epsilon_3}{2}+\frac{\epsilon_3}{2}=\epsilon_3
\end{align*}
and the claim is shown by induction.\\
Now due to Theorem \ref{thm : psicontracting}, there exists an $\epsilon_5>0$ such that
the map $\psi$ satisfies the estimate
\begin{align*}
	\left\|\psi(h,k)-\psi(\tilde{h},\tilde{k})\right\|_{Y_{q,r}}
		&\leq C_2\left(\left\|k_1\right\|_{W^{2,q}}+\left\|k_1\right\|_{W^{2,r}}\right)\left\|(h-\tilde{h},k-\tilde{k})\right\|_{Y_{q,r}} \\
		&\qquad +C_2\left(\left\|(h-\hat{h},k)\right\|_{Y_{q,r}}+\left\|(\tilde{h}-\hat{h},\tilde{k})\right\|_{Y_{q,r}}\right)\left\|(h-\tilde{h},k-\tilde{k})\right\|_{Y_{q,r}}
\end{align*}
as long as $\left\|(h-\hat{h},k)\right\|_{Y_{q,r}}+\left\|(\tilde{h}-\hat{h},\tilde{k})\right\|_{Y_{q,r}}<\epsilon_5$. If we now choose $\epsilon_2$ and $\epsilon_3$ so small that
\begin{align*}
C_2\epsilon_2+2C_2\epsilon_3\leq \frac{1}{2},\qquad {\epsilon_3}\leq \frac{\epsilon_5}{2},
\end{align*}
we obtain
\begin{align*}
	&\left\|\left(h^{(i+2)}-h^{(i+1)},k^{(i+2)}-k^{(i+1)}\right)\right\|_{Y_{q,r}} \\
	&\qquad\leq C_2\left(\left\|k_1\right\|_{W^{2,q}}+\left\|k_1\right\|_{W^{2,r}}
+\left\|\left(h^{(i+1)}-\hat{h},k^{(i+1)}\right)\right\|_{Y_{q,r}}+\left\|\left(h^{(i)}-\hat{h},k^{(i)}\right)\right\|_{Y_{q,r}}
\right)\\
	&\qquad\qquad\cdot\left\|\left(h^{(i+1)}-h^{(i)},k^{(i+1)}-k^{(i)}\right)\right\|_{Y_{q,r}}\\
	&\qquad\leq (C_2\epsilon_2+2C_2\epsilon_3)\left\|\left(h^{(i+1)}-h^{(i)},k^{(i+1)}-k^{(i)}\right)\right\|_{Y_{q,r}}\\
	&\qquad \leq \frac{1}{2}\left\|\left(h^{(i+1)}-h^{(i)},k^{(i+1)}-k^{(i)}\right)\right\|_{Y_{q,r}}
\end{align*}
for all $i\in\N$. 
Thus by induction, the sequence
\begin{align*}
\left\{\left(h^{(i)},k^{(i)}\right)\right\}_{i\in\N}
\end{align*}
is a Cauchy sequence in $Y_{q,r}$. By construction, it converges to an element $\left(h^{(\infty)},k^{(\infty)}\right)\in Y_{q,r}$ which satisfies
\begin{align*}
	\psi\left(h^{(\infty)},k^{(\infty)}\right)=\left(h^{(\infty)},k^{(\infty)}\right)
\end{align*}
and thus is by construction the (unique) fixed point of $\psi$. In addition, if for the $\epsilon>0$ given in the statement of the Theorem, $\epsilon_3$ is chosen so small that $\epsilon_3<\epsilon$, we get
\begin{align*}
	\left\|\left(h^{(\infty)}-\hat{h},k^{(\infty)}\right)\right\|_{Y_{q,r}}
		\leq \lim_{i\to\infty}\left\|\left(h^{(i)}-\hat{h},k^{(i)}\right)\right\|_{Y_{q,r}}
		\leq \epsilon_3<\epsilon.
\end{align*}
By the discussion in Section \ref{sec : RdT mixed evolution}, $\left(g^{(\infty)}_t=h^{(\infty)}_t+k^{(\infty)}_t\right)_{t\geq1}$ (which is for each fixed time an element in $W^{2,q}\cap W^{2,r}\subset W^{1,\infty}$) is a (weak) solution of the Ricci-de Turck flow with moving gauge, starting at $g_1$. On the other hand, a solution of the Ricci-de Turck flow with moving gauge starting at $g_1$ is uniquely obtained by solving the $\hat{h}$-gauged Ricci-de Turck flow and pulling back by a suitable family of diffeomorphisms. By construction, the resulting flow $(g_t)_{t\geq1}$ is $W^{2,q}\cap W^{2,r}$-close to $\hat{h}$ at least for small times $[t,t+\epsilon]$. By uniqueness, $g_t=g^{(\infty)}_t$ as long as $g_t$ does not leave a small neighbourhood. A bootstrapping argument then implies that $g_t=g^{(\infty)}_t$ for all time which finishes the proof.
\end{proof}

\subsection{Optimal convergence rates of the modified Ricci-de Turck flow}\label{subsection : optimal convergence}
Throughout this subsection, let $q\in (1,n)$, $r\in (n,\infty)$ be H\"{o}lder exponents satisfying 
\begin{align*}
\frac{n}{2}\left(\frac{1}{q}-\frac{1}{r}\right) > \frac{1}{2},\qquad
\frac{n}{2}\left(\frac{1}{q}-\frac{1}{r}\right) \neq 1.
\end{align*}
Fix $\epsilon>0$ and $\delta>0$ so small that Theorem \ref{thm : fixed point} applies: The modified Ricci-de Turck flow $g_t$ starting at an initial metric $g_0$ $\delta$-close to $\hat{h}$ in $L^{[q,\infty]}$ is well-defined, exists for all $t\geq0$ and for $t\geq1$ satisfies 
\begin{align*}
	\left\|\left(h_t-\hat{h},k_t\right)\right\|_{Y_{q,r}}<\epsilon,
\end{align*}
with $
h_t:=\Phi(g_t), k_t:=g_t-h_t
$.

\begin{prop}\label{prop : improved rates}
If we have
	\begin{align*}
	\left\|g_0-\hat{h}\right\|_{L^p}<\infty,
	\end{align*}
	for some $p\in (1,q)$, then  $(h-\hat{h},k)\in Y_{p,r'}$ for every $r'\in [r,\infty)$.
\end{prop}
\begin{proof}
We are first going to show that  $(h-\hat{h},k)\in Y_{p,r}$. For this purpose, let $h_1,k_1$ be as in the previous proof. Recall that we have
  \begin{align*}
\left\|k_1\right\|_{W^{2,q}}+  \left\|k_1\right\|_{W^{2,r}}+  \left\|k_1\right\|_{W^{2,\infty}}+ \left\|h_1-\hat{h}\right\|_{L^q}<\epsilon_2
\end{align*}
and that $\left\|g_t-\hat{h}\right\|_{L^{\infty}}\leq \epsilon_0$ for all $t\in [0,1]$.
Therefore, short-time estimates under the Ricci-de Turck flow (Lemma \ref{lem : Ricci flow short-time estimates}) yield
  \begin{align*}
\left\|g_1-\hat{h}\right\|_{W^{2,p}}<\infty
\end{align*}
and therefore,
\begin{align*}
\left\|k_1\right\|_{W^{2,p}}\leq \left\|g_1-\hat{h}\right\|_{W^{2,p}}+\left\|h_1-\hat{h}\right\|_{W^{2,p}}<\infty.
\end{align*}
By interpolation, $\left\|k_1\right\|_{W^{2,q'}}<\infty$ for all $q'\in [p,q]$.
	Let now $q=q_0>q_1>\ldots >q_{N}=p$ be a finite sequence of H\"{o}lder exponents satisfying
\begin{align*}
	\frac{n}{2}\left(\frac{1}{q_i}-\frac{1}{q_{i-1}}\right)
\leq \min\left\{\frac{n}{2r},n\left(\frac{1}{q}-\frac{1}{r}\right)-1,\frac{1}{2}\right\},
\end{align*}
for all $i\in\left\{1,\ldots,N\right\}$.
As $q\geq q_{i-1}$,
\begin{align*}
	\frac{n}{2}\left(\frac{1}{q_i}-\frac{1}{q_{i-1}}\right)
		\leq \min\left\{\frac{n}{2r},n\left(\frac{1}{q_{i-1}}-\frac{1}{r}\right)-1,\frac{1}{2}\right\}.
\end{align*}
A repetitive application of Proposition \ref{psi1mapping} and Proposition \ref{psi2mapping} applied to $h=\psi_1(h,k)$ and $k=\psi_2(h,k)$ yields
		\begin{align*}
	\left\|k\right\|_{X_{q_{i},r}}
		&\leq C
			\left[\left\|k_1 \right\|_{W^{2,q_{i}}}+\left\|k_1 \right\|_{W^{2,r}}
+\left(\left\|k\right\|_{X_{q_{i-1},r}}+\left\|h-\hat{h}\right\|_{Z_{q_{i-1},r}}\right)\left\|k\right\|_{X_{q_{i-1},r}}\right],\\
\left\|h-\hat{h}\right\|_{Z_{q_i,r}}&\leq C\left(\left\|h_1-\hat{h}\right\|_{L^{q_i}}+\left\|k\right\|_{X_{q_i,r}}^2\right)
\leq C\left(\left\|h_1-\hat{h}\right\|_{L^{q}}+\left\|k\right\|_{X_{q_i,r}}^2\right),
\end{align*}
and after a finite number of steps, we obtain
\begin{align*}
\left\|(h-\hat{h},k)\right\|_{Y_{p,r}}= \left\|h-\hat{h}\right\|_{Z_{p,r}}+\left\|k\right\|_{X_{p,r}}<\infty,
\end{align*} 
as desired.

Now we are going to show that $(h-\hat{h},k)\in Y_{p,r'}$ for any $r'\in [r,\infty)$. 
The argumentation is similar to the above but slightly more involved.
Pick a finite sequence $r=r_0<r_1<\ldots <r_N=r'$ such that
\begin{align*}
	\min\left\{n\left(\frac{1}{p}-\frac{1}{r}\right)-1,\frac{1}{2}\right\}>\frac{n}{2}\left(\frac{1}{r_{i-1}}-\frac{1}{r_i}\right).
\end{align*}
Note that this also implies 
\begin{align*}
	\min\left\{n\left(\frac{1}{p}-\frac{1}{r_{i-1}}\right)-1,\frac{1}{2}\right\}
		> \frac{n}{2}\left(\frac{1}{r_{i-1}}-\frac{1}{r_i}\right),
\end{align*}
as $r_{i-1}\geq r$.
By interpolation,
\begin{align*}
\left\|k_1 \right\|_{W^{2,r_i}}<\infty.
\end{align*}
Let us first show that
\begin{align*}
	\left\|k\right\|_{X_{p,r_i}}
		&\leq C
\left(\left\|k_1 \right\|_{W^{2,p}}+\left\|k_1 \right\|_{W^{2,r_i}}\right)\\
&\qquad 
+
C\left(\left\|k_1 \right\|_{W^{2,r_i}}+\left\|k\right\|_{X_{p,r_{i-1}}}+\left\|h-\hat{h}\right\|_{Z_{p,r_{i-1}}}\right)
\left(\left\|k_1 \right\|_{W^{2,r_i}}+ \left\|k\right\|_{X_{p,r_{i-1}}}\right).
\end{align*}
To do so we estimate for $k=\psi_2(h,k)$, using Lemma \ref{lem : projection and Banach space} and the triangle inequality,
\begin{align*}
	\left\|{\psi}_2(h,k)\right\|_{X_{p,r_i}}
		&\leq C\left\|\overline{\psi}_2(h,k)\right\|_{X_{p,r_i}} \\
		&\leq C\left\| P(g,h,h_{\infty})_{1\to t}(\Pi^{\perp}_{h_\infty}(k_1))\right\|_{X_{p,r_i}}\\
		&\qquad+ C\left\|\int_1^{\max\left\{1,t-2\right\}}  P(g,h,h_{\infty})_{s\to t}I(t,s,k,h,h_{\infty})ds\right\|_{X_{p,r_i}}\\
		&\qquad + C\left\|	\int_{\max\left\{1,t-2\right\}}^t P(g,h,h_{\infty})_{s\to t}I(t,s,k,h,h_{\infty})ds\right\|_{X_{p,r_i}}.
\end{align*}
Lemma \ref{lem : linear part} yields
\begin{align*}
\left\| P(g,h,h_{\infty})_{1\to t}(\Pi^{\perp}_{h_\infty}(k_1))\right\|_{X_{p,r_i}}
\leq C(\left\|k_1 \right\|_{W^{2,p}}+\left\|k_1 \right\|_{W^{2,r_i}})
\end{align*}
and Lemma \ref{lem : nonlinear part 1} yields
\begin{align*}
	&\left\|\int_1^{\max\left\{1,t-2\right\}} P(g,h,h_{\infty})_{s\to t}I(t,s,k,h,h_{\infty})ds\right\|_{X_{p,r_i}} \\
		&\qquad \qquad \leq C\left(\left\|k\right\|_{X_{p,r_{i-1}}}+\left\|h-\hat{h}\right\|_{Z_{p,r_{i-1}}}\right)\left\|k\right\|_{X_{p,r_{i-1}}}.
\end{align*}
Now due to the definition of the norms, we have for
\begin{align*}
(A):=	\int_{\max\left\{1,t-2\right\}}^t P(g,h,h_{\infty})_{s\to t}I(t,s,k,h,h_{\infty})ds
\end{align*}
the inequality
\begin{align*}
	\left\|(A)\right\|_{X_{p,r_i}}
		\leq \left\|	(A)\right\|_{X_{p,r_{i-1}}}+\sup_{t\geq 1}t^{\frac{n}{2}\left(\frac{1}{p}-\frac{1}{r_{i}}\right)}\left\|(A)\right\|_{W^{2,r_{i}}}.
\end{align*}
From Lemma \ref{lem : nonlinear part 2}, we know already that 
\begin{align*}
	\left\|(A)\right\|_{X_{p,r_{i-1}}}
		\leq C\left(\left\|k\right\|_{X_{p,r_{i-1}}}+\left\|h-\hat{h}\right\|_{Z_{p,r_{i-1}}}\right)\left\|k\right\|_{X_{p,r_{i-1}}},
\end{align*}
and thus it suffices to consider the second term. From Lemma \ref{lem : short-time estimates} (iii) and (iv), and Lemma \ref{lem : integral term}, we get
\begin{equation}\begin{split}\label{eq: estimate (A)}
	\left\|(A)\right\|_{W^{2,r_{i}}}
&\leq 	\int_{\max\left\{1,t-1\right\}}^t  \left\| P(g,h,h_{\infty})_{s\to t}I(t,s,k,h,h_{\infty})\right\|_{W^{2,r_{i}}} ds\\
&\qquad+\int_{\max\left\{1,t-2\right\}}^{\max\left\{1,t-1\right\}}  \left\| P(g,h,h_{\infty})_{s\to t}I(t,s,k,h,h_{\infty})\right\|_{W^{2,r_{i}}} ds\\
&\leq 	C\int_{\max\left\{1,t-1\right\}}^t  (t-s)^{-1/2}
\left\|I(t,s,k,h,h_{\infty})\right\|_{W^{1,r_{i}}}ds\\
&\qquad +C\int_{\max\left\{1,t-2\right\}}^{\max\left\{1,t-1\right\}}    (t-s)^{-1}
\left\|I(t,s,k,h,h_{\infty})\right\|_{L^{r_{i}}}ds\\
		&\leq C\sup_{s\in [\max\left\{1,t-1\right\},t]} \left\|I(t,s,k,h,h_{\infty})\right\|_{W^{1,r_{i}}}\\
		&\qquad + C\sup_{s\in [\max\left\{1,t-2\right\},\max\left\{1,t-1\right\}]} \left\|I(t,s,k,h,h_{\infty})\right\|_{L^{r_{i}}} \\
		&\leq C\sup_{s\in [\max\left\{1,t-2\right\},t]}(\left\|k\right\|_{W^{2,r_{i}}}+\left\|h-h_{\infty}\right\|_{L^p})\left\|k\right\|_{W^{2,r_{i}}}.
\end{split}
\end{equation}
Let us distinguish between large times and small times. 
For $t\leq 4$, the short-time estimates in Lemma \ref{lem : Ricci flow moving gauge short-time estimates} (ii) yield 
\begin{align*}
	&\sup_{s\in [\max\left\{1,t-2\right\},t]}(\left\|k\right\|_{W^{2,r_{i}}}+\left\|h-h_{\infty}\right\|_{L^p})\left\|k\right\|_{W^{2,r_{i}}}\\
&\qquad\qquad\leq 		
		C\left(\left\|k_1\right\|_{W^{2,r_{i}}}+\sup_{s\in [1,t]}\left\|k\right\|_{W^{2,r_{i-1}}}+ \left\|h-h_{\infty}\right\|_{L^p}\right) 	\left(\left\|k_1\right\|_{W^{2,r_{i}}}+\sup_{s\in [1,t]}\left\|k\right\|_{W^{2,r_{i-1}}}\right)	\\	
&\qquad\qquad\leq C\left( \left\|k_1\right\|_{W^{2,r_{i}}}+
\left\|k\right\|_{X_{p,r_{i-1}}} +
\left\|h-\hat{h}\right\|_{Z_{p,r_{i-1}}} \right)\left(
 \left\|k_1\right\|_{W^{2,r_{i}}}+
\left\|k\right\|_{X_{p,r_{i-1}}}\right).
\end{align*}
Now let us consider large times $t\geq 4$.
By  using Sobolev embedding and applying Lemma \ref{lem : Ricci flow moving gauge short-time estimates} (i), we have
\begin{equation}\begin{split}\label{eq : estimate (A) part 1}
	\sup_{s\in [t-2,t]}\left\|k\right\|_{W^{2,r_{i}}}
		&\leq	\sup_{s\in [t-2,t]} C\left\|k\right\|_{W^{3,r_{i-1}}}\\
		&\leq 	\sup_{s\in [t-3,t]} C\left\|k\right\|_{W^{2,r_{i-1}}}
 		\leq C t^{-\frac{n}{2}\left(\frac{1}{p}-\frac{1}{r_{i-1}}\right)}\left\|k\right\|_{X_{p,r_{i-1}}}.
\end{split}
\end{equation}
Since $p<r_{i-1}$, we conclude
\begin{align}\label{eq : estimate (A) part 2a}
	\sup_{s\in [t-2,t]}\left\|k\right\|_{W^{2,r_{i}}}
		\leq C t^{-\frac{n}{2}\left(\frac{1}{p}-\frac{1}{r_{i-1}}\right)} \left\|k\right\|_{X_{p,r_{i-1}}}
		\leq  C t^{-\frac{n}{2}\left(\frac{1}{r_{i-1}}-\frac{1}{r_{i}}\right)}\left\|k\right\|_{X_{p,r_{i-1}}}
\end{align}
and since $n\left(\frac{1}{p}-\frac{1}{r_{i-1}}\right)-1>\frac{n}{2}\left(\frac{1}{r_{i-1}}-\frac{1}{r_{i}}\right)$, we also get
\begin{align}\label{eq : estimate (A) part 2b}
\sup_{s\in[1,t]}	\left\|h-h_{\infty}\right\|_{L^p}
		\leq C t^{1-n\left(\frac{1}{p}-\frac{1}{r_{i-1}}\right)}\left\|h-\hat{h}\right\|_{Z_{p,r_{i-1}}}
		\leq C t^{-\frac{n}{2}\left(\frac{1}{r_{i-1}}-\frac{1}{r_{i}}\right)} \left\|h-\hat{h}\right\|_{Z_{p,r_{i-1}}}.
\end{align}
Applying the estimates \eqref{eq : estimate (A) part 1}, \eqref{eq : estimate (A) part 2a} and 
\eqref{eq : estimate (A) part 2b} to \eqref{eq: estimate (A)}, we get
\begin{align*}
	\left\|(A)\right\|_{W^{2,r_{i}}}
		\leq Ct^{-\frac{n}{2}\left(\frac{1}{p}-\frac{1}{r_{i}}\right)}\left( \left\|k\right\|_{X_{p,r_{i-1}}} + \left\|h-\hat{h}\right\|_{Z_{p,r_{i-1}}}\right) \left\|k\right\|_{X_{p,r_{i-1}}}.
\end{align*}
Thus we have shown
\begin{align*}
	\left\|k\right\|_{X_{p,r_i}}
		&\leq C\left( \left\|k_1\right\|_{W^{2,r_{i}}}+
\left\|k\right\|_{X_{p,r_{i-1}}} +
\left\|h-\hat{h}\right\|_{Z_{p,r_{i-1}}} \right)\left(
 \left\|k_1\right\|_{W^{2,r_{i}}}+
\left\|k\right\|_{X_{p,r_{i-1}}}\right)<\infty
\end{align*}
and from Proposition \ref{psi1mapping} and Lemma \ref{lem : elliptic regularity}, we have
\begin{align*}
\left\|h-\hat{h}\right\|_{Z_{p,r_{i}}}&\leq C\left(\left\|h-\hat{h}\right\|_{L^{p}}+\left\|k\right\|_{X_{p,r_{i}}}^2\right)
\leq C\left(\left\|h-\hat{h}\right\|_{L^{q}}+\left\|k\right\|_{X_{p,r_{i}}}^2\right)<\infty.
\end{align*}
Thus after a finite number of steps, we get
\begin{align*}
\left\|(h-\hat{h},k)\right\|_{Y_{p,r'}}= \left\|h-\hat{h}\right\|_{Z_{p,r'}}+\left\|k\right\|_{X_{p,r'}}<\infty,
\end{align*} 
as desired
\end{proof}
\begin{cor}\label{prop Ck convergence rates}
Suppose there exists $p\in (1,q)$ such that $\left\|g_0-\hat{h}\right\|_{L^p}<K$ for some  constant $K<\infty$. Then
for each $\tau>0$ there exists a constant $C=C(K,p,\epsilon,\tau)$ such that
\begin{align*}
	\left\|h_t-h_{\infty}\right\|_{C^m}\leq C\cdot t^{1-\frac{n}{p}+\tau},\qquad \left\|k_t\right\|_{C^m}\leq C\cdot t^{-\frac{n}{2p}+\tau}.
\end{align*}
\end{cor}
\begin{proof}
	We get from Proposition \ref{prop : improved rates} that 
	\begin{align*}
	\left\|(h_t-\hat{h},k_t)\right\|_{Y_{p,r'}}<\infty,
	\end{align*}
	for all $r'\in [r,\infty)$. Let $\tau>0$ be given and choose 
	 $r'$ so large that $\frac{n}{2r'}<\frac{n}{r'}<\tau$.
	Then we get
	\begin{align*}
	\left\|h_t-h_{\infty}\right\|_{C^m}
		\leq C \left\|h_t-h_{\infty}\right\|_{L^{p}}&\leq C\int_t^{\infty}\left\|\partial_sh\right\|_{L^{p}}ds\\
		&\leq C \int_t^{\infty}s^{-n\left(\frac{1}{p}-\frac{1}{r'}\right)}ds\left\|h-\hat{h}\right\|_{Z_{p,r'}}\\
		&\leq C t^{1-n\left(\frac{1}{p}-\frac{1}{r'}\right)}\left\|h-\hat{h}\right\|_{Z_{p,r'}}\\
		&\leq Ct^{1-\frac{n}{p}+\tau}\left\|h-\hat{h}\right\|_{Z_{p,r'}}.
	\end{align*}
For proving the second estimate, we distinguish between small and large times. Consider first $t\in [1,2]$.
Because $g_t$, $t\in[0,1]$ evolves under the Ricci-de Turck flow with fixed gauge, Lemma \ref{lem : Ricci flow short-time estimates} yields $\left\|k_{1}\right\|_{W^{m+1,r'}}<\infty$. Thus, Sobolev embedding and Lemma \ref{lem : Ricci flow moving gauge short-time estimates} (ii) yield
	\begin{align*}
	\left\|k_t\right\|_{C^m}\leq C\left\|k_{t}\right\|_{W^{m+1,r'}}&\leq C(\left\|k_{1}\right\|_{W^{m+1,r'}}+ \sup_{s\in [1,t]}\left\|k_{s}\right\|_{W^{2,r'}})\\
	&\leq  C(\left\|k_{1}\right\|_{W^{m+1,r'}}+ \left\|k\right\|_{X_{p,r'}}).
	\end{align*}	
For $t>2$	we use Sobolev embedding and Lemma \ref{lem : Ricci flow moving gauge short-time estimates} (i) to get
	\begin{align*}
	\left\|k_t\right\|_{C^m}\leq C\left\|k_{t}\right\|_{W^{m+1,r'}}\leq C\sup_{s\in [t-1,t]}\left\|k_{s}\right\|_{W^{2,r'}}\leq 
	C t^{-\frac{n}{2}\left(\frac{1}{p}-\frac{1}{r'}\right)}\left\|k\right\|_{X_{p,r'}}\leq Ct^{-\frac{n}{2p}+\tau}\left\|k\right\|_{X_{p,r'}},
	\end{align*}
	and the proof of the theorem is finished.
\end{proof}
For proving a refinement of the decay of $k_t$, let us fix some time $t\geq 1$.
Define
\begin{align*}
\overline{k}_s=\Pi^{\perp}_{h_s,h_t}(k_s),\qquad s\in [1,t].
\end{align*}
By Proposition \ref{prop : alternative mod RdTf}, the evolution on $\overline{k}$ can be written as
\begin{align*}\partial_s\overline{k}_s+\Delta_{L,h_t}\overline{k}_s=\Pi^{\perp}_{h_t}[(\Delta_{L,h_t}-\Delta_{L,h_s})(k_s)+
(1-D_{g_s}\Phi)(H_1(s))].
\end{align*}
Observe also that $\overline{k}_t=\Pi^{\perp}_{h_t,h_t}(k_t)=k_t$. Therefore, by the Duhamel principle
we get an alternative formula for $k_t$ which is
\begin{equation}\label{eq : modified expression k_t}
	\begin{split}
k_t=\overline{k}_t&=e^{-(t-1)\Delta_{L,h_t}}\overline{k}_1\\
&\qquad+
\int_1^{t}e^{-(t-s)\Delta_{L,h_t}}\Pi^{\perp}_{h_t}[(\Delta_{L,h_t}-\Delta_{L,h_s})(k_s)+
(1-D_{g_s}\Phi)(H_1(s))]ds.
\end{split}
\end{equation}
To obtain estimates for this expression, we have to derive estimates on the integrand. The estimates  in Lemma \ref{lem : nonlinear part 0.1} and Lemma \ref{lem : nonlinear part 0.2} will enable us to control the part of the integral from $1$ to $t-1$. To treat also the part from $t-1$ to $t$, we need another lemma.
\begin{lem}\label{lem : nonlinear part 0.3}
Suppose there exists $p\in (1,q)$ such that $\left\|g_0-\hat{h}\right\|_{L^p}<\infty$ .
Then for  $r'\in[r,\infty)$, $q'\in [p,r']$ and $i\in\N_0$, there exists a constant $C(p,q',r',i,\epsilon)$ such that for all $t\geq 3$, we have
\begin{align*}
	&\sup_{s\in [t-1,t]}\left\| (\Delta_{L,h_t}-\Delta_{L,h_s})(k_s)+(1-D_{g_s}\Phi)(H_1(s))\right\|_{W^{i,q'}}\\
		&\qquad\qquad\leq Ct^{-\frac{n}{2}\left(\frac{1}{p}-\frac{1}{r'}\right)-\frac{n}{2}\left(\frac{1}{p}-\frac{1}{q'}\right)} \left(\left\|h-\hat{h}\right\|_{Z_{p,r'}}+\left\|k\right\|_{X_{p,r'}}\right)\left\|k\right\|_{X_{p,r'}}.
\end{align*}
\end{lem}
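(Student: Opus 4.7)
The plan is to estimate the two summands of the integrand separately and to exploit that the integration interval $[\max\{1,t-1\},t]$ has length at most one. The key inputs are Proposition \ref{prop : improved rates} (which guarantees $\|(h-\hat h,k)\|_{Y_{p,r'}}<\infty$), Lemma \ref{lem : projection lemma} together with Proposition \ref{prop : kernel properties} (which imply that all Sobolev norms on the finite-dimensional $T_{h_s}\F$ are equivalent and controlled by the $L^p$-norm), and Lemma \ref{lem : Ricci flow moving gauge short-time estimates} (which absorbs additional derivatives of $k_s$ into bounded constants when passed across a unit time interval).

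For $(\Delta_{L,h_t}-\Delta_{L,h_s})(k_s)$, I would expand pointwise in the schematic form $(h_t-h_s) * \nabla^2 k_s+\nabla^j(h_t-h_s) * \nabla^{2-j}k_s+R * (h_t-h_s) * k_s$ and apply H\"older's inequality with $\tfrac{1}{q'}=\tfrac{1}{p_1}+\tfrac{1}{r'}$. Writing $h_t-h_s=\int_s^t\partial_r h\,dr$ and using $\partial_r h=D_{g_r}\Phi(H_1(r))\in T_{h_r}\F$, the equivalence of Sobolev norms on this finite-dimensional space gives
\begin{equation*}
\|h_t-h_s\|_{W^{i+2,p_1}}\leq C\int_s^t\|\partial_r h\|_{L^p}\,dr\leq Ct^{-n(1/p-1/r')}\|h-\hat h\|_{Z_{p,r'}},
\end{equation*}
using the definition of the $Z_{p,r'}$-norm and the fact that $r\geq t/2$ on $[s,t]$. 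For the factor $\|k_s\|_{W^{i+2,r'}}$ I would invoke Lemma \ref{lem : Ricci flow moving gauge short-time estimates} on $[s-1,s]$ to trade the extra $i$ derivatives for a constant, after which the definition of $X_{p,r'}$ produces a factor $Ct^{-\min\{1,\frac{n}{2}(1/p-1/r')\}}\|k\|_{X_{p,r'}}$. Multiplying and observing that $\tfrac n2(\tfrac 1p-\tfrac 1{r'})+\min\{1,\tfrac n2(\tfrac1p-\tfrac1{r'})\}\geq \tfrac n2(\tfrac1p-\tfrac1{q'})$ (which follows from $q'\leq r'$) closes this term.

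For $(1-D_{g_s}\Phi)(H_1(s))$, Lemma \ref{lem : projection lemma} (iii) reduces the task to controlling $\|H_1\|_{W^{i,q'}}$, since $D_{g_s}\Phi$ takes values in the finite-dimensional $T_{h_s}\F$ where all Sobolev norms are equivalent and bounded by any $L^a$-norm. Writing $H_1$ in its quadratic form in $k$, $\nabla k$ and $\nabla^2 k$ with coefficients in $g^{-1}$ and $R$, I would bound the two endpoints $\|H_1\|_{L^p}$ and $\|H_1\|_{L^{r'}}$ by pairing one factor against its $L^p$ (resp.\ $L^{r'}$) norm and the other against an $L^\infty$-bound obtained by Sobolev embedding ($r'>n$) from $\|k\|_{W^{i+3,r'}}$, the latter being controlled by Lemma \ref{lem : Ricci flow moving gauge short-time estimates}. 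An $L^{q'}$-interpolation between these two endpoints then recovers the target exponent $-\tfrac n2(\tfrac1p-\tfrac1{r'})-\tfrac n2(\tfrac1p-\tfrac1{q'})$ in the regime $\tfrac n2(\tfrac1p-\tfrac1{r'})\leq 1$.

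The hard point is the borderline regime $\tfrac n2(\tfrac1p-\tfrac1{r'})>1$, where the $X_{p,r'}$-norm only yields $\|k\|_{W^{2,r'}}\leq Ct^{-1}\|k\|_X$ and the naive interpolation falls short of the target when $q'$ is close to $p$. To recover the missing decay I would call on Corollary \ref{prop Ck convergence rates}, which provides the sharper pointwise bound $\|k_s\|_{C^k}\leq Cs^{-n/(2p)+\tau}$ for arbitrary $\tau>0$, and substitute this improved $L^\infty$-decay into the endpoint estimates above. This upgrade, together with iterated applications of the smoothing estimate of Lemma \ref{lem : Ricci flow moving gauge short-time estimates} to handle the $i\geq 1$ case, is what makes the two exponents in the target rate decouple in the way the statement predicts.
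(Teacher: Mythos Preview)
Your approach is correct in spirit but takes a longer route than the paper's. The paper bounds the entire integrand at once by the single product
\[
C\bigl(\|h_t-h_s\|_{W^{i+2,\infty}}+\|k_s\|_{W^{i+2,\infty}}\bigr)\,\|k_s\|_{W^{i+2,q'}},
\]
rather than treating the two summands with different H\"older splittings. The first factor produces $t^{-\frac{n}{2}(\frac{1}{p}-\frac{1}{r'})}$: for $h_t-h_s$ one integrates $\partial_r h$ exactly as you do, and for $\|k_s\|_{W^{i+2,\infty}}$ one uses the smoothing Lemma~\ref{lem : Ricci flow moving gauge short-time estimates} together with Sobolev embedding to pass to $\|k\|_{L^{r'}}$ at an earlier time. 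The second factor is then handled by interpolating $\|k_s\|_{W^{i+2,q'}}$ between the $p$- and $r'$-endpoints, again reducing to $\|k_{t-2}\|_{L^p}$ and $\|k_{t-2}\|_{L^{r'}}$ via the smoothing lemma; this gives precisely $t^{-\frac{n}{2}(\frac{1}{p}-\frac{1}{q'})}$.

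The key difference is that you interpolate the whole quadratic expression $\|H_1\|$ between its $L^p$ and $L^{r'}$ endpoints, whereas the paper interpolates only the \emph{linear} factor $\|k_s\|_{W^{i+2,q'}}$. This is exactly what lets the paper avoid your ``hard regime'' detour: no case distinction on the size of $\frac{n}{2}(\frac{1}{p}-\frac{1}{r'})$ is made, and Corollary~\ref{prop Ck convergence rates} is not invoked. Your proposed fix through that corollary would introduce a $\tau$-loss absent from the stated bound, and would replace the factor $\|k\|_{X_{p,r'}}$ by a constant depending on $\|k\|_{X_{p,r''}}$ for some auxiliary $r''$, so the resulting inequality would be slightly weaker than what is claimed. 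Your treatment of $(\Delta_{L,h_t}-\Delta_{L,h_s})(k_s)$ via $\frac{1}{q'}=\frac{1}{p_1}+\frac{1}{r'}$ works but overshoots; the paper's uniform $L^\infty\times L^{q'}$ factorization handles both terms symmetrically and keeps the argument short.
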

\begin{proof}
By Lemma \ref{lem : projection lemma} (iii) and the H\"{o}lder inequality applied to \eqref{eq : difference LL} and \eqref{eq : RdT1}, we get
\begin{align*}
	&\sup_{s\in [t-1,t]}\left\| (\Delta_{L,h_t}-\Delta_{L,h_s})(k_s)+
(1-D_{g_s}\Phi)(H_1(s))\right\|_{W^{i,q'}}\\
&\qquad\qquad\leq C\sup_{s\in [t-1,t]} \left(
\left\| (\Delta_{L,h_t}-\Delta_{L,h_s})(k_s)\right\|_{W^{i,q'}}
+\left\|H_1(s)\right\|_{W^{i,q'}}\right)\\
		&\qquad\qquad\leq C\sup_{s\in [t-1,t]} \left(\left\|h_t-h_s\right\|_{W^{i+2,\infty}}+\left\|k_s\right\|_{W^{i+2,\infty}}\right)\left\|k_s\right\|_{W^{i+2,q'}}.
\end{align*}
From the end of the proof of Corollary \ref{prop Ck convergence rates}, we already know that
\begin{align*}
	\sup_{s\in [t-1,t]}\left\|k_s\right\|_{W^{i+2,\infty}}
		\leq Ct^{-\frac{n}{2}\left(\frac{1}{p}-\frac{1}{r'}\right)} \left\|k\right\|_{X_{p,r'}}.
\end{align*}
In addition,
\begin{align*}
	\sup_{s\in [\max\left\{1,t-1\right\},t]}\left\|h_t-h_s\right\|_{W^{i+2,\infty}}
		&\leq C\sup_{s\in [t-1,t]}\int_{s}^{t}\left\|\partial_{s'}h\right\|_{L^p}ds'\\
		&\leq C\sup_{s\in [t-1,t]}\int_s^t(s')^{-n\left(\frac{1}{p}-\frac{1}{r'}\right)}ds'\cdot\left\|h-\hat{h}\right\|_{Z_{p,r'}}\\
		&\leq Ct^{-n\left(\frac{1}{p}-\frac{1}{r'}\right)}\left\|h-\hat{h}\right\|_{Z_{p,r'}}\\
		&\leq Ct^{-\frac{n}{2}\left(\frac{1}{p}-\frac{1}{r'}\right)}\left\|h-\hat{h}\right\|_{Z_{p,r'}}.
\end{align*}
Let $\theta\in [0,1]$ be defined by the equation 
\begin{align*}
	\frac{1}{p}-\frac{1}{q'}
		=\theta \left(\frac{1}{p}-\frac{1}{r'}\right).
\end{align*}
By interpolation, the short-time estimates in Lemma \ref{lem : Ricci flow moving gauge short-time estimates} (i) and the definition of the $X_{p,r'}$-norm,
\begin{align*}
	&\sup_{s\in [t-1,t]}\left\|k_s\right\|_{W^{i+2,q'}}\leq C\sup_{s\in [t-1,t]}\left\|k_s\right\|_{W^{i+2,p}}^{1-\theta}\left\|k_s\right\|_{W^{i+2,r'}}^{\theta}\\
&\leq C\left(\left\|k_{t-2}\right\|_{L^{p}}+\sup_{s\in [t-2,t]}\left\|k_s\right\|_{W^{2,p}}\right)^{1-\theta}
\left(\left\|k_{t-2}\right\|_{L^{r'}}+\sup_{s\in [t-2,t]}\left\|k_s\right\|_{W^{2,r'}}\right)^{\theta}\\	
%
		&\leq C\left\|k\right\|_{X_{p,r'}}^{1-\theta}\left( (t-2)^{-\frac{n}{2}\left(\frac{1}{p}-\frac{1}{r'}\right) }\left\|k\right\|_{X_{p,r'}}\right)^{\theta}
		\\
		&\leq C t^{-\frac{n}{2}\left(\frac{1}{p}-\frac{1}{q'}\right) }\left\|k\right\|_{X_{p,r'}},
\end{align*}
and the statement follows from putting these estimates together.
\end{proof}
\begin{prop}\label{prop : L^p conv rates}
Suppose there exists $p\in (1,q)$ such that $\left\|g_0-\hat{h}\right\|_{L^p}<\infty$.
Let  $q'\in [p,\infty)$ and $i\in\N$.
\begin{itemize}
	\item[(i)] If $\frac{n}{2}\left(\frac{1}{p}-\frac{1}{q'}\right)+\frac{i}{2}<\frac{n}{2p}$ there exists a constant $C = C(p, q',i,K,\epsilon)$ such that for all $t\geq 1$, we have
	\begin{align}\label{eq : L^p conv rates 1}
	\norm{\nabla^i k_t}_{L^{q'}} \leq Ct^{-\frac n2 \left(\frac1p - \frac{1}{q'}\right)-\frac{i}{2}}
	\end{align}
	\item[(ii)] If $\frac{n}{2}\left(\frac{1}{p}-\frac{1}{q'}\right)+\frac{i}{2}\geq\frac{n}{2p}$ for each  $\tau>0$ there exists  a constant $C = C(p, q',i,\tau,K,\epsilon)$ such that for all $t\geq 1$, we have
	\begin{align}\label{eq : L^p conv rates 2}
	\norm{\nabla^i k_t}_{L^{q'}} \leq Ct^{-\frac{n}{2p}+\tau}.
	\end{align}
\end{itemize}
\end{prop}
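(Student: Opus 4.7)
The plan is to use the integral representation \eqref{eq : modified expression k_t} of $k_t$ as the sum of a linear heat flow term and a Duhamel integral, and to estimate each piece separately using the mapping properties of $e^{-t\Delta_{L,h_t}} \circ \Pi^{\perp}$ from Corollary \ref{cor : linear estimates}, the bounds on the inhomogeneity from Lemmas \ref{lem : nonlinear part 0.1}, \ref{lem : nonlinear part 0.2} and \ref{lem : nonlinear part 0.3}, together with the convolution estimate of Lemma \ref{important_technical_lemma}. The crucial upgrade compared to Section \ref{subsection : mapping properties} is that Proposition \ref{prop : improved rates} provides $(h-\hat h, k)\in Y_{p,r'}$ for \emph{every} $r'\in[r,\infty)$, so we are free to choose the auxiliary exponent $r'$ as large as we please.

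First I would apply $\nabla^i$ to \eqref{eq : modified expression k_t} and estimate the first summand $\nabla^i e^{-(t-1)\Delta_{L,h_t}}\overline k_1$ in $L^{q'}$ directly by Corollary \ref{cor : linear estimates}, using that $\overline k_1\in L^p$. This already contributes the rate claimed in (i) when $\frac{n}{2}(\frac{1}{p}-\frac{1}{q'})+\frac{i}{2}<\frac{n}{2p}$, and contributes $t^{-\frac{n}{2p}+\tau}$ in case (ii). For the integral I would split it as $\int_1^{\max\{1,t-1\}} + \int_{\max\{1,t-1\}}^t$. On the main part I would fix a very large $r'$ and a small auxiliary exponent $p''\in(1,p)$ (allowed since the integrand lies in $L^{p''}$ for $p''$ close enough to $1$), apply Corollary \ref{cor : linear estimates} to $\nabla^i\circ e^{-(t-s)\Delta_{L,h_t}}\circ\Pi^{\perp}_{h_t}$ from $L^{p''}$ to $L^{q'}$, and bound the integrand in $L^{p''}$ using Lemma \ref{lem : nonlinear part 0.1} and Lemma \ref{lem : nonlinear part 0.2} (both applied with $Y_{p,r'}$ in place of $Y_{q,r}$). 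This produces a convolution integral of the form $\int_1^{t-1}(t-s)^{-\alpha}s^{-\beta}\,ds$ whose decay is controlled via Lemma \ref{important_technical_lemma}. The short-time piece $\int_{\max\{1,t-1\}}^t$ I would handle using the short-time estimates of Lemma \ref{lem : short-time estimates} for $\nabla^i\circ e^{-(t-s)\Delta_{L,h_t}}$ combined with Lemma \ref{lem : nonlinear part 0.3}, which gives exactly the factor $t^{-\frac{n}{2}(\frac{1}{p}-\frac{1}{r'})-\frac{n}{2}(\frac{1}{p}-\frac{1}{q'})}$ needed to absorb the $(t-s)^{-i/2}$ loss.

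The main obstacle will be bookkeeping of exponents so that the convolution decay produced by Lemma \ref{important_technical_lemma} matches the target $t^{-\frac{n}{2}(\frac{1}{p}-\frac{1}{q'})-\frac{i}{2}}$ in case (i). Concretely, from Lemma \ref{lem : nonlinear part 0.1} the $(\Delta_{L,h_t}-\Delta_{L,h_s})(k_s)$ term has in-time decay $s^{1-\frac{3n}{2}(\frac{1}{p}-\frac{1}{r'})}$, which by taking $r'$ large approaches $s^{1-\frac{3n}{2p}}$; combined with the heat kernel exponent, the minimum in Lemma \ref{important_technical_lemma} becomes $\min\{\alpha,\beta,\alpha+\beta-1\}$, and a direct computation shows this minimum is at least $\frac{n}{2}(\frac{1}{p}-\frac{1}{q'})+\frac{i}{2}$ provided $r'$ is taken sufficiently large and $p''$ sufficiently small (with room to spare in the strict inequality of case (i)). The same mechanism works for the $(1-D_g\Phi)(H_1)$ term via Lemma \ref{lem : nonlinear part 0.2}, which delivers in-time decay essentially $s^{-\beta-\frac{n}{2}(\frac{1}{p}-\frac{1}{r'})}$ with $\beta>\frac{1}{2}$; since $n(\frac{1}{p}-\frac{1}{r'})>1$ for large $r'$, Lemma \ref{important_technical_lemma} yields the full exponent $\frac{n}{2}(\frac{1}{p}-\frac{1}{q'})+\frac{i}{2}$ without loss.

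Finally, in case (ii) where $\frac{n}{2}(\frac{1}{p}-\frac{1}{q'})+\frac{i}{2}\geq\frac{n}{2p}$, the same scheme yields the rate $t^{-\frac{n}{2p}+\tau}$: the linear term already obeys this by Corollary \ref{cor : linear estimates}, and for the nonlinear integral one either saturates at the borderline of Lemma \ref{important_technical_lemma} (producing an unavoidable logarithm, which is absorbed into the $t^\tau$) or one simply chooses $r'$ large enough that $\frac{n}{2}(\frac{1}{p}-\frac{1}{r'})<\tau$, so the heat-kernel estimates of Corollary \ref{cor : linear estimates} (ii) contribute at worst $t^{-\frac{n}{2p}+\tau}$. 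Putting all three pieces together and invoking Proposition \ref{prop : improved rates} to bound the $Y_{p,r'}$ norms of $(h-\hat h, k)$ uniformly yields the stated estimates.
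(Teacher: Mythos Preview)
Your approach is essentially the paper's: start from the representation \eqref{eq : modified expression k_t}, estimate the linear piece by Corollary \ref{cor : linear estimates}, split the Duhamel integral at $t-1$, handle the bulk with a small auxiliary exponent $p''\in(1,p)$ and a large $r'$ via Lemmas \ref{lem : nonlinear part 0.1}, \ref{lem : nonlinear part 0.2} and Lemma \ref{important_technical_lemma}, and finish the boundary piece with Lemma \ref{lem : nonlinear part 0.3}. The exponent bookkeeping you sketch is exactly what the paper carries out.

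There is, however, a genuine slip in your treatment of the short-time piece. You say the decay factor $t^{-\frac{n}{2}(\frac{1}{p}-\frac{1}{r'})-\frac{n}{2}(\frac{1}{p}-\frac{1}{q'})}$ from Lemma \ref{lem : nonlinear part 0.3} is ``needed to absorb the $(t-s)^{-i/2}$ loss''. For $i\geq 2$ the function $(t-s)^{-i/2}$ is \emph{not} integrable on $[t-1,t]$, and no power of $t$ can repair a divergent $s$-integral; the smoothing estimate $\nabla^i\circ e^{-(t-s)\Delta_L}:L^{q'}\to L^{q'}$ simply cannot be used here. The correct mechanism is to avoid the smoothing step on this interval: Lemma \ref{lem : nonlinear part 0.3} already controls the integrand in $W^{i,q'}$ (for every $i\in\N_0$), so you use that $e^{-(t-s)\Delta_{L,h_t}}\circ\Pi^{\perp}_{h_t}:W^{i,q'}\to W^{i,q'}$ is bounded uniformly for $t-s\in[0,1]$ (Lemma \ref{lem : short-time estimates} (i)--(ii) for $i\leq 2$, standard parabolic regularity for higher $i$, together with Lemma \ref{lem : projection lemma} (ii)). Then
\[
\int_{t-1}^{t}\big\|\nabla^i e^{-(t-s)\Delta_{L,h_t}}\Pi^{\perp}_{h_t}\Theta_s\big\|_{L^{q'}}\,ds
\leq C\sup_{s\in[t-1,t]}\|\Theta_s\|_{W^{i,q'}},
\]
with no singular factor, and the right-hand side is bounded by $Ct^{-\frac{n}{2}(\frac{1}{p}-\frac{1}{r'})-\frac{n}{2}(\frac{1}{p}-\frac{1}{q'})}$ from Lemma \ref{lem : nonlinear part 0.3}. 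Choosing $r'$ large enough so that $\frac{n}{2}(\frac{1}{p}-\frac{1}{r'})\geq\alpha$ (possible since $\alpha<\frac{n}{2p}$ in both cases) then gives the target rate. With this correction, your argument goes through and coincides with the paper's.
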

\begin{proof}
For $t\in [1,3]$, the assertion follows from the short-time estimates in Lemma \ref{lem : Ricci flow moving gauge short-time estimates}. Therefore, we may assume $t\geq 3$ from now on.
	We have, using \eqref{eq : modified expression k_t} and the triangle inequality,		
	\begin{align*}
	&\left\|\nabla^ik_t\right\|_{L^{q'}}\leq \left\|\nabla^i\circ e^{-(t-1)\Delta_{L,h_t}}\overline{k}_1\right\|_{L^{q'}}\\
	&\qquad+
	\int_1^{t-1}\left\|\nabla^i\circ e^{-(t-s)\Delta_{L,h_t}}\Pi^{\perp}_{h_t}[(\Delta_{L,h_t}-\Delta_{L,h_s})(k_s)+
	(1-D_{g_s}\Phi)(H_1(s))]\right\|_{L^{q'}}ds\\
	&\qquad+
	\int_{t-1}^{t}\left\|\nabla^i\circ e^{-(t-s)\Delta_{L,h_t}}\Pi^{\perp}_{h_t}[(\Delta_{L,h_t}-\Delta_{L,h_s})(k_s)+
	(1-D_{g_s}\Phi)(H_1(s))]\right\|_{L^{q'}}ds .
	\end{align*}
	Let $\alpha=\alpha(p,q',i,\tau)$ be defined by
\begin{align*}
\alpha(p,q',i,\tau)=\begin{cases}
\frac n2 \left(\frac1p - \frac{1}{q'}\right)+\frac{i}{2},& \text{ if }\frac{n}{2}\left(\frac{1}{p}-\frac{1}{q'}\right)+\frac{i}{2}<\frac{n}{2p},\\
\frac{n}{2p}-\tau, & \text{ if }\frac{n}{2}\left(\frac{1}{p}-\frac{1}{q'}\right)+\frac{i}{2}\geq\frac{n}{2p},
\end{cases}
\end{align*}	
so that it equals the decay rates in \eqref{eq : L^p conv rates 1} and \eqref{eq : L^p conv rates 2} in the respective cases. Choose $p'\in (1,p)$ small and $r'\in [r,\infty)$ large (where $r$ is as in Theorem \ref{thm : fixed point}).
	Furthermore, we denote $\alpha'=\alpha(p',q',i,\tau)>0$.
	Note that $\alpha'-\alpha=\frac{n}{2}(\frac{1}{p'}-\frac{1}{p})$ in all cases.
	 By Corollary \ref{cor : linear estimates} and Lemma \ref{lem : projection lemma} (ii), we already know
	\begin{align*}
	\left\|\nabla^i\circ e^{-(t-1)\Delta_{L,h_t}}\overline{k}_1\right\|_{L^{q'}}\leq Ct^{-\alpha}\left\|\overline{k}_1\right\|_{L^{p}}\leq Ct^{-\alpha}\left\|k_1\right\|_{L^{p}}.
	\end{align*}
Therefore, by Lemma  \ref{lem : nonlinear part 0.1}, we get
	\begin{align*}
	&\int_1^{t-1}\left\|\nabla^i\circ e^{-(t-s)\Delta_{L,h_t}}\Pi^{\perp}_{h_t}[(\Delta_{L,h_t}-\Delta_{L,h_s})(k_s)]\right\|_{L^{q'}}ds\\
	&\leq \int_1^{t-1}\left\|\nabla^i\circ e^{-(t-s)\Delta_{L,h_t}}\Pi^{\perp}_{h_t}\right\|_{L^{p'},L^{q'}}
	\left\|(\Delta_{L,h_t}-\Delta_{L,h_s})(k_s)\right\|_{L^{p'}}ds\\
	&\leq C  \int_1^{t-1}(t-s)^{-\alpha'}s^{1-\frac{3n}{2}(\frac{1}{p}-\frac{1}{r'})}ds\cdot \left(\left\|h-\hat{h}\right\|_{Z_{p,r'}}+\left\|k\right\|_{X_{p,r'}}\right)\left\|k\right\|_{X_{p,r'}}
\end{align*}
We have
\begin{align*}
	\alpha'
		&=\frac{n}{2}\left(\frac{1}{p'}-\frac{1}{p}\right)+\alpha>\alpha \\
	\frac{3n}{2}\left(\frac{1}{p}-\frac{1}{r'}\right)-1
		&>\frac{n}{2}\left(\frac{1}{p}-\frac{1}{r'}\right) 
		>\alpha \\
	\alpha'+\frac{3n}{2}\left(\frac{1}{p}-\frac{1}{r'}\right)-2
	&=\alpha+\frac{n}{2}\left( \frac{1}{p'}-\frac{1}{p} \right)+\frac{3n}{2} \left( \frac{1}{p}-\frac{1}{r'} \right)-2\\
	&=\alpha+\frac{n}{2}\left(\frac{1}{p'}-\frac{1}{r'}\right)+n\left(\frac{1}{p}-\frac{1}{r'}\right)-2
	>\alpha
\end{align*}
so that Lemma \ref{important_technical_lemma} implies 
\begin{align*}
	 \int_1^{t-1}(t-s)^{-\alpha'}s^{1-\frac{3n}{2}(\frac{1}{p}-\frac{1}{r'})}ds\leq Ct^{-\alpha}.
\end{align*}
	From Corollary \ref{cor : linear estimates} and Lemma \ref{lem : nonlinear part 0.2}, we have (with $\beta=\min\left\{1,\frac{n}{2}\left(\frac{1}{p}-\frac{1}{r'}\right) \right\}$) that
	\begin{align*}
	&\int_1^{t-1}\left\|\nabla^i\circ e^{-(t-s)\Delta_{L,h_t}}\Pi^{\perp}_{h_t}[
(1-D_{g_s}\Phi)(H_1(s))]\right\|_{L^{q'}}ds\\
	&\leq \int_1^{t-1}\left\|\nabla^i\circ e^{-(t-s)\Delta_{L,h_t}}\Pi^{\perp}_{h_t}\right\|_{L^p,L^{q'}}\left\|
(1-D_{g_s}\Phi)(H_1(s))\right\|_{L^p}ds\\
	&\leq C  \int_1^{t-1}(t-s)^{-\alpha}s^{-\beta-\frac{n}{2}\left(\frac{1}{p}-\frac{1}{r'}\right)}ds\cdot \left(\left\|h-\hat{h}\right\|_{Z_{p,r'}}+\left\|k\right\|_{X_{p,r'}}\right)\left\|k\right\|_{X_{p,r'}},
	\end{align*}
	provided that we have chosen $r'$ so large that $n\left(\frac{1}{p}-\frac{1}{r'}\right) > 1$. 
	Since any of the given $\alpha$ satisfies $\alpha<\frac{n}{2p}$, we may always choose $r'$ so large that $\alpha<\frac{n}{2}\left(\frac{1}{p}-\frac{1}{r'}\right) \leq \frac{n}{2}\left(\frac{1}{p}-\frac{1}{r'}\right)+\beta $. 
	In addition, we always have $\frac{n}{2}\left(\frac{1}{p}-\frac{1}{r'}\right)+\beta> 1$. Then we get from Lemma \ref{important_technical_lemma} that
	\begin{align*}
	\int_1^{t-1}(t-s)^{-\alpha}s^{-\beta-\frac{n}{2}\left(\frac{1}{p}-\frac{1}{r'}\right)}ds \leq Ct^{-\alpha}.
	\end{align*}
	Finally, we get from Lemma \ref{lem : nonlinear part 0.3} that
	\begin{align*}
		&\int_{t-1}^{t}\left\|\nabla^i\circ e^{-(t-s)\Delta_{L,h_t}}\Pi^{\perp}_{h_t}[(\Delta_{L,h_t}-\Delta_{L,h_s})(k_s)+
(1-D_{g_s}\Phi)(H_1(s))]\right\|_{L^{q'}}ds\\
		&\qquad \leq C\sup_{s\in [t-1,t]}\left\| (\Delta_{L,h_t}-\Delta_{L,h_s})(k_s)+
(1-D_{g_s}\Phi)(H_1(s))\right\|_{W^{i,q'}}\\
		&\qquad\leq Ct^{-\frac{n}{2}\left(\frac{1}{p}-\frac{1}{r'}\right)-\frac{n}{2}\left(\frac{1}{p}-\frac{1}{q'}\right)}
\left(\left\|h-\hat{h}\right\|_{Z_{p,r'}}+\left\|k\right\|_{X_{p,r'}}\right)\left\|k\right\|_{X_{p,r'}}\\
		&\qquad\leq Ct^{-\alpha}
\left( \left\|h-\hat{h}\right\|_{Z_{p,r'}}+\left\|k\right\|_{X_{p,r'}} \right) \left\|k\right\|_{X_{p,r'}},	
	\end{align*}
	because for given $\alpha$, $r'$ was chosen so large that
	\begin{align*}
	\frac{n}{2}\left(\frac{1}{p}-\frac{1}{r'}\right)+\frac{n}{2}\left(\frac{1}{p}-\frac{1}{q'}\right)
		\geq \frac{n}{2}\left(\frac{1}{p}-\frac{1}{r'}\right)
		\geq\alpha.
	\end{align*}
Putting all the estimates together, we get
\begin{align*}
	\left\|\nabla^ik_t\right\|_{L^{q'}}
		&\leq Ct^{-\alpha} \left[ \left\|k_1\right\|_{L^p} + \left(\left\|h-\hat{h}\right\|_{Z_{p,r'}}+\left\|k\right\|_{X_{p,r'}}\right)\left\|k\right\|_{X_{p,r'}}\right],
\end{align*}	
which proves the proposition.
\end{proof}
\begin{proof}[Proof of Theorem \ref{thm : mainthm 1 introduction}]
	For an initial metric $g_0$ which is $L^{[q,\infty]}$-close to $\hat{h}$, we denote by $\tilde{g}_t$ the modified Ricci-de Turck flow starting at $g_0=\tilde{g}_0$. We are first going to show that all the assertions of Theorem \ref{thm : mainthm 1 introduction} hold with $\phi_t^*g_t$ replaced by $\tilde{g}_t$. Let $C_1>0$ be the constant of Lemma \ref{lem : normcontrol}.	
	 For the given neighbourhood $\U$, choose $\epsilon>0$ so small that 
	 \begin{align*}
	 	\mathcal{B}^{[q,\infty]}_{C_1\cdot \epsilon}(\hat{h}):=\left\{g\in\M\mid \left\|g-\hat{h}\right\|_{L^q}+\left\|g-\hat{h}\right\|_{L^{\infty}}<C_1\cdot \epsilon\right\}\subset \U.
	 	\end{align*}
 	Now if $g_0\in \V:=\mathcal{B}^{[q,\infty]}_{\delta}(\hat{h})$, for a $\delta>0$ chosen small enough, Lemma \ref{lem : Ricci flow short-time estimates in Ck} and Lemma \ref{lem : Ricci flow short-time estimates} imply that we have
 	$\tilde{g}_t\in 	\mathcal{B}^{[q,\infty]}_{C_1\cdot \epsilon}(\hat{h})$ for all $t\in [0,1]$.
 	Moreover, by Theorem \ref{thm : fixed point}, $\tilde{g}_t$ exists for all time and for $t\geq1$, the tensors $h_t=\Phi(\tilde{g}_t)$ and $k_t=\tilde{g}_t-h_t$ satisfy
      \begin{align*}
    	\left\|(h_t-\hat{h},k_t)\right\|_{Y_{q,r}}<\epsilon,
    \end{align*}
which by Lemma \ref{lem : normcontrol} implies $\tilde{g}_t\in 	\mathcal{B}^{[q,\infty]}_{C_1\cdot \epsilon}(\hat{h})\subset \U$ for all $t\geq 1$. The decay and convergence rates in (i)-(iii) for $h_t$ and $k_t$ follow from Corollary \ref{prop Ck convergence rates} and Proposition \ref{prop : L^p conv rates}. To finish the proof, it suffices to show that we can write $\tilde{g}_t=\phi_t^*g_t$, where $g_t$ is the standard Ricci flow starting at $g_0$. For this purpose, let
\begin{align}\label{eq : diff_generators}
	V_t=-\begin{cases} V(\tilde{g}_t,\hat{h}), & t\in (0,1),\\
		V(\tilde{g}_t,h_t), &t\in [1,\infty)
		\end{cases}
\end{align}
For $t\in (0,1)$, $V_t$ is of the form $V_t=(\tilde{g}_t)^{-1}*(\tilde{g}_t)^{-1}*\nabla^{\hat{h}}(\tilde{g}_t-\hat{h})$ while for $t\geq1$, we have $V_t=(\tilde{g}_t)^{-1}*(\tilde{g}_t)^{-1}*\nabla^{h_t} k_t$. Therefore, by Lemma \ref{lem : Ricci flow short-time estimates} and Corollary \ref{prop Ck convergence rates}, $V_t$ is bounded for $t>0$ and hence a family of complete vector fields.
Due to Lemma \ref{lem : Ricci flow short-time estimates in Ck}, we have 
\begin{align}\label{eq : blowup de Turck vector field origin}
\left\| V_t\right\|_{L^{\infty}}\leq C t^{-\frac{1}{2}}\left\|g_0-\hat{h}\right\|_{L^{\infty}}\in L^1([0,1])
\end{align}
for $t\leq 1$. Therefore, $V_t$ actually generates a family of diffeomorphisms $({\varphi}_t)_{t\geq0}$ with ${\varphi}_0=\id_{M}$. A standard computation shows that the family $g_t=\varphi_t^*\tilde{g}_t$ is a Ricci flow starting at $g_0$ and the proof is completed with $\phi_t:=\varphi_t^{-1}$.
\end{proof}
\begin{rem}\label{rem : Lipschitz bound}
	Note that the bound in \eqref{eq : blowup de Turck vector field origin} implies the following: For given $\epsilon>0$, there exists $\delta>0$ such that if $g_0\in \mathcal{B}^{\infty}_{\delta}(\hat{h})$, the Ricci-de Turck flow $\tilde{g}_t$ as well as the standard Ricci flow ${g}_t$ starting at $g_0$ exist up to time $1$ and stay in $\mathcal{B}^{\infty}_{\epsilon}(\hat{h})$ for all $t\in [0,1]$.
\end{rem}

\subsection{Decay of the de Turck vector field and the Ricci curvature}\label{subsec : de Turck vector field}
Let $q,r,\delta,\epsilon,g_0, h_t$ and $k_t$ be exactly as at the beginning of Subsection \ref{subsection : optimal convergence}.
Assume in addition that $\left\|g_0-\hat{h}\right\|_{L^p}\leq K<\infty$ so that Proposition \ref{prop : improved rates} applies and $\left(h_t-\hat{h},k_t\right)\in Y_{p,r'}$ for every $r'\in [r,\infty)$.

The goal of this subsection is to get improved estimates for the de Turck vector field and the Ricci curvature which are as follows:
\begin{prop}\label{prop : de Turck vector field}
	For each $i\in\N_0$ and $\tau>0$, there exists a constant $C=C(i,p,\tau,K,\epsilon)$ such that
	\begin{align*}
		\left\| V(g_t,h_t)\right\|_{C^i}
			\leq \begin{cases} Ct^{-\frac{n}{2p}-\frac{1}{2}+\tau},
			& \text{ if  }
			p\in \left(1,\frac{2n}{3}\right), \\
		Ct^{1-\frac{3n}{2p}+\tau},
			&\text{ if  }
			p\in \left[\frac{2n}{3},n\right), \end{cases}	\\
		\left\| \Ric_{g_t}\right\|_{C^i}
			\leq \begin{cases} C\cdot t^{-\frac{n}{2p}-1+\tau},
			& \text{ if  }p\in \left(1,\frac{n}{2}\right), \\
		C\cdot t^{1-\frac{3n}{2p}+\tau},& \text{ if  }
			p\in \left[\frac{n}{2},n\right).
		\end{cases}
	\end{align*}
	for all $t\geq 1$.
\end{prop}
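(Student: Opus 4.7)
The plan is to decompose $V(g_t,h_t)$ and $\Ric_{g_t}$ into their linearizations at $h_t$ plus quadratic remainders, and then to exploit the integral expression \eqref{eq : modified expression k_t} for $k_t$ together with the special derivative estimates of Theorem \ref{thm : special derivative estimates}. Write $V(g_t,h_t) = DV_{h_t}(k_t) + R_V(k_t)$ and $\Ric_{g_t} = D\Ric_{h_t}(k_t) + R_{\Ric}(k_t)$, with $DV_{h_t}$ and $D\Ric_{h_t}$ the Fr\'echet derivatives in the first argument at $g=h_t$. The remainders are at most quadratic in $k_t$ together with up to two spatial derivatives, so combining Proposition \ref{prop : L^p conv rates} with the short-time smoothing of Lemma \ref{lem : Ricci flow moving gauge short-time estimates} applied on $[t-1,t]$ yields $\|k_t\|_{C^{i+2}}\leq Ct^{-\tfrac{n}{2p}+\tau}$ and hence $\|R_V(k_t)\|_{C^i}+\|R_{\Ric}(k_t)\|_{C^i}\leq Ct^{-\tfrac{n}{p}+\tau}$. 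This is strictly faster than both target rates for $V$ on $p\in(1,n)$ and for $\Ric$ on $p\in(1,n)$, so it suffices to estimate the linearizations.

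For the linearizations, substitute \eqref{eq : modified expression k_t} into $DV_{h_t}$ (resp.\ $D\Ric_{h_t}$) to obtain a free-evolution term plus an integral remainder. By Theorem \ref{thm : special derivative estimates}, $\|DV\circ e^{-\sigma\Delta_L}\|_{L^{p'}\to L^{p'}}\leq C\sigma^{-1/2}$ and $\|D\Ric\circ e^{-\sigma\Delta_L}\|_{L^{p'}\to L^{p'}}\leq C\sigma^{-1}$. Splitting $e^{-\sigma\Delta_L}$ into thirds and composing with Theorem \ref{thm : linear estimates} and Corollary \ref{cor : linear estimates} yields, for $q'>n$ and any $i\in\N_0$,
\[
\|\nabla^i\circ DV\circ e^{-\sigma\Delta_L}\circ\Pi^{\perp}\|_{L^{p'}\to L^{q'}}\leq C\sigma^{-\tfrac{1}{2}-\tfrac{n}{2}(\tfrac{1}{p'}-\tfrac{1}{q'})+\tau},
\]
and analogously with exponent $-1-\tfrac{n}{2}(\tfrac{1}{p'}-\tfrac{1}{q'})$ for $D\Ric$. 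Taking $q'>n$ large and applying Sobolev embedding $W^{m,q'}\hookrightarrow C^i$ bounds the free-evolution term in $C^i$ by $Ct^{-\tfrac{n}{2p}-\tfrac{1}{2}+\tau}$ for $V$ and $Ct^{-\tfrac{n}{2p}-1+\tau}$ for $\Ric$.

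For the integral remainder, split $[1,t]$ into $[1,\max\{1,t-1\}]$ and $[\max\{1,t-1\},t]$. On the long-range interval, Lemma \ref{lem : nonlinear part 0.1} gives $\|(\Delta_{L,h_t}-\Delta_{L,h_s})(k_s)\|_{L^{p'}}\leq Cs^{1-\tfrac{3n}{2}(\tfrac{1}{p}-\tfrac{1}{r'})}$ and Lemma \ref{lem : nonlinear part 0.2} gives $\|(1-D_{g_s}\Phi)(H_1(s))\|_{L^{p'}}\leq Cs^{-\beta-\tfrac{n}{2}(\tfrac{1}{p}-\tfrac{1}{r'})}$ with $\beta=\min\{1,\tfrac{n}{2}(\tfrac{1}{p}-\tfrac{1}{r'})\}$. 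Composing with the mapping property above and invoking Lemma \ref{important_technical_lemma} for the convolution in $s$ yields the rate $\min\{\alpha,\gamma\}$, where $\alpha=\tfrac{1}{2}+\tfrac{n}{2p}$ (respectively $1+\tfrac{n}{2p}$ in the Ricci case) comes from the semigroup and $\gamma=\tfrac{3n}{2p}-1$ from the integrand. These two exponents coincide at $p=\tfrac{2n}{3}$ for $V$ and at $p=\tfrac{n}{2}$ for $\Ric$, and above the threshold the integral rate $t^{1-\tfrac{3n}{2p}+\tau}$ dominates the free-evolution rate. On the short-range interval, Lemma \ref{lem : nonlinear part 0.3} controls the integrand and short-time smoothing (Lemma \ref{lem : Ricci flow moving gauge short-time estimates}) produces a contribution at least as good as the long-range estimate. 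Combining everything yields the stated bounds.

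The main technical obstacle is matching the exponents $p',q',r'$ so that Lemma \ref{important_technical_lemma} delivers exactly the claimed rates: the borderline case $\max\{\alpha,\gamma\}=1$ would introduce a logarithmic loss, absorbed into $\tau>0$ via a small perturbation of $r'$. Secondary subtleties are the time-dependence of $h_t$ in the semigroup $e^{-\sigma\Delta_{L,h_t}}$, handled as in the proof of Proposition \ref{prop : L^p conv rates} by replacing $\Delta_{L,h_t}$ with $\Delta_{L,h_\infty}$ and absorbing the difference $(\Delta_{L,h_t}-\Delta_{L,h_\infty})(k_s)$ into the integrand, and the passage from $L^{q'}$ estimates to $C^i$ estimates, which follows from the mapping property above applied with $i$ derivatives on $DV$ (or $D\Ric$) together with Sobolev embedding, without worsening the decay rate.
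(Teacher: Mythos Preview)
Your overall strategy matches the paper's: Taylor-expand $V$ and $\Ric$ into linearization plus quadratic remainder (the paper's Lemmas \ref{lem : linear de Turck} and \ref{lem : nonlinear de Turck}), insert the integral representation \eqref{eq : modified expression k_t}, and combine the special derivative estimates of Theorem \ref{thm : special derivative estimates} with Lemma \ref{important_technical_lemma}. The exponent bookkeeping and the thresholds $p=\tfrac{2n}{3}$, $p=\tfrac{n}{2}$ are correct.

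Two points deserve correction. First, your proposed route to the key mapping bound
\[
\|\nabla^i\circ DV\circ e^{-\sigma\Delta_L}\|_{L^{p'}\to L^{q'}}\leq C\sigma^{-\frac12-\frac n2(\frac1{p'}-\frac1{q'})}
\]
via ``splitting $e^{-\sigma\Delta_L}$ into thirds'' does not close as stated: after using Theorem \ref{thm : special derivative estimates} for $DV\circ e^{-\sigma/3\Delta_L}$ the output is a vector field, and no heat semigroup remains on that side to absorb $\nabla^i$. Trying instead to bound $\nabla^i\circ DV$ by $\nabla^{i+1}$ on $2$-tensors and invoking Corollary \ref{cor : linear estimates} loses exactly the extra $\sigma^{-1/2}$ (resp.\ $\sigma^{-1}$) gain and yields only $t^{-\frac{n}{2p}+\tau}$, which is the proposition's claim \emph{without} the improvement. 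The paper resolves this in Lemma \ref{lem : heat flow linearized vector field} via the commutation identities $\Delta_{VF}\circ DV=DV\circ\Delta_L$ and $\Delta_L\circ D\Ric=D\Ric\circ\Delta_L$, writing $\nabla^i\circ DV\circ e^{-\sigma\Delta_L}=\nabla^i\circ e^{-\frac12\Delta_{VF}}\circ DV\circ e^{-(\sigma-\frac12)\Delta_L}$ and using short-time smoothing for $\Delta_{VF}$ to handle $\nabla^i$ at no cost. You should invoke the same commutation.

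Second, the ``secondary subtlety'' you flag about the time-dependence of $h_t$ in the semigroup is not an issue: in \eqref{eq : modified expression k_t} the operator $\Delta_{L,h_t}$ is taken at the \emph{fixed} terminal time $t$, so the semigroup is autonomous and no replacement by $\Delta_{L,h_\infty}$ is needed. The evolution of the background is already absorbed into the integrand via the term $(\Delta_{L,h_t}-\Delta_{L,h_s})(k_s)$.
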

By Taylor expansion along the curve 
$[0,1]\ni s\mapsto h_t+s\cdot k_t$, we get
\begin{align*}
V(g_t,h_t)&=V(g_t,h_t)-V(h_t,h_t)=DV_{h_t}(k_t)+\frac{1}{2}\int_0^1(1-s)^2D^2V_{h_t+s\cdot k_t}(k_t,k_t)ds,\\
\Ric_{g_t}&=\Ric_{g_t}-\Ric_{h_t}=D\Ric_{h_t}(k_t)+\frac{1}{2}\int_0^1(1-s)^2D^2\Ric_{g_t+s\cdot k_t}(k_t,k_t)ds,
\end{align*}
where $D^i$ denotes the $i$th Fr\'{e}chet derivative (for $V$, just in the first variable).
The proposition now follows from analyzing the respective parts on the right hand side.
We first need some estimates on the pure linear part of the equations.
\begin{lem}\label{lem : heat flow linearized vector field}
Let $\mathcal{U}$ be a small $L^{[q,\infty]}$-neighbourhood of $\hat{h}$. For $h\in\mathcal{F}\cap\mathcal{U}$, $t\geq1$, $i\in\N_0$ and $1<p\leq r<\infty$, there exists a constant $C=C(i,p,r,\mathcal{U})$ such that 
	\begin{equation}
		\begin{split}
			\label{eq : heat flow linearized vector field 1}
			\left\|\nabla^i\circ DV_h\circ e^{-t\Delta_{L,h}}\right\|_{L^p,L^r}
				&\leq C t^{-\frac{n}{2}\left(\frac{1}{p}-\frac{1}{r}\right)-\frac{1}{2}},\\
			\left\|\nabla^i\circ D\Ric_h\circ e^{-t\Delta_{L,h}}\right\|_{L^p,L^r}
				&\leq C t^{-\frac{n}{2}\left(\frac{1}{p}-\frac{1}{r}\right)-1}.
		\end{split}
	\end{equation}
	For $t\in [0,1]$, $i\in\N_0$ and $p\in (1,\infty)$, there exists a constant $C=C(i,p,\mathcal{U})$ such that
	\begin{align}\label{eq : heat flow linearized vector field 2}
	\left\|DV_h\circ e^{-t\Delta_{L,h}}\right\|_{W^{i+1,p},W^{i,p}}\leq C,\qquad 	\left\|D\Ric_h\circ e^{-t\Delta_{L,h}}\right\|_{W^{i+2,p},W^{i,p}}\leq C.
	\end{align}
\end{lem}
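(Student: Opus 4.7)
I would treat the two parts of the lemma separately.

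For the short-time estimates \eqref{eq : heat flow linearized vector field 2} on $t \in [0,1]$, the key observation is that $DV$ is a first-order linear differential operator in its argument with smooth, uniformly bounded coefficients (read off from $V(g,h)^l=g^{ij}(\Gamma(g)-\Gamma(h))^l_{ij}$ by linearisation, and using that any gauged Ricci-flat $h$ near $\hat h$ satisfies $h-\hat h \in \mathcal{O}_\infty(r^{-n})$ by Proposition \ref{prop: near metrics}), while $D\Ric$ is second-order with analogous structure. Hence $DV:W^{i+1,p}\to W^{i,p}$ and $D\Ric:W^{i+2,p}\to W^{i,p}$ are bounded operators. The uniform $W^{k,p}$-boundedness of $e^{-t\Delta_L}$ on $[0,1]$ (applied with trivial coefficients $g=h$) is supplied by Lemma \ref{lem : short-time estimates}, and composing the two gives the claim.

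For the long-time estimates \eqref{eq : heat flow linearized vector field 1} on $t\ge 1$, I would first settle the base case $i=0$. Splitting $e^{-t\Delta_L} = e^{-t\Delta_L/2}\circ e^{-t\Delta_L/2}$ and inserting the projection $\Pi^{\perp}$ (noting that the kernel part decays at infinity and is smooth, so it is harmless), I would apply Theorem \ref{thm : special derivative estimates} to the outer factor — yielding the decisive $(t/2)^{-1/2}$ decay for $DV$, respectively $(t/2)^{-1}$ for $D\Ric$, valid without any restriction on the integrability exponent — and Theorem \ref{thm : linear estimates}(i) to the inner factor, yielding the transition $(t/2)^{-\frac{n}{2}(\frac{1}{p}-\frac{1}{r})}$ from $L^p$ to $L^r$. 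Multiplying the two gives the claimed decay rate for $i=0$.

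For $i\ge 1$ and $t\in[1,2]$, the bound $t^{-\frac{n}{2}(\frac{1}{p}-\frac{1}{r})-\frac{1}{2}}$ reduces to a uniform constant and follows from the short-time arguments of part (ii). For $t\ge 2$, the plan is to use the semigroup splitting $e^{-t\Delta_L} = e^{-\Delta_L/2}\circ e^{-(t-1/2)\Delta_L}$, invoking the $i=0$ case as a black box on the composition $DV\circ e^{-(t-1/2)\Delta_L}$ to obtain the correct decay $(t-1/2)^{-\frac{1}{2}-\frac{n}{2}(\frac{1}{p}-\frac{1}{r})}\le C t^{-\frac{1}{2}-\frac{n}{2}(\frac{1}{p}-\frac{1}{r})}$ in the operator norm $L^p\to L^r$, while the outer $\nabla^i\circ e^{-\Delta_L/2}$ contributes only a constant $C(i,r)$ by parabolic smoothing on a compact time interval (Lemma \ref{lem : short-time estimates}).

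The main obstacle is that this simple splitting rests on the rewriting $\nabla^i\circ DV\circ e^{-t\Delta_L} = (\nabla^i\circ e^{-\Delta_L/2})\circ(DV\circ e^{-(t-1/2)\Delta_L})$, which is \emph{false} in general because $DV$ does not commute with the heat semigroup. The difference is controlled by a Duhamel-type expansion of the commutator $[DV,e^{-\Delta_L/2}]$, whose integrand involves $[DV,\Delta_L]$; the latter is a differential operator of order at most two whose action after short smoothing is bounded. The potentially divergent $u^{-1}$ behaviour at $u\to 0$ in the Duhamel integral is the delicate technical point, and I expect the proof to resolve it either by an interpolation argument exploiting the Dirac-type structure \eqref{eq : Dirac} on the $\Pi^\perp$-part, or by a direct Sobolev-space refinement of the proof of Theorem \ref{thm : special derivative estimates} from \cite{KP2020}, with the latter being the more natural route given the setup of that paper.
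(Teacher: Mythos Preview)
Your treatment of the short-time estimates and of the base case $i=0$ for $t\ge 1$ matches the paper exactly. The gap is in your handling of $i\ge 1$.

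You correctly diagnose that the naive rewriting $\nabla^i\circ DV\circ e^{-t\Delta_L}=(\nabla^i\circ e^{-\Delta_L/2})\circ(DV\circ e^{-(t-1/2)\Delta_L})$ fails because $DV$ does not commute with $e^{-s\Delta_L}$. But your proposed remedy---a Duhamel expansion of $[DV,e^{-\Delta_L/2}]$ with an unresolved $u^{-1}$ singularity, to be tamed by interpolation or by revisiting the proof of Theorem~\ref{thm : special derivative estimates}---misses the clean structural fact that makes the commutator trivial. On a Ricci-flat background one has the exact intertwining identity
\[
\Delta_{VF}\circ DV = DV\circ \Delta_L,
\]
where $\Delta_{VF}=\nabla^*\nabla$ is the connection Laplacian on vector fields (and analogously $\Delta_L\circ D\Ric=D\Ric\circ\Delta_L$ on symmetric $2$-tensors). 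This gives $e^{-s\Delta_{VF}}\circ DV = DV\circ e^{-s\Delta_L}$ at the semigroup level, so the correct splitting is
\[
\nabla^i\circ DV\circ e^{-t\Delta_L}
=\nabla^i\circ e^{-\tfrac12\Delta_{VF}}\circ DV\circ e^{-(t-\tfrac12)\Delta_L},
\]
with the \emph{vector-field} heat operator on the outside rather than the Lichnerowicz one. Since $\nabla^i\circ e^{-\tfrac12\Delta_{VF}}$ is bounded on $L^r$ by the same standard short-time smoothing you already invoked, the $i\ge 1$ case reduces immediately to $i=0$ with no commutator analysis needed. The $D\Ric$ case is identical using $\Delta_L\circ D\Ric=D\Ric\circ\Delta_L$.
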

\begin{proof}We consider the case of the de Turck vector field first.
We fix $h\in\mathcal{F}$ and drop it in the notation in the proof for convenience.
 By Theorem \ref{thm : linear estimates} and Theorem \ref{thm : special derivative estimates}.
\begin{align*}
	\left\|e^{-t\Delta_L}\circ\Pi^{\perp}\right\|_{L^p,L^r}
		\leq C t^{-\frac{n}{2}\left(\frac{1}{p}-\frac{1}{r}\right)},\qquad \left\| DV\circ e^{-t\Delta_L}\right\|_{L^r,L^r}\leq C t^{-\frac{1}{2}},
\end{align*}
for all $t\geq 0$. 
The connection Laplacian $\Delta_{VF}$  on vector fields satisfies the commutation formula $\Delta_{VF}\circ DV=DV\circ \Delta_L$, see e.g.\ \cite{KP2020}*{Section 6.3}.  Therefore, $k\in\ker_{L^2}(\Delta_L)$ implies $DV(k)\in  \ker_{L^2}(\Delta_{VF})$.
However, $\Delta_{VF}=\nabla^*\nabla$ has vanishing $L^2$ kernel by an integration by parts argument. Thus, $DV\circ \Pi^{\parallel}=0$ and we obtain
\begin{align*}
 DV\circ e^{-t\Delta_L}=  DV\circ \Pi^{\perp} \circ e^{-t\Delta_L} 
=  DV\circ e^{-t\Delta_L} \circ \Pi^{\perp} 
  = DV\circ e^{-\frac{t}{2}\Delta_L}\circ e^{-\frac{t}{2}\Delta_L}\circ \Pi^{\perp}.
\end{align*} 
  This implies \eqref{eq : heat flow linearized vector field 1} in the case $i=0$. 
For derivatives, we have
\begin{align*}
	\nabla^i\circ DV\circ e^{-t\Delta_L}
		=\nabla^i\circ e^{-\frac{1}{2}\Delta_{VF}}\circ DV\circ e^{-\left(t-\frac{1}{2}\right)\Delta_L}
\end{align*}
By standard short-time estimates for parabolic equations (similarly as in Lemma \ref{lem : short-time estimates} (i) and (ii)), $\nabla^i\circ e^{-\frac{1}{2}\Delta_{VF}}$ is a bounded map on $L^r$ and \eqref{eq : heat flow linearized vector field 1} follows from the case $i=0$. 
Also by short-time estimates, $e^{-t\Delta_L}$ is bounded on $W^{i,p}$ for $t\in [0,1]$. Because $DV$ is a linear first order operator, \eqref{eq : heat flow linearized vector field 2} is immediate. 

	The case of the Ricci curvature is very similar. By \cite{Besse07}*{Theorem 1.174}, we have the relation $D\Ric(k)=\frac{1}{2}(\Delta_L(k)+\mathcal{L}_{DV(k)}h)$. Since $k\in\ker_{L^2}(\Delta_L)$ implies $DV(k)\in  \ker_{L^2}(\Delta_{VF})=\left\{0\right\}$, we obtain $\ker_{L^2}(\Delta_L)\subset\ker_{L^2}(D\Ric)$.
Furthermore, we have the commutator formula $\Delta_L\circ D\Ric=D\Ric\circ \Delta_L$ 
From now on, we can proceed exactly as above, using the estimate
\begin{align*}
	\left\| D\Ric\circ e^{-t\Delta_L}\right\|_{L^r,L^r}\leq C t^{-1},
\end{align*}
which holds due to Theorem \ref{thm : special derivative estimates}.
\end{proof}
The next step is to apply these linearizations to $k_t$ instead of $e^{-t\Delta_L}$.
\begin{lem}\label{lem : linear de Turck}
		For each $i\in\N_0$ and $\tau>0$, there exists a constant $C=C(i,\tau,p,K,\epsilon)$ such that
	\begin{align*}
		\left\|DV_{h_t} (k_t)\right\|_{C^i}&\leq \begin{cases}  Ct^{-\frac{n}{2p}+\tau-\frac{1}{2}},& \text{ if  }p\in \left(1,\frac{2n}{3}\right), \\
			Ct^{1-\frac{3n}{2p}+\tau},&\text{ if  }
			p\in \left[\frac{2n}{3},n\right), \end{cases}	\\
		\left\|D\Ric_{h_t} (k_t)\right\|_{C^i}&\leq \begin{cases}  Ct^{-\frac{n}{2p}+\tau-1},& \text{ if  }p\in \left(1,\frac{n}{2}\right), \\
	Ct^{1-\frac{3n}{2p}+\tau},&\text{ if  }
	p\in \left[\frac{n}{2},n\right), \end{cases}		
	\end{align*}
	for all $t\geq 1$.
\end{lem}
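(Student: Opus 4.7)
The plan is to exploit the integral representation \eqref{eq : modified expression k_t}, which writes
\[
k_t = e^{-(t-1)\Delta_{L,h_t}}\overline{k}_1 + \int_1^{t} e^{-(t-s)\Delta_{L,h_t}}\Pi^{\perp}_{h_t}\bigl[(\Delta_{L,h_t}-\Delta_{L,h_s})(k_s) + (1-D_{g_s}\Phi)(H_1(s))\bigr]\,ds,
\]
and to apply $DV_{h_t}$ (resp.\ $D\Ric_{h_t}$) termwise. The crucial input is Lemma \ref{lem : heat flow linearized vector field}, which provides the additional $(t-s)^{-1/2}$ (resp.\ $(t-s)^{-1}$) decay for $DV \circ e^{-(t-s)\Delta_L}$ (resp.\ $D\Ric\circ e^{-(t-s)\Delta_L}$) compared with the bare heat semigroup. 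Since $h_t$ is Ricci-flat, $DV_{h_t}$ and $D\Ric_{h_t}$ commute with $\Delta_{L,h_t}$ in the sense required by that lemma (and the projection $\Pi^{\perp}_{h_t}$ can be absorbed into the source term via Lemma \ref{lem : projection lemma}).

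First I treat the purely linear contribution $DV_{h_t}\circ e^{-(t-1)\Delta_{L,h_t}}\overline{k}_1$. By Lemma \ref{lem : heat flow linearized vector field} applied with $p' \in (1,p]$ and $r'\in(n,\infty)$ large, together with Sobolev embedding, this piece is bounded in $C^i$ by a constant times $t^{-\frac{n}{2p}-\frac12+\tau}$ (respectively $t^{-\frac{n}{2p}-1+\tau}$ in the Ricci case), which is the \emph{first} candidate for the overall rate. Next I split the convolution integral at $t-1$. On the inner piece $[1,t-1]$, I combine the mapping estimate of Lemma \ref{lem : heat flow linearized vector field} for $(t-s)$ with the source bounds of Lemma \ref{lem : nonlinear part 0.1} (giving $s^{1-\frac{3n}{2p}+\tau}$ for the $\Delta_L$-difference term) and Lemma \ref{lem : nonlinear part 0.2} (giving $s^{-\beta-\frac{n}{2}(1/p-1/r')}$ for the $(1-D_g\Phi)(H_1)$ term, with $\beta$ close to $1$). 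Feeding both into the convolution lemma (Lemma \ref{important_technical_lemma}), and choosing an auxiliary H\"older exponent $p'\in(1,p)$ small so that the $(t-s)$ exponent dominates, I obtain for the $V$ case rates $t^{-\frac{n}{2p}-\frac12}$ from the $H_1$ term and $t^{1-\frac{3n}{2p}+\tau}$ from the $\Delta_L$-difference term; for the Ricci case the corresponding rates are $t^{-\frac{n}{2p}-1}$ and $t^{1-\frac{3n}{2p}+\tau}$.

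For the outer piece $[t-1,t]$ the denominator $(t-s)^{-1/2}$ (resp.\ $(t-s)^{-1}$) is integrable (resp.\ after absorbing a spatial derivative via an extra short-time smoothing), so I use Lemma \ref{lem : nonlinear part 0.3} to control the integrand in a high $W^{k,r'}$-norm and then Sobolev embed to $C^i$. In this short-time regime the rate obtained is at least $t^{-\frac{n}{2p}-\frac12+\tau}$ (resp.\ $t^{-\frac{n}{2p}-1+\tau}$), which is no worse than the bounds from the other two regions. Taking the minimum of the three resulting rates produces the case split in the statement: the threshold $p=\tfrac{2n}{3}$ comes from solving $\tfrac{n}{2p}+\tfrac12=\tfrac{3n}{2p}-1$, and analogously $p=\tfrac{n}{2}$ from $\tfrac{n}{2p}+1=\tfrac{3n}{2p}-1$.

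Finally, the upgrade from $L^{r'}$ to $C^i$ is routine: a single application of the short-time smoothing estimate of Lemma \ref{lem : Ricci flow moving gauge short-time estimates} on the interval $[t-\tfrac12,t]$, combined with Sobolev embedding for $r'>n$, costs only an arbitrarily small $\tau>0$ in the exponent. The main obstacle in executing the plan is bookkeeping of the exponents so that the conditions $\max(\alpha,\beta)\neq1$ in Lemma \ref{important_technical_lemma} are never attained; this is handled by first proving the bound with a loss $\tau>0$ (choosing $r'$ and $p'$ generically) and only afterwards letting $\tau\to 0$ wherever the strict inequalities above are in fact strict.
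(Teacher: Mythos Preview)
Your plan is essentially the paper's own proof: apply $DV_{h_t}$ (resp.\ $D\Ric_{h_t}$) to the Duhamel representation \eqref{eq : modified expression k_t}, split the integral at $t-1$, use Lemma~\ref{lem : heat flow linearized vector field} for the kernel factor, Lemmas~\ref{lem : nonlinear part 0.1}--\ref{lem : nonlinear part 0.2} for the source on $[1,t-1]$, Lemma~\ref{lem : nonlinear part 0.3} on $[t-1,t]$, and Lemma~\ref{important_technical_lemma} for the convolution. Your identification of the thresholds $p=\tfrac{2n}{3}$ and $p=\tfrac{n}{2}$ is correct.

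There is one genuine slip in your last paragraph. You propose to obtain the $C^i$-bound from an $L^{r'}$-bound by applying the short-time smoothing Lemma~\ref{lem : Ricci flow moving gauge short-time estimates} on $[t-\tfrac12,t]$. That lemma smooths $k_t$, not $DV_{h_t}(k_t)$; the latter does not solve a parabolic equation to which you can apply parabolic regularity. If instead you pass through the elementary inequality $\|DV_{h_t}(k_t)\|_{C^i}\le C\|k_t\|_{C^{i+1}}$ and then smooth $k$, you lose exactly the extra $t^{-1/2}$ (resp.\ $t^{-1}$) that the lemma is supposed to gain, and you end up back at the rate of Corollary~\ref{prop Ck convergence rates}.

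The fix is already in your hands: Lemma~\ref{lem : heat flow linearized vector field} gives bounds for $\nabla^i\circ DV\circ e^{-t\Delta_L}$ (and $\nabla^i\circ D\Ric\circ e^{-t\Delta_L}$) for every $i\in\N_0$, not just $i=0$. Run the entire Duhamel argument for $\|\nabla^i DV_{h_t}(k_t)\|_{L^{r}}$ with a fixed $i$ and $r>n$; the exponents in Lemma~\ref{important_technical_lemma} are unchanged because the extra $\nabla^i$ is absorbed by the short-time part of Lemma~\ref{lem : heat flow linearized vector field}. Then use Sobolev embedding $W^{i+1,r}\hookrightarrow C^i$ and choose $r$ so large that $n/r<\tau$. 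This is precisely how the paper handles the passage to $C^i$, and it closes the argument without any smoothing step on $DV(k)$.
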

\begin{proof}
We are going to establish the estimates for both operators simultaneously. For notational convenience, we define
$P_{1,h_t}:=DV_{h_t}$ and $P_{2,h_t}:=D\Ric_{h_t}$ . Then our goal is to prove
\begin{align*}
\left\|P_{j,h_t} (k_t)\right\|_{C^i}&\leq \begin{cases}  Ct^{-\frac{n}{2p}+\tau-\frac{j}{2}},& \text{ if  }p\in \left(1,\frac{2n}{2+j}\right), \\
			Ct^{1-\frac{3n}{2p}+\tau},&\text{ if  }
			p\in \left[\frac{2n}{2+j},n\right) \end{cases}	
\end{align*}
for $j=1,2$. 
	 For $t\in [1,3]$, the bounds follow immediately from Corollary \ref{prop Ck convergence rates} as
	\begin{align*}
		\left\|P_{j,h_t}  (k_t)\right\|_{C^i}\leq C\left\| k_t\right\|_{C^{i+j}}\leq C.
	\end{align*}
Therefore, we may assume $t>3$ from now on.
	By applying $P_{j,h_t}$ to \eqref{eq : modified expression k_t}, we write
	\begin{align*}
	P_{j,h_t} (k_t)&=P_{j,h_t}\circ e^{-(t-1)\Delta_{L,h_t}}\overline{k}_1\\
	&\qquad+
	\int_1^{t-1}P_{j,h_t}\circ e^{-(t-s)\Delta_{L,h_t}}\Pi^{\perp}_{h_t}[(\Delta_{L,h_t}-\Delta_{L,h_s})(k_s)+
	(1-D_{g_s}\Phi)(H_1(s))]ds\\
	&\qquad+
	\int_{t-1}^tP_{j,h_t}\circ e^{-(t-s)\Delta_{L,h_t}}\Pi^{\perp}_{h_t}[(\Delta_{L,h_t}-\Delta_{L,h_s})(k_s)+
	(1-D_{g_s}\Phi)(H_1(s))]ds,
	\end{align*}
	and we estimate these three terms separately. Choose a H\"{o}lder exponent $r\in (n,\infty)$ whose precise value is yet to be determined but which is so large that
	\begin{align*}
	\frac{n}{2}\left(\frac{1}{p}-\frac{1}{r}\right)
		>\frac{1}{2}.
	\end{align*}
	Note that the expression $e^{-(t-1)\Delta_{L,h_t}}$ means that we solve for each fixed $t$ an autonomous heat equation for $s\in [0,t-1]$ with respect to the metric $h_t\in\mathcal{F}$.
	Thus for the first term, Lemma \ref{lem : heat flow linearized vector field} (applied for each $t$ to the fixed metric $h_t\in\mathcal{F}$) and Lemma \ref{lem : projection lemma} (ii) yield
	\begin{align*}
	\left\|\nabla^i\circ P_{j,h_t}\circ e^{-(t-1)\Delta_{L,h_t}}\overline{k}_1\right\|_{L^r}&\leq 
	C(t-1)^{-\frac{n}{2}(\frac{1}{p}-\frac{1}{r})-\frac{j}{2}}\left\|\overline{k}_1\right\|_{L^p}\leq C t^{-\frac{n}{2}(\frac{1}{p}-\frac{1}{r})-\frac{j}{2}}\left\|{k}_1\right\|_{L^p}
	\end{align*}
	for $j=1,2$.
	To estimate the second term, we first deal with the integrands. For $1\leq s\leq t$, Lemma \ref{lem : nonlinear part 0.1} yields
	\begin{align}\label{eq : nonlinear part used 1}
	\left\| (\Delta_{L,h_t}-\Delta_{L,h_s})(k_s)\right\|_{L^p}
	&\leq Cs^{1-\frac{3n}{2}(\frac{1}{p}-\frac{1}{r})}\left\|h-\hat{h}\right\|_{Z_{p,r}}\left\|k\right\|_{X_{p,r}}
	\end{align}
	and Lemma \ref{lem : nonlinear part 0.2} yields
	\begin{align}\label{eq : nonlinear part used 2}
	\left\|(1-D_{g_s}\Phi)(H_1(s))\right\|_{L^p}
	&\leq C s^{-\beta-\frac{n}{2}(\frac{1}{p}-\frac{1}{r})}
	\left\|k\right\|_{X_{p,r}}^2,
	\end{align}
	where $\beta=\min\left\{1,\frac{n}{2}\left(\frac{1}{p}-\frac{1}{r}\right)\right\}$. 
	We now distinguish between the cases of small and large $p$.

	 If $p<\frac{2n}{2+j}$, we pick $r\in (n,\infty)$ so large that
 	 $\beta\geq\frac{j}{2}$ for $j=1,2$ and
	  $n\left(\frac{1}{p}-\frac{1}{r}\right)>1+\frac{j}{2}$. 
	Then we get
	\begin{align*}
	\beta+\frac{n}{2}\left(\frac{1}{p}-\frac{1}{r}\right)
		\geq\frac{n}{2}\left(\frac{1}{p}-\frac{1}{r}\right)+\frac{j}{2},
		\qquad 
		\frac{3n}{2}\left(\frac{1}{p}-\frac{1}{r}\right)-1
		>\frac{n}{2}\left(\frac{1}{p}-\frac{1}{r}\right)+\frac{j}{2}.
	\end{align*}
	By the triangle inequality, \eqref{eq : nonlinear part used 1} and \eqref{eq : nonlinear part used 2}, we thus get
	\begin{align*}
	&\left\| (\Delta_{L,h_t}-\Delta_{L,h_s})(k_s)+(1-D_{g_s}\Phi)(H_1(s))\right\|_{L^p} \\
	&\qquad \qquad \leq Cs^{-\frac{n}{2}\left(\frac{1}{p}-\frac{1}{r}\right)-\frac{1}{2}}
	\left\|k\right\|_{X_{p,r}}\left(\left\|k\right\|_{X_{p,r}}+\left\|h-\hat{h}\right\|_{Z_{p,r}}\right).
	\end{align*}
	Consequently, Lemma \ref{lem : heat flow linearized vector field} and Lemma \ref{important_technical_lemma} imply that
	\begin{align*}
	&\left\|\nabla^i\int_1^{t-1}P_{j,h_t}\circ e^{-(t-s)\Delta_{L,h_t}}\Pi^{\perp}_{h_t}[(\Delta_{L,h_t}-\Delta_{L,h_s})(k_s)+
	(1-D_{g_s}\Phi)(H_1(s))]ds\right\|_{L^r}\\
	&\leq C\int_1^{t}\left\| \nabla^i \circ P_{j,h_t}\circ e^{-(t-s)\Delta_{L,h_t}}\Pi^{\perp}_{h_t}\right\|_{L^p,L^r}
	\left\| (\Delta_{L,h_t}-\Delta_{L,h_s})(k_s)+
	(1-D_{g_s}\Phi)(H_1(s))\right\|_{L^p}ds\\
	&\leq C\int_1^{t-1}(t-s)^{-\frac{n}{2}\left(\frac{1}{p}-\frac{1}{r}\right)-\frac{j}{2}}s^{-\frac{n}{2}\left(\frac{1}{p}-\frac{1}{r}\right)-\frac{j}{2}}ds\cdot \left\|k\right\|_{X_{p,r}}\left(\left\|k\right\|_{X_{p,r}}+\left\|h-\hat{h}\right\|_{Z_{p,r}}\right)\\
	&\leq C t^{-\frac{n}{2}\left(\frac{1}{p}-\frac{1}{r}\right)-\frac{j}{2}}\left\|k\right\|_{X_{p,r}}\left(\left\|k\right\|_{X_{p,r}}+\left\|h-\hat{h}\right\|_{Z_{p,r}}\right),
	\end{align*}
    because $\frac{n}{2}\left(\frac{1}{p}-\frac{1}{r}\right)+\frac{j}{2}>1$ for $j=1,2$.

\noindent	Now if $p\in \left[\frac{2n}{2+j},n\right)$, we have
	\begin{align*}
		n\left(\frac{1}{p}-\frac{1}{r}\right)
			<\frac{2+j}{2}
	\end{align*}
	for any $r\in (n,\infty)$ so that
	\begin{align}\label{eq : p r comparison}
		\beta+\frac{n}{2}\left(\frac{1}{p}-\frac{1}{r}\right)
			>\frac{n}{2}\left(\frac{1}{p}-\frac{1}{r}\right)+\frac{j}{2}
			>\frac{3n}{2}\left(\frac{1}{p}-\frac{1}{r}\right)-1.
	\end{align}
By the triangle inequality, \eqref{eq : nonlinear part used 1} and \eqref{eq : nonlinear part used 2}, we thus get
	\begin{align*}
	&\left\| (\Delta_{L,h_t}-\Delta_{L,h_s})(k_s)+ (1-D_{g_s}\Phi)(H_1(s))\right\|_{L^p} \\
		&\qquad \leq Cs^{1-\frac{3n}{2}\left(\frac{1}{p}-\frac{1}{r}\right)} 
			\left\|k\right\|_{X_{p,r}}
			\left(\left\|k\right\|_{X_{p,r}}+\left\|h-\hat{h}\right\|_{Z_{p,r}}\right).
	\end{align*}
	Consequently, by Lemma \ref{important_technical_lemma},
	\begin{align*}
	&\left\|\nabla^i\int_1^{t-1}P_{j,h_t}\circ e^{-(t-s)\Delta_{L,h_t}}\Pi^{\perp}_{h_t}[(\Delta_{L,h_t}-\Delta_{L,h_s})(k_s)+
	(1-D_{g_s}\Phi)(H_1(s))]ds\right\|_{L^r}\\
	&\leq C\int_1^{t}\left\| \nabla^i \circ P_{j,h_t}\circ e^{-(t-s)\Delta_{L,h_t}}\Pi^{\perp}_{h_t}\right\|_{L^p,L^r}
	\left\| (\Delta_{L,h_t}-\Delta_{L,h_s})(k_s)+
	(1-D_{g_s}\Phi)(H_1(s))\right\|_{L^p}ds\\
	&\leq C\int_1^{t-1}(t-s)^{-\frac{n}{2}\left(\frac{1}{p}-\frac{1}{r}\right)-\frac{j}{2}}s^{1-\frac{3n}{2}\left(\frac{1}{p}-\frac{1}{r}\right)}ds	
		\cdot \left\|k\right\|_{X_{p,r}}\left(\left\|k\right\|_{X_{p,r}}+\left\|h-\hat{h}\right\|_{Z_{p,r}}\right)\\
	&\leq C t^{1-\frac{3n}{2}\left(\frac{1}{p}-\frac{1}{r}\right)}
		\left\|k\right\|_{X_{p,r}}\left(\left\|k\right\|_{X_{p,r}}+\left\|h-\hat{h}\right\|_{Z_{p,r}}\right)
	\end{align*}
due to \eqref{eq : p r comparison} and the inequality $\frac{n}{2}\left(\frac{1}{p}-\frac{1}{r}\right)+\frac{j}{2}>1$. Because $p<n$, the latter inequality holds provided that $r$ is sufficiently large.

	For the third term, we get, using \eqref{eq : heat flow linearized vector field 2} and Lemma \ref{lem : nonlinear part 0.3},
	\begin{align*}
	&\left\|\nabla^i\int_{t-1}^t P_{j,h_t}\circ e^{-(t-s)\Delta_{L,h_t}}\Pi^{\perp}_{h_t}[(\Delta_{L,h_t}-\Delta_{L,h_s})(k_s)+
	(1-D_{g_s}\Phi)(H_1(s))]ds\right\|_{L^r}\\
	&\qquad\leq C\sup_{s\in [t-1,t]}\left\| (\Delta_{L,h_t}-\Delta_{L,h_s})(k_s)+
	(1-D_{g_s}\Phi)(H_1(s))\right\|_{W^{i+2,r}}\\
	&\qquad\leq Ct^{-n\left(\frac{1}{p}-\frac{1}{r}\right)}\left(\left\|h-\hat{h}\right\|_{Z_{p,r}}+\left\|k\right\|_{X_{p,r}}\right).
	\end{align*}
	Note also that we have for $j=1,2$ that
	\begin{align*}
	-n\left(\frac{1}{p}-\frac{1}{r}\right)\leq \begin{cases}  -\frac{n}{2}\left(\frac{1}{p}-\frac{1}{r}\right)-\frac{j}{2},& \text{ if  }p\in \left(1,\frac{2n}{2+j}\right), \\
		1-\frac{3n}{2}\left(\frac{1}{p}-\frac{1}{r}\right),&\text{ if  }
		p\in \left[\frac{2n}{2+j},n\right), \end{cases}.
	\end{align*}
	provided that $r$ is chosen sufficiently large.
Thus by summing up the inequalities, we get for all $i\in\N_0$ and $j=1,2$ that
	\begin{align*}
	\left\| P_{j,h_t} (k_t)\right\|_{W^{i,r}}&\leq \begin{cases}  Ct^{-\frac{n}{2}\left(\frac{1}{p}-\frac{1}{r}\right)-\frac{j}{2}},& \text{ if  }p\in \left(1,\frac{2n}{2+j}\right), \\
		Ct^{1-\frac{3n}{2}\left(\frac{1}{p}-\frac{1}{r}\right)},&\text{ if  }
		p\in \left[\frac{2n}{2+j},n\right). \end{cases}	
\end{align*}
	Combining this with the Sobolev type inequality
	\begin{align*}
	\left\|   P_{j,h_t} (k_t)\right\|_{C^i}\leq C\left\|P_{j,h_t} (k_t)\right\|_{W^{i+1,r}}
	\end{align*}
	and choosing $r$ so large that
	\begin{align*}
	\frac{n}{2r}\leq \frac{n}{r}<\tau,
	\end{align*}
	we obtain the desired result.
\end{proof}
It remains to consider the error terms in the Taylor expansion.
\begin{lem}\label{lem : nonlinear de Turck}
	For each $i\in\N_0$ and $\tau>0$, there exists a constant $C=C(i,\tau,p,K,\epsilon)$ such that
	\begin{align*}
		\left\|\int_0^1(1-s)^2D^2V_{h_t+s\cdot k_t,h_t}(k_t,k_t)ds\right\|_{C^i}\leq C t^{-\frac{n}{p}+\tau},\\
		\left\|\int_0^1(1-s)^2D^2\Ric_{h_t+s\cdot k_t,h_t}(k_t,k_t)ds\right\|_{C^i}\leq C t^{-\frac{n}{p}+\tau}
	\end{align*}
	for all $t\geq 1$.
\end{lem}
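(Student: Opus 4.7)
The plan is to exploit that $D^2V_g$ and $D^2\Ric_g$ are bilinear differential operators of bounded order, apply them to the two slots $(k_t,k_t)$, and then invoke the pointwise $C^i$-decay of $k_t$ already provided by Corollary \ref{prop Ck convergence rates}.

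The first step would be to use the quasi-linear structure of $V(\cdot,h)$ (first order in $g$) and $\Ric$ (second order in $g$) to record the schematic bounds
\begin{align*}
|\nabla^j D^2V_{h_t+sk_t,h_t}(k_t,k_t)| &\leq C_j\sum_{j_1+j_2\leq j+1}|\nabla^{j_1}k_t|\,|\nabla^{j_2}k_t|,\\
|\nabla^j D^2\Ric_{h_t+sk_t}(k_t,k_t)| &\leq C_j\sum_{j_1+j_2\leq j+2}|\nabla^{j_1}k_t|\,|\nabla^{j_2}k_t|,
\end{align*}
uniformly in $s\in[0,1]$, with constants $C_j$ depending only on bounded norms of $(h_t+sk_t)^{-1}$, $h_t$, and the curvature $R_{h_t}$. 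These coefficients are bounded uniformly in $(t,s)$ because $h_t-\hat{h}\in\O_\infty(r^{-n})$ by Proposition \ref{prop: near metrics}, the family $h_t$ lies in $\U\cap\F$, and $\|k_t\|_{L^\infty}$ is small by Lemma \ref{lem : normcontrol}, ensuring uniform non-degeneracy of $h_t+sk_t$.

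Next, I would plug in the pointwise decay of $k_t$: for every $m\in\N_0$ and every $\tau'>0$, Corollary \ref{prop Ck convergence rates} yields $\|k_t\|_{C^m}\leq C(m,\tau')\,t^{-n/(2p)+\tau'}$ for all $t\geq 1$. Substituting in the schematic bounds above and choosing $\tau'=\tau/2$ gives
\begin{align*}
\|D^2V_{h_t+sk_t,h_t}(k_t,k_t)\|_{C^i} &\leq C\,\|k_t\|_{C^{i+1}}^2 \leq C\,t^{-n/p+\tau},\\
\|D^2\Ric_{h_t+sk_t}(k_t,k_t)\|_{C^i} &\leq C\,\|k_t\|_{C^{i+2}}^2 \leq C\,t^{-n/p+\tau},
\end{align*}
uniformly in $s\in[0,1]$. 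Integration against the integrable weight $(1-s)^2\,ds$ on $[0,1]$ preserves the decay rate and yields both claimed estimates.

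The only point requiring real care is the first step, namely checking that the highest-order derivative $\nabla^2 k_t$ appears at most linearly in the bilinear expression $D^2\Ric(k_t,k_t)$, and likewise $\nabla k_t$ in $D^2V(k_t,k_t)$: if a term $\nabla^2 k_t\cdot\nabla^2 k_t$ were to appear, the argument would fail since $\nabla^2 k_t$ decays only like $\|k_t\|_{C^2}$ without an extra small factor. However, this follows directly from the quasi-linear nature of the operators, as the principal part of both $DV$ and $D\Ric$ is linear in the perturbation, and it can be verified by a bookkeeping computation starting from the explicit formulas for $\Gamma(g)-\Gamma(h)$ and the Ricci tensor. Hence this should not constitute a genuine obstacle.
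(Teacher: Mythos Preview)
Your proof is correct and follows the same approach as the paper: record the schematic forms $D^2V_{h_t+sk_t,h_t}(k_t,k_t)=\nabla k_t*k_t$ and $D^2\Ric_{h_t+sk_t}(k_t,k_t)=\nabla^2 k_t*k_t+\nabla k_t*\nabla k_t+R*k_t*k_t$, bound by $\|k_t\|_{C^{i+1}}^2$ resp.\ $\|k_t\|_{C^{i+2}}^2$, and invoke Corollary~\ref{prop Ck convergence rates}. One small remark: your final paragraph's worry is unnecessary, since the $C^m$-decay rate from Corollary~\ref{prop Ck convergence rates} is \emph{independent of $m$}, so even a hypothetical $\nabla^2 k_t\cdot\nabla^2 k_t$ term would still produce the rate $t^{-n/p+\tau}$.
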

\begin{proof}
	First note that due to short-time estimates $g_t$ is $C^i$-close to $\hat{h}$ for all $t\geq 1$. Therefore, all $C^i$-norms of the metrics $g_{t,s}:=h_t+s\cdot k_t$ are equivalent for $t\geq 1$ and $s\in [0,1]$ and we may suppress the dependence of the norms on the metric. 
	We have the schematic expressions
	\begin{align*}
		D^2V_{g_t+s\cdot k_t,h_t}(k_t,k_t)=\nabla k_t*k_t,\qquad 	D^2\Ric_{g_{t,s}}(k_t,k_t)=\nabla^2k_t*k_t+\nabla k_t*\nabla k_t+R_{g_{t,s}}*k_t*k_t,
	\end{align*}
	from which we conclude
	\begin{align*}
		\left\|\int_0^1(1-s)^2D^2V_{g_t+s\cdot k_t,h_t}(k_t,k_t)ds\right\|_{C^i}
			&\leq C \left\|k_t\right\|_{C^{i+1}}^2
			\leq C \left(t^{-\frac{n}{2p}+\frac{\tau}{2}}\right)^2
			=C t^{-\frac{n}{p}+\tau},\\
		\left\|\int_0^1(1-s)^2D^2\Ric_{g_t+s\cdot k_t}(k_t,k_t)ds\right\|_{C^i}
			&\leq C \left\|k_t\right\|_{C^{i+2}}^2
			\leq C \left(t^{-\frac{n}{2p}+\frac{\tau}{2}}\right)^2=C t^{-\frac{n}{p}+\tau}.	
	\end{align*}
	The inequalities on the right hand sides follow from Proposition \ref{prop Ck convergence rates} above.
\end{proof}
\begin{proof}[Proof of Proposition \ref{prop : de Turck vector field}]
	This follows now directly from Lemma \ref{lem : linear de Turck} and Lemma \ref{lem : nonlinear de Turck}. Note that the terms in Lemma \ref{lem : linear de Turck} are dominating for any $p\in (1,n)$.
\end{proof}
\subsection{Convergence of the Ricci flow}
\begin{proof}[Proof of Theorem \ref{thm : mainthm 2 introduction}]
Choose an arbitrary $\U$ and let $\V=\V(\U)$ as in Theorem \ref{thm : mainthm 1 introduction}. Then the modified Ricci-de Turck flow $\tilde{g}_t$ starting at $g_0$ exists for all time and converges to a Ricci-flat limit $\tilde{h}_{\infty}$ as $t\to\infty$.
 Moreover, the tensors $\tilde{h}_t=\Phi(\tilde{g}_t)$ and $\tilde{k}_t=\tilde{g}_t-\tilde{h}_t$ satisfy the convergence rates of the Propositions \ref{prop Ck convergence rates} and \ref{prop : L^p conv rates}. The family of vector fields $V(\tilde{g}_t,\tilde{h}_t)$, $t\geq1$ satisfies the decay rates of Proposition \ref{prop : de Turck vector field}. In particular since $p<\frac{3n}{4}$, we have
for all $i\in \N_0$ that
\begin{align*}
\left\|V(\tilde{g}_t,\tilde{h}_t)\right\|_{C^i(\hat{h})}\leq Ct^{-\alpha}
\end{align*}
for all $t\geq 1$  and some $\alpha>1$. Therefore, the family of diffeomorphisms $(\tilde{\varphi}_t)_{t\geq 0}$ with $\tilde{\varphi}_0=\id_M$ generated by $\tilde{V}_t$, $t\geq 0$ defined in \eqref{eq : diff_generators} converges for all $i\in \N_0$ in $C^i(\hat{h})$ to a limit diffeomorphism $\tilde{\varphi}_{\infty}$ as $t\to\infty$. Now let
\begin{align*}
	g_t=\tilde{\varphi}_t^*\tilde{g}_t,\qquad 	h_t=\tilde{\varphi}_t^*\tilde{h}_t,\qquad
		k_t=\tilde{\varphi}_t^*\tilde{k}_t,\qquad t\in [0,\infty).
\end{align*}
Observe that $(g_t)_{t\geq 0}$ is the standard Ricci flow starting at $g_0$ and $h_t$ is a family of Ricci-flat metrics.
Because $\tilde{h}_t\to\tilde{h}_{\infty}$ in $C^i(\hat{h})$, we also have $h_t=\tilde{\varphi}_t^*\tilde{h}_t\to \tilde{\varphi}_{\infty}^*\tilde{h}_{\infty}=:h_{\infty}$ in $C^i(\hat{h})$.
Recall that the $C^i$ norms of $\tilde{h}_{\infty}=(\tilde{\varphi}_{\infty}^{-1})^*h_{\infty}$ and $\hat{h}$ are equivalent as $\tilde{h}_{\infty},\hat{h}\in\U\cap\F$.
Therefore, the $C^i$-norms induced by the Ricci-flat metrics $(\tilde{\varphi}_t^{-1})^*h_{\infty}$ $t\in [1,\infty]$ and $\hat{h}$ are also equivalent. 
Thus we get for each $\tau>0$ a constant such that
	\begin{align*}
	\left\|g_t-h_t\right\|_{C^i(h_{\infty})}=\left\|k_t\right\|_{C^i(h_{\infty})}= 
	\left\|\tilde{k}_t\right\|_{C^i((\tilde{\varphi}_t^{-1})^*h_{\infty})}
	\leq 	C\left\| \tilde{k}_t\right\|_{C^i(\hat{h})}
	\leq C t^{-\frac{n}{2p}+\tau}
	\end{align*}
	due to Theorem \ref{thm : mainthm 1 introduction}. In particular, the Ricci flow $g_t=k_t+h_t$ converges to $h_{\infty}$ as $t\to\infty$.
	 To obtain the convergence rate of $h_t$, we compute
	\begin{align*}
	\partial_th_t=\tilde{\varphi}_t^*(\partial_t \tilde{h}_t)-\tilde{\varphi}_t^*\left(\mathcal{L}_{V(\tilde{g}_t,\tilde{h}_t)}\tilde{h}_t\right),
	\end{align*}
	which yields 
	\begin{align*}
	\left\|\partial_th_t\right\|_{C^i(h_{\infty})}&\leq \left\|\tilde{\varphi}_t^*(\partial_t \tilde{h}_t)  \right\|_{C^i(h_{\infty})}+ \left\|\tilde{\varphi}_t^*(\mathcal{L}_{V(\tilde{g}_t,\tilde{h}_t)}\tilde{h}_t) \right\|_{C^i(h_{\infty})}\\
	&= \left\|\partial_t \tilde{h}_t \right\|_{C^i((\tilde{\varphi}_t^{-1})^*h_{\infty})}+ \left\|\mathcal{L}_{V(\tilde{g}_t,\tilde{h}_t)}\tilde{h}_t   \right\|_{C^i((\tilde{\varphi}_t^{-1})^*h_{\infty})}\\
	&\leq C\left(\left\|\partial_t \tilde{h}_t \right\|_{C^i(\hat{h})}+ \left\|V(\tilde{g}_t,\tilde{h}_t) \right\|_{C^{i+1}(\hat{h})}\right).
	\end{align*}
   Proposition \ref{prop : improved rates} yields by definition of the $Y_{p,r'}$-norm that $\left\|\partial_t \tilde{h}_t \right\|_{C^i(\hat{h})}\leq C t^{-\frac{n}{p}+\tau}$, where $C=C(\tau)$ and $\tau>0$ can be chosen arbitrarily small. The convergence rate of $h_t$ now follows from  Proposition \ref{prop : de Turck vector field} and integrating in time.
\end{proof}

\subsection{Positive scalar curvature rigidity}
In this subsection, we will prove the scalar curvature rigidity statement using our stability result. We will  use that the Ricci curvature (and hence the scalar curvature as well) decay of order $\O(t^{-\frac{n}{2p}-1+\tau})$ for small $p$. On the other hand, because the scalar curvature satisfies the super heat equation
\begin{align*}
\partial_t\scal_{g_t}+\Delta_{g_t}\scal_{g_t}=2|\Ric_{g_t}|^2_{g_t}
\end{align*}
along the Ricci flow, we expect a decay rate of at most of order $\O(t^{-\frac{n}{2}})$, which is the $L^{\infty}$ decay rate of the heat kernel on ALE spaces. We will follow the same strategy as \cite{App18}.

Let $g$ be a metric satisfying the assumptions of Theorem \ref{thm : psc rigidity introduction}. Let $\tilde{g}_t$ be the $\hat{h}$-gauged Ricci-de Turck flow and $g_t$ the standard Ricci flow starting from $g=g_0$, both defined up to time $1$.
We need to understand the heat kernel of the evolving backgrounds. For $0\leq s<t$ and $x,y\in M$, let $K(x,t,y,s)$ be the heat kernel associated to $g_t$, i.e.\
\begin{align*}
u(t,x):=\int_M K(x,t,y,s) u_s(y)\dv_{g_s}
\end{align*}
is the solution of the initial value problem
\begin{align*}
\partial_tu+\Delta_{g_t}u=0,\qquad u(s,x)=u_s(x).
\end{align*}
Let $\tilde{K}(x,t,y,s)$ be the heat kernel associated to $\tilde{g}_t$, then we have the relation
\begin{align*}
K(x,t,y,s)=\tilde{K}(\phi_t(x),t,\phi_t(y),s),
\end{align*}
where $\phi_t$ are the diffeomorphisms such that $\phi_t^*\tilde{g}_t=g_t$.
For $0<s<t\leq 1$, \cite{Zhu16}*{Theorem 4.2} yields the Gaussian bounds
\begin{align*}
\tilde{K}(x,t,y,s)\leq C_1(t-s)^{-\frac{n}{2}}\exp(C_2\Lambda+C_3(t-s)\kappa+C_4\sqrt{(t-s)\kappa}) \exp\left(-\frac{d_{\tilde{g}_t}(x,y)^2}{8 \exp(4\kappa )(t-s)}\right),
\end{align*}
where $\Lambda=\int_s^t\left\|\Ric_{\tilde{g}_{t'}}\right\|_{C^0(\tilde{g_t})}dt'$ and $\Ric_{\tilde{g}_{t'}}\geq-\kappa$ for $t'\in [s,t]$. By Remark \ref{rem : Lipschitz bound}, $g_t$ stays $L^{\infty}$-close to $\hat{h}$ up to time $1$, so that the induced distance functions $d_{g_t}$, $d_{\tilde{g}_t}$ and $d_{\hat{h}}$ are all equivalent.
 By diffeomorphism invariance, we thus get
	\begin{equation}\begin{split}\label{eq : Gaussian bounds}
K(x,t,y,s)&\leq C_1(t-s)^{-\frac{n}{2}}\exp\left(C_2\Lambda+C_3(t-s)\kappa+C_4\sqrt{(t-s)\kappa}\right)\\
&\qquad \cdot\exp\left(-\frac{d_{\hat{h}}(x,y)^2}{C_5 \exp(4\kappa )(t-s)}\right),
\end{split}
\end{equation}
where $\Lambda=\int_s^t\left\|\Ric_{{g}_{t'}}\right\|_{C^0({g_t})}dt'$ and $\Ric_{{g}_{t'}}\geq-\kappa$ for $t'\in [s,t]$.
\begin{lem}
If $\scal_{g_0}\geq0$, then $\scal_{g_1}\geq0$.
	\end{lem}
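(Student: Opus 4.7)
The scalar curvature evolves along the Ricci flow by
\begin{align*}
\partial_t\scal_{g_t}+\Delta_{g_t}\scal_{g_t}=2|\Ric_{g_t}|^2_{g_t}\geq 0,
\end{align*}
where $\Delta$ is the (non-negative) geometer's Laplacian. My plan is to combine this super-heat inequality with the positivity of the heat kernel $K(x,t,y,s)$ of $g_t$ and the Gaussian bounds \eqref{eq : Gaussian bounds}, via a Duhamel representation. Concretely, for $\sigma\in(0,1)$ I would first establish the identity
\begin{align*}
\scal_{g_1}(x) = \int_M K(x,1,y,\sigma)\,\scal_{g_\sigma}(y)\,dV_{g_\sigma}(y) + \int_\sigma^1 \int_M K(x,1,y,s)\, 2|\Ric_{g_s}|^2(y)\,dV_{g_s}(y)\,ds,
\end{align*}
and then let $\sigma\to 0^+$: the double integral is manifestly non-negative, and the first term should converge to $\int_M K(x,1,y,0)\,\scal_{g_0}(y)\,dV_{g_0}(y) \geq 0$ by positivity of both factors.

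The first step is, for every $\sigma>0$, to establish Shi-type bounds $\|\scal_{g_s}\|_{L^\infty}+\|\Ric_{g_s}\|_{L^\infty}^2 \leq C/s$ on $s\in(0,1]$ by applying Lemma \ref{lem : Ricci flow short-time estimates in Ck} to the Ricci-de Turck flow $\tilde g_t$ and pulling back via the diffeomorphisms $\phi_t$ from the proof of Theorem \ref{thm : mainthm 1 introduction}. Together with \eqref{eq : Gaussian bounds}, these bounds make all integrals in the Duhamel identity absolutely convergent and justify the identity by integrating the super-heat equation for $\scal$ against $K(x,1,\cdot,\cdot)$. The second step is to pass to the limit $\sigma\to 0^+$: since $g_0$ is smooth, $g_\sigma\to g_0$ in $C^\infty_{\mathrm{loc}}$, and in particular $\scal_{g_\sigma}(y)\to\scal_{g_0}(y)\geq 0$ pointwise. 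Splitting $\scal_{g_\sigma}=\scal_{g_\sigma}^+-\scal_{g_\sigma}^-$, Fatou's lemma applied to the non-negative sequence $K(x,1,y,\sigma)\scal_{g_\sigma}^+(y)$ delivers the correct lower bound, while the contribution of the negative part must be shown to vanish as $\sigma\to 0^+$.

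The hard part is this last step. The prefactor $\exp\bigl(C_2\int_\sigma^1 \|\Ric_{g_{t'}}\|_{L^\infty}\,dt'\bigr)$ in the Gaussian bound \eqref{eq : Gaussian bounds} blows up polynomially in $\sigma^{-1}$ because of the Shi-type degeneration of the curvature as $s\to 0^+$, so naive dominated convergence fails for $\int_M K(x,1,y,\sigma)\,\scal_{g_\sigma}^-(y)\,dV_{g_\sigma}(y)$. I would handle this by decomposing the integral into a compact region, where $C^\infty_{\mathrm{loc}}$-convergence forces $\scal_{g_\sigma}^- \to 0$ uniformly and thereby absorbs the polynomial prefactor, and a tail region $\{d_{\hat h}(x,y) \geq R\}$, where the Gaussian factor $\exp(-d_{\hat h}(x,y)^2/C(1-\sigma))$ from \eqref{eq : Gaussian bounds} dominates the polynomial prefactor and gives a tail bound that is uniform in $\sigma$ and tends to zero as $R\to\infty$. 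This two-region book-keeping is the only non-formal analytic point in the argument; everything else is the standard Duhamel identity together with positivity of $K$ and $|\Ric|^2$.
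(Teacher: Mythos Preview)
Your tail estimate is where the argument breaks. You correctly observe that the factor $\exp(C_2\Lambda)$ in \eqref{eq : Gaussian bounds} blows up only polynomially, since $\Lambda=\int_\sigma^1\|\Ric_{g_{t'}}\|_{L^\infty}\,dt'\leq C\log(1/\sigma)$. But you have overlooked the other curvature-dependent terms in that bound. On $[\sigma,1]$ the Shi-type estimate only gives $\|\Ric_{g_{t'}}\|_{L^\infty}\leq C/t'$, so the Ricci lower bound over the interval is $\kappa\sim C/\sigma$; hence $(t-s)\kappa\sim C/\sigma$ and $\kappa T\sim C/\sigma$, and the factors $\exp(C_3(t-s)\kappa)$ and $\exp(4\kappa T)$ in \eqref{eq : Gaussian bounds} blow up \emph{exponentially} in $1/\sigma$. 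The latter sits in the denominator of the Gaussian exponent, so the Gaussian becomes $\exp\bigl(-d_{\hat h}(x,y)^2/(Ce^{C/\sigma})\bigr)$, which is essentially $1$ at any fixed distance as $\sigma\to0$ and therefore provides no tail control whatsoever. Even if the degeneration were only polynomial, your claim of a $\sigma$-uniform tail bound would still be false: on $\{d_{\hat h}(x,y)\geq R\}$ you can only use $|\scal_{g_\sigma}^-|\leq C/\sigma$, and $\sigma^{-\alpha-1}\int_{\{d\geq R\}}e^{-cd^2}\,dV$ diverges as $\sigma\to0$ for every fixed $R$; letting $R=R(\sigma)\to\infty$ would in turn require a quantitative rate for $\sup_{B_{R(\sigma)}}\scal_{g_\sigma}^-\to 0$ on growing balls, which $C^2_{\mathrm{loc}}$-convergence does not provide.

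The paper's proof is engineered precisely to avoid this. It never applies \eqref{eq : Gaussian bounds} across the full interval $[\sigma,1]$, but instead iterates over the dyadic times $t_i=\theta^i$ and nested balls of radii $r_i=R(1-\beta^i)$ with $\beta\in(\sqrt\theta,1)$. On each subinterval $[t_{i+1},t_i]$ one has $(t_i-t_{i+1})\kappa\leq C(1-\theta)/\theta$ and $\Lambda\leq C\log(1/\theta)$, both \emph{uniform in $i$}; the only $i$-dependent growth is the polynomial $(t_i-t_{i+1})^{-n/2}\sim\theta^{-ni/2}$, which is beaten by the Gaussian tail since $(r_{i+1}-r_i)^2/\theta^i\sim R^2(\beta^2/\theta)^i\to\infty$. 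One obtains a recursion $a_i\geq a_{i+1}-C/(R^{2j}2^i)$ for $a_i:=\inf_{B(r_i,x)}\scal_{g_{t_i}}$; summing and using only $C^2_{\mathrm{loc}}$-convergence on the \emph{fixed} compact set $B_R(x)$ to get $\liminf_i a_i\geq 0$ yields $\scal_{g_1}(x)\geq -C/R^{2j}$, and $R\to\infty$ concludes. This dyadic localisation is the missing idea in your approach.
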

\begin{proof}
This lemma has been shown in the case of $\R^n$ in \cite{App18}, based on the analysis in \cite{Bam16} and a parabolic scaling argument which does not work on general ALE manifolds. 
For this reason, we present the details here although the ideas are similar as in \cite{Bam16}. Let $\theta\in (0,1)$ and consider the sequence of times $t_i=\theta^i$, $i\in\N_0$. 
 Due to short-time estimates for the Ricci-de Turck flow,  $\left\|\Ric_{{g}_{t}}\right\|_{C^0({g_t})}\leq C_6 t^{-1}$ for $t\in (0,1]$. From \eqref{eq : Gaussian bounds}, we conclude
\begin{align*}
K(x,t_i,y,t_{i+1})\leq C_7 \theta^{-C_8\cdot i}\exp\left(-\frac{d_{\hat{h}}(x,y)^2}{C_9 \theta^i}\right).
\end{align*}
Now let $\beta > (\sqrt{\theta},1)$, $R>0$ and consider the sequence of radii
\begin{align*}
r_i=R\cdot (1-\beta^i).
\end{align*}
Fix a point $x\in M$ and set 
\begin{align*}
a_i:=\inf\left\{\scal_{g_{t_i}}(y) \mid y\in B(r_i,x)   \right\},
\end{align*}
where $B(r_i,x)$ is the ball of radius $r_i$ around $x$, defined with respect to $\hat{h}$. 
Standard regularity theory of the Ricci-de Turck flow (see e.g. \cite{Bam16}) shows that $g_t\in C^{2}_{loc}([0,1],\mathcal{M})$. Therefore, 
\begin{align*}
\liminf_{i\to\infty}a_i\geq \inf\left\{\scal_{g_0}(y) \mid y\in B(R,x)   \right\}\geq 0
\end{align*}
Then we have, for any $y\in M$,
\begin{align*}
\scal_{g_{t_i}}(y)&\geq
\int_M K(y,t_i,z,t_{i+1})\cdot \scal_{g_{t_{i+1}}}(z)	\dv_{g_{t_{i+1}}}\\
&\geq a_{i+1}\int_{B(r_{i+1}-r_i,y) } K(y,t_i,z,t_{i+1})	\dv_{g_{t_{i+1}}}\\
&\qquad -\frac{C_{10}}{t_{i+1}}\int_{M\setminus B(r_{i+1}-r_i,y) } K(y,t_i,z,t_{i+1})	\dv_{g_{t_{i+1}}}\\
&\geq a_{i+1}-\frac{C_{11}}{t_{i+1}}\theta^{-C_8\cdot i}\exp\left(-\frac{(r_{i+1}-r_i)^2}{C_8 \theta^i}\right)\\
&\geq a_{i+1}-C_{11}\cdot \theta^{-(C_8+1)\cdot i-1}
\exp\left(-\frac{R^2}{C_{12}}\frac{\beta^{2i}}{ \theta^i}\right).
\end{align*}
Because $\beta^2>\theta$, we may fix some $j\in\N$ such that 
\begin{align*}
\left(\frac{\theta}{\beta^2}\right)^j\leq \frac{1}{2}\theta^{C_8+1}.
\end{align*} 
Using $x^j\exp{(-x)}\leq C_{13}$ for $x>0$, we thus get
\begin{align*}
\scal_{g_{t_i}}(y)&\geq a_{i+1}- \frac{C_{14}}{R^{2j}\cdot 2^i}.
\end{align*}
We conclude
\begin{align*}
a_i\geq a_{i+1}- \frac{C_{14}}{R^{2j}\cdot 2^i},
\end{align*}
and therefore,
\begin{align*}
\scal_{g_{1}}(x)\geq a_0\geq \liminf_{i\to\infty}a_{i}- \frac{2\cdot C_{14}}{R^{2j}}\geq - \frac{2\cdot C_{14}}{R^{2j}}.
\end{align*}
Because $x\in M$ was taken arbitrarily, the result follows from letting $R\to\infty$.
	\end{proof}
Now, we continue with our analysis on large times. Let $(g_t)_{t\geq1}$ be the standard Ricci flow starting from $g_1$ and $(\overline{g}_t)_{t\geq 1}$ be the Ricci flow with moving gauge, also starting from $g_1$. Again, we have diffeomorphisms $(\overline{\varphi}_t)_{t\geq 1}$ such that 
$g_t=\overline{\varphi}^{*}_t\overline{g}_t$.
\begin{definition}
	Let $1\leq s<t$ and $x,y\in M$. Then the $\mathcal{L}$-length of a curve $\gamma:[s,t]\to M$ is
	\begin{align*}
	\mathcal{L}(\gamma):=\int_s^t\sqrt{t-t'}(\scal_{g_{t'}}(\gamma(t'))+|\gamma'(t')|_{g_{t'}}^2)dt'
	\end{align*}
	and the reduced distance between $(x,t)$ and $(y,s)$ is
	\begin{align*}
	\ell(x,t,y,s):=\frac{1}{2\sqrt{t-s}}\inf\left\{\mathcal{L}(\gamma)\mid \gamma:[s,t]\to M \text{ is a smooth curve with }\gamma(s)=y\text{ and }\gamma(t)=x \right\}.	\end{align*}
\end{definition}
\begin{lem}\label{lem : heat kernel reduced distance}
With the same notation as above, we have
\begin{align*}
K(x,t,y,s)\geq\frac{1}{(4\pi(t-s))^{\frac{n}{2}}}\exp({-\ell(x,t,y,s)} )
\end{align*}
\end{lem}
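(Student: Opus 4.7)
The approach is Perelman's classical differential--Harnack argument for the conjugate heat equation along the Ricci flow. Introduce the candidate function
\[
v(y,s) := (4\pi(t-s))^{-n/2}\exp\bigl(-\ell(x,t,y,s)\bigr), \qquad s \in [1,t),
\]
so that the claim is $K(x,t,y,s) \geq v(y,s)$ pointwise.

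First I would invoke Perelman's fundamental pointwise computation for the reduced distance. In the sign convention forced by the paper's definition of $K$ (so that $K$ as a function of $(x,t)$ satisfies $\partial_t K + \Delta_{g_t} K = 0$, and hence as a function of $(y,s)$ satisfies the conjugate heat equation $-\partial_s K + \Delta_{g_s} K - R_{g_s} K = 0$), one has the barrier inequality
\[
-\partial_s v + \Delta_{g_s} v - R_{g_s} v \;\geq\; 0 \qquad \text{on } M \times [1,t).
\]
This is a purely local statement about $\ell$, proved by differentiating along $\mathcal L$-geodesics and using the first and second variation formulas for $\mathcal L$, together with the evolution of $g_s$; the full calculation is in \cite{Per02}*{\S 7} (see also Kleiner--Lott). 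Since the computation is entirely pointwise, the ALE structure plays no role at this stage. Next I would verify the matching initial data: as $s \nearrow t$, both $K(x,t,\cdot,s)$ and $v(\cdot,s)$ converge to $\delta_x$ in the distributional sense; for $v$ this uses the classical identification $\ell(x,t,y,s) \to d_{g_t}(x,y)^2/(4(t-s))$ and the asymptotic isotropy of $g_t$ at $x$.

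Finally, I would compare $K$ and $v$ by applying the weak maximum principle to $K - v$ on $M \times [1,t-\varepsilon]$ and sending $\varepsilon \to 0$. The main obstacle is noncompactness: Perelman's original argument takes place on closed manifolds, and on our ALE manifold one has to rule out loss at spatial infinity. The ingredients needed are (i) a Gaussian upper bound for $K$, available by the argument behind \eqref{eq : Gaussian bounds} (on any finite interval of time, using that Proposition \ref{prop : de Turck vector field} provides uniform-in-time bounds on $\Ric_{g_t}$ and on all its derivatives for $t \geq 1$), and (ii) a matching Gaussian upper bound on $v$ coming from the standard lower bound $\ell(x,t,y,s) \geq c\,d_{\hat h}(x,y)^2/(t-s) - C$ outside a compact set, which is a routine consequence of the uniform curvature bounds on $g_\cdot$. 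Together with the Euclidean volume growth of the ALE end, these decays let a standard exhaustion/cutoff argument absorb all boundary contributions and deliver $K \geq v$ on $M \times [1,t)$, which is the claim.

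The crux is therefore not the differential inequality (which is purely local and identical to Perelman's) but the verification that the noncompact maximum principle applies, and here we benefit directly from the sharp long-time decay estimates for $g_t$ established in Theorem \ref{thm : mainthm 1 introduction} and Proposition \ref{prop : de Turck vector field}: the Ricci flow has uniformly bounded geometry along $[1,\infty)$, which is exactly what is needed both for the Gaussian bounds on $K$ and for the standard properties of the $\mathcal L$-geometry (existence and regularity of minimizing $\mathcal L$-geodesics, polynomial-in-$\tau$ bounds on $\ell$ on compact sets, and Gaussian lower bounds at infinity) to be available.
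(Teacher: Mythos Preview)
Your proposal is correct and follows essentially the same approach as the paper: the paper simply cites \cite{chowetalII}*{Lemma 16.49}, whose proof is exactly Perelman's differential Harnack inequality for $v=(4\pi(t-s))^{-n/2}e^{-\ell}$ combined with a weak maximum principle, and then observes that both ingredients carry over to complete Ricci flows of bounded curvature (the first via \cite{chowetalI}, the second via \cite{chowetalII}*{Theorem 12.10}). Your argument unwinds this reference; the only difference is that you justify the noncompact comparison through Gaussian decay of $K$ and $v$, whereas the paper more directly invokes the bounded-curvature weak maximum principle from \cite{chowetalII}, which already handles the growth condition without needing the Gaussian bounds explicitly.
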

\begin{proof}
In the compact case, this result is \cite{chowetalII}*{Lemma 16.49}. The proof of this lemma is on the one hand based on \cite{chowetalII}*{Lemma 16.48} (whose proof in turn builds up on results in \cite{chowetalI} which do also hold for Ricci flows of complete manifolds of bounded curvature) and on the other hand on the weak maximum principle which does also hold in the present  situation (see e.g. \cite{chowetalII}*{Theorem 12.10}). Therefore, the assertion of \cite{chowetalII}*{Lemma 16.49} also holds. 
\end{proof}
\begin{lem}\label{lem : heatkernellowerbound}
	There exist constants $C_1,C_2>0$ such that
	\begin{align*}
	K(x,t,y,s)\geq\frac{C_1}{(4\pi(t-s))^{\frac{n}{2}}}
 	\exp\left({-\frac{(d_{\hat{h}}(x,y))^2}{C_2(t-s)}}\right)
	\end{align*}
\end{lem}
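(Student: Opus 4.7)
My plan is to apply Lemma \ref{lem : heat kernel reduced distance} and bound the reduced distance from above by a multiple of $d_{\hat h}(x,y)^2/(t-s)$ plus a uniform additive constant. Write $d := d_{\hat h}(x,y)$, fix a minimizing unit-speed $\hat h$-geodesic $\sigma:[0,d] \to M$ with $\sigma(0) = y$, $\sigma(d) = x$, and define a test curve $\gamma:[s,t] \to M$ by $\gamma(t') := \sigma(d\,\tau(t'))$, where $\tau : [s,t] \to [0,1]$ is the Euler-Lagrange minimizer of $\int_s^t \sqrt{t-t'}\,(\tau')^2\,dt'$ subject to $\tau(s)=0$, $\tau(t)=1$. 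A short calculation gives $\tau'(t') = \frac{1}{2\sqrt{(t-s)(t-t')}}$ and $\int_s^t \sqrt{t-t'}\,(\tau')^2\,dt' = \frac{1}{2\sqrt{t-s}}$.

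Next I estimate the two contributions to $\mathcal L(\gamma)$ separately. For the kinetic term, the stability conclusions of Theorem \ref{thm : mainthm 1 introduction}, together with the uniform $C^1$-control of the gauge diffeomorphisms $\overline\varphi_t$ supplied by Proposition \ref{prop : de Turck vector field}, ensure that there is a constant $\Lambda$ for which $\Lambda^{-1} \hat h \leq g_{t'} \leq \Lambda \hat h$ for all $t' \geq 1$. Hence $|\gamma'(t')|_{g_t}^2 \leq \Lambda d^2 (\tau'(t'))^2$, and integrating against $\sqrt{t-t'}\,dt'$ produces $\Lambda d^2/(2\sqrt{t-s})$, which after dividing by $2\sqrt{t-s}$ contributes $\Lambda d^2/(4(t-s))$ to $\ell$. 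For the scalar term, Proposition \ref{prop : de Turck vector field} combined with the diffeomorphism invariance of $\mathrm{Ric}$ yields $\|\scal_{g_{t'}}\|_{C^0(\hat h)} \leq C\,(t')^{-\alpha}$ with $\alpha > 1$ in the range of $p$ relevant to this subsection; using the crude bound $\sqrt{t-t'} \leq \sqrt{t-s}$ and $s \geq 1$,
\begin{equation*}
	\int_s^t \sqrt{t-t'}\,\|\scal_{g_{t'}}\|_{C^0}\,dt'
		\leq C\sqrt{t-s}\int_s^\infty (t')^{-\alpha}\,dt'
		\leq \frac{C\sqrt{t-s}\,s^{1-\alpha}}{\alpha-1}
		\leq \frac{C\sqrt{t-s}}{\alpha-1}.
\end{equation*}
Dividing by $2\sqrt{t-s}$ produces an absolute constant $C_0$.

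Combining the two estimates gives $\ell(x,t,y,s) \leq \Lambda d_{\hat h}(x,y)^2/(4(t-s)) + C_0$; inserting into Lemma \ref{lem : heat kernel reduced distance} yields the claim with $C_1 := e^{-C_0}$ and $C_2 := 4/\Lambda$. The main technical point I anticipate is justifying the uniform bilipschitz equivalence of $g_{t'}$ and $\hat h$ on the full time half-line $[1,\infty)$ together with the scalar decay exponent $\alpha > 1$; both follow from the stability machinery already in place, though one must carefully track the pullback by $\overline\varphi_{t'}$ and verify that in the parameter range required for the PSC rigidity application ($p < n/(n-2) \leq n/2$) the Ricci-decay rate of Proposition \ref{prop : de Turck vector field} is indeed faster than $t^{-1}$. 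A secondary minor point is that Lemma \ref{lem : heat kernel reduced distance} is stated with $|\gamma'|_{g_t}$ rather than $|\gamma'|_{g_{t'}}$, but this is compatible with the bilipschitz-based velocity estimate above and does not affect the argument.
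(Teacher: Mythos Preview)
Your proof is correct and follows the same strategy as the paper: bound the reduced distance $\ell$ from above via an explicit test curve along a $\hat h$-geodesic, then plug into Lemma~\ref{lem : heat kernel reduced distance}. The paper simply uses the constant-speed parametrization $|\gamma'|_{\hat h}=d/(t-s)$, which after integrating $\sqrt{t-t'}$ gives a kinetic contribution $\frac{\Lambda d^2}{3(t-s)}$; your Euler--Lagrange reparametrization yields the slightly better constant $\frac{\Lambda d^2}{4(t-s)}$, but this refinement is inessential for the statement.

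Where your write-up is actually more careful than the paper's is the scalar term. The paper just absorbs $\frac{1}{2\sqrt{t-s}}\int_s^t\sqrt{t-t'}\,\scal_{g_{t'}}\,dt'$ into an unnamed constant $C_1$; as you correctly identify, a mere uniform bound on $\scal$ would only give a contribution of order $(t-s)$, so one really must invoke the decay $\|\scal_{g_{t'}}\|_{C^0}\le C(t')^{-\alpha}$ with $\alpha>1$ from Proposition~\ref{prop : de Turck vector field} (valid here because $n$ is even, hence $n\ge 4$ and $p<\tfrac{n}{n-2}\le \tfrac{n}{2}$). Your observation that the diffeomorphisms $\overline\varphi_{t'}$ are uniformly $C^1$-controlled is exactly what is needed to transfer this decay from the Ricci--de~Turck flow to the Ricci flow and to justify the bilipschitz equivalence $\Lambda^{-1}\hat h\le g_{t'}\le \Lambda\hat h$. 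The remark about $|\gamma'|_{g_t}$ versus $|\gamma'|_{g_{t'}}$ is a typo in the paper's definition of $\mathcal L$; as you note, the uniform equivalence of all the metrics involved makes this harmless.
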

\begin{proof}
	For $x,y\in M$, let $\gamma_{x,y}:[s,t]\to M$  be a $\hat{h}$-geodesic joining $x$ and $y$. Due to the parametrization interval, $|\gamma'_{x,y}(t')|_{\hat{h}}=\frac{d_{\hat{h}}(x,y)}{(t-s)}$. Therefore,
	\begin{align*}
	\ell(x,t,y,s)\leq \frac{1}{\sqrt{t-s}}\mathcal{L}(\gamma_{x,y})&=\frac{1}{\sqrt{t-s}}\int_s^t\sqrt{t-t'}(\scal_{g_{t'}}(\gamma_{x,y}(t'))+|\gamma_{x,y}'(t')|_{g_{t'}}^2)dt'\\
	&\leq C_1+\frac{C_2}{\sqrt{t-s}}\int_s^t\sqrt{t-t'}|\gamma_{x,y}'(t')|_{\hat{h}}^2dt'
	= C_1+\frac{2C_2}{3}\frac{(d_{\hat{h}}(x,y))^2}{t-s}.
	\end{align*}
	Thus, we get 
	\begin{align*}
	\exp({-\ell(x,t,y,s)} )\geq \exp\left(- C_1-\frac{2C_2}{3}\frac{(d_{\hat{h}}(x,y))^2}{t-s}\right)=\exp(- C_1)\exp\left(-\frac{2C_2}{3}\frac{(d_{\hat{h}}(x,y))^2}{t-s}\right)
	\end{align*}
	and the result follows from Lemma \ref{lem : heat kernel reduced distance}.
\end{proof}

\begin{proof}[Proof of Theorem \ref{thm : psc rigidity introduction}] Note that due to interpolation, we may assume that $p>1$.
By the Duhamel principle, we have
	\begin{align*}
	\scal_{g_t}(x)=\int_M K(x,t,y,1) \scal_{g_1}(y)\dv_{g_1}+\int_1^t\int_M K(x,t,y,t') |\Ric_{g_{t'}}|_{g_{t'}}^2\dv_{g_{t'}}dt'.
	\end{align*}
	Now suppose that $\Ric_{g_0}\neq 0$. Then we also have $\Ric_{g_1}\neq 0$.
 Because $\scal_{g_1}\geq0$, we thus get $\scal_{g_t}(x)>0$ for all $t>1$ and $x\in M$.
  Now fix a point $x\in M$ and a ball $B_r(x)\subset M$ (defined with respect to $\hat{h}$) such that $\scal_{g_2}(y)\geq R>0$ for all $y\in B_r(x)$. 
		Then for $t>3$ using Lemma \ref{lem : heatkernellowerbound}, we get
		\begin{align*}
	\scal_{g_t}(x)&\geq \int_M K(x,t,y,2) \scal_{g_2}(y)\dv_{g_2}\\
	&\geq R\frac{C_1}{(4\pi(t-2))^{\frac{n}{2}}}\int_{B_r(x)} 
	\exp\left({-\frac{(d_{\hat{h}}(x,y))^2}{C_2(t-2)}}\right)\dv_{g_2}\geq C(t-2)^{-\frac{n}{2}}\geq Ct^{-\frac{n}{2}}.
		\end{align*}
		On the other hand, by Proposition \ref{prop : de Turck vector field}, we have for any $\tau>0$ constants such that 
		\begin{align}\label{eq : scalar curvature convergence}
	\scal_{g_t}(x)=\scal_{\overline{g}_t}(\overline{\varphi}_t(x))\leq C	\left\| \Ric_{\overline{g}_t}\right\|_{C^0}&\leq Ct^{-\frac{n}{2p}+\tau-1},
		\end{align}
		which leads to a contradiction since $p<\frac{n}{n-2}$.		
	\end{proof}
\begin{rem}
In \cite{App18}*{Lemma 6.6}, proves that under the present assumptions, $\scal_{g_t}\in L^1$ for $t>1$, if $p=\frac{n}{n-2}$. However, we are not able to reproduce this result because we do not have \eqref{eq : scalar curvature convergence} with $\tau=0$.
\end{rem}
\begin{proof}[Proof of Theorem \ref{thm: counterexample scalar curvature rigidity intro}]
	For $k\in\N_0$ and $\delta\in\R$, let
	\begin{align*}
	\mathrm{Conf}^{k,p}_{\delta}(\hat{h})=(1+W^{k,p}_{\delta}(M))\cdot \hat{h}=\left\{g\mid g\text{ conformal to }\hat{h}\text{ and }g-\hat{h}\in W^{k,p}_{\delta}(S^2M)\right\}
	\end{align*}
If $k>\frac{n}{p}$ and $\delta=-\frac{n}{p}$, we have a map
\begin{align*}
\scal: \mathrm{Conf}^{k,p}_{\delta}(\hat{h})\to W^{k-2,p}_{\delta-2}(M),\qquad   g\mapsto \scal_g.
\end{align*}
Its linearization at $\hat{h}$ is given by
\begin{align*}
(n-1)\cdot \Delta :W^{k,p}_{\delta}(M)\to W^{k-2,p}_{\delta-2}(M),
\end{align*}
see e.g. \cite{Besse07}*{Theorem 1.174}. 
Due to the condition on $p$, we have $\delta>2-n$ and this map is indeed an isomorphism (see e.g. \cite{Bartnik1986}*{Proposition 2.2}). Let now $f\in W^{k-2,p}_{\delta-2}(M)$
Due to the inverse function theorem for Banach manifolds, $\scal$ restricts to a diffeomorphism
\begin{align*}
\scal:\mathrm{Conf}^{k,p}_{\delta}(\hat{h})\supset\mathcal{U}\to \mathcal{V}\subset W^{k-2,p}_{\delta-2}(M),
\end{align*}
for some small neighbourhoods $\mathcal{U}$ of $\hat{h}$ and $\mathcal{V}$ of $0$, respectively. Therefore, we find for each sequence of positive functions $f_i\in \mathcal{V}$ converging to $0$ in $ W^{k-2,p}_{\delta-2}$ a sequence of metrics $g_i\in \mathrm{Conf}^{k,p}_{\delta}(\hat{h})$ with $\scal_{g_i}=f_i$ converging to $\hat{h}$ in  $ W^{k,p}_{\delta}$. By Sobolev embedding, we have
\begin{align*}
\left\| g_i-\hat{h}\right\|_{L^{[p,\infty]}}&=\left\| g_i-\hat{h}\right\|_{L^{p}}+\left\| g_i-\hat{h}\right\|_{L^{\infty}}\\
&\leq \left\| g_i-\hat{h}\right\|_{L^{p}_{\delta}}+C\left\| g_i-\hat{h}\right\|_{L^{\infty}_{\delta}}\leq C\left\| g_i-\hat{h}\right\|_{W^{k,p}_{\delta}}\to 0,
\end{align*}
which proves the result.
\end{proof}

\begin{bibdiv}
\begin{biblist}

\bib{App18}{article}{
   author={Appleton, A.},
   title={Scalar curvature rigidity and Ricci DeTurck flow on perturbations
   of Euclidean space},
   journal={Calc. Var. Partial Differ. Equ.},
   volume={57},
   date={2018},
   number={5},
   pages={Art.\ 132},
}

\bib{App19}{article}{
	title={Eguchi-Hanson singularities in U(2)-invariant Ricci flow},
	author={Appleton, A.},
	journal={Peking Math. J.},
	volume={6},
	number={1},
	pages={1--141},
	year={2023},
}

\bib{Bam14}{article}{
   author={Bamler, R. H.},
   title={Stability of hyperbolic manifolds with cusps under Ricci flow},
   journal={Adv. Math.},
   volume={263},
   date={2014},
   pages={412--467},
}

\bib{Bam15}{article}{
	title={Stability of symmetric spaces of noncompact type under Ricci flow},
	author={Bamler, R.\ H.},
	journal={Geom. Funct. Anal.},
	volume={25},
	number={2},
	pages={342--416},
	year={2015},
}

\bib{Bam16}{article}{
   author={Bamler, R.\ H.},
   title={A Ricci flow proof of a result by Gromov on lower bounds for
   scalar curvature},
   journal={Math. Res. Lett.},
   volume={23},
   date={2016},
   number={2},
   pages={325--337},
}

\bib{Bam18}{article}{
	title={Convergence of Ricci flows with bounded scalar curvature},
	author={Bamler, R.\ H.},
	journal={Ann. Math.},
	volume={188},
	number={3},
	pages={753--831},
	date={2018},
}

\bib{BKN89}{article}{
	author={Bando, S.},
	author={Kasue, A.},
	author={Nakajima, H.},
	title={On a construction of coordinates at infinity on manifolds with fast curvature decay and maximal volume growth},
	journal={Invent. Math.},
	volume={97},
	date={1989},
	number={2},
	pages={313--349},
}

\bib{Bartnik1986}{article}{
	author={Bartnik, R.},
	title={The mass of an asymptotically flat manifold},
	journal={Comm.\ Pure Appl.\ Math.},
	volume={39},
	date={1986},
	number={5},
	pages={661--693},
}

\bib{Besse07}{book}{
	author={Besse, A.\ L.},
	title={Einstein manifolds},
	series={Classics in Mathematics},
	note={Reprint of the 1987 edition},
	publisher={Springer-Verlag, Berlin},
	date={2008},
	pages={xii+516},
}

\bib{chowetalI}{book}{
	title={The Ricci Flow: Techniques and Applications. Part I. Geometric aspects},
	author={Chow, B.},
	author={Chu, S.-C.},
	author={Glickenstein, D.},
	author={Guenther, C.},
	author={Isenberg, J.},
	author={Ivey, T.},
	author={Knopf, D.},
	author={Lu, P.},
	author={Luo, F.},
	author={Ni, L.},	
	volume={135},
	year={2007},
	publisher={American Mathematical Society Providence, RI}
}

\bib{chowetalII}{book}{
	title={ The Ricci flow: techniques and applications. Part II. Analytic aspects},
	author={Chow, B.},
	author={Chu, S.-C.},
	author={Glickenstein, D.},
	author={Guenther, C.},
	author={Isenberg, J.},
	author={Ivey, T.},
	author={Knopf, D.},
	author={Lu, P.},
	author={Luo, F.},
	author={Ni, L.},	
	volume={144},
	year={2008},
	publisher={American Mathematical Society Providence, RI}
}

\bib{Dahl97}{article}{
   author={Dahl, M.},
   title={The positive mass theorem for ALE manifolds},
   conference={
      title={Mathematics of gravitation, Part I},
      address={Warsaw},
      date={1996},
   },
   book={
      series={Banach Center Publ.},
      volume={41},
      publisher={Polish Acad. Sci. Inst. Math., Warsaw},
   },
   date={1997},
   pages={133--142},
}

\bib{DWW05}{article}{
   author={Dai, X.},
   author={Wang, X.},
   author={Wei, G.},
   title={On the stability of Riemannian manifold with parallel spinors},
   journal={Invent. Math.},
   volume={161},
   date={2005},
   number={1},
   pages={151--176},
}

\bib{Der15}{article}{
   author={Deruelle, A.},
   title={Stability of non compact steady and expanding gradient Ricci solitons},
   journal={Calc. Var. Partial Differ. Equ.},
   volume={54},
   date={2015},
   number={2},
   pages={2367--2405},
}

\bib{DK17}{article}{
	author={Deruelle, A.},
	author={Kröncke, K.},
	title={Stability of ALE Ricci-flat manifolds under Ricci flow},
	 journal={J. Geom. Anal.},
   volume={31},
   date={2021},
   number={3},
   pages={2829--2870},
}

\bib{DL17}{article}{
   author={Deruelle, A.},
   author={Lamm, T.},
   title={Weak stability of Ricci expanders with positive curvature
   operator},
   journal={Math. Z.},
   volume={286},
   date={2017},
   number={3-4},
   pages={951--985},
}

\bib{EMT2011}{article}{
   author={Enders, J.},
   author={M\"{u}ller, R.},
   author={Topping, P.\ M.},
   title={On type-I singularities in Ricci flow},
   journal={Comm. Anal. Geom.},
   volume={19},
   date={2011},
   number={5},
   pages={905--922},
}

\bib{Ham82}{article}{
   author={Hamilton, R. S.},
   title={Three-manifolds with positive Ricci curvature},
   journal={J. Differ. Geom.},
   volume={17},
   date={1982},
   number={2},
   pages={255--306},
}

\bib{HM14}{article}{
	title={Dynamical stability and instability of Ricci-flat metrics},
	author={Haslhofer, R.},
	author={M{\"u}ller, R.},
	journal={Math. Ann.},
	volume={360},
	number={1-2},
	pages={547--553},
	date={2014},
}

\bib{IKS19}{article}{
   author={Isenberg, J.},
   author={Knopf, D.},
   author={\v{S}e\v{s}um, N.},
   title={Non-K\"{a}hler Ricci flow singularities modeled on K\"{a}hler-Ricci solitons},
   journal={Pure Appl. Math. Q.},
   volume={15},
   date={2019},
   number={2},
   pages={749--784},
}

\bib{Joy99}{article}{
	author={Joyce, D.},
	title={A new construction of compact 8-manifolds with holonomy ${\rm
			Spin}(7)$},
	journal={J. Differ. Geom.},
	volume={53},
	date={1999},
	number={1},
		pages={89--130},
}

\bib{Joy00}{book}{
	author={Joyce, D.},
	title={Compact manifolds with special holonomy},
	series={Oxford Mathematical Monographs},
	publisher={Oxford University Press, Oxford},
	date={2000},
	pages={xii+436},
}

\bib{Joy01}{article}{
	author={Joyce, D.},
	title={Asymptotically locally Euclidean metrics with holonomy ${\rm SU}(m)$},
	journal={Ann. Global Anal. Geom.},
	volume={19},
	date={2001},
	number={1},
	pages={55--73},
}

\bib{KL12}{article}{
   author={Koch, H.},
   author={Lamm, T.},
   title={Geometric flows with rough initial data},
   journal={Asian J. Math.},
   volume={16},
   date={2012},
   number={2},
   pages={209--235},
}

\bib{Kro89}{article}{
	author={Kronheimer, P. B.},
	title={The construction of ALE spaces as hyper-K\"{a}hler quotients},
	journal={J. Differ. Geom.},
	volume={29},
	date={1989},
	number={3},
	pages={665--683},
}

\bib{Kro15}{article}{
	title={Stability and instability of Ricci solitons},
	author={Kr\"{o}ncke, Klaus},
	journal={Calc. Var. Partial Differ. Equ.},
	volume={53},
	number={1-2},
	pages={265--287},
	date={2015},
}

\bib{Kro18}{article}{
   author={Kr\"{o}ncke, Klaus},
   title={Stability of sin-cones and cosh-cylinders},
   journal={Ann. Sc. Norm. Super. Pisa Cl. Sci. (5)},
   volume={18},
   date={2018},
   number={3},
   pages={1155--1187},
}

\bib{Kro20}{article}{
   author={Kr\"{o}ncke, Klaus},
   title={Stability of Einstein metrics under Ricci flow},
   journal={Comm. Anal. Geom.},
   volume={28},
   date={2020},
   number={2},
   pages={351--394},
}

\bib{KP2020}{article}{
	author={Kr\"{o}ncke, K.},
	author={Petersen, O.\ L.},
	title={Long-time estimates for heat flows on ALE manifolds},
	journal={Int. Math. Res. Not.},
	date={2021},
}

\bib{Mit91}{book}{
   author={Mitrinovi\'{c}, D. S.},
   author={Pe\v{c}ari\'{c}, J. E.},
   author={Fink, A. M.},
   title={Inequalities involving functions and their integrals and
   derivatives},
   series={Mathematics and its Applications (East European Series)},
   volume={53},
   publisher={Kluwer Academic Publishers Group, Dordrecht},
   date={1991},
   pages={xvi+587},
}

\bib{pacini}{article}{
  author={Pacini, T.},
  title={Desingularizing isolated conical singularities: Uniform estimates via weighted Sobolev spaces},
  journal={Comm. Anal. Geom.},
  volume={21},
  number={1},
  pages={105--170},
  year={2013},
}

\bib{Per02}{article}{
	title={The entropy formula for the Ricci flow and its geometric applications},
	author={Perelman, G.},
	journal={arXiv:math/0211159 [math.DG]},
	year={2002}
}

\bib{SSS08}{article}{
   author={Schn\"{u}rer, O.\ C.},
   author={Schulze, F.},
   author={Simon, M.},
   title={Stability of Euclidean space under Ricci flow},
   journal={Comm. Anal. Geom.},
   volume={16},
   date={2008},
   number={1},
   pages={127--158},
}

\bib{SSS11}{article}{
   author={Schn\"{u}rer, O.\ C.},
   author={Schulze, F.},
   author={Simon, M.},
   title={Stability of hyperbolic space under Ricci flow},
   journal={Comm. Anal. Geom.},
   volume={19},
   date={2011},
   number={5},
   pages={1023--1047},
}

\bib{Shi-Def}{article}{
	author={Shi, W.-X.},
	title={Deforming the metric on complete Riemannian manifolds},
	journal={J. Differ. Geom.},
	volume={30},
	date={1989},
	number={1},
	pages={223--301},
}

\bib{Wang91}{article}{
   author={Wang, M.\ Y.},
   title={Preserving parallel spinors under metric deformations},
   journal={Indiana Univ. Math. J.},
   volume={40},
   date={1991},
   number={3},
   pages={815--844},
}

\bib{WW16}{article}{
   author={Williams, M.\ B.},
   author={Wu, H.},
   title={Dynamical stability of algebraic Ricci solitons},
   journal={J. Reine Angew. Math.},
   volume={713},
   date={2016},
   pages={225--243},
}

\bib{Wu13}{article}{
   author={Wu, H.},
   title={Stability of complex hyperbolic space under curvature-normalized Ricci flow},
   journal={Geom. Dedicata},
   volume={164},
   date={2013},
   pages={231--258},
}

\bib{Zhu16}{article}{
   author={Zhu, M.},
   title={Davies type estimate and the heat kernel bound under the Ricci flow},
   journal={Trans. Amer. Math. Soc.},
   volume={368},
   date={2016},
   number={3},
   pages={1663--1680},
}

\end{biblist}
\end{bibdiv}

\end{sloppypar}
\end{document}